%revised april 2015
%revised January 2015
%(revised on July 14, 2014)

%Don't forget to delete
%showkeys
%overfullrule follows from
%\date

\documentclass[12pt,leqno]{article}
\overfullrule=0pt

\usepackage{amssymb,amsmath,amsthm}
\usepackage[all]{xy}
\usepackage{graphics}

\usepackage{enumerate}
\usepackage{mathrsfs}
\usepackage{color}
\usepackage[colorlinks=true, pdfstartview=FitV, linkcolor=blue,%
citecolor=blue, urlcolor=blue]{hyperref}

%%%%%%%%%%%%%%%%%%%%%%%%%%%%%%%%

%\newcommand{\ber}{\begin{rouge}}
%\newcommand{\er}{\end{rouge}}

\newenvironment{bleu}
{\relax\color{blue}}
{\hspace*{.3ex}\relax}
\newcommand{\beb}{\begin{bleu}}
\newcommand{\eb}{\end{bleu}}
%%%%%%%%%%%%%%%%%%%%%%%%%%%%%%%%

\newcommand{\nc}{\newcommand}
\nc{\on}{\operatorname}

\newlength{\my}
\setlength{\my}{\textwidth}
\addtolength{\my}{-50pt}
\nc{\noi}{\noindent}

\renewcommand{\Re}{\operatorname{Re}}
\renewcommand{\Im}{\operatorname{Im}}

%%%%%%%%%%%%%%%%%%%%%%%%%%%%%%%%%%%%%%%%%%%%
\newtheorem{theorem}{Theorem}[section]
\newtheorem{proposition}[theorem]{Proposition}
\newtheorem{lemma}[theorem]{Lemma}
\newtheorem{corollary}[theorem]{Corollary}

\theoremstyle{definition}

\newtheorem{definition}[theorem]{Definition}
\newtheorem{notation}[theorem]{Notation}
\newtheorem{example}[theorem]{Example}

\newtheorem{remark}[theorem]{Remark}

\newtheorem{conjecture}[theorem]{Conjecture}

%%%%%%%%%%%%%%%%%%%%%%%%%%%%%%%%%%%%%%%%%%%%%%%%
\nc{\Rem}{\begin{remark}}
\nc{\enrem}{\end{remark}}
\nc{\Conj}{\begin{conjecture}}
\nc{\enconj}{\end{conjecture}}

%%%%%% Characters

%%%%%% Characters

%mathrm
\nc{\RR}{\mathrm{R}}
\nc{\LL}{\mathrm{L}}

%mathbb

\newcommand{\C}{{\mathbb{C}}}

\newcommand{\R}{{\mathbb{R}}}
\newcommand{\Q}{{\mathbb{Q}}}

\newcommand{\BBP}{{\mathbb P}}

\newcommand{\BBD}{{\mathbb D}}

\newcommand{\BBH}{{\mathbb H}}
\newcommand{\BBHd}{{\mathbb H}^*}

\def\phi{{\varphi}}
\def\epsilon{\varepsilon}

%mathbf
\newcommand{\cor}{{\bf k}}

\newcommand{\icor}{\rm I{\bf k}}
\newcommand{\iC}{\rm I{\C}}
\newcommand{\iD}{\rm I{\D}}

\newcommand{\isha}{\rm I{\sha}}

\newcommand{\iDrm}{\rm I{\Drm}}

\def\D{\mathscr{D}}
\def\O{\mathscr{O}}
%mathscript
\def\sha{\mathscr{A}}
\def\shb{\mathscr{B}}
\def\shc{\mathscr{C}}
\def\shd{\mathscr{D}}
\def\she{\mathscr{E}}
\def\shf{\mathscr{F}}

\def\shi{\mathscr{I}}

\def\shk{\mathscr{K}}
\def\shl{\mathscr{L}}
\def\shm{\mathscr{M}}
\def\shn{\mathscr{N}}
\def\sho{\mathscr{O}}

\def\sht{\mathscr{T}}

%mathfrack

%%%% Kernels

%\DeclareMathOperator{\ker}{Ker}

%%%%Arrows

\newcommand{\rmpt}{{\rm pt}}

\newcommand{\into}{\hookrightarrow}

\renewcommand{\to}[1][]{\xrightarrow[]{#1}}
\newcommand{\from}[1][]{\xleftarrow[]{#1}}
\newcommand{\isoto}[1][]{\xrightarrow[#1]%
{{\raisebox{-.6ex}[0ex][-.6ex]{$\mspace{1mu}\sim\mspace{2mu}$}}}}
\newcommand{\isofrom}[1][]{\xleftarrow[#1]%
{{\raisebox{-.6ex}[0ex][-.6ex]{$\mspace{1mu}\sim\mspace{2mu}$}}}}

\newcommand{\To}[1][\rule{1ex}{0pt}]{\xrightarrow{\hs{.6ex}#1\hs{.6ex}}}

%%%%Internal operations 

\newcommand{\muHom}[1][]{\mathrm{Hom}^\mu_{\raise1.5ex\hbox to.1em{}#1}}
\newcommand{\Hom}[1][]{\mathrm{Hom}_{\raise1.5ex\hbox to.1em{}#1}}
\newcommand{\RHom}[1][]{\RR\mathrm{Hom}_{\raise1.5ex\hbox to.1em{}#1}}
\newcommand{\Ext}[2][]{\mathrm{Ext}_{\raise1.5ex\hbox to.1em{}#1}^{#2}}
\renewcommand{\hom}[1][]{{\mathscr{H}\mspace{-4mu}om}_{\raise1.5ex\hbox to.1em{}#1}}
\newcommand{\rhom}[1][]{{\RR\mathscr{H}\mspace{-3mu}om}_{\raise1.5ex\hbox to.1em{}#1}}
\newcommand{\rhomc}[1][]
{{\mathscr{H}\mspace{-3mu}om}^*_{\raise1.5ex\hbox to.1em{}#1}}

\newcommand{\cihom}[1][]
{{\mathscr{I}\mspace{-3mu}}{hom}^+_{\raise1.5ex\hbox to.1em{}#1}}

\nc{\ihom}[1][]{{\shi\mspace{-3mu}hom}_{\raise1.5ex\hbox to.1em{}#1}}
\nc{\rihom}[1][]{{\mspace{2mu}\mathrm{R}\shi\mspace{-3mu}hom}_{\raise1.5ex\hbox to.1em{}#1}}
\nc{\fihom}[1][]{{\shi\mspace{-3mu}hom}^{\mathrm{E}}_{\raise1.5ex\hbox to.1em{}#1}}
\nc{\FHom}[1][]{{\mathrm{RHom}^{\mathrm{E}}_{\raise1.5ex\hbox to.1em{}#1}}}
\nc{\fhom}[1][]{{\mathscr{H}%
\mspace{-3mu}om}^{\mathrm{E}}_{\raise1.5ex\hbox to.1em{}#1}}

\nc{\Tam}{{\rm E}}

\newcommand{\ext}[2][]{{\mathscr{E}xt}_{\raise1.5ex\hbox to.1em{}#1}^{#2}}
\newcommand{\Tor}[2][]{\mathrm{Tor}^{\raise1.5ex\hbox to.1em{}#1}_{#2}}
\newcommand{\tens}[1][]{\mathbin{\otimes_{\raise1.5ex\hbox to-.1em{}{#1}}}}
\newcommand{\ltens}[1][]{\mathbin{\overset{\mathrm{L}}\tens}_{#1}}

\newcommand{\lltens}[1][]{{\mathop{\tens}\limits^{\rm L}}_{#1}}

\newcommand{\etens}{\mathbin{\boxtimes}}

\newcommand{\shend}{\operatorname{{\she\mspace{-2mu}\mathit{nd}}}}
\newcommand{\Endo}[1][]{\mathrm{End}_{\raise1.5ex\hbox to.1em{}#1}}

\newcommand{\Aut}[1][]{\mathrm{Aut}_{\raise1.5ex\hbox to.1em{}#1}}
\newcommand{\sect}{\Gamma}
\newcommand{\rsect}{\mathrm{R}\Gamma}

\newcommand{\conv}[1][]{\mathop{\circ}\limits_{#1}}
\newcommand{\ctens}[1][]{\mathbin{\overset{+}\tens}_{#1}}

\newcommand{\cconv}[1][]{\mathop{\circ}\limits^{#1}}
\newcommand{\Dconv}{\cconv[{\mathrm D}]}

\newcommand{\econv}[1][]{\mathop{\circ}\limits^{\mathrm{E}}\limits_{#1}}

\newcommand{\cetens}[1][]{\mathop{\etens}\limits^{+}\limits_{#1}}

\newcommand{\VV}{{\mathsf{V}}}
\newcommand{\VVd}{{\mathsf{V}^*}}
\newcommand{\VVp}{\mathsf{V'}}
\newcommand{\VVdp}{\mathsf{V'}^*}

\newcommand{\WW}{{\mathbb{V}}}
\newcommand{\WWd}{{\mathbb{V}^*}}

\newcommand{\PP}{\mathsf{P}}
%%%External operations
\newcommand{\Drm}{\mathsf{D}}
\newcommand{\OO}{\mathsf{O}}

\newcommand{\oim}[1]{{#1}_*}
\newcommand{\eim}[1]{{#1}_!}
\newcommand{\roim}[1]{\RR{#1}_*}
\newcommand{\reim}[1]{\RR{#1}_!}

\newcommand{\reeim}[1]{\RR{#1}_{\mspace{1mu}!!}}
\newcommand{\opb}[1]{#1^{-1}}

\newcommand{\epb}[1]{#1^{\,!}\,}

\newcommand{\Dtens}[1][]{\overset{\mathrm{D}}\otimes_{\raise1.5ex\hbox to-.1em{}#1}}
\newcommand{\Detens}[1][]{\overset{\mathrm{D}}\etens_{\raise1.5ex\hbox to-.1em{}#1}}
\newcommand{\Ddual}{{\BBD}}

\newcommand{\Dopb}[1]{{\mathrm{D}}{#1}^{*}}
\newcommand{\Doim}[1]{{\mathrm{D}}{#1}_{*}}

\newcommand{\good}{\mathrm{good}}
\newcommand{\qgood}{\mathrm{q\text-good}}
\newcommand{\ghol}{\mathrm{g\text-hol}}

\nc{\rE}{\mathrm{E}}
\nc{\enh}{\mathsf{E}}
\newcommand{\Toim}[1]{\enh{#1}_*}
\newcommand{\Teeim}[1]{\enh{#1}_{\mspace{1mu}!!}}
\newcommand{\Topb}[1]{\enh\mspace{2mu}#1^{-1}}
\newcommand{\Tepb}[1]{\enh\mspace{2mu}#1^{\,!}}

\nc{\EF}[1][]{{}^{\enh}{\mspace{-3mu}\shf_{#1}}}
\nc{\EFa}[1][]{{}^{\enh}{\mspace{-3mu}\shf^a_{#1}}}
\nc{\FS}[1][]{{}^{\rm S}{\mspace{-3mu}\shf_{#1}}}
\nc{\FSa}[1][]{{}^{\rm S}{\mspace{-3mu}\shf^a_{#1}}}
\nc{\Leg}[1][]{{\rm Conv}{(#1)}}
\nc{\dom}{\mathrm{dom}}
\nc{\domo}{\dom^\circ}

%%%%%Modules

\nc{\sHH}{\mathscr{H}\mspace{-4mu}\mathscr{H}}

\nc{\sMH}{\mathscr{M}\mspace{-4mu}\mathscr{H}}

\newcommand{\eqdot}{\mathbin{:=}}
\newcommand{\seteq}{\mathbin{:=}}

\newcommand{\cl}{\colon}
\newcommand{\scbul}{{\,\raise.4ex\hbox{$\scriptscriptstyle\bullet$}\,}}

\newcommand{\tw}[1]{\widetilde{#1}}
\newcommand{\twX}{{\widetilde{X}}}
\newcommand{\rmH}{{\mathrm{H}}}
\newcommand{\rmE}{{\mathrm{E}}}
\newcommand{\rmd}{{\mathrm{d}}}

\newcommand{\olom}{\varpi}

\newcommand{\ol}{\overline}
\newcommand{\bl}{\bigl(}
\newcommand{\br}{\bigr)}
\newcommand{\ro}{{\rm(}}%roman ouvert
\newcommand{\rf}{\,{\rm)}}%roman ferme

\newcommand{\rp}{{\rm)}}

%\nc{\sv}[2][{}]{\rule[#1]{0pt}{#2}}

\newcommand{\Rc}{{\R\text{-c}}}

%%%% Environnement

\newcommand{\ba}{\begin{array}}
\newcommand{\ea}{\end{array}}

\nc{\be}{\begin{enumerate}}
\nc{\ee}{\end{enumerate}}
\newcommand{\bnum}{\begin{enumerate}[{\rm(i)}]}
\newcommand{\enum}{\end{enumerate}}
\newcommand{\banum}{\begin{enumerate}[{\rm(a)}]}
\newcommand{\eanum}{\end{enumerate}}

\newcommand{\eq}{\begin{eqnarray}}
\newcommand{\eneq}{\end{eqnarray}}
\newcommand{\eqn}{\begin{eqnarray*}}
\newcommand{\eneqn}{\end{eqnarray*}}

\newcommand{\set}[2]{\left\{#1 \mathbin{;} #2 \right\}}

\nc{\Proof}{\begin{proof}}
\nc{\QED}{\end{proof}}
\nc{\Prop}{\begin{proposition}}
\nc{\enprop}{\end{proposition}}

%%%%%Roman

\def\rop{{\rm op}}
\def\op{{\rm op}}

\def\Op{{\rm Op}}
\def\dist{{\rm dist}}

\def\hol{{\rm hol}}

\DeclareMathOperator{\id}{id}

\DeclareMathOperator{\supp}{supp}
\DeclareMathOperator{\ori}{{or\mspace{2mu}}}
\DeclareMathOperator{\chv}{char}

\newcommand{\Der}[1][]{\mathsf{D}^{#1}}
\newcommand{\Derb}{\Der[\mathrm{b}]}

\newcommand{\RD}{\mathrm{D}}

\newcommand{\coh}{{\rm coh}}

\newcommand{\dT}{{\dot{T}}}

\newcommand{\indcc}{{\rm Ind}(\shc)}
\newcommand{\indc}{\rm IC}

\newcommand{\BDC}{\Derb}
\newcommand{\TDC}{{\rm E^b}}

\newcommand{\mop}{\mathrm{r}}

\newcommand{\Tmp}{\mathsf{T}}
\newcommand{\OEn}{\O^{\mspace{2mu}\enh}}
\newcommand{\DbT}{\Db^\Tmp}
\newcommand{\DbE}{\Db^\enh}
\newcommand{\OvE}{\Omega^\enh}
\newcommand{\drE}{\mathcal{DR}^\enh}
\newcommand{\solE}{\mspace{1mu}\mathcal{S}ol^{\mspace{1mu}\enh}}

\nc{\wc}[1]{\overset{\mbox{$\scriptscriptstyle\vee$}}{#1}}
\nc{\field}{\cor}
\nc{\bM}{\widehat{M}}
\nc{\bN}{\widehat N}
\nc{\bX}{{\widehat X}}
\nc{\bY}{\widehat Y}
\nc{\bL}{\widehat L}
\nc{\baf}{\widehat f}
\nc{\bR}{{\ol\R}}
\nc{\bV}{{\ol \VV}}
\nc{\bW}{{\ol \WW}}

\nc{\bVd}{{\ol \VVd}}
\nc{\bWd}{{{\ol \WW}^*}}

\newcommand{\fM}{{M_\infty}}
\newcommand{\fN}{{N_\infty}}
\newcommand{\fS}{{S_\infty}}
\newcommand{\fX}{{X_\infty}}
\newcommand{\fY}{{Y_\infty}}

\newcommand{\fR}{{\R_\infty}}
\newcommand{\fV}{{\VV_\infty}}
\newcommand{\fVd}{{\VV^*_\infty}}
\newcommand{\fW}{{\WW_\infty}}
\newcommand{\fWd}{{\WW^*_\infty}}

\nc{\oM}{{\ol M}}
\nc{\oN}{\ol N}
\nc{\oX}{{\ol X}}
\nc{\oS}{\ol S}
\nc{\oY}{\ol Y}
\nc{\oL}{\ol L}
\nc{\oR}{{\ol\R}}
\nc{\Tl}{\mathrm{L^E}}
\nc{\Tr}{\mathrm{R^E}}

\newcommand{\Msa}{{M_{\rm sa}}}
\newcommand{\Db}{{\cal D} b}
\newcommand{\Dbt}{{\cal D} b^{\mathrm t}}
\newcommand{\Cinft}[1][X]{\mathcal{C}^{\infty,\mathrm t}_{#1}}
\newcommand{\thom}{Thom}

\newcommand{\Ot}[1][X]{\mathcal{O}^{\mspace{2mu}\mathrm t}_{#1}}

\newcommand{\Ovt}{\Omega^{\mspace{1.5mu}\mathrm t}}

\newcommand{\dr}{\mathcal{DR}}
\newcommand{\drt}{\mathcal{DR}^{\mathrm t}}

\newcommand{\sol}{\mathcal Sol}

\newcommand{\solt}{\mathcal Sol^{\mspace{2.5mu}\mathrm t}}
\newcommand{\reghol}{{\mathrm{rh}}}

%
%\newcommand{\D}{\sheaffont{D}}
%\renewcommand{\O}{\sheaffont{O}}

%%%%%%% Limits-IndOb

\newcommand{\indlim}[1][]{\mathop{\varinjlim}\limits_{#1}}
\newcommand{\sindlim}[1][]{\smash{\mathop{\varinjlim}\limits_{#1}}\,}

\newcommand{\sprolim}[1][]{\smash{\mathop{\varprojlim}\limits_{#1}}\,}

\newcommand{\inddlim}[1][]{\mathop{\text{\rm``{$\varinjlim$}''}}\limits_{#1}}
\newcommand{\sinddlim}[1][]{\smash{\mathop{\text{\rm``{$\varinjlim$}''}}\limits_{#1}}\,}

\nc{\eps}{\varepsilon}
\nc{\hs}{\hspace*}
\nc{\nn}{\nonumber}
\nc{\tM}{\widetilde{M}}
\nc{\h}{\mathbf{h}}
\nc{\tf}{\tilde{f}}
\nc{\trf}{{{}^{\mathrm{t}}\mspace{-3mu}f}}
\nc{\codim}{\on{codim}}
\nc{\lh}{\mathscr{H}}
\nc{\bwr}{\mbox{\large{$\wr$}}}
\nc{\dTi}{\dT^{*,\mathrm{in}}}
\nc{\Cd}{\mathrm{C}}
\nc{\tK}{\widetilde{K}}
\nc{\aMM}{a_{M\times M}}
\nc{\e}{\mspace{1mu}\mathrm{e}\mspace{1mu}}
\nc{\lan}{\langle}
\nc{\ran}{\rangle}
\nc{\la}{\lambda}

\numberwithin{equation}{section}

\begin{document}

\title{Irregular holonomic kernels and Laplace transform}

\author{Masaki Kashiwara and Pierre Schapira}

\date{}

\maketitle

\begin{abstract}
Given a (not necessarily regular) holonomic $\D$-module $\shl$  defined on the product of two complex manifolds, we prove that the correspondence associated with $\shl$ commutes (in some sense) with the De Rham functor. We apply this result to the study of the classical Laplace transform. The main tools used here are the theory of ind-sheaves 
and its enhanced version. 
\end{abstract}

%\footnote{Mathematics Subject Classification: 32C38, 35A27, 44A10\\
%keywords: Irregular holonomic $\D$-modules, Laplace transform, ind-sheaves}
\footnote{M.~K.\ was partially supported by Grant-in-Aid for
Scientific Research (B) 22340005, Japan Society for the Promotion of
Science.}

\tableofcontents

\section{Introduction}
Perhaps the most popular integral transform in Mathematics is the Fourier transform, or its complex version, the Laplace transform. It interchanges objects living on a finite-dimensional vector space $\VV$ with objects living  on  
the dual space $\VVd$. 
The kernel of this transform is $\e^{\langle x,y\rangle}$ and all the subtlety and difficulty of this transform come from the fact   that 
the $\D$-module on $\VV\times\VVd$ generated by 
this kernel is holonomic but is not regular.  In this paper,  we shall give tools to treat integral transforms associated with general holonomic kernels and apply them to the particular case of the Laplace transform.

Let us be more precise. For a complex manifold $(X,\sho_X)$ we denote by $\Omega_X$ the sheaf of differential forms of top degree, by $\D_X$ the sheaf of (finite-order) differential operators and by $d_X$ the complex dimension of $X$. We use the usual six operations for sheaves and denote by  $\Doim{g}$, $\Dopb{f}$ and  $\Dtens$  the operations of direct image, inverse image and tensor product   for $\D$-modules. Here, a sheaf or a 
$\D$-module should be understood in the derived sense, that is, in the bounded derived category of sheaves or 
$\D$-modules.

All along this paper, we shall use the language of ind-sheaves of~\cite{KS01} and the six operations for ind-sheaves, 
$\ltens$, $\rihom$, $\opb{f}$, $\epb{f}$, $\roim{f}$ and $\reeim{f}$. The main object of interest will be 
the ind-sheaf $\Ot[X]$  of holomorphic functions with tempered growth,  realized as the Dolbeault complex of the ind-sheaf $\Cinft[X]$ of $\mathrm{C}^\infty$-functions with tempered growth. On an open subanalytic subset $U$ the sections of this last sheaf are functions which have  polynomial growth at the boundary, as well as all their derivatives. The history of the ind-sheaf $\Ot[X]$  is  closely related to the solution of the Riemann-Hilbert problem for regular holonomic modules of~\cite{Ka80,Ka84}. Recall that the main tool 
 to solve this problem was the functor $\thom$, which in the language of ind-sheaves reads as 
$\rhom(\scbul,\Ot[X])$.  

We shall use the Sol and tempered Sol functors and the De Rham and tempered De Rham functors for $\D_X$-modules.
Denoting by  $\Ovt_X$ the ind-sheaf of tempered differential forms of top degree,  these functors are given by:
\begin{align*}
\sol_X(\shm)&=\rhom[\D_X](\shm,\sho_X),\quad &\solt_X(\shm)&=\rhom[\D_X](\shm,\Ot[X]),\\
\dr_X(\shm)&=\Omega_X\ltens[\D_X]\shm,\quad &\drt_X(\shm)&=\Ovt_X\ltens[\D_X]\shm.
\end{align*}

Consider a correspondence of complex manifolds:
\eq\label{diag:inttrans0}
&&\xymatrix@C=6ex@R=4ex{
&S\ar[ld]_-f\ar[rd]^-g&\\
{X}&&{Y}.
}\eneq
For a  $\D_S$-module $\shl$ and a  $\D_X$-module $\shm$ one sets:
\eqn
&&\shm\Dconv\shl\eqdot\Doim{g}(\Dopb{f}\shm\Dtens\shl).
\eneqn
For ind-sheaves  $L$ on $S$,  $F$ on $X$ and $G$ on $Y$ one sets
\eq
&&L\conv G\eqdot\reeim{f}(L\tens\opb{g}G),\quad \Psi_L(F)=\roim{g}\rihom(L,\epb{f}F).\label{eq:conv}
\eneq
For the notion of being quasi-good or good and the categories $\Derb_\good(\D_X)$ and  $\Derb_\qgood(\D_X)$,
see \S~\ref{subsectionDmod}.
\begin{theorem}\label{th:741200}
Let  $\shm\in\Derb_\qgood(\D_X)$ and let $\shl\in\Derb_\good(\D_{S})$.  Set $L\eqdot\sol_{S}(\shl)$ and assume
\bnum
\item
$\shl$ is  regular holonomic,
\item
$\opb{f}\supp(\shm)\cap\supp(\shl)$ is proper over $Y$.
\enum
Then  there is a natural isomorphism in $\Derb(\C_Y)$:
\eq\label{eq:functDRregcase}
&&\Psi_L(\drt_X(\shm))\,[d_X-d_S]\simeq\drt_Y(\shm\Dconv\shl).
\eneq
\end{theorem}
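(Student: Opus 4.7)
The plan is to start from the right-hand side of~\eqref{eq:functDRregcase} and transform it into the left-hand side by applying, in turn, three compatibility formulas for the tempered De Rham functor $\drt$ with the $\D$-module operations that enter the definition $\shm\Dconv\shl=\Doim{g}(\Dopb{f}\shm\Dtens\shl)$. Hypothesis~(ii) feeds the direct-image step; hypothesis~(i) is the crucial input for the tensor-product step.

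First, I apply the compatibility of $\drt$ with proper pushforward, which under the properness assumption~(ii) allows one to identify $\reeim{g}$ with $\roim{g}$ on the support of $\Dopb{f}\shm\Dtens\shl$:
\[
\drt_Y(\shm\Dconv\shl)\simeq\roim{g}\,\drt_S(\Dopb{f}\shm\Dtens\shl).
\]
Next, using regularity of $\shl$ (hypothesis~(i)), I invoke the natural isomorphism
\[
\drt_S(\shn\Dtens\shl)\simeq\rihom\bl\sol_S(\shl),\,\drt_S(\shn)\br,
\]
valid for any $\shn\in\Derb_\qgood(\D_S)$ and any regular holonomic $\shl$. Finally, the compatibility of $\drt$ with inverse image under $f$,
\[
\drt_S(\Dopb{f}\shm)\simeq\epb{f}\,\drt_X(\shm)[d_X-d_S],
\]
accounts for the shift appearing in the statement. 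Combining the three steps yields
\[
\drt_Y(\shm\Dconv\shl)\simeq\roim{g}\,\rihom\bl L,\,\epb{f}\drt_X(\shm)\br[d_X-d_S]=\Psi_L(\drt_X(\shm))[d_X-d_S],
\]
which is~\eqref{eq:functDRregcase}.

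The principal obstacle is the tensor-product step: establishing the identity $\drt_S(\shn\Dtens\shl)\simeq\rihom(\sol_S(\shl),\drt_S(\shn))$ when $\shl$ is regular holonomic. This is a form of Riemann--Hilbert in disguise: regularity forces $\solt_S(\shl)\simeq\sol_S(\shl)$, so that the constructible sheaf $L=\sol_S(\shl)$ faithfully records $\shl$ from the tempered viewpoint, and one can interchange the $\sho_S$-module tensor product by $\shl$ with $\rihom$ against $L$. Constructing the natural comparison morphism and proving it is an isomorphism in the derived category of ind-sheaves requires a dévissage on $\shl$, ultimately reducing to the classical theory of the functor $\thom$. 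The proper-pushforward and inverse-image compatibilities used in the first and third steps are standard consequences of the six-operation formalism on ind-sheaves, and carry no serious difficulty beyond bookkeeping of the dimension shift.
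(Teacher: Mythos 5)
Your proof is correct and follows precisely the route the paper indicates: the authors themselves describe Theorem~\ref{th:741200} as an immediate consequence of the three compatibility results (pushforward, pullback, and the Bj\"ork-type formula~\eqref{eq:bjork00} for regular holonomic modules, which are items (ii), (iii), (iv) of Theorem~\ref{thm:ifunct}), and they carry out the same three-step chain explicitly in the proof of the enhanced analogue Theorem~\ref{th:7412}. Your remark about passing from $\reeim{g}$ to $\roim{g}$ under the properness hypothesis is exactly the bookkeeping needed to match the definition of $\Psi_L$, so nothing is missing.
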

This result 
(which, to our knowledge, never appeared in the literature under this form) 
is an immediate  consequence of three deep results:\\
\newlength{\mylength}
\setlength{\mylength}{\textwidth}
\addtolength{\mylength}{-3ex}
(i)\ \parbox[t]{\mylength}{the  direct image functor $\Doim{g}$  commutes,
under an hypothesis of properness, with the tempered De Rham functor,}\\
(ii) the inverse image functor $\Dopb{f}$ commutes, up to a shift, with the tempered De Rham functor,\\
(ii) the formula, in which $\shn$ is regular  holonomic  and  $\shm$ is coherent on $X$:
\eq\label{eq:bjork00}
\rihom(\sol_X(\shn),\drt_X(\shm))&\simeq&\drt_X(\shm\Dtens\shn).
\eneq
As a corollary, one gets (under the same hypotheses) the adjunction formula  of~\cite{KS01}, in which $G$ is an ind-sheaf on $Y$:
\eq\label{eq:741200}
&&\ba{c}\RHom(L\conv G,\Ovt_{X}\ltens[\D_{X}]\shm)\,[d_X-d_S]
\simeq\RHom(G,\Ovt_{Y}\ltens[\D_{Y}](\shm\Dconv\shl)).
\ea\eneq
Note that a similar formula holds when replacing $\Ot[X]$ and $\Ot[Y]$  with their non tempered versions $\sho_X$ and $\sho_Y$ (and ind-sheaves with usual sheaves), but the hypotheses are different. Essentially, $\shm$ has to be coherent, $f$ non characteristic for $\shm$ 
and $\Dopb{f}\shm$ has to be transversal to the holonomic module $\shl$. On the other hand,  we do not need the regularity assumption on $\shl$. 
See~\cite{DS96} for such a non tempered formula (in a more particular setting). 

However, if one removes the hypothesis that the holonomic module $\shl$ is regular in Theorem~\ref{th:741200}, 
formulas~\eqref{eq:functDRregcase} and~\eqref{eq:741200} do not hold anymore  and we have to replace  $\Ot[X]$ with its enhanced version, the object 
$\OEn_X$ of~\cite{DK13}, and this is one of the main purpose of this paper. Let us briefly explain what is $\OEn_X$.

In order to keep in mind the behavior at infinity of our objects, one considers bordered spaces $\fM=(M,\bM)$, where 
$M$ is open in $\bM$. A typical example, which will play an essential role here, is the bordered space 
\eqn
&&\fR=(\R,\bR) \mbox{ where }\bR\eqdot\R\sqcup \{+\infty,-\infty\}.
\eneqn
For a commutative ring $\cor$ and a ``good'' topological space $M$, denote by 
$\Derb(\icor_M)$ the bounded derived category of ind-sheaves  of $\cor$-modules. 
Then one introduces  the quotient category
\eqn
&&\Derb(\icor_\fM)\eqdot\Derb(\icor_{\bM})/
\{F; \cor_{M}\tens F\simeq 0\}.
\eneqn
(Note that when working with usual sheaves, one would recover the category $\Derb(\cor_M)$ but the situation is different with ind-sheaves.) The six operations on ind-sheaves are easily extended to ind-sheaves on bordered spaces. 

Now consider the bordered space $M\times\fR=(M\times\R, M\times\bR)$ .
Denote by $\pi\cl M\times\fR\to M$ the projection. One defines the new category of enhanced ind-sheaves on $M$ by setting:
\eqn
\TDC(\icor_M)\eqdot\Derb(\icor_{M\times\fR})/\{F;\opb{\pi}\roim{\pi}F\isoto F\}.
\eneqn
The  quotient functor 
$\Derb(\icor_{M\times\fR})\to\TDC(\icor_M)$ admits a right and a left adjoint, denoted by $\Tr$ and $\Tl$, respectively. 
This category $\TDC(\icor_M)$ is closely related to constructions  initiated in Tamarkin~\cite{Ta08} (see also~\cite{GS12} for a detailed exposition and complements to Tamarkin's work). In particular it is endowed with a new tensor product, denoted by $\ctens$, and a new internal hom, denoted by $\cihom$.  The four operations for enhanced ind-sheaves associated with a morphism of manifolds $f$ are denoted by $\Toim{f}$, $\Teeim{f}$, $\Topb{f}$ and $\Tepb{f}$ and one also uses the bifunctor 
 $\FHom$ with values in $\Derb(\cor)$ (see Definition~\ref{def:fihom}). 
 These operations enjoy similar properties to the one for sheaves. 

Let $X$ be a complex manifold, $Y\subset X$ a 
 complex hypersurface and set $U=X\setminus Y$.
For $\varphi\in\O_X(*Y)$, one sets
\begin{align*}
\D_X \e^\varphi &= \D_X/\{P; P\e^\varphi=0 \text{ on } U\}, \quad
\she^{\,\varphi}_{U|X}=\D_X \e^\varphi(*Y).
\end{align*}
Then one introduces the object $\OEn_X$ of $\TDC(\iC_X)$ which plays a role analogous to the objects $\Ot[X]$ but contains more information. Denote by $i\colon X\times\fR \to X\times\BBP$ the natural morphism and denote by $\tau\in\C\subset \PP$ the affine variable in the complex projective line $\PP$. One sets:
\begin{align*}
\OEn_X 
&= \epb i \rhom[\D_\BBP](\she_{\C|\BBP}^\tau,\Ot[X\times\BBP])[2] \in \TDC(\iC_X), \\
\OvE_X &= \Omega_X \ltens[\O_X] \OEn_X.
\end{align*}
One defines the enhanced De Rham and Sol functors by 
\begin{align*}
\drE_{X}&\cl\Derb_\qgood(\D_{X})\to\TDC(\iC_X),\quad \shm\mapsto \OvE_{X}\ltens[\D_X]\shm,\\
\solE_{X}&\cl (\Derb_\qgood(\D_{X}))^\rop\to\TDC(\iC_X),\quad
\shm\mapsto\rhom[\D_X](\shm,\OEn_{X}).
\end{align*}
One also defines the enhanced version  $L\econv\scbul$ and $\Psi_L^\Tam(\scbul)$ of the functors 
$L\conv\scbul$ and $\Psi_L(\scbul)$  in~\eqref{eq:conv}.
 Then our main result is the following (see Theorem~\ref{th:7412}) which generalizes
Theorem~\ref{th:741200} to the case where $\shl$ is no more regular.

\begin{theorem}\label{th:741201}
Let $\shm\in\Derb_\qgood(\D_X)$, 
$\shl\in\Derb_\good(\D_{S})$ and set $L\eqdot\solE_{S}(\shl)$. 
 Assume that  $\shl$ is holonomic and that $\opb{f}\supp(\shm)\cap\supp(\shl)$ is proper over $Y$.
Then there is a natural isomorphism in $\TDC(\iC_Y)$:
\eqn
&&\Psi_L^\Tam(\drE_X(\shm))\,[d_X-d_S]\simeq\drE_Y(\shm\Dconv\shl).
\eneqn
\end{theorem}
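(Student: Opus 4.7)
The plan is to imitate the proof of Theorem~\ref{th:741200} step by step, replacing the tempered De Rham and Sol functors $\drt_X$, $\sol_X$ by their enhanced counterparts $\drE_X$, $\solE_X$ throughout. The key point is that while $\Ot[X]$ is blind to the irregular part of $\shl$, the enhanced object $\OEn_X$ retains it, so the correspondence induced by an irregular holonomic kernel is still faithfully read on the sheaf side, now in the category $\TDC(\iC_X)$ of enhanced ind-sheaves.

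Concretely, I would first collect three enhanced analogues of the ingredients~(i),~(ii),~(iii) used for Theorem~\ref{th:741200}. First, an enhanced direct image compatibility: for $\shn\in\Derb_\good(\D_S)$ with $\supp(\shn)$ proper over $Y$,
\eqn
&&\Toim{g}\,\drE_S(\shn)\simeq \drE_Y(\Doim{g}\shn).
\eneqn
Second, an enhanced inverse image compatibility: for $\shm\in\Derb_\qgood(\D_X)$,
\eqn
&&\drE_S(\Dopb{f}\shm)\simeq\Tepb{f}\drE_X(\shm)\,[d_X-d_S].
\eneqn
Third, the enhanced Bj\"ork-type formula: for $\shl$ holonomic (not necessarily regular) and $\shm\in\Derb_\qgood(\D_X)$,
\eqn
&&\cihom\bl\solE_X(\shl),\drE_X(\shm)\br\simeq\drE_X(\shm\Dtens\shl).
\eneqn
The first two should reduce to the classical tempered compatibilities through the definition $\OEn_X=\epb{i}\rhom[\D_\BBP](\she_{\C|\BBP}^\tau,\Ot[X\times\BBP])[2]$, by base-changing along the projection $X\times\BBP\to X$ and using the functorial behavior of the exponential kernel $\she_{\C|\BBP}^\tau$. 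The third, which is the genuine novelty beyond Theorem~\ref{th:741200}, rests on the irregular Riemann--Hilbert correspondence of~\cite{DK13}.

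Granting these, the theorem follows by a direct chain. With $L=\solE_S(\shl)$ and starting from the right-hand side,
\eqn
\drE_Y(\shm\Dconv\shl)
&=&\drE_Y\bl\Doim{g}(\Dopb{f}\shm\Dtens\shl)\br\\
&\simeq&\Toim{g}\,\drE_S(\Dopb{f}\shm\Dtens\shl)\\
&\simeq&\Toim{g}\,\cihom\bl\solE_S(\shl),\drE_S(\Dopb{f}\shm)\br\\
&\simeq&\Toim{g}\,\cihom\bl L,\Tepb{f}\drE_X(\shm)\br\,[d_X-d_S]\\
&=&\Psi_L^\Tam\bl\drE_X(\shm)\br\,[d_X-d_S],
\eneqn
using in turn the definition of $\Dconv$, the enhanced direct image formula (the hypothesis that $\opb{f}\supp(\shm)\cap\supp(\shl)$ is proper over $Y$ supplies the required properness for $g$ on the support of the kernel), the enhanced Bj\"ork formula on $S$ applied to $\Dopb{f}\shm$ and $\shl$, the enhanced inverse image formula, and finally the definition of $\Psi_L^\Tam$.

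The main obstacle is the enhanced Bj\"ork formula. The regular-case identity~\eqref{eq:bjork00} rests on reconstructing a regular holonomic module from its ordinary solution complex; for irregular $\shl$ this breaks down and one must work with $\solE_X(\shl)$ instead. Proving the enhanced identity will therefore depend on the theorem of D'Agnolo--Kashiwara that $\solE_X$ is fully faithful on the whole holonomic category, together with careful bookkeeping of $\ctens$ and $\cihom$ on $\TDC(\iC_X)$ and their interaction with $\ltens[\D_X]$. A plausible strategy is to reduce by d\'evissage to the elementary case $\shl=\she^{\,\varphi}_{U|X}$, for which $\solE_X(\shl)$ can be described explicitly in terms of the exponential kernels already built into $\OEn_X$.
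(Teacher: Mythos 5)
Your proposal is correct and follows essentially the same route as the paper: the chain $\drE_Y(\shm\Dconv\shl)\simeq\Toim{g}\drE_S(\Dopb{f}\shm\Dtens\shl)\simeq\Toim{g}\cihom(L,\Tepb{f}\drE_X(\shm))[d_X-d_S]$, using enhanced direct/inverse image compatibility together with the enhanced Björk-type identity, is exactly the argument given in Theorem~\ref{th:7412} via Proposition~\ref{pro:opforenhD}. You also correctly single out the enhanced Björk formula (Theorem~\ref{th:bjork}) as the genuinely new ingredient and correctly anticipate its proof strategy (dévissage to normal-crossing exponential modules $\she^{\varphi}_{U|X}$ via the criteria of~\cite{DK13}).
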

Note that in the course of the proof, we shall need to strengthen the Riemann-Hilbert theorem of~\cite{DK13} and to prove the isomorphism for an arbitrary
holonomic $\D_X$-module $\shm$ (see~Theorem~\ref{th:bjork}):
\eqn\label{eq:bjorkmorf00}
&&\shm\Dtens\OEn_X\isoto\cihom(\solE_X(\shm),\OEn_X)\mbox{ in }\TDC(\iD_X).
\eneqn
The proof of this isomorphism, which follows from the same lines as in loc.\ cit., is  a  main technical part of this paper. 
As an easy application of our theorem, we obtain:

\begin{corollary}\label{cor:741201}
In the situation as in {\rm Theorem~\ref{th:741201}}, let $G\in \TDC(\iC_Y)$. Then there is a natural isomorphism in $\Derb(\C)$
\eqn
&&\FHom(L\econv G,\OvE_{X}\ltens[\D_{X}]\shm)\,[d_X-d_S]\\
&&\hspace{30ex}\simeq\FHom(G,\OvE_{Y}\ltens[\D_{Y}](\shm\Dconv\shl)).
\eneqn
\end{corollary}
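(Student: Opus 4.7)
The plan is straightforward: this corollary should follow formally from Theorem~\ref{th:741201} combined with an adjunction identifying $L\econv\scbul$ as the left adjoint (up to appropriate shifts/duals and via $\FHom$) of $\Psi_L^\Tam(\scbul)$, in complete analogy with the non-enhanced adjunction between $L\conv\scbul$ and $\Psi_L(\scbul)$.

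First, I would rewrite the right-hand side of the desired isomorphism as
\[
\FHom(G,\OvE_{Y}\ltens[\D_{Y}](\shm\Dconv\shl))
= \FHom(G,\drE_Y(\shm\Dconv\shl))
\]
by definition of $\drE_Y$. Then I would invoke Theorem~\ref{th:741201} (whose hypotheses are exactly those of the corollary) to substitute $\drE_Y(\shm\Dconv\shl)\simeq \Psi_L^\Tam(\drE_X(\shm))\,[d_X-d_S]$, yielding
\[
\FHom(G,\drE_Y(\shm\Dconv\shl))\simeq \FHom\bl G,\Psi_L^\Tam(\drE_X(\shm))\br\,[d_X-d_S].
\]
It remains to move the $\Psi_L^\Tam$ across the $\FHom$ by adjunction, producing $\FHom(L\econv G,\drE_X(\shm))[d_X-d_S]$, which is exactly the left-hand side after re-expanding $\drE_X(\shm)=\OvE_X\ltens[\D_X]\shm$.

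The one step that is not literally Theorem~\ref{th:741201} is the adjunction
\[
\FHom(L\econv G,F)\simeq \FHom\bl G,\Psi_L^\Tam(F)\br,
\]
for $L\in\TDC(\iC_S)$, $G\in\TDC(\iC_Y)$, $F\in\TDC(\iC_X)$. This is the enhanced analogue of the classical $(\conv,\Psi)$-adjunction for ind-sheaves, and is formal: it follows by unwinding the definitions of $\econv$ and $\Psi^\Tam$ in terms of $\Teeim{f}$, $\Topb{g}$, $\Tepb{f}$, $\Toim{g}$, $\ctens$ and $\cihom$, and then applying the standard adjunctions $(\Teeim{f},\Tepb{f})$, $(\Topb{g},\Toim{g})$ together with the tensor-hom adjunction $(\ctens,\cihom)$ already established for enhanced ind-sheaves.

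The main (and really only) obstacle is verifying this enhanced adjunction cleanly; everything else is a two-line substitution. Once that adjunction is in hand, the chain of isomorphisms above finishes the proof, with the shift $[d_X-d_S]$ surviving unchanged since $\FHom$ is a triangulated bifunctor.
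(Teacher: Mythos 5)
Your proof is correct and follows essentially the same route as the paper: the paper's two-line argument is exactly the combination of Theorem~\ref{th:741201} with the adjunction $\Phi_L^\Tam\dashv\Psi_L^\Tam$ (stated as~\eqref{eq:phipsiadj} in the text) read at the level of $\FHom$. You are, if anything, slightly more careful than the paper in flagging that what is actually needed is the $\FHom$-valued version of the adjunction rather than the bare $\Hom$-set isomorphism, and in noting that it unwinds into the composite of the adjunctions $(\Teeim{f},\Tepb{f})$, $(\Topb{g},\Toim{g})$ and $(\ctens,\cihom)$.
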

%Here  $\econv$ is an  enhanced version of the convolution $\conv$
%in \eqref{eq:conv}.

Next, we shall apply these results to the Laplace transform. 
For that purpose, we need to treat first the Fourier-Sato transform, and its enhanced version. 
Let $\VV$ be a real finite-dimensional vector space of dimension $n$, $\VVd$ its dual. Recall that the Fourier-Sato transform,
denoted here by  $\FS[\VV]$,  is an equivalence of categories between conic sheaves on $\VV$ and conic sheaves on $\VV^*$. References are made to~\cite{KS90}. In~\cite{Ta08}, D.~Tamarkin has extended the Fourier-Sato transform to no more conic (usual) sheaves, by adding an extra variable. Here we generalize this last transform to enhanced ind-sheaves on
the bordered space $\fV=(\VV,\bV)$, where $\bV$ is the projective compactification of 
$\VV$. We introduced the kernels $L_\VV\eqdot\cor_{\{t=\langle x,y\rangle\}}$ 
and $L^a_\VV\eqdot\cor_{\{t=-\langle x,y\rangle\}}$ 
and define the enhanced Fourier-Sato functors 
\begin{align*}
&&\ba{c}
\EF[\VV]\cl \TDC(\icor_{\fV})\to\TDC(\icor_{\fVd}),\quad \EF[\VV](F)=F\econv L_\VV,\\
\EFa[\VV]\cl \TDC(\icor_{\fV})\to\TDC(\icor_{\fVd}),\quad \EFa[\VV](F)=F\econv L^a_\VV.
\ea
\end{align*}
We easily prove that the functors $\EF[\VV]$ and 
 $\EFa[\VVd]$ are equivalences of categories, quasi-inverse to each other up to shift (see Theorem~\ref{th:fourier}).
Moreover  the enhanced Fourier-Sato transform is compatible with the classical one, that is, we have a quasi-commutative  diagram of categories and functors (in which the vertical arrows are fully faithful functors):
\eq\label{diag:fourierefourier00}
&&\ba{c}\xymatrix{
\TDC(\icor_\fV)\ar[rr]^-{\EF[\VV]}&&\TDC(\icor_\fVd)\\
\Derb_{\R^+}(\cor_\VV)\ar[rr]^-{\FS[\VV]}\ar[u]_-{\epsilon_\VV}&&\Derb_{\R^+}(\cor_\VVd)\ar[u]_-{\epsilon_\VVd}.
}\ea
\eneq

Let now $\WW$ be a complex vector space of complex dimension $d_\WW$ and let $\WWd$ be its dual. 
We denote here by $\bW$ the projective compactification of $\WW$ and set $\fW=(\WW,\bW)$, $\BBH=\bW\setminus\WW$. 
We set for short $X=\bW\times\bWd$,  $U=\WW\times\WWd$
and consider the  Laplace kernel
\eq\label{eq:Laplaceker00}
&&\shl\eqdot\she^{\langle x,y\rangle}_{U|X}.
\eneq
Denote by $\Drm_\WW$ the Weyl algebra on $\WW$. 
As it is well-known, the Laplace kernel induces an isomorphism
$\Drm_\WW\simeq\Drm_\WWd$, hence an isomorphism
\eq\label{eq:Laplaceiso100}
&&\D_\bW(*\BBH)\Dconv\shl\simeq \D_\bWd(*\BBHd)\tens\det\WWd.
\eneq
Then our main theorem on the Laplace transform (see Theorem~\ref{th:laplace}) is:

\begin{theorem}\label{th:laplace00}
Isomorphism~\eqref{eq:Laplaceiso100} induces an isomorphism 
\eq\label{eq:Laplaceiso200}
&&\EF[\WW](\OEn_\fW)\simeq\OEn_\fWd\tens\det\WW\,[-d_\WW]\mbox{  in }\Derb((\iDrm_\WW)_\fWd).
\eneq
\end{theorem}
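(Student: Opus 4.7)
The plan is to deduce the theorem from Theorem~\ref{th:741201} applied to the correspondence
\[
\bW \xleftarrow{\ f\ } \bW \times \bWd \xrightarrow{\ g\ } \bWd
\]
with $\shm = \D_\bW(*\BBH)$ and $\shl = \she^{\langle x,y\rangle}_{U|X}$. Since $\bW$ and $\bWd$ are compact, $f$ and $g$ are proper and the support-properness hypothesis is automatic; moreover $\shm$ is q-good holonomic and $\shl$ is good holonomic. Setting $L = \solE_S(\shl)$, Theorem~\ref{th:741201} yields
\[
\Psi_L^\Tam\bl\drE_\bW(\shm)\br\,[-d_\WW] \simeq \drE_\bWd(\shm \Dconv \shl).
\]

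Next, I would compute both sides explicitly. On the source, since $\sho_\bW(*\BBH)$ is $\sho_\bW$-flat, an unwinding of the definition of $\drE$ gives $\drE_\bW\bl\D_\bW(*\BBH)\br \simeq \OvE_\bW(*\BBH) \simeq \OEn_\fW \tens \det\WWd$, using the canonical identification $\Omega_\WW \simeq \det\WWd \tens \sho_\WW$ together with the identification $\OEn_\bW(*\BBH) \simeq \OEn_\fW$. On the target, the Laplace isomorphism~\eqref{eq:Laplaceiso100} gives $\shm \Dconv \shl \simeq \D_\bWd(*\BBHd) \tens \det\WWd$, and the analogous calculation yields $\drE_\bWd(\shm \Dconv \shl) \simeq \OEn_\fWd \tens \det\WW \tens \det\WWd$. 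Cancelling the common factor $\det\WWd$ then produces
\[
\Psi_L^\Tam(\OEn_\fW)\,[-d_\WW] \simeq \OEn_\fWd \tens \det\WW.
\]

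The remaining task is to identify $\Psi_L^\Tam$ with the enhanced Fourier-Sato functor $\EF[\WW]=\scbul\econv L_\WW$. First, one would verify the identification of the enhanced solutions of the Laplace kernel: since $\shl$ is generated by $\e^{\langle x,y\rangle}$ with polar singularities along $X\setminus U$, the defining property of $\OEn$ applied to the exponential connection gives $\solE_S(\shl) \simeq L^a_\WW\,[d_\WW]$ (or $L_\WW\,[d_\WW]$, depending on sign conventions). Then the passage from $\Psi_L^\Tam$ to $\scbul \econv L_\WW$ uses the properness of $f$ and $g$ (so $\roim{g} \simeq \reeim{g}$) together with enhanced Verdier duality, which exchanges $\cihom(L, \epb{f}\scbul)$ with a tensor expression involving the enhanced dual of $L$; since $L_\WW^\vee \simeq L^a_\WW$ up to shift and $\epb{f} \simeq \opb{f}\,[2d_\WW]$ for the smooth proper projection $f$, this converts $\Psi_L^\Tam$ into convolution with $L_\WW$ after absorbing the relative shift. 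The $\Drm_\WW$-module structure is preserved throughout because~\eqref{eq:Laplaceiso100} is itself an isomorphism of $\Drm_\WW$-modules, via the Laplace identification $\Drm_\WW \simeq \Drm_\WWd$.

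The main obstacle is the explicit identification $\solE_S(\shl)\simeq L^a_\WW\,[d_\WW]$, together with the careful tracking of shifts, $\det$-twists, and orientation conventions. This computation rests on the structure of $\OEn$ on products (its behaviour under $\etens$) and on standard formulas for enhanced solutions of exponential holonomic $\D$-modules, in the spirit of the enhanced Riemann-Hilbert correspondence of~\cite{DK13}; once this identification and the duality conversion from $\Psi_L^\Tam$ to $\EF[\WW]$ are in place, the passage through Theorem~\ref{th:741201} becomes routine and produces the claimed isomorphism in $\Derb((\iDrm_\WW)_\fWd)$.
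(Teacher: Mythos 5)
Your overall plan does track the paper's argument: apply the integral-transform theorem for the enhanced De Rham functor with the Laplace kernel, plug in $\shm=\D_\bW(*\BBH)$, use the $\D$-module-level Laplace isomorphism~\eqref{eq:Laplaceiso100}, and then identify the transform functor $\Psi_L^\Tam$ with the enhanced Fourier--Sato transform. However there are several concrete gaps, one of which would give the wrong degree.

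First, the claimed identification $\solE_S(\shl)\simeq L^a_\WW\,[d_\WW]$ is incorrect: by~\eqref{eq:laplaceEphiB} and Lemma~\ref{le:laplaceD2}, the correct formula is $\solE_{\fW\times\fWd}(\shl)\simeq\C^\Tam_{\fW\times\fWd}\ctens L^a_\WW$ with no shift whatsoever. Carrying a spurious $[d_\WW]$ through the argument would produce $\EF[\WW](\OEn_\fW)\simeq\OEn_\fWd\tens\det\WW$ with no $[-d_\WW]$, contradicting the statement. You also do not explain why the $\C^\Tam_{\fW\times\fWd}\ctens$ factor can be dropped from the kernel; the paper does this by observing $\cihom(\C^\Tam_\fW,\drE_\fW(\shm))\simeq\drE_\fW(\shm)$, so that $\Psi^\Tam_K$ agrees with $\Psi^\Tam_{L^a_\WW}$ on objects of that form.

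Second, you apply Theorem~\ref{th:741201} on the compact manifolds $\bW$, $\bWd$, $\bW\times\bWd$, but $\OEn_\fW$, $\OEn_\fWd$ and the Fourier--Sato functor $\EF[\WW]$ all live on the \emph{bordered} spaces $\fW$, $\fWd$. As written, intermediate isomorphisms such as $\Psi^\Tam_L(\OEn_\fW)[-d_\WW]\simeq\OEn_\fWd\tens\det\WW$ do not typecheck, since $\Psi^\Tam_L$ goes from $\TDC(\iC_\bW)$ to $\TDC(\iC_\bWd)$. The paper sidesteps this entirely by invoking Theorem~\ref{th:7412bis}, the bordered-space version from \S4.3, applied directly on $\fW\leftarrow\fW\times\fWd\to\fWd$; you should do the same.

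Third, your conversion of $\Psi^\Tam_L$ into $\scbul\econv L_\WW$ via ``enhanced Verdier duality'' is vague (the dual of $L^a_\WW$, the identification $\epb f\simeq\opb f[2d_\WW]$ over a bordered space, and the ``absorbing the relative shift'' step are not justified). The paper already provides the clean tool for this: Corollary~\ref{cor:fourieradjoint} says $\EF[\VV](\scbul)\simeq\Psi^\Tam_{L^a_\VV}(\scbul)\tens\ori_\VV\,[-n]$, proved by uniqueness of right adjoints. Here $n=2d_\WW$ and $\ori_\WW\simeq\C$, giving $\EF[\WW]\simeq\Psi^\Tam_{L^a_\WW}\,[-2d_\WW]$, which combined with the (correct, unshifted) $\solE$ identification and the $\det$-bookkeeping yields exactly the $[-d_\WW]$ in~\eqref{eq:Laplaceiso200}.
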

As an immediate application (see Corollary~\ref{cor:laplace}), we obtain
\begin{corollary}\label{cor:laplace00}
Isomorphism~\eqref{eq:Laplaceiso200} 
induces   an isomorphism in $\Derb(\Drm_\WW)$, 
functorial in $F\in\TDC(\iC_{\fW})${\rm:}
\eq
&&\FHom(F,\OEn_\fW)\simeq \FHom(\EF_\WW(F),\OEn_\fWd)\tens\det\WW\,[-d_\WW].
\eneq
\end{corollary}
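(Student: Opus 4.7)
The strategy is to derive the adjunction formula from Theorem~\ref{th:laplace00} by making essential use of the fact, proved in Theorem~\ref{th:fourier}, that the enhanced Fourier-Sato transform $\EF[\WW]\cl\TDC(\icor_\fW)\to\TDC(\icor_\fWd)$ is an equivalence of categories with quasi-inverse (up to a shift) given by $\EFa[\WWd]$. Since both functors are convolutions with the explicit kernels $L_\WW$ and $L^a_{\WWd}$ supported on the graphs $\{t=\pm\langle x,y\rangle\}$, there is a natural adjunction
\[
\FHom(F_1\econv L_\WW,G)\simeq\FHom(F_1,G\econv L^a_{\WWd})\,[\text{shift}]
\]
for $F_1\in\TDC(\icor_\fW)$ and $G\in\TDC(\icor_\fWd)$, in full analogy with the classical Fourier-Sato adjunction of~\cite{KS90}. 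Combined with the fact that $\EFa[\WWd]\circ\EF[\WW]$ is the identity up to shift, this produces a natural isomorphism
\[
\FHom(F_1,F_2)\simeq\FHom(\EF[\WW](F_1),\EF[\WW](F_2))
\]
for $F_1,F_2\in\TDC(\icor_\fW)$, and this isomorphism is $\Drm_\WW$-linear whenever $F_2$ carries a compatible $\Drm_\WW$-action.

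I then apply this with $F_1=F$ and $F_2=\OEn_\fW$ and substitute the isomorphism of Theorem~\ref{th:laplace00}, which holds in $\Derb((\iDrm_\WW)_\fWd)$ and is thus automatically $\Drm_\WW$-linear, to obtain
\[
\FHom(F,\OEn_\fW)\simeq\FHom(\EF[\WW](F),\OEn_\fWd\tens\det\WW\,[-d_\WW]).
\]
Finally, since $\det\WW\,[-d_\WW]$ is a one-dimensional $\cor$-vector space concentrated in a single cohomological degree, it can be factored out of the second argument of $\FHom$, giving the desired isomorphism
\[
\FHom(F,\OEn_\fW)\simeq\FHom(\EF[\WW](F),\OEn_\fWd)\tens\det\WW\,[-d_\WW]
\]
in $\Derb(\Drm_\WW)$, functorial in $F$.

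The main obstacle is the first step: while $\EF[\WW]$ is formally a triangulated equivalence, lifting this to a natural isomorphism of $\FHom$-complexes (rather than merely on morphism groups) requires a careful treatment of the convolution adjunction for enhanced ind-sheaves, including bookkeeping of the shifts and of the $\Drm_\WW$-action coming from the identification $\Drm_\WW\simeq\Drm_\WWd$. This parallels the standard sheaf-theoretic adjunctions underlying the classical Fourier-Sato equivalence and should follow from the general formalism of enhanced ind-sheaves recalled in the earlier sections of the paper.
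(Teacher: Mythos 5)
Your proof is correct and takes essentially the same route as the paper's. The paper simply invokes Corollary~\ref{cor:fourier} (the functorial isomorphism $\FHom(F_1,F_2)\simeq\FHom(\EF[\VV](F_1),\EF[\VV](F_2))$, which follows from the equivalence of Theorem~\ref{th:fourier}) with $F_1=F$, $F_2=\OEn_\fW$, and then substitutes $\EF[\WW](\OEn_\fW)\simeq\OEn_\fWd\tens\det\WW\,[-d_\WW]$ from Theorem~\ref{th:laplace}; your proposal does the same, just with more elaboration on re-deriving Corollary~\ref{cor:fourier} from the kernel adjunction.
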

When restricting this isomorphism to conic  sheaves on $\WW$, we recover 
the main theorem of~\cite{KS97} which asserts that 
for an $\R$-constructible and conic sheaf  $F\in\Derb(\C_\WW)$, the Laplace transform induces an isomorphism
\eq\label{eq:Laplaceiso97} 
\RHom(F,\Ot[\WW])\simeq \RHom(\FS[\WW](F),\Ot[\WWd])\tens\det\WW\,[-d_\WW].
\eneq
Several applications are given in loc.\ cit.\ and, by adding a variable, some non conic situations are also treated with the help of  this isomorphism in~\cite{Da12}. 

Here, as an application of Corollary~\ref{cor:laplace00}, we obtain the following result. 
For an open subset $U$ of $\WW$ subanalytic in $\bW$ and a continuous function $\phi$ on $U$ with subanalytic graph
in $\bW$, we can define the ind-sheaf  $\e^{\phi}\Dbt_M$. Roughly speaking,  it is the ind-sheaf of  distributions  $u$ such that   
 $\e^{-\phi} u$  is tempered. Consider the Dolbeault complex
\eq\label{eq:dolbeautDbt00} 
&&\e^\phi\Ot[\fW]\eqdot0\to \e^\phi{\Dbt_\bW}^{(0,0)}\to[\ol\partial]\cdots\to 
\e^\phi{\Dbt_\bW}^{(0,d_\WW)}\to0.
\eneq
Assume that $U$ is convex, $\phi$ is a convex function  and denote by $\phi^*$ its Legendre transform. Then,
 under some hypotheses, we prove that the complex $\e^\phi\Ot[\fW](U)$ is concentrated in degree $0$, the complex
 $\e^{-\phi^*}\Ot[\fW](\WWd)$ 
is concentrated in degree $d_\WW$ 
and the Laplace transformation interchanges these two complexes
(Corollary~\ref{cor:vanishOEn1}).
\begin{remark}
A detailed survey of the theory of regular and irregular holonomic D-modules will appear in~\cite{KS15}.
\end{remark}

\section{Enhanced ind-sheaves}

\subsection{Ind-sheaves}
In this subsection and the next one, we recall some results of~\cite{KS01}.

Let $M$ be a locally compact space countable at infinity and let $\cor$ be a commutative Noetherian ring with finite global dimension. (In this paper, all rings are unital.)
Recall that $\md[\cor_M]$ denotes the abelian category of sheaves of $\cor$-modules on $M$.
We denote by $\mdcp[\cor_M]$ the full subcategory consisting of sheaves with compact support. 
The category of ind-sheaves on $M$ is the category of ind-objects of $\mdcp[\cor_M]$. 
We set for short:
\eqn
&& \icor_M:= {\rm Ind}(\mdcp[\cor_M]), 
\eneqn
and  call an object of this category  an {\em ind-sheaf} on $M$. 
We denote by $\sinddlim$ the inductive limit in the category $\icor_M$.

The prestack $U\mapsto\II[\cor_U]$, $U$ open in $M$, is a stack.

We have two pairs $(\alpha_M,\iota_M)$ and $(\beta_M,\alpha_M)$  of adjoint functors 
\eqn
&& \xymatrix{
{\md[\cor_M]}\ar@<-1.5ex>[rr]_-{\beta_M}\ar@<1.5ex>[rr]^-{\iota_M}&&{\II[\cor_M]}\ar[ll]|-{\;\alpha_M\;}.
}
\eneqn
 If $F$ has compact support, $\iota_M(F)=F$ after identifying a category $\shc$ with a full subcategory of $\indcc$. More generally, $\iota_M(F)=\inddlim[U] F_U$ where $U$ ranges over the family of relatively compact open subsets of $M$.
 The functor $\alpha_M$ associates 
$\indlim[i] F_i$ ($F_i\in\mdcp[\cor_M]$, $i\in I$, $I$ small and filtrant) to the object $\inddlim[i] F_i$. If $\cor$ is a field, $\beta_M(F)$ coincides with the functor 
$\mdcp[\cor_M]\ni G\mapsto\sect(M;H^0(\RD_M'G)\tens F)$,
where  $\RD_M'$ is the duality functor $\rhom(\scbul,\cor_M)$.
\begin{itemize}
\item
$\iota_M$ is exact, fully faithful, and commutes with $\sprolim$, 
\item $\alpha_M$ is exact and commutes with $\sprolim$ and $\sindlim$,
\item $\beta_M$ is right exact, fully faithful and commutes with $\sindlim$,
\item $\alpha_M$ is left  adjoint to $\iota_M$,
\item $\alpha_M$ is right adjoint to $\beta_M$,
\item $\alpha_M\circ\iota_M\simeq \id_{\md[\cor_M]}$ 
and $\alpha_M\circ\beta_M\simeq \id_{\md[\cor_M]}$.
\end{itemize}
One denotes by $\ltens$ and $\rihom$ the (derived) operations of tensor product and  internal $\hom$. If $f\cl M\to N$ is a continuous map, one denotes by $\opb{f}$, $\epb{f}$, $\roim{f}$ and $\reeim{f}$ the (derived) operations of inverse and direct images. One also sets
\eqn
&&\rhom=\alpha_M\circ\rihom\cl \Derb(\icor_M)^\rop\times\Derb(\icor_M)\to\Derb(\cor_M).
\eneqn

\subsection{Subanalytic topology}
Here again, we recall some results of~\cite{KS01}.

Assume  that $M$ is a real analytic manifold. 
Denote by $\Op_M$ the category of its open subsets (the morphisms being the 
 inclusions) and by 
$\Op_{\Msa}$ the full subcategory of $\Op_M$ consisting of 
 subanalytic and relatively compact open subsets. 
 The site $\Msa$ is obtained by  deciding that a family  $\{U_i\}_{i\in I}$
of subobjects of $U\in\Op_{\Msa}$ is a covering of $U$ 
if there exists a finite subset $J\subset I$ such that $\bigcup_{j\in J}U_j=U$. 
One denotes by 
\eq
&&\rho_M\cl M\to\Msa
\eneq
the natural morphism of sites. Here again, we have two pairs of adjoint
functors $(\opb{\rho_M},\oim{\rho_M})$ and $(\eim{\rho_M},\opb{\rho_M})$ :

\eqn
&& \xymatrix{
{\md[\cor_M]}\ar@<-1.5ex>[rr]_-{\eim{\rho_M}}\ar@<1.5ex>[rr]^-{\oim{\rho_M}}&&{\md[\cor_\Msa]}\ar[ll]|-{\;\opb{\rho_M}\;}.
}
\eneqn
For $F\in \md[\cor_M]$, $\eim{\rho_M}F$ is the sheaf associated to the
presheaf $U\mapsto F(\overline U)$, $U\in \Op_{\Msa}$.

One proves that the restriction of $\oim{\rho_M}$ to the category $\mdrc[\cor_M]$ of $\R$-constructible sheaves 
is exact and fully faithful.
By this result, we shall consider the category $\mdrc[\cor_M]$ as  a subcategory of 
$\md[\cor_M]$ or as of $\md[\cor_{\Msa}]$. Denote by $\mdrcc[\cor_M]$ the full subcategory of $\mdrc[\cor_M]$ consisting of sheaves with compact support and set: 
$$\IIrc[\cor_M]={\rm Ind}(\mdrcc[\cor_M]).$$
One defines the functor 
\eq\label{eq:fctalphasa}
&&\alpha_{\Msa}\cl\IIrc[\cor_{M}]\To \md[\cor_{\Msa}]
\eneq
similarly as the functor $\alpha_M$.
\begin{theorem}
The functor $\alpha_{\Msa}$ in~\eqref{eq:fctalphasa}   is  
an equivalence of categories.
\end{theorem}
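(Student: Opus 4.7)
The plan is to establish essential surjectivity and full faithfulness of $\alpha_\Msa$ separately, using the finiteness built into the subanalytic topology. The setup is clean: since $\oim{\rho_M}$ restricted to $\mdrc[\cor_M]$ is exact and fully faithful, the category $\mdrcc[\cor_M]$ embeds as a full subcategory of $\md[\cor_\Msa]$, and $\alpha_\Msa$ sends a formal filtered colimit $\sinddlim[i] F_i$ to the genuine colimit $\indlim[i] F_i$ computed in $\md[\cor_\Msa]$.

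For full faithfulness, the key point is to show that every $F \in \mdrcc[\cor_M]$ is a coherent (finitely presentable) object of $\md[\cor_\Msa]$, meaning that for any filtered inductive system $\{G_i\}$ in $\md[\cor_\Msa]$ the canonical map $\indlim[i]\Hom[\cor_\Msa](F, G_i) \to \Hom[\cor_\Msa](F, \indlim[i] G_i)$ is an isomorphism. For $F = \cor_U$ with $U \in \Op_\Msa$ this amounts to the commutation $\indlim[i] G_i(U) \isoto (\indlim[i] G_i)(U)$, which follows directly from the defining property of $\Msa$: every covering of $U$ admits a finite subcovering, so $\sect(U;\scbul)$ commutes with filtered colimits of sheaves. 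A general $F \in \mdrcc[\cor_M]$ admits a finite presentation by such constant sheaves $\cor_U$, obtained by choosing a subanalytic stratification of $\supp F$ and iterating the standard exact sequence $0 \to \cor_V \to \cor_U \to \cor_{U\setminus V} \to 0$; since coherent objects are stable under finite colimits, we conclude.

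For essential surjectivity, any $G \in \md[\cor_\Msa]$ admits a canonical epimorphism from a direct sum of representables $\bigoplus_{i \in I} \cor_{U_i}$ with $U_i \in \Op_\Msa$, and the kernel admits a presentation of the same form. Indexing by the poset of pairs of finite subsets (of the generators and the relations) yields a filtered inductive system $\{G_\alpha\}$ in $\mdrcc[\cor_M]$ with $G \simeq \indlim[\alpha] G_\alpha$, giving the required preimage under $\alpha_\Msa$.

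The main obstacle throughout is the coherence of the representables $\cor_U$ in $\md[\cor_\Msa]$. This is precisely the statement that \emph{fails} in $\md[\cor_M]$ with the ordinary topology, and what makes the subanalytic site the right framework: the finite-subcover axiom in the definition of $\Msa$ is exactly what is needed to promote $\mdrcc[\cor_M]$ to the subcategory of coherent objects and to identify $\md[\cor_\Msa]$ with its ind-completion.
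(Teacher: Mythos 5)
Your argument is correct in its broad strokes and follows the standard route; the paper itself recalls the theorem from~\cite{KS01} without giving a proof, so there is no in-text argument to compare against. The two pillars are the right ones: on a site in which every covering admits a finite subcovering, sections over a fixed $U$ commute with filtered colimits of sheaves, so each $\cor_U$ ($U\in\Op_{\Msa}$) is a compact (finitely presentable) object of $\md[\cor_\Msa]$; and $\md[\cor_\Msa]$ is a Grothendieck category generated by these compacts. Once one identifies the full subcategory of finitely presented objects with the image of $\mdrcc[\cor_M]$, the equivalence $\Ind(\mdrcc[\cor_M]) \simeq \md[\cor_\Msa]$ is a formal consequence, exactly as you describe.

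The one step that deserves more care is the inclusion of $\mdrcc[\cor_M]$ into the finitely presented objects. Iterating $0\to\cor_V\to\cor_U\to\cor_{U\setminus V}\to 0$ shows that $\cor_Z$ is finitely presented for every locally closed subanalytic relatively compact $Z$, but a general $F\in\mdrcc[\cor_M]$ is only \emph{locally constant} on the strata of a subanalytic stratification, and the local systems need not be trivial. You need one more ingredient: either pass to a subanalytic triangulation adapted to $F$, so that $F$ is genuinely constant on each open simplex (these being contractible), or trivialize each local system on a finite good subanalytic cover of the stratum and use the resulting finite \v{C}ech-type left resolution of $F_S$ by finite sums of sheaves $\cor_W^{\oplus r}$. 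Either route, combined with the stability of finitely presented objects under cokernels and extensions, closes the gap; this is a minor omission in the write-up rather than a flaw in strategy. The converse inclusion (finitely presented implies $\R$-constructible), which you use tacitly in the essential surjectivity step to ensure that the objects $G_\alpha$ actually lie in $\mdrcc[\cor_M]$, is also worth stating explicitly: it holds because the image of $\mdrc[\cor_M]$ in $\md[\cor_\Msa]$ is a full abelian subcategory closed under cokernels, and full faithfulness of $\oim{\rho_M}$ guarantees that every morphism $\cor_V\to\cor_U$ in $\md[\cor_\Msa]$ comes from one in $\md[\cor_M]$.
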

In other words, ind-$\R$-constructible sheaves are ``usual sheaves''
on the subanalytic site.
By this result, the embedding $\mdrcc[\cor_M]\into\mdcp[\cor_M]$ gives a 
fully faithful functor
$I_M\cl \md[\cor_{\Msa}]\to\II[\cor_M]$.
Hence, in the diagram of categories
\eq\label{dia00}
\xymatrix{
\mdrc[\cor_M]\ar[d]\ar[r]&\md[\cor_{\Msa}]\ar[d]^-{I_M}\\
\md[\cor_M]\ar@{.>}[ru]|-{\oim{\rho_M}}\ar[r]^-{\iota_M}&\II[\cor_M],
}
\eneq
all solid arrows are exact and fully faithful.
One shall be aware that the square and the upper triangle quasi-commute but
$\iota_M\not= I_M\circ \oim{\rho_M}$ in general. Moreover, 
$\oim{\rho_M}$ is not right exact in general. 

{}From now on, we shall identify sheaves on $\Msa$ with ind-sheaves on $M$.

\subsection{Ind-sheaves on bordered spaces} 
In this subsection, we  mainly follow~\cite{DK13}.

A topological space is \emph{good} if it is Hausdorff, locally compact, countable at infinity and has finite flabby dimension.

\begin{definition}\label{def:bspace}
The category of \emph{bordered spaces} is the category whose objects are pairs $(M,\bM)$ with $M\subset \bM$ an open embedding of good topological spaces.
Morphisms $f\colon (M,\bM) \to (N,\bN)$ are continuous maps $f\colon M\to N$ such that
\begin{equation}
\label{eq:Hbord}
\overline\Gamma_f \to \bM \text{ is proper}. 
\end{equation}
Here $\Gamma_f$ is the graph of $f$ and $\ol\Gamma_f$ its closure in $\bM\times\bN$.

The composition $(L,\bL) \to[e] (M,\bM) \to[f] (N,\bN)$ is given by $f\circ e\colon L\to N$ (see Lemma~\ref{lem:bordcom} below), and the identity $\id_{(M,\bM)}$ is given by $\id_{M}$. 
\end{definition}
If there is no risk of confusion, we shall often denote by $\fM$ a bordered space $(M,\bM)$.

\begin{lemma}\label{lem:bordcom}
Let $f\colon (M,\bM) \to (N,\bN)$ and $e\colon (L,\bL) \to (M,\bM)$ be morphisms of bordered spaces. Then the composition $f\circ e$ is a morphism of bordered spaces.
\end{lemma}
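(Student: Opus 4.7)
\noindent\textbf{Proof plan for Lemma \ref{lem:bordcom}.}

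The plan is to realize the closure of the graph of $f\circ e$ in $\bL\times\bN$ as (a closed subset of) the image of a certain auxiliary subset $W$ of the triple product $\bL\times\bM\times\bN$, whose projection to $\bL$ is properness-friendly. Concretely, I would set
\[
W \seteq \bigl\{(l,m,n)\in\bL\times\bM\times\bN \;;\; (l,m)\in\ol\Gamma_e,\ (m,n)\in\ol\Gamma_f\bigr\}.
\]
This is closed in $\bL\times\bM\times\bN$, and it obviously contains the ``lifted graph'' $\{(l,e(l),f(e(l))) \;;\; l\in L\}$, whose projection to $\bL\times\bN$ is precisely $\Gamma_{f\circ e}$. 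So under the projection $p_{13}\colon\bL\times\bM\times\bN\to\bL\times\bN$ we have $\Gamma_{f\circ e}\subset p_{13}(W)$, and consequently $\ol\Gamma_{f\circ e}\subset p_{13}(W)$ provided $p_{13}(W)$ is closed (which will follow from the properness statements below).

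Next I would show that the projection $p_1\colon W\to\bL$ is proper. This step has two substeps. First, the square
\[
\xymatrix{
W \ar[r]^-{p_{23}} \ar[d]_-{p_{12}} & \ol\Gamma_f \ar[d] \\
\ol\Gamma_e \ar[r] & \bM
}
\]
is cartesian, where $\ol\Gamma_e\to\bM$ and $\ol\Gamma_f\to\bM$ are the relevant projections. Since $\ol\Gamma_f\to\bM$ is proper by hypothesis and proper maps are stable under base change (in the category of locally compact Hausdorff spaces), $p_{12}\colon W\to\ol\Gamma_e$ is proper. Composing with the proper map $\ol\Gamma_e\to\bL$, we conclude $p_1\colon W\to\bL$ is proper.

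The last step is to deduce that $\ol\Gamma_{f\circ e}\to\bL$ is proper. Given a compact $K\subset\bL$, $p_1^{-1}(K)$ is compact in $W$, so its image $p_{13}\bigl(p_1^{-1}(K)\bigr)= (K\times\bN)\cap p_{13}(W)$ is compact in $\bL\times\bN$. The preimage of $K$ under $\ol\Gamma_{f\circ e}\to\bL$ is $(K\times\bN)\cap\ol\Gamma_{f\circ e}$, which is a closed subset of the compact set $(K\times\bN)\cap p_{13}(W)$, hence compact. The only mild subtlety will be checking that $\bL\times\bM\times\bN$ and $\bL\times\bN$ remain Hausdorff and locally compact (so that the word ``proper'' behaves correctly), but this is immediate since all $\bL$, $\bM$, $\bN$ are good. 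I do not expect any serious obstacle; the proof reduces to the standard fact that proper maps are closed under base change and composition, together with the fiber-product description of $W$.
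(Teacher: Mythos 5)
Your proof is correct and, although the paper does not print an explicit proof of this lemma, your argument is exactly parallel to the paper's proof of the companion Lemma~\ref{lem:bordcom2} (semi-properness of compositions): there too one forms the fiber product $\ol\Gamma_e\times_{\bM}\ol\Gamma_f$, base-changes one of the properness hypotheses along the Cartesian square, and composes; the only difference is which leg of the fiber product is followed. Your handling of the mild point that $\ol\Gamma_{f\circ e}$ sits inside $p_{13}(W)$ (because $p_{13}|_W$ is proper, hence closed) is fine, and matches the level of detail the paper itself uses when it passes from $\ol\Gamma_e\times_{\bM}\ol\Gamma_f$ to its image in the proof of Lemma~\ref{lem:bordcom2}.
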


One shall identify a space $M$ and the bordered space $(M,M)$.
Then, by using the identifications $M=(M,M)$ and $\bM = (\bM,\bM)$, there are natural morphisms
\[
M \to (M,\bM) \to \bM.
\]
Note however that $(M,\bM) \to M$ is not necessarily a morphism of bordered spaces.

One often denotes by $j_M$ or simply $j$ the natural morphism $\fM\to\bM$.

For two bordered spaces $(M,\bM)$ and $(N,\bN)$ their product in the category of bordered spaces is the bordered space  $(M\times N,\bM\times\bN)$.

\begin{definition}\label{def:semiproper}
Let $f\colon (M,\bM) \to (N,\bN)$ be a morphism of bordered spaces. We say that $f$ is semi-proper if the map 
$\overline\Gamma_f\to\bN$ is proper.
\end{definition}
 Any isomorphism of bordered spaces is semi-proper. 
\begin{lemma}\label{lem:bordcom2}
Let $f\colon (M,\bM) \to (N,\bN)$ and $e\colon (L,\bL) \to (M,\bM)$ be morphisms of bordered spaces. If both $f$ and $e$ are semi-proper, then $f\circ e$ is semi-proper.
\end{lemma}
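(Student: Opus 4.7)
The plan is to build a ``composite graph'' in the triple product $\bL\times\bM\times\bN$ that maps properly onto $\bN$ and whose image in $\bL\times\bN$ contains $\overline{\Gamma_{f\circ e}}$. Concretely I would set
\[
Z \eqdot \{(l,m,n)\in\bL\times\bM\times\bN \cl (l,m)\in\overline{\Gamma_e},\ (m,n)\in\overline{\Gamma_f}\},
\]
which is closed in $\bL\times\bM\times\bN$ as an intersection of two closed conditions, and which is canonically identified with the fiber product $\overline{\Gamma_e}\times_{\bM}\overline{\Gamma_f}$.

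The first key step is to prove that the projection $Z\to\bN$ is proper. I would factor it as $Z\to\overline{\Gamma_f}\to\bN$. The first arrow is the pullback of $\overline{\Gamma_e}\to\bM$ along $\overline{\Gamma_f}\to\bM$; the map $\overline{\Gamma_e}\to\bM$ is proper by the semi-properness of $e$, and properness is stable under base change in locally compact Hausdorff spaces, so this first arrow is proper. The second arrow is proper by the semi-properness of $f$. Hence the composition is proper.

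Next, since $\bL$ is Hausdorff, the projection $\bL\times\bN\to\bN$ is separated; a standard topological lemma (if $g\circ h$ is proper and $g$ is separated, then $h$ is proper, valid in the locally compact Hausdorff setting) then shows that the intermediate map $Z\to\bL\times\bN$ is itself proper. Consequently the image $q_{LN}(Z)$ is closed in $\bL\times\bN$, and since $Z\twoheadrightarrow q_{LN}(Z)$ is a continuous surjection whose composition with $q_{LN}(Z)\to\bN$ is the proper map $Z\to\bN$, the map $q_{LN}(Z)\to\bN$ is also proper.

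Finally, the graph $\Gamma_{f\circ e}$ sits in $q_{LN}(Z)$ via $l\mapsto(l,e(l),f(e(l)))$; because $q_{LN}(Z)$ is closed, this gives $\overline{\Gamma_{f\circ e}}\subset q_{LN}(Z)$. Restricting the proper map $q_{LN}(Z)\to\bN$ to the closed subset $\overline{\Gamma_{f\circ e}}$ is then proper, which is exactly the semi-properness of $f\circ e$. I expect the only real obstacle to be the topological bookkeeping of the elementary properness lemmas invoked along the way (base change, descent under separated maps, descent under continuous surjections); each of them holds cleanly in the locally compact Hausdorff framework guaranteed by the ``good'' hypothesis on the bordered spaces.
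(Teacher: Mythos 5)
Your proposal is correct and follows essentially the same route as the paper: both form the fiber product $\overline{\Gamma_e}\times_{\bM}\overline{\Gamma_f}$, prove its projection to $\bN$ is proper by base change (for the semi-properness of $e$) composed with the semi-properness of $f$, and then descend this properness to the image in $\bL\times\bN$ via the continuous surjection. The one place you are slightly more explicit than the paper is in justifying that the image is closed in $\bL\times\bN$ (you invoke separatedness of $\bL\times\bN\to\bN$ to upgrade $Z\to\bN$ proper to $Z\to\bL\times\bN$ proper), a point the paper's proof leaves implicit; this is a helpful clarification rather than a departure in method.
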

\begin{proof}
Set for short
$\Im(\ol{\Gamma_e}\times_{\bM}{\ol\Gamma_f})\eqdot\Im({\ol\Gamma_e}\times_{\bM}{\ol\Gamma_f}\to\ol L\times\ol N)$
and consider the  commutative diagrams:
\eqn
&&
\xymatrix{
\ol{\Gamma_e}\times_{\bM}{\ol\Gamma_f}\ar[r]^-c\ar[d]&{\ol\Gamma_f}\ar[d]\\
{\ol\Gamma_{e}}\ar@{}[ru]|-\square\ar[r]^-a&\ol M
}\hspace{8ex}\mbox{ and }\hspace{2ex}
\xymatrix{
&\ol{\Gamma_e}\times_{\bM}{\ol\Gamma_f}\ar[r]^-c\ar[d]&{\ol\Gamma_f}\ar[d]^-b\\
{\ol\Gamma_{f\circ e}}\ar[r]&\Im(\ol{\Gamma_e}\times_{\bM}{\ol\Gamma_f})\ar[r]^-d&\ol N.
}\eneqn
The diagram on the left is Cartesian. Since the map $a$ is proper,  the map $c$ is proper. Since the maps $b$ and $c$ are proper, the map $d$ is proper.
\end{proof}
Let  $\fM=(M,\bM)$ be a bordered space. Denote by 
$i\cl \bM\setminus M\to \bM$ the closed embedding. Identifying $\Derb(\cor_{\bM\setminus M})$ with its essential image in $\Derb(\cor_{\bM})$ by the fully faithful functor $\reim i \simeq \roim i$, the restriction functor 
$F\mapsto F|_M$ induces an equivalence
$\Derb(\field_{\bM}) / \Derb(\field_{\bM\setminus M})\isoto\Derb(\field_M)$.
This is no longer true for ind-sheaves. Therefore  one introduces 
\eq\label{eq:icorfM}
&&\Derb(\icor_{\fM})\eqdot \Derb(\icor_{\bM})/\Derb(\icor_{\bM\setminus M}).
\eneq
where $\Derb(\icor_{\bM\setminus M})$ is identified with its essential image in $\Derb(\icor_{\bM})$. 

The fully faithful functor $\Derb(\cor_{\bM})\into\Derb(\icor_{\bM})$ induces a fully faithful functor
\eq\label{eq:corMtoicorfM}
&&\Derb(\cor_{M})\into\Derb(\icor_{\fM}).
\eneq

We sometimes write $\Derb(\cor_{M_\infty})$ 
for the category $\Derb(\cor_{M})$ regarded as a full subcategory of 
$\Derb(\icor_{\fM})$.

Recall that if $\sht$ is a triangulated category and $\shi$ a subcategory, one denotes by ${}^\bot\shi$ and $\shi^\bot$ the left and right orthogonal to $\shi$ in $\sht$, respectively.

\begin{proposition}\label{pro:bord}
Let $\fM=(M,\bM)$ be a bordered space. One has 
\eqn
\Derb(\icor_{\bM\setminus M}) 
&=& \set{F\in\Derb(\icor_{\bM})}{\cor_M \tens F \simeq 0}\\
&=& \set{F\in\Derb(\icor_{\bM})}{ \rihom(\cor_M, F) \simeq 0 },\\
{}^\bot \Derb(\icor_{\bM\setminus M}) 
&=& \set{F\in\Derb(\icor_{\bM})}{  \cor_M \tens F\isoto F },\\
\Derb(\icor_{\bM\setminus M})^\bot &=& \set{F\in\Derb(\icor_{\bM})}{ F \isoto \rihom(\cor_M, F) }.
\eneqn
Moreover, there are  equivalences
\eqn
\Derb(\icor_{\fM}) &\isoto& \Derb(\icor_{\bM\setminus M})^\bot, \quad F \mapsto \rihom(\cor_M, F),\\
\Derb(\icor_{\fM}) &\isoto& {}^\bot \Derb(\icor_{\bM\setminus M}), \quad F\mapsto \cor_M \tens F,
\eneqn
 quasi-inverse to   the functor induced by the quotient functor.
\end{proposition}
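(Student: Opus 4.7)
The plan rests on the distinguished triangle
\eqn
&& \cor_M \to \cor_{\bM} \to \reim{i}\,\cor_{\bM\setminus M} \to[+1]
\eneqn
in $\Derb(\icor_{\bM})$, where $i\cl\bM\setminus M\into\bM$ is the closed embedding. Tensoring it with $F$ and, separately, applying $\rihom(\scbul,F)$, and invoking the projection formula $(\reim{i}\cor_{\bM\setminus M})\ltens F\simeq\reim{i}\opb{i}F$ together with the adjunction-based identity $\rihom(\reim{i}\cor_{\bM\setminus M},F)\simeq\reim{i}\epb{i}F$ (both standard for ind-sheaves along a closed embedding, via the $(\opb{i},\reim{i})$- and $(\reim{i},\epb{i})$-adjunctions), I obtain the two recollement triangles
\eqn
&& \cor_M\ltens F \to F \to \reim{i}\opb{i}F \to[+1], \\
&& \reim{i}\epb{i}F \to F \to \rihom(\cor_M,F) \to[+1],
\eneqn
whose middle arrows are the relevant unit and counit. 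These two triangles will do essentially all the work.

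With them in hand, the four subcategory descriptions are immediate. Identifying $\Derb(\icor_{\bM\setminus M})$ with its essential image in $\Derb(\icor_{\bM})$ via the fully faithful $\reim{i}\simeq\roim{i}$, an object $F$ lies in it iff $\reim{i}\opb{i}F\isoto F$, which by the first triangle means $\cor_M\ltens F\simeq 0$; equivalently iff $F\isoto\reim{i}\epb{i}F$, which by the second triangle means $\rihom(\cor_M,F)\simeq 0$. For the orthogonals, the $(\opb{i},\reim{i})$-adjunction shows that $F\in{}^\bot\Derb(\icor_{\bM\setminus M})$ iff $\opb{i}F\simeq 0$, i.e.\ $\reim{i}\opb{i}F\simeq 0$, which by the first triangle is $\cor_M\ltens F\isoto F$; symmetrically, the $(\reim{i},\epb{i})$-adjunction together with the second triangle yield the characterization of $\Derb(\icor_{\bM\setminus M})^\bot$.

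For the two equivalences I spell out only the right orthogonal case; the other is strictly parallel. By the characterization just proved, $\rihom(\cor_M,\scbul)$ annihilates $\Derb(\icor_{\bM\setminus M})$, so it descends to a functor $R\cl\Derb(\icor_{\fM})\to\Derb(\icor_{\bM\setminus M})^\bot$; that the image lies in the right orthogonal follows from the idempotence $\cor_M\ltens\cor_M\isoto\cor_M$, which yields $\rihom(\cor_M,\rihom(\cor_M,F))\simeq\rihom(\cor_M,F)$. For $F'\in\Derb(\icor_{\bM\setminus M})^\bot$ one has $R(Q(F'))\simeq\rihom(\cor_M,F')\simeq F'$ by the third description, while conversely for arbitrary $F$ the second triangle has outer term $\reim{i}\epb{i}F\in\Derb(\icor_{\bM\setminus M})$, so $Q(F)\isoto Q(\rihom(\cor_M,F))$ in $\Derb(\icor_{\fM})$. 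Hence $R$ and the functor induced by $Q$ are mutually quasi-inverse. The only technical matter requiring care is the validity, at the ind-sheaf level, of the projection formula and the adjunction identity used to produce the two triangles; once these ingredients of the \cite{KS01} six-operation formalism are granted, the rest is a purely formal manipulation with recollement triangles.
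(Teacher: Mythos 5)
Your proof is correct. The paper states Proposition~\ref{pro:bord} without proof (it is essentially recalled from \cite{DK13}), so there is no in-paper argument to compare against, but your approach --- deriving the two recollement triangles
$\cor_M\ltens F\to F\to\reim{i}\opb{i}F\to[+1]$ and $\reim{i}\epb{i}F\to F\to\rihom(\cor_M,F)\to[+1]$
from the open-closed decomposition triangle for $M\hookrightarrow\bM\hookleftarrow\bM\setminus M$, using the projection formula and the $(\reim{i},\epb{i})$-adjunction, and then reading off the four subcategory characterizations and the Verdier-quotient equivalences --- is exactly the standard argument and the one such omitted proofs rely on. Two minor points worth being precise about: the adjunction you label $(\opb{i},\reim{i})$ is literally $(\opb{i},\roim{i})$, which coincides with $\reim{i}$ here only because $i$ is a closed embedding; and the descent of $\rihom(\cor_M,\scbul)$ (resp.\ $\cor_M\ltens\scbul$) to the quotient uses not only that it annihilates $\Derb(\icor_{\bM\setminus M})$ but that it sends morphisms whose cone lies there to isomorphisms --- which follows from the same two triangles, as you in effect note. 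With those small clarifications, the argument is complete.
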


The functors $\tens$ and $\rihom$ in $\Derb(\icor_{\bM})$ induce well defined functors (we keep the same notations)
\begin{align*}
\tens &\colon \Derb(\icor_{\fM}) \times \Derb(\icor_{\fM}) \to \Derb(\icor_{\fM}), \\
\rihom &\colon \Derb(\icor_{\fM})^\op \times \Derb(\icor_{\fM}) \to \Derb(\icor_{\fM}).
\end{align*}

Let $f\colon (M,\bM) \to (N,\bN)$ be a morphism of bordered spaces, and recall that $\Gamma_f$ denotes the graph of the associated map $f\colon M \to N$.
Since $\Gamma_f$  is  locally closed in $\bM\times \bN$, one can 
consider the sheaf $\cor_{\Gamma_f}$ on $\bM\times \bN$.

\begin{definition}\label{def:fbordered}
Let $f\colon (M,\bM) \to (N,\bN)$ be a morphism of bordered spaces.
For $F\in\Derb(\icor_{\bM})$ and $G\in\Derb(\icor_{\bN})$, one sets
\eqn
\reeim{f} F &=& \reeim{q_2}(\cor_{\Gamma_f}\tens\opb{q_1}F), \\
\roim{f} F &=& \roim{q_2}\rihom(\cor_{\Gamma_f},\epb{q_1}F), \\
\opb f G &=& \reeim{q_1}(\cor_{\Gamma_f}\tens\opb{q_2}G), \\
\epb f G &=& \roim{q_1}\rihom(\cor_{\Gamma_f},\epb{q_2}G),
\eneqn
where $q_1\colon\bM\times\bN\to\bM$ and $q_2\colon\bM\times\bN\to\bN$ are the projections.
\end{definition}

\begin{remark}
Considering a continuous map $f\colon M\to N$ as a morphism of bordered spaces with $\bM=M$ and $\bN=N$, the above functors are isomorphic to the usual operations for ind-sheaves.
\end{remark}

\begin{lemma}
\label{lem:coper}
The above definition induces well-defined functors
\begin{align*}
\reeim{f},\roim{f}  &\colon \Derb(\icor_{\fM}) \to \Derb(\icor_{\fN}), \\
\opb{f},\epb{f}  &\colon \Derb(\icor_{\fN}) \to \Derb(\icor_{\fM}).
\end{align*}
\end{lemma}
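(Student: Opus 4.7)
The plan is to invoke the universal property of Verdier quotients. Since $\Derb(\icor_{\fM}) = \Derb(\icor_{\bM})/\Derb(\icor_{\bM\setminus M})$ and similarly for $\fN$, it suffices to prove that the four functors, viewed as functors on the ambient categories $\Derb(\icor_{\bM}), \Derb(\icor_{\bN})$, send the subcategory $\Derb(\icor_{\bM\setminus M})$ (resp.\ $\Derb(\icor_{\bN\setminus N})$) to $\Derb(\icor_{\bN\setminus N})$ (resp.\ $\Derb(\icor_{\bM\setminus M})$). By Proposition~\ref{pro:bord}, membership in these kernel subcategories is detected either by $\cor_M\tens(\scbul)\simeq 0$ or equivalently by $\rihom(\cor_M,\scbul)\simeq 0$, and we will use whichever characterization is convenient.

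The key observation is that $\Gamma_f$ is contained in $M\times N \subset \bM\times\bN$, so
\[
\cor_{\Gamma_f} \simeq \opb{q_1}\cor_M\tens\cor_{\Gamma_f}\simeq \opb{q_2}\cor_N\tens\cor_{\Gamma_f}.
\]
For $\reeim{f}$, if $\cor_M\tens F\simeq 0$ then
\[
\cor_{\Gamma_f}\tens\opb{q_1}F\simeq\cor_{\Gamma_f}\tens\opb{q_1}(\cor_M\tens F)\simeq 0,
\]
so $\reeim{f}F\simeq 0$, which is in $\Derb(\icor_{\bN\setminus N})$. The argument for $\opb{f}$ is identical, applying $\opb{q_2}\cor_N\tens\cor_{\Gamma_f}\simeq\cor_{\Gamma_f}$ to a $G$ with $\cor_N\tens G\simeq 0$.

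For $\roim{f}$ and $\epb{f}$ we use the dual characterization. Starting from $\rihom(\opb{q_i}\cor_?,\epb{q_i}(\scbul))\simeq\epb{q_i}\rihom(\cor_?,\scbul)$ (the standard adjunction identity for ind-sheaves), if $\rihom(\cor_M,F)\simeq 0$ then
\[
\rihom(\cor_{\Gamma_f},\epb{q_1}F)\simeq\rihom(\cor_{\Gamma_f},\epb{q_1}\rihom(\cor_M,F))\simeq 0,
\]
hence $\roim{f}F\simeq 0$; and symmetrically for $\epb{f}$ starting from $\rihom(\cor_N,G)\simeq 0$. In all four cases the functors actually vanish on the kernel (which is stronger than what the universal property demands), so we get induced triangulated functors $\Derb(\icor_{\fM})\to\Derb(\icor_{\fN})$ (and vice versa), well-defined up to canonical isomorphism.

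The only genuinely delicate point is verifying that $\cor_M\tens F\simeq 0$ and $\rihom(\cor_M,F)\simeq 0$ define the same subcategory of $\Derb(\icor_{\bM})$, so that one may use whichever formulation fits the operation at hand; this is precisely the content of Proposition~\ref{pro:bord} and does not need to be reproved here. Once this is granted, the verification is a formal manipulation of the support of $\cor_{\Gamma_f}$ together with the projection formula and the adjunction between $\opb{q_i}$ and $\epb{q_i}$.
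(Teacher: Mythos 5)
Your argument is correct and is the standard one (the paper itself omits the proof, citing \cite{DK13}): use the universal property of the Verdier quotient and check that each of the four kernel operations already vanishes on the defining subcategory, exploiting that $\Gamma_f\subset M\times N$ gives $\cor_{\Gamma_f}\simeq\opb{q_1}\cor_M\tens\cor_{\Gamma_f}\simeq\opb{q_2}\cor_N\tens\cor_{\Gamma_f}$, and then using the two equivalent characterizations of $\Derb(\icor_{\bM\setminus M})$ from Proposition~\ref{pro:bord} — the $\tens$-characterization for $\reeim{f}$, $\opb{f}$ and the $\rihom$-characterization for $\roim{f}$, $\epb{f}$.

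One small presentation point: the displayed chain
\[
\rihom(\cor_{\Gamma_f},\epb{q_1}F)\simeq\rihom(\cor_{\Gamma_f},\epb{q_1}\rihom(\cor_M,F))
\]
is not an equality of the second arguments; what is actually happening is
\[
\rihom(\cor_{\Gamma_f},\epb{q_1}F)\simeq\rihom\bl\cor_{\Gamma_f}\tens\opb{q_1}\cor_M,\epb{q_1}F\br
\simeq\rihom\bl\cor_{\Gamma_f},\rihom(\opb{q_1}\cor_M,\epb{q_1}F)\br
\simeq\rihom\bl\cor_{\Gamma_f},\epb{q_1}\rihom(\cor_M,F)\br,
\]
using the tensor--hom adjunction and then $\rihom(\opb{q_1}\cor_M,\epb{q_1}F)\simeq\epb{q_1}\rihom(\cor_M,F)$. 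You describe exactly this in the prose, so it is only the displayed line that is compressed; spelling out the intermediate step would make the computation airtight. Otherwise the proof is complete.
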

The operations for ind-sheaves on bordered spaces satisfy similar properties as for usual sheaves that we do not recall here.

Note that if $f\cl\fM\to\fN$ is semi-proper, then the diagram below quasi-commutes:
\eq\label{eq:eeimfeimf}
&&\ba{c}\xymatrix{
\Derb(\cor_M)\ar[r]^-{\reim{f}}\ar[d]&\Derb(\cor_N)\ar[d]\\
\Derb(\icor_\fM)\ar[r]^-{\reeim{f}}&\Derb(\icor_\fN).
}\ea\eneq

\subsection{Enhanced ind-sheaves}
Tamarkin's constructions of~\cite{Ta08} are extended to  ind-sheaves on bordered spaces. 
We refer to~\cite{GS12} for a detailed exposition and some complements to Tamarkin's paper. 

In this subsection, we recall results of~\cite{DK13} with a slight generalisation. In loc.\ cit.\ the authors consider 
the bordered space $M\times\R_\infty$ where $M$ is an usual space. However, we shall need to consider 
situations in which $M$ is itself a bordered space.

Consider the 2-point compactification of the real line $\ol\R \eqdot \R\sqcup\{\pm\infty\}$. Denote by $\PP\eqdot\BBP^1(\R)=\R\sqcup\{\infty\}$ the real projective line. Then $\ol\R$ has a structure of subanalytic space such that the natural map 
$\ol\R\to\PP$ is a subanalytic map.

\begin{notation}\label{not:Rinfty}
We will consider the bordered space
\[
\R_\infty \eqdot (\R,\overline\R).
\]
\end{notation}
Note that $\R_\infty$ is isomorphic to $(\R,\PP)$ as a bordered space.

Consider the morphisms of bordered spaces 
\begin{align}
\label{eq:muq1q2}
\mu,q_1,q_2 &\colon \R_\infty^2 \to \R_\infty,
\end{align}
where 
$\mu(t_1,t_2) = t_1+t_2$ and $q_1$, $q_2$ are the natural projections.

For a bordered space $\fM$, we will use the same notations for the associated morphisms
\begin{align*}
\mu, q_1,q_2 &\colon \fM\times\R_\infty^2 \to \fM\times\R_\infty.
\end{align*}
Consider also the natural morphisms
\begin{align*}
\xymatrix@C=.5em{
\fM\times\R_\infty \ar[rr]^j \ar[dr]_\pi && \fM\times\overline\R \ar[dl]^{\overline\pi} \\
&\fM\;.
}
\end{align*}

\begin{definition}\label{def:ctens1}
Let $\fM$ be a bordered space. 
The functors
\begin{align*}
\ctens &\colon \BDC(\icor_{\fM\times\R_\infty}) \times \BDC(\icor_{\fM\times\R_\infty}) \to \BDC(\icor_{\fM\times\R_\infty}), \\
\cihom &\colon \BDC(\icor_{\fM\times\R_\infty})^\op \times \BDC(\icor_{\fM\times\R_\infty}) \to \BDC(\icor_{\fM\times\R_\infty})
\end{align*}
are defined by
\begin{align*}
K_1\ctens K_2 &= \reeim {\mu} (\opb q_1 K_1 \tens \opb q_2 K_2), \\
\cihom(K_1,K_2)&= \roim {q_1} \rihom(\opb q_2 K_1, \epb{\mu}K_2).
\end{align*}
\end{definition}

One sets
\begin{align*}
\cor_{\{t\geq 0\}} &= \cor_{\set{(x,t)\in \bM\times\bR\,}{\;x\in M,\ t\in\R,\ t \geq 0}}, 
\end{align*}
and one uses similar notations for  $\cor_{\{t= 0\}}$, $\cor_{\{t> 0\}}$, $\cor_{\{t\leq 0\}}$, $\cor_{\{t< 0\}}$,  
$\cor_{\{t\not=0\}}$, etc.  
These are sheaves on $\bM\times\overline\R$ whose stalks vanish on $(\bM\times\bR)\setminus(M\times\R)$. 
We also regard them as objects of $\BDC(\icor_{\fM\times\R_\infty})$. 

The category $\BDC(\icor_{\fM\times\R_\infty})$ has a structure of commutative tensor category with $\ctens$ as tensor product and $\cor_{\{t= 0\}}$ as unit object.

One defines  the full subcategory of $\BDC(\icor_{\fM\times\R_\infty})$:
\eqn
\indc_{t^* = 0}
&=&\set{K }{\opb{\pi}\roim{\pi}K\isoto K}\\
&=&\set{K }{K\isoto \epb{\pi}\reim{\pi}K}\\
&=&\set{K}{\mbox{ there exists $L\in\Derb(\icor_\fM)$ with }\opb{\pi}L\simeq K}\\
&=& \set{K}{( \cor_{\{t\geq 0\}}\oplus \cor_{\{t\leq 0\}})  
\ctens K  \simeq 0} \\
 &=&\set{K}{ \cihom( \cor_{\{t\geq 0\}}\oplus \cor_{\{t\leq 0\}}, K)  \simeq 0}.
\eneqn

\begin{definition}\label{def:TDCforfM}
The triangulated category of {\em enhanced ind-sheaves}, denoted by  $\TDC(\icor_\fM)$, is  the quotient category
\begin{align*}
\TDC(\icor_{\fM}) &= \BDC(\icor_{{\fM}\times\R_\infty})/\indc_{t^* = 0}\\
&=\BDC(\icor_{\fM\times\R_\infty})/\set{K}{\opb{\pi}\roim{\pi}K\isoto K}.
\end{align*}
\end{definition}
Similarly, one defines the category $\TDC(\cor_{M}) $ as
\eq\label{eq:TDCforM}
&&\TDC(\cor_{M}) =\BDC(\cor_{{M}\times\R})/\set{K}{\opb{\pi}\roim{\pi}K\isoto K}.
\eneq
Then $\TDC(\cor_{M})$ is a full subcategory of $\TDC(\icor_{\fM})$.

One has the equivalences
\eq
\TDC(\icor_{\fM})& \simeq&{}^\bot\indc_{t^* = 0}\simeq(\indc_{t^* = 0})^\bot.
\eneq
Hence, we may regard $\TDC(\icor_{\fM})$ as a full subcategory of $\BDC(\icor_{\fM\times\R_\infty})$ in two different ways.

\begin{notation}\label{not:Tlr}
The functors  $\Tl,\Tr\cl\TDC(\icor_\fM)\to\BDC(\icor_{\fM\times\R_\infty})$ are given by:
\eqn
\Tl &=& (\cor_{\{t\geq0\}}\oplus\cor_{\{t\leq0\}})\ctens(\scbul), \\ 
\Tr &=&\cihom(\cor_{\{t\geq0\}}\oplus\cor_{\{t\leq0\}}, \scbul).
\eneqn
\end{notation}
The functors $\Tl$ and $\Tr$ are the left and right adjoint of the 
 quotient functor $\BDC(\icor_{\fM\times\R_\infty}) \to \TDC(\icor_\fM)$, respectively.
They are fully faithful. Note that for $K\in\TDC(\icor_\fM)$ we have
\eqn
&\roim{\pi}\Tr K\simeq0,\qquad\reeim{\pi}\Tl K\simeq0,&\\
&\roim{\pi}\Tl K\simeq \reeim{\pi}\Tr K.&
\eneqn

\subsection{Operations on enhanced ind-sheaves}
As in the preceding subsection, we recall results of~\cite{DK13} with a slight generalisation, replacing usual spaces with bordered spaces.

The bifunctors 
\begin{align*}
\ctens &\colon \TDC(\icor_\fM) \times \TDC(\icor_\fM) \to \TDC(\icor_\fM), \\
\cihom &\colon \TDC(\icor_\fM)^\op \times \TDC(\icor_\fM) \to \TDC(\icor_\fM)
\end{align*}
are those induced by the bifunctors $\ctens$ and $\cihom$ defined on $\BDC(\icor_{\fM\times\R_\infty})$.

Note that for any $K\in\TDC(\icor_\fM)$ the composition $\cor_{t\geq 0} \ctens K \to K\to \cihom(\cor_{t\geq 0} , K)$
induces an isomorphism in $\TDC(\icor_\fM)$
\[
\cor_{t\geq 0} \ctens K \isoto \cihom(\cor_{t\geq 0} , K).
\]
Such an isomorphism does not hold when replacing $\TDC(\icor_\fM)$ with $\Derb(\icor_\fM)$.

Let $f\colon \fM \to \fN$ be a morphism of bordered spaces.
Denote by $\tilde f\colon \fM\times\R_\infty \to \fN\times\R_\infty$ the associated morphism. Then the compositions of functors
\begin{align}
\label{eq:oimftilde}
\reeim{\tilde f},\; \roim{\tilde f} &\colon \BDC(\icor_{\fM\times\R_\infty}) \to \BDC(\icor_{\fN\times\R_\infty}) \to \TDC(\icor_\fN), \\
\label{eq:opbftilde}
\opb{\tilde f},\; \epb{\tilde f} &\colon \BDC(\icor_{\fN\times\R_\infty}) \to \BDC(\icor_{\fM\times\R_\infty}) \to \TDC(\icor_\fM)
\end{align}
factor through $\TDC(\icor_\fM)$ and $\TDC(\icor_\fN)$, respectively.

\begin{definition}\label{def:fT}
One denotes by
\begin{align*}
\Teeim f,\; \Toim f &\cl \TDC(\icor_\fM) \to \TDC(\icor_\fN), \\
\Topb f,\; \Tepb f &\cl \TDC(\icor_\fN) \to \TDC(\icor_\fM)
\end{align*}
the functors induced by \eqref{eq:oimftilde} and \eqref{eq:opbftilde}, respectively.
\end{definition}

The above operations satisfy analogous properties  to  the operations 
for sheaves.

One also defines similarly the external product functor
\eqn
\scbul\cetens\scbul&\cl&\TDC(\icor_\fM)\times\TDC(\icor_\fN) \to \TDC(\icor_{\fM\times \fN}),\\
&&F\cetens G=\Topb{p_1}F\ctens\Topb{p_2}G,
\eneqn
where $p_1$ and $p_2$ denote the projections from $\fM\times \fN$ to $\fM$ and $\fN$, respectively.

\begin{definition}\label{def:fihom}
One defines the hom-functor 
\begin{align}
\fihom\colon \TDC(\icor_\fM)^\op \times \TDC(\icor_\fM) &\to \BDC(\icor_\fM)\\
\fihom(K_1,K_2) &= \roim\pi\rihom(\Tl(K_1),\Tr(K_2)),\nn
\end{align}
and one sets
\eqn
&&\fhom= \alpha_M\circ\fihom\;\cl\; \TDC(\icor_\fM)^\op \times \TDC(\icor_\fM) \to \BDC(\cor_M),\\
&&\FHom(K_1,K_2) = \rsect(M;\fhom(K_1,K_2)).
\eneqn
\end{definition}
Note that 
\begin{align*}
\fihom(K_1,K_2) &\simeq \roim\pi\rihom(\Tl(K_1),\Tl(K_2)) \\
&\simeq \roim\pi\rihom(\Tr(K_1),\Tr(K_2))
\end{align*} 
and
\begin{align*}
\Hom[\TDC(\icor_\fM)](K_1,K_2)
&\simeq H^0 \bl\FHom(K_1,K_2)\br.
\end{align*}
\begin{remark}\label{rem:fiber}
For $x\in M$, let $\iota_x$ be the embedding $\rmpt\into M$ given by $\iota_x(\rmpt)=x$. 
Let $F\in\TDC(\cor_M)$. Then $F\simeq0$ if and only if $\Topb{\iota_x}F\simeq0$ for all $x\in M$. 
\end{remark}

\subsection{The functor $\cor^\Tam_M\ctens(\scbul)$}

\label{not:gg}
Consider the objects of $\BDC(\icor_{\fM\times\R_\infty})$
\[
\cor_{\{t\gg0\}} \eqdot \inddlim[a\rightarrow+\infty] \cor_{\{t\geq a\}},\qquad
\cor_{\{t<\ast\}}  \eqdot \inddlim[a\rightarrow+\infty] \cor_{\{t< a\}}.
\]
There are a distinguished triangle and isomorphisms in $\Derb(\icor_{\fM\times\R_\infty})$
\eqn
&&\cor_{M\times\R}\to \cor_{\{t\gg0\}} \to \cor_{\{t<\ast\}} \,[1]\to[+1],\\
&&\cor_{\{t\geq -a\}} \ctens \cor_{\{t\gg0\}} \isoto \cor_{\{t\gg0\}} \isoto \cor_{\{t\geq a\}} \ctens \cor_{\{t\gg0\}} ,\quad (a\in\R_{\geq 0}). 
\eneqn
Denote by $\cor^\Tam_\fM$ the object of $\TDC(\icor_\fM)$ associated with
the ind-sheaf $\cor_{\{t\gg0\}}\in\BDC(\icor_{\fM\times\R_\infty})$.
Note that 
\[
\Tl(\cor^\Tam_\fM) \simeq \cor_{\{t\gg0\}},
\quad
\Tr(\cor^\Tam_\fM) \simeq \cor_{\{t<\ast\}}[1].
\]

\begin{lemma}\label{lem:kTamtens}
For $F\in\BDC(\cor_{\fM\times\R_\infty})$ and $K\in\TDC(\icor_\fM)$, there is an isomorphism in $\TDC(\icor_\fM)$
\[
\cor_\fM^\Tam \ctens \cihom(F,K)
\isoto
\cihom(F,\cor_\fM^\Tam \ctens K).
\]
\end{lemma}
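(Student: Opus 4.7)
I would first construct the natural comparison morphism
\[
\alpha\cl \cor_\fM^\Tam\ctens\cihom(F,K)\to\cihom(F,\cor_\fM^\Tam\ctens K)
\]
by adjunction, starting from the counit $\cihom(F,K)\ctens F\to K$, tensoring with $\cor_\fM^\Tam$, and transposing through the adjunction $(F\ctens -)\dashv \cihom(F,-)$. The remaining task is to verify that $\alpha$ is an isomorphism in $\TDC(\icor_\fM)$.

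The main reduction exploits the presentation $\cor_\fM^\Tam\simeq\inddlim[a\to+\infty]\cor_{\{t\geq a\}}$ together with the identification
\[
\cor_{\{t\geq a\}}\ctens G \simeq \Teeim{T_a}(\cor_{\{t\geq 0\}}\ctens G),
\]
where $T_a\cl\fM\times\R_\infty\to\fM\times\R_\infty$ denotes the translation $(x,t)\mapsto(x,t+a)$. This is a projection-formula calculation built on the Cartesian identity $\mu\circ(T_a\times\id)=T_a\circ\mu$. A parallel calculation, using also the invariance $\opb{(T_a\times\id)}\opb{q_2}F=\opb{q_2}F$ (valid because $q_2$ forgets the translated $\R_\infty$-factor), shows that translations commute with $\cihom(F,-)$:
\[
\Teeim{T_a}\cihom(F,K)\simeq\cihom(F,\Teeim{T_a}K).
\]

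Combining these two identifications shifts the claim to the single equation in $\TDC(\icor_\fM)$,
\[
\cor_{\{t\geq 0\}}\ctens\cihom(F,K)\isoto\cihom(F,\cor_{\{t\geq 0\}}\ctens K).
\]
Here the crucial $\TDC$-isomorphism $\cor_{\{t\geq 0\}}\ctens(-)\simeq\cihom(\cor_{\{t\geq 0\}},-)$ recalled in the excerpt applies to both sides and rewrites this as
\[
\cihom(\cor_{\{t\geq 0\}},\cihom(F,K))\simeq\cihom(F,\cihom(\cor_{\{t\geq 0\}},K)),
\]
which is a formal instance of $\cihom(A\ctens B,-)\simeq\cihom(A,\cihom(B,-))$ combined with the symmetry of $\ctens$.

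The hardest step will be justifying the commutation of $\cihom(F,-)$ with the filtered colimit $\inddlim[a\to+\infty]$ used in the reduction, since right adjoints do not generally commute with colimits. I expect the remedy to come from working inside the quotient $\TDC(\icor_\fM)$: modulo the defining relations $\opb\pi\roim\pi K\isoto K$, the filtered system of translates becomes effectively stationary, and the idempotence $\cor_\fM^\Tam\ctens\cor_\fM^\Tam\simeq\cor_\fM^\Tam$ lets one replace the inductive limit by a single $\cor_\fM^\Tam$-stable representative. This compatibility is the central technical point that will require careful checking.
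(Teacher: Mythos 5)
Your preparatory steps are all correct: the comparison morphism is constructed by adjunction as you describe; the translation identities $\cor_{\{t\geq a\}}\ctens G\simeq \reeim{T_a}(\cor_{\{t\geq 0\}}\ctens G)$ and $\reeim{T_a}\cihom(F,K)\simeq\cihom(F,\reeim{T_a}K)$ follow by base change along the relevant Cartesian squares, $T_a$ being an isomorphism of bordered spaces; and the closed-monoidal computation
\[
\cor_{\{t\geq0\}}\ctens\cihom(F,K)\simeq\cihom(\cor_{\{t\geq0\}}\ctens F,K)\simeq\cihom(F\ctens\cor_{\{t\geq0\}},K)\simeq\cihom(F,\cor_{\{t\geq0\}}\ctens K),
\]
which rests on the recalled isomorphism $\cor_{\{t\geq0\}}\ctens(\scbul)\simeq\cihom(\cor_{\{t\geq0\}},\scbul)$ in $\TDC(\icor_\fM)$ together with the tensor--hom adjunction, is valid. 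Combined with the translation argument, this yields the statement with $\cor_\fM^\Tam$ replaced by $\cor_{\{t\geq a\}}$ for each fixed $a\geq0$.

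The passage to $\cor_\fM^\Tam\simeq\inddlim[a\rightarrow+\infty]\cor_{\{t\geq a\}}$ is, however, a genuine gap, and the remedy you sketch does not hold up. The filtered system $a\mapsto\cor_{\{t\geq a\}}\ctens K$ does \emph{not} become stationary in $\TDC(\icor_\fM)$: the cone of the transition map $\cor_{\{t\geq a'\}}\ctens K\to\cor_{\{t\geq a\}}\ctens K$ (for $a'\le a$) is $\cor_{\{a'\leq t<a\}}\ctens K\,[1]$, which is in general nonzero in $\TDC(\icor_\fM)$ -- already for $K=\cor_{\{t\geq 0\}}$ the cone is $\cor_{\{a'\leq t<a\}}[1]$, and $\cor_{\{a'\leq t<a\}}$ does not lie in the subcategory $\{L:\opb\pi\roim\pi L\isoto L\}$ since $\roim\pi\cor_{\{a'\leq t<a\}}\simeq 0$ while $\cor_{\{a'\leq t<a\}}\neq 0$. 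The idempotence $\cor_\fM^\Tam\ctens\cor_\fM^\Tam\simeq\cor_\fM^\Tam$ only tells you that the colimit $\cor_\fM^\Tam\ctens K$ is stable under $\cor_\fM^\Tam\ctens(\scbul)$; it does not tell you that $\cihom(F,\cor_\fM^\Tam\ctens K)$ is, which is exactly the point at issue. Nor can the colimit be avoided by running your closed-monoidal computation directly with $\cor_\fM^\Tam$ in place of $\cor_{\{t\geq0\}}$: the functor $\cor_\fM^\Tam\ctens(\scbul)$ is an inductive limit of the functors $\cihom(\cor_{\{t\geq a\}},\scbul)$, whereas $\cihom(\cor_\fM^\Tam,\scbul)$ is a (homotopy) projective limit of them, and these disagree, so the needed self-adjunction of $\cor_\fM^\Tam\ctens(\scbul)$ fails. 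Since $\cihom(F,\scbul)$ is built from $\roim{q_1}$, $\rihom$ and $\epb\mu$, none of which commutes with filtered inductive limits of ind-sheaves, the commutation with $\inddlim[a\rightarrow+\infty]$ for this specific system is precisely the content of the lemma and requires a genuine argument that your outline does not supply.
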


\begin{proposition}\label{pro:stableops}
Let $f\colon \fM\to \fN$ be a morphism of bordered spaces.
\bnum
\item
For $K\in\TDC(\icor_\fM)$ one has
\[
\Teeim f(\cor_\fM^\Tam \ctens K) \simeq \cor_\fN^\Tam \ctens \Teeim f K.
\]
\item
For $L\in\TDC(\icor_\fN)$ one has
\begin{align*}
\Topb f(\cor_\fN^\Tam \ctens L) &\simeq \cor_\fM^\Tam \ctens \Topb f L, \\
\Tepb f(\cor_\fN^\Tam \ctens L) &\simeq \cor_\fM^\Tam \ctens \Tepb f L.
\end{align*}
\enum
\end{proposition}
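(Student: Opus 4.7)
The plan is to lift each of the three isomorphisms from $\TDC(\icor_\bullet)$ back to $\Derb(\icor_{\bullet\times\R_\infty})$ through the fully faithful embeddings $\Tl,\Tr$, establish the corresponding identities between ind-sheaf expressions using base change and the projection formula, and then descend. Let $\tilde f\cl\fM\times\R_\infty\to\fN\times\R_\infty$ denote the morphism induced by $f$ and the identity on $\R_\infty$, and let $\tilde{\tilde f}\cl\fM\times\R_\infty^2\to\fN\times\R_\infty^2$ be its lift to the double product. Two elementary ingredients will be used throughout: first, the squares formed by $(\tilde{\tilde f},\tilde f)$ and either of the projections $q_i^\bullet$ ($i=1,2$), as well as the square involving $\mu$, are all Cartesian, so base change applies in each of its forms; second, $\opb{\tilde f}\cor_{\{t\gg 0\}}^\fN\simeq\cor_{\{t\gg 0\}}^\fM$, which follows at once from $\cor_{\{t\gg 0\}}\simeq\inddlim[a\to+\infty]\cor_{\{t\geq a\}}$ together with the compatibility of $\opb{\tilde f}$ with ind-limits and with the closed sets $\{t\geq a\}$.

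For (i), represent $\cor^\Tam_\fM\ctens K$ in $\Derb(\icor_{\fM\times\R_\infty})$ by $\reeim{\mu_M}(\opb{q_1^M}\cor_{\{t\gg 0\}}^\fM\tens\opb{q_2^M}\Tl K)$. Since $\tilde f\circ\mu_M=\mu_N\circ\tilde{\tilde f}$, one commutes $\reeim{\tilde f}$ past $\reeim{\mu_M}$; then rewriting $\opb{q_1^M}\cor_{\{t\gg 0\}}^\fM\simeq\opb{\tilde{\tilde f}}\opb{q_1^N}\cor_{\{t\gg 0\}}^\fN$ and applying the projection formula extracts the $\cor_{\{t\gg 0\}}^\fN$-factor from $\reeim{\tilde{\tilde f}}$; finally, base change on the Cartesian square for $q_2$ replaces $\reeim{\tilde{\tilde f}}\opb{q_2^M}$ by $\opb{q_2^N}\reeim{\tilde f}$. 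Reassembling yields a representative of $\cor^\Tam_\fN\ctens\reeim{\tilde f}K$, and passing to $\TDC(\icor_\fN)$ proves (i).

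The $\Topb f$ case of (ii) is more direct, since $\opb{\tilde f}$ commutes with tensor products and with inverse images along projections, and, by base change on the Cartesian square involving $\mu$, also commutes with $\reeim{\mu_N}$. Applying these compatibilities to the defining expression for $\cor^\Tam_\fN\ctens L$ and using $\opb{\tilde f}\cor_{\{t\gg 0\}}^\fN\simeq\cor_{\{t\gg 0\}}^\fM$ immediately produces $\cor^\Tam_\fM\ctens\opb{\tilde f}L$ at the level of $\Derb(\icor_{\fM\times\R_\infty})$.

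The main obstacle is the $\Tepb f$ statement, since $\epb{\tilde f}$ does not in general commute with $\tens$ or with $\reeim{\mu}$. My strategy is to invoke Lemma~\ref{lem:kTamtens}, which moves $\cor^\Tam\ctens(\scbul)$ through a $\cihom$. Combining it with the standard formula $\epb{\tilde{\tilde f}}\rihom(A,B)\simeq\rihom(\opb{\tilde{\tilde f}}A,\epb{\tilde{\tilde f}}B)$, with base change for $\roim$ and $\epb$ on the Cartesian squares for $q_1$ and $\mu$, and with the identification $\opb{\tilde{\tilde f}}\opb{q_i^N}\cor_{\{t\gg 0\}}^\fN\simeq\opb{q_i^M}\cor_{\{t\gg 0\}}^\fM$, one rewrites $\epb{\tilde f}(\cor^\Tam_\fN\ctens L)$ as $\cor^\Tam_\fM\ctens\epb{\tilde f}L$ in $\Derb(\icor_{\fM\times\R_\infty})$, and descends. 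Alternatively, one can derive the $\Tepb$ statement from (i) by adjunction: (i) produces by adjunction a canonical morphism $\cor^\Tam_\fM\ctens\Tepb f L\to\Tepb f(\cor^\Tam_\fN\ctens L)$, and one verifies that it is an isomorphism by applying $\FHom(K,\scbul)$ for arbitrary $K\in\TDC(\icor_\fM)$ and reducing, via Lemma~\ref{lem:kTamtens} and the projection formula for enhanced ind-sheaves, to the already established case (i).
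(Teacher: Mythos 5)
The paper does not actually prove Proposition~\ref{pro:stableops}: it is stated as a result recalled from~\cite{DK13} (with the mild extension from ordinary spaces to bordered spaces), so there is no in-paper proof to compare against. I therefore assess your argument on its own merits.

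Your treatment of (i) and of the $\Topb f$ half of (ii) is correct and is essentially the standard calculation: represent $\cor_\fM^\Tam\ctens K$ in $\Derb(\icor_{\fM\times\R_\infty})$, pass $\reeim{\tilde f}$ (resp.\ $\opb{\tilde f}$) through $\reeim{\mu}$ by proper base change, use the projection formula, the Cartesian square for $q_2$ (resp.\ the commutation of $\opb{}$ with $\tens$), and the identification $\opb{\tilde f}\cor_{\{t\gg 0\}}^{\fN}\simeq\cor_{\{t\gg 0\}}^{\fM}$.

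The $\Tepb f$ case, which you rightly flag as the hard one, contains a genuine gap in both of the routes you sketch. In the first route, the ingredients you list ($\epb{\tilde{\tilde f}}\rihom(A,B)\simeq\rihom(\opb{\tilde{\tilde f}}A,\epb{\tilde{\tilde f}}B)$, base change for $\roim$/$\epb$ on the Cartesian squares, and the pullback identity $\opb{\tilde{\tilde f}}\opb{q_i^N}\cor_{\{t\gg 0\}}^\fN\simeq\opb{q_i^M}\cor_{\{t\gg 0\}}^\fM$) produce the isomorphism
$\Tepb f\,\cihom(\cor^\Tam_\fN,L)\simeq\cihom(\cor^\Tam_\fM,\Tepb f L)$.
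This is the commutation of $\Tepb f$ with the \emph{right} adjoint $\cihom(\cor^\Tam_{\scbul},\scbul)$ of the functor $\cor^\Tam_{\scbul}\ctens(\scbul)$, and in fact it already follows formally from~(i) by passing to right adjoints (using $\Teeim f\dashv\Tepb f$ and the $\ctens$/$\cihom$ adjunction). What the statement requires is commutation with the \emph{left} adjoint $\cor^\Tam_{\scbul}\ctens(\scbul)$, and these two functors are not obviously identified: one is a left adjoint, the other a right adjoint, and $\cor^\Tam=\inddlim_a\cor_{\{t\geq a\}}$ is not a compact object, so there is no a priori reason for $\cihom(\cor^\Tam,K)\simeq\cor^\Tam\ctens K$. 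Lemma~\ref{lem:kTamtens} does not close this gap either: it only moves $\cor^\Tam\ctens(\scbul)$ past $\cihom(F,\scbul)$ for $F$ an honest sheaf in $\Derb(\cor_{\fM\times\R_\infty})$, and $\cor^\Tam$ is not such an $F$. The second route (adjunction from~(i) followed by testing against $\FHom(K,\scbul)$) is circular: on the target side one may use $\Teeim f\dashv\Tepb f$ to reduce to $\FHom(\Teeim f K,\cor^\Tam_\fN\ctens L)$, but on the source side there is no direct way to extract $\cor^\Tam_\fM\ctens$ from the second argument of $\FHom$ that does not itself appeal to the very commutation being proved.

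A proof that does close: factor $f$ through its graph, $f=p\circ\gamma_f$ with $\gamma_f\cl\fM\to\fM\times\fN$ a closed embedding and $p$ the second projection. For the projection, $\tilde p$ is a topological submersion, so $\epb{\tilde p}\simeq\opb{\tilde p}$ up to a pulled-back orientation twist and shift, and the already-proved $\Topb$-commutation gives the result. For the closed embedding, $\epb{\tilde\imath}(\scbul)\simeq\opb{\tilde\imath}\,\rihom(\opb{\pi}\cor_{i(M)},\scbul)$, and the functor $\rihom(\opb{\pi}F,\scbul)$ with $F$ a sheaf pulled back from $\fN$ (not involving the $t$-variable) commutes with $\cor^\Tam\ctens(\scbul)$ by the $\rihom$-version of Lemma~\ref{lem:kTamtens}; one then invokes the $\Topb$-commutation again. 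In particular, this explains why the absent operation $\Toim f$ is genuinely not covered by the proposition: commuting $\Topb f$ with $\cor^\Tam\ctens(\scbul)$ gives, by right adjoints, commutation of $\Toim f$ with $\cihom(\cor^\Tam,\scbul)$ only, and no factorization trick rescues it.
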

\begin{definition}\label{def:fcteepsilon}
One defines the functors 
\eq
&&e_M,\epsilon_M\cl \BDC(\icor_\fM) \to \TDC(\icor_\fM),\label{eq:eMepsilonM}\\
&&\hs{10ex}e_M(F) = \cor_\fM^\Tam\tens\opb\pi F, 
\quad\epsilon_M(F) = \cor_{\{t\geq0\}}\tens\opb\pi F.\nn
\eneq
\end{definition}
Note that
\eqn
&&e_M(F)\simeq\cor_\fM^\Tam\ctens\epsilon_M(F) .
\eneqn
\begin{proposition}\label{pro:embed}
The functors $e_M$ and $\epsilon_M$  are  fully faithful.
\end{proposition}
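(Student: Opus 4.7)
The plan is to construct an explicit right adjoint to each of $\epsilon_M$ and $e_M$ and verify that the unit of adjunction is an isomorphism, using the fully faithful embedding $\Tl\colon\TDC(\icor_\fM)\isoto{}^\bot\indc_{t^*=0}\subset\BDC(\icor_{\fM\times\R_\infty})$ together with the standard tensor-hom and $(\opb\pi,\roim\pi)$ adjunctions.

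The first step is to check that $\epsilon_M(F)=\cor_{\{t\geq 0\}}\tens\opb\pi F$ and $e_M(F)=\cor_\fM^\Tam\tens\opb\pi F$ already lie in ${}^\bot\indc_{t^*=0}$, so that $\Tl$ restricts to the identity on their essential images. Since $\pi\circ\mu=\pi\circ q_i$, the projection formula yields $K\ctens(L\tens\opb\pi F)\simeq(K\ctens L)\tens\opb\pi F$, so it suffices to verify $(\cor_{\{t\geq 0\}}\oplus\cor_{\{t\leq 0\}})\ctens\cor_{\{t\geq 0\}}\simeq\cor_{\{t\geq 0\}}$ in $\BDC(\icor_{\R_\infty})$ and the analogous identity with $\cor_\fM^\Tam$ in place of $\cor_{\{t\geq 0\}}$. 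A direct fibre-wise computation of $\reeim\mu$ shows $\cor_{\{t\leq 0\}}\ctens\cor_{\{t\geq a\}}\simeq 0$ for every $a\in\R$ (the fibre of $\mu$ over a point of $\R$ meets $\{t_1\leq 0,\,t_2\geq a\}$ in an unbounded half-line whose $H^*_c$ vanishes), while $\cor_{\{t\geq 0\}}\ctens\cor_{\{t\geq 0\}}\simeq\cor_{\{t\geq 0\}}$ and $\cor_{\{t\geq 0\}}\ctens\cor_\fM^\Tam\simeq\cor_\fM^\Tam$ follow from the formulas recalled at the start of this subsection. Under the resulting identification,
\[
\Hom_{\TDC(\icor_\fM)}(\epsilon_M(F_1),\epsilon_M(F_2))\simeq\Hom(\cor_{\{t\geq 0\}}\tens\opb\pi F_1,\,\cor_{\{t\geq 0\}}\tens\opb\pi F_2),
\]
which by tensor-hom and $(\opb\pi,\roim\pi)$ adjunction equals $\Hom(F_1,\roim\pi\rihom(\cor_{\{t\geq 0\}},\cor_{\{t\geq 0\}}\tens\opb\pi F_2))$. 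Full faithfulness of $\epsilon_M$ therefore reduces to the isomorphism $\roim\pi\rihom(\cor_{\{t\geq 0\}},\cor_{\{t\geq 0\}}\tens\opb\pi F)\simeq F$, and similarly for $e_M$ with $\cor_\fM^\Tam$ in place of $\cor_{\{t\geq 0\}}$.

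To establish this, set $Z\eqdot M\times[0,+\infty)$, let $j_Z\colon Z\hookrightarrow\bM\times\bR$ be the locally closed inclusion, and $p\eqdot\pi\circ j_Z\colon(Z,\bar Z)\to\fM$ with $\bar Z\eqdot\bM\times[0,+\infty]$. The projection formula gives $\cor_{\{t\geq 0\}}\tens\opb\pi F\simeq\reeim{j_Z}\opb{p}F$; combining $\rihom(\reeim{j_Z}A,B)\simeq\roim{j_Z}\rihom(A,\epb{j_Z}B)$ with $\epb{j_Z}\reeim{j_Z}\simeq\id$ yields $\rihom(\cor_{\{t\geq 0\}},\cor_{\{t\geq 0\}}\tens\opb\pi F)\simeq\roim{j_Z}\opb{p}F$. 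The functoriality $\roim\pi\circ\roim{j_Z}\simeq\roim p$ and a last projection formula then reduce the required isomorphism to $\roim p\,\cor_Z\simeq\cor_\fM$, which holds because the fibre of $p$ is the contractible bordered space $([0,+\infty),[0,+\infty])$. The argument for $e_M$ runs in parallel by writing $\cor_\fM^\Tam\simeq\inddlim[a\to+\infty]\cor_{\{t\geq a\}}$ and exchanging $\rihom$ and $\roim\pi$ with the filtered colimit. The principal technical subtlety is the bookkeeping at $\{t=+\infty\}\subset\bR$: applied naively to the shriek extension $\reeim{j_Z}\cor_Z=\cor_{\{t\geq 0\}}$, the functor $\roim\pi$ vanishes, and the crucial point is that $\rihom(\cor_{\{t\geq 0\}},-)$ instead produces the non-shriek pushforward $\roim{j_Z}$, which restores the sections at infinity.
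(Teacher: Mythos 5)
Your reduction is sound through the first two stages: showing that $\cor_{\{t\geq0\}}\tens\opb\pi F$ lies in ${}^\bot\indc_{t^*=0}$ via the computation $(\cor_{\{t\geq0\}}\oplus\cor_{\{t\leq 0\}})\ctens\cor_{\{t\geq 0\}}\simeq\cor_{\{t\geq 0\}}$ is correct, and so is the passage by $\tens$-$\rihom$ and $(\opb\pi,\roim\pi)$ adjunction to the key claim $\roim\pi\rihom(\cor_{\{t\geq0\}},\cor_{\{t\geq0\}}\tens\opb\pi F)\simeq F$. The problem is the last step. You write that ``a last projection formula'' reduces $\roim p\,\opb p F\simeq F$ to $\roim p\,\cor_Z\simeq\cor_\fM$. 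But the projection formula $\roim p(\cor_Z\tens\opb p F)\simeq(\roim p\,\cor_Z)\tens F$ does \emph{not} hold for $\roim p$; the six-operations projection formula is for $\reeim p$ only. And $p\colon(Z,\bar Z)=(M\times[0,\infty),\bM\times[0,\infty])\to(M,\bM)$ is precisely \emph{not} a morphism for which $\roim p$ and $\reeim p$ coincide: the boundary stratum $\bM\times\{+\infty\}\subset\bar Z$ lies over the interior $M$ of $\fM$, which is exactly the phenomenon your own closing remark isolates. Indeed, with $\reeim p$ the projection formula gives $\reeim p\,\opb p F\simeq F\tens\reeim p\,\cor_Z\simeq 0$ (fibers have vanishing $H^*_c$), while the claim you actually need is that $\roim p\,\opb p F\simeq F$. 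So the ``projection formula'' step is exactly where the real content of the proposition sits, and it is not justified: one must prove directly that the unit $F\to\roim p\,\opb p F$ is an isomorphism (equivalently, that $\opb p$ is fully faithful for this specific semi-proper $p$ with fiber $[0,+\infty)$), which requires a genuine argument — for instance, exploiting the zero section $s\colon\fM\hookrightarrow(Z,\bar Z)$ and a homotopy/fiberwise computation compatible with the ind-sheaf and bordered-space operations. The proposal does not supply this.

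A secondary worry is the treatment of $e_M$. You propose to ``exchange $\rihom$ and $\roim\pi$ with the filtered colimit'' after writing $\cor_\fM^\Tam\simeq\sinddlim_a\cor_{\{t\geq a\}}$. But $\rihom(-,-)$ is contravariant in its first argument, so $\rihom(\sinddlim_a\cor_{\{t\geq a\}},G)$ is controlled by a projective (not inductive) limit, and one cannot simply commute it past $\roim\pi$ and the colimit. The reduction from $e_M$ to the same fiberwise computation therefore needs a different mechanism (for example deducing it from the $\epsilon_M$ case via $e_M\simeq\cor^\Tam_\fM\ctens\epsilon_M$ and properties of $\cor^\Tam_\fM\ctens(\scbul)$, rather than by this colimit exchange).
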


\subsection{Enhanced ind-sheaves over an algebra}\label{subsect:alg}
We need to generalize some definitions and results of the preceding subsections to the case where $\cor$ is replaced by a sheaf of algebras. 
For simplicity we only consider the case where $M$ is a good topological space, not a bordered space.

Let $\sha$ be
a sheaf of $\cor$-algebras on a space $X$.
We denote by $\md[\sha]$ the category of sheaves of $\sha$-modules.
Recall that an ind-sheaf of $\sha$-modules on $X$ is an ind-sheaf $\shm$
with a morphism of sheaves of $\cor$-algebras
$$\sha\to \shend(\shm).$$
We shall denote by $\II[\sha]$
the category of ind-sheaves of $\sha$-modules on $X$,
and by 
$\Derb(\II[\sha])$, or simply
$\Derb(\isha)$, the derived category of ind-sheaves of $\sha$-modules on $X$ (see~\cite{KS01}). 
 (In \cite{KS01}, it was denoted by $\mathrm{I}(\beta\sha)$ and 
$\Derb(\mathrm{I}(\beta\sha))$.) 
Then we have functors
\eqn
&&\rihom[\beta\sha](\scbul,\scbul)\cl
\Derb(\isha)^\rop\times \Derb(\isha)\To\Der(\icor_M),\\
&&\scbul\ltens[\beta\sha]\scbul\cl
\Derb(\isha^\rop)\times \Derb(\isha))\To\Der(\icor_M),
\eneqn
We write for short
\eqn
\rhom[\sha](\scbul,\scbul)&\cl&
\Derb(\md[\sha])^\rop\times \Derb(\isha)\To\Der(\icor_M)\\
\scbul\ltens[\sha]\scbul&\cl&
 \Derb(\isha^\rop)\times\Derb(\md[\sha])\To\Der(\icor_M),
\eneqn
where
\eqn
&&\rhom[\sha](\shm,K)=\rihom[\beta\sha](\beta\shm, K)\quad\text{and}\\
&&L\tens[\sha]\shm=L\ltens[\beta\sha]\beta\shm
\eneqn
for $\shm\in\Derb(\md[\sha])$, $K\in \Derb(\isha)$ and
$L\in \Derb(\md[\sha^\rop])$.

Consider a  good topological space $M$ and the bordered space $M\times\fR$. As above, we denote by 
$\pi\cl M\times\fR\to M$ and $\ol\pi\cl M\times\bR\to M$ the projections.
Assume to be given a $\cor$-flat sheaf of algebras $\sha$ on $M$.
For short, we set 
\eqn
&&\sha_{M\times\bR}=\opb{\ol\pi}\sha.
\eneqn
One defines the categories:
\eqn
\Derb(\isha_{M\times\fR})&=&\Derb(\isha_{M\times\bR})
/\set{F\in\Derb(\isha_{M\times\bR})}{F\tens\cor_{M\times\R}\simeq0},\\
\TDC(\isha)&=&\Derb(\isha_{M\times\fR})/
\set{K}{\opb{\pi}\roim{\pi}K\isoto K}.
\eneqn
We keep the notations $q_1,q_2,\mu$ as in \eqref{eq:muq1q2} and denote by
$q\cl M\times\fR\times\fR\to {M}$ the projection.

\begin{definition}\label{def:ctens2}
One defines the functors
\eq
&&
\scbul\ctens[\beta\sha]\scbul\cl
\Derb(\isha^{\;\rop}_{M\times\fR})\times\Derb(\isha_{M\times\fR})\to\Derb(\icor_{M\times\fR})\label{eq:ctensA1}\\
&&\hspace{10ex}\shn\ctens[\sha] \shm \eqdot \reeim {\mu} (\opb q_1\shn 
\lltens[\opb{q}\beta\sha] \opb q_2 \shm),\nn\\
&&
\cihom[\beta\sha](\scbul,\scbul)\cl\Derb(\isha_{M\times\fR})
\times\Derb(\isha_{M\times\fR})\to\Derb(\icor_{{M\times\fR}}) \label{eq:cihomA1} \\
&&\hspace{10ex}\cihom[\beta\sha](\shm_1,\shm_2)\eqdot \roim {q_1} \rihom[\opb{q}\beta\sha](\opb q_2 \shm_1, \epb{\mu}\shm_2). \nn
\eneq
\end{definition}
If $\sha$ is commutative, these functors take their values in 
$\Derb(\isha_{M\times\fR})$.

On easily checks that the functors~\eqref{eq:ctensA1}  and \eqref{eq:cihomA1} induce functors (we keep the same notation):
\begin{align}
\scbul\ctens[\beta\sha]\scbul&\cl\TDC(\isha^\rop)\times\TDC(\isha)\to\TDC(\icor_{M}),\label{eq:ctensA2}\\
\cihom[\beta\sha](\scbul,\scbul)&\cl\TDC(\isha)^\rop\times\TDC(\isha)\to\TDC(\icor_{M}).\label{eq:cihomA2}
\end{align}
If $\sha$ is commutative, these functors take their values in $\TDC(\isha)$.

We shall also need to consider the functors:
\begin{align}
\scbul\ltens[\sha]\scbul&\cl
\Derb(\sha^\rop)\times\Derb(\isha_{M\times\fR})\to\Derb(\icor_{M\times\fR})\label{eq:tensA1}\\
\rhom[\sha](\scbul,\scbul)&\cl\Derb(\sha)^\rop\times\Derb(\isha_{M\times\fR})\to\Derb(\icor_{{M\times\fR}})\label{eq:ihomA1}
\end{align}
which induce functors
\begin{align}
\scbul\ltens[\sha]\scbul&\cl\Derb(\sha^\rop)\times\TDC(\isha)\to\TDC(\icor_{M})\label{eq:tensB1}\\
\rhom[\sha](\scbul,\scbul)&\cl\Derb(\sha)^\rop\times\TDC(\isha)\to\TDC(\icor_{M}).\label{eq:ihomB1}
\end{align}
If $\sha$ is commutative, these functors take their values in 
$\Derb(\isha_{M\times\fR})$ or in $\TDC(\isha)$.

\section{Holomorphic solutions of $\shd$-modules}

\subsection{$\D$-modules}\label{subsectionDmod}
Reference are made to~\cite{Ka03} for the theory of $\D$-modules. The aim of this subsection is simply to fix a few notations. 

Let $(X,\sho_X)$ be a {\em complex} manifold. We introduce the following notations  (most of them are classical).
\begin{itemize}
\item
$d_X$ is the  complex dimension of $X$,
$\D_X$ the sheaf of $\C$-algebras  of holomorphic finite-order
differential operators, $\Omega_X$ the invertible sheaf of
differential forms of top degree, $\md[\D_X]$ the category of left $\D_X$-modules, $\BDC(\D_X)$ its bounded derived category.
\item
$\mop\colon \BDC(\D_X) \isoto \BDC(\D_X^\op)$ is the equivalence of categories given by
\eqn
&&\shm^\mop = \Omega_X\ltens[\O_X]\shm.
\eneqn
\item
 $\Dtens$ and $\Detens$ are the (derived) operations of tensor product and external product for $\D$-modules. Recall that $\shn\Dtens\shm=\shn\ltens[\sho_X]\shm$ in $\Derb(\sho_X)$.
\item
For $f\colon X\to Y$ a morphism of complex manifolds, 
$\Dopb f$ and $\Doim f$ are the (derived) operations of  inverse image and direct images for $\D$-modules. 
\item
The dual of $\shm\in\BDC(\D_X)$ is given by
\[
\Ddual_X\shm = \rhom[\D_X](\shm,\D_X\tens[\O_X]\Omega_X^{\otimes-1})[d_X].
\]
\item
 $\BDC_\coh(\D_X)$, $\BDC_\qgood(\D_X)$ and $\BDC_\good(\D_X)$ are the full
subcategories of $\BDC(\D_X)$ whose objects have coherent, quasi-good and good cohomologies, respectively.
Here, a $\D_X$-module $\shm$ is called \emph{quasi-good} if, for any relatively compact open subset $U\subset X$, $\shm\vert_U$
is a sum of coherent $(\O_X\vert_U)$-submodules. A $\D_X$-module $\shm$ is called \emph{good} if it is quasi-good and coherent.
\item
For $\shm\in\BDC_\coh(\D_X)$, $\chv(\shm)$ is its characteristic variety,  a closed conic involutive subset of the cotangent bundle $T^*X$. If $\chv(\shm)$ is Lagrangian, $\shm$ is called holonomic. For the notion of regular holonomic $\D_X$-module, refer e.g.\ to \cite[\S5.2]{Ka03}.
 We denote by $\BDC_{\hol}(\D_X)$ and $\BDC_{\reghol}(\D_X)$  the full 
subcategories of $\BDC(\D_X)$ whose objects have holonomic and regular
holonomic cohomologies, respectively. 
\end{itemize}

Note that $\BDC_\coh(\D_X)$, $\BDC_\qgood(\D_X)$, $\BDC_\good(\D_X)$,  $\BDC_{\hol}(\D_X)$ and $\BDC_{\reghol}(\D_X)$ are triangulated categories. 

If $Y\subset X$ is a closed hypersurface, denote by $\O_X(*Y)$ the sheaf of meromorphic functions with poles at $Y$. It is a regular holonomic $\D_X$-module. For $\shm\in\BDC(\D_X)$, set
\[
\shm(*Y) = \shm \Dtens \O_X(*Y).
\]

\subsection{Tempered functions and distributions}
In this subsection we recall some results of~\cite{KS96,KS01}.
Here,  $M$ is a real analytic manifold and $\cor=\C$.  As usual, we denote by 
$\shc_M^\infty$ (resp.\ $\shc_M^\omega$) the sheaf of  $\C$-valued functions
of class ${\mathrm C}^\infty$ (resp.\ real analytic), by $\Db_M$ 
(resp.\ $\shb_M$) 
the sheaf of Schwartz's distributions (resp.\ Sato's hyperfunctions),
and by $\shd_M$ the sheaf of  real analytic finite-order differential operators.

\begin{definition}
Let  $U$ be an open subset of $M$ 
and $f\in \shc^{\infty}_M(U)$. One says that $f$ has {\em  polynomial growth} at $p\in M$
if it satisfies the following condition.
For a local coordinate system
$(x_1,\dots,x_n)$ around $p$, there exist
a sufficiently small compact neighborhood $K$ of $p$
and a positive integer $N$
such that
\eq\label{eq:deftp}
&\sup_{x\in K\cap U}\big(\dist(x,K\setminus U)\big)^N\vert f(x)\vert
<\infty\,,&
\eneq
with the convention that if $K\cap U=\emptyset$ or if $K\subset U$, then the left-hand side of~\eqref{eq:deftp} is understood to be $0$.
It is obvious that $f$ has polynomial growth at any point of $U\cup(M\setminus\ol U)$.
We say that $f$ is {\em tempered} at $p$ if all its derivatives
have polynomial growth at $p$. We say that $f$ is tempered 
if it is tempered at any point.
\end{definition}

For an open subanalytic set $U$ in $M$,
denote by $\Cinft[M](U)$ the subspace
of $\shc^{\infty}_M(U)$ consisting of tempered $\mathrm{C}^\infty$-functions. 

Denote by $\Dbt_M(U)$ the space of tempered distributions on $U$, 
defined by the exact sequence
\eqn
&&0\to\sect_{M\setminus U}(M;\Db_M)\to\sect(M;\shd b_M)\to\Dbt_M(U)\to 0.
\eneqn                     

Using  Lojasiewicz's inequalities, one easily proves  the following results.
\begin{itemize}
\item
The presheaf  $\Cinft[M]\eqdot U\mapsto \Cinft[M](U)$ is a sheaf on  $\Msa$, hence an ind-sheaf on $M$. One calls it the {\em ind-sheaf of tempered $\mathrm{C}^\infty$-functions}.
\item
The presheaf  $\Dbt_M\eqdot U\mapsto \Dbt_M(U)$ is a sheaf on $\Msa$, hence an ind-sheaf on $M$. One calls it the {\em ind-sheaf of tempered distributions}.
\end{itemize}
Let $F\in\Derb_\Rc(\C_M)$. One has the isomorphism
\eq\label{eq:thom}
&&\alpha_M\rihom(F,\Dbt_{M})\simeq \thom(F,\Db_M),
\eneq
where the right-hand side was defined by Kashiwara  as the main tool for his proof of the Riemann-Hilbert correspondence in~\cite{Ka80,Ka84}. 
\begin{definition}[{See~\cite[Def.~5.4.1]{DK13}}]\label{def:Rbspace}
The category of \emph{real analytic bordered spaces} is defined as follows.\\
The objects are pairs $(M,\bM)$ where $\bM$ is a real analytic manifold and $M\subset \bM$ is an open subanalytic subset.\\
Morphisms $f\colon (M,\bM) \to (N,\bN)$ are real analytic maps $f\colon M\to N$ such that
\bnum
\item
$\Gamma_f$ is a subanalytic subset of $\bM\times\bN$, 
\item
$\overline\Gamma_f\to\bM$ is proper.
\ee
\end{definition}
Hence a morphism of real analytic bordered spaces is a morphism of bordered spaces.
Recall that $j_M\colon(M,\bM)\to\bM$ 
denotes the natural morphism.

\begin{definition}\label{def:CwDbtonfM}
Let $\fM=(M,\bM)$ be a real analytic bordered space. One sets
$\Dbt_{\fM} \eqdot\opb{j_M}\Dbt_{\bM}$.
\end{definition}
If $f\colon\fM\to\fN$ is an isomorphism of real analytic bordered spaces, then
$\Dbt_{\fM} \simeq \opb f\Dbt_{\fN}$ as object of $\Derb(\iC_\fM)$.

 We say that $S$ is a subanalytic subset of $\fM$
if $S$ is a subset of $M$ subanalytic in $\bM$. 

\subsection{Holomorphic functions with tempered growth}

References for this subsection are made to~\cite{KS01}. We slightly change our notations and write 
$\rhom[\D_X](\shm,\scbul)$ instead of $\rhom[\beta_X\D_X](\beta_X\shm,\scbul)$.

One defines the ind-sheaf of tempered holomorphic functions 
$\Ot[X]$ as the Dolbeault complex with coefficients in  
$\Cinft[X]$. More precisely, denoting by $X^c$ the complex conjugate manifold to $X$ and by $X_\R$ the underlying real analytic manifold, we set:
\eq\label{eq:defOt1}
\Ot[X]&=&\rhom[\shd_{X^c}](\sho_{X^c},\Cinft[X_\R]).
\eneq
One proves the isomorphism
\eq\label{eq:defOt2}
\Ot[X]&\simeq&\rhom[\shd_{X^c}](\sho_{X^c},\Dbt_{X_\R}).
\eneq
Note that the object $\Ot[X]$ is not
concentrated in degree zero in dimension $>1$. Indeed, with the subanalytic topology, only finite coverings 
are allowed. If one considers for example the open set $U\subset\C^n$, the difference of an open ball of radius $R>0$ and a closed ball of radius $0<r<R$, then the Dolbeault complex will not be exact after
 any finite covering. 
 
 Still denote by $\sho_X$ the image of this sheaf in  $\md[\iC_X]$. 
We have then the morphism in the category  $\Derb(\iC_X)$:
\eqn
&&\Ot[X]\to\sho_X.
\eneqn
  
\begin{example}
Let $Z$ be a closed complex analytic subset of the complex manifold $X$ and let $M$ be a real analytic manifold 
such that $X$ is a complexification of $M$. We have the isomorphisms
\eqn
&&\rhom[{\iC_X}](\C_Z,\Ot)\simeq\rsect_{[Z]}(\sho_X)\mbox{ (algebraic cohomology)},\\
&&\rhom[{\iC_X}](\C_Z,\sho_X)\simeq\rsect_{Z}(\sho_X),\\
&&\rhom[{\iC_X}](\RD_X'\C_M,\Ot)\simeq\Db_M,\\
&&\rhom[{\iC_X}](\RD_X'\C_M,\sho_X)\simeq\shb_M,
\eneqn
where $\RD'_X\cl \Derb(\C_X)^\rop\to\Derb(\C_X)$ is the duality functor $\rhom(\scbul,\C_X)$. 
\end{example}
Notice that with this approach, the sheaf $\Db_M$ of Schwartz's distributions is constructed similarly as the sheaf $\shb_M$ of Sato's hyperfunctions.  

The classical de Rham and solution functors are given by
\begin{align*}
\dr_X &\cl \BDC(\D_X) \to \BDC(\C_X), &\shm &\mapsto \Omega_X \ltens[\D_X] \shm, \\
\sol_X &\colon \BDC(\shd_X)^\op \to \BDC(\C_X), &\shm &\mapsto \rhom[\D_X] (\shm,\sho_X),
\end{align*}
and the tempered de Rham and solution functors are
\begin{align*}
\drt_X &\cl \BDC(\D_X) \to \BDC(\iC_X), &\shm &\mapsto \Ovt_X \ltens[\D_X] \shm, \\
\solt_X &\cl \BDC(\D_X)^\op \to \BDC({\iC_X}), &\shm& \mapsto \rhom[\D_X] (\shm,\Ot[X]).
\end{align*}
One has
\[
\sol_X \simeq \alpha_X\solt_X,
\quad
\dr_X \simeq \alpha_X\drt_X,
\]
and it follows from~\cite{Ka84} that for $\shl\in\BDC_\reghol(\D_X)$ one has
\[
\solt_X(\shl) \simeq \sol_X(\shl),
\quad
\drt_X(\shl) \simeq \dr_X(\shl).
\]
For $\shm\in\BDC_\coh(\D_X)$, one has
\[
\solt_X(\shm) \simeq \drt_X(\Ddual_X\shm)[-d_X].
\]

Let us recall some functorial properties of the tempered de Rham and solution functors.

\begin{theorem}[{\cite[Theorems 7.4.1, 7.4.6 and 7.4.12]{KS01}}]
\label{thm:ifunct}
Let $f\colon X\to Y$ be a morphism of complex manifolds. 
\bnum
\item
There is an isomorphism in $\BDC(\II[\opb f\D_Y])$
\[
\epb f \Ot[Y][d_Y] \simeq \D_{Y\leftarrow X} \ltens[\D_X] \Ot[X] [d_X].
\]
\item
For any $\shn\in\BDC(\D_Y)$, there is an isomorphism in $\BDC(\iC_X)$
\[
\drt_X(\Dopb f\shn) [d_X] \simeq 
\epb f \drt_Y(\shn) [d_Y].
\]
\item
Let $\shm\in\BDC_\good(\D_X)$, and assume that $\supp\shm$ is proper over $Y$. Then there is an isomorphism in $\BDC(\iC_Y)$
\[
\drt_Y(\Doim f\shm) \simeq 
\reeim f\drt_X(\shm) .
\]
\item
Let $\shl\in\BDC_\reghol(\D_X)$. 
Then there is an isomorphism in $\BDC(\iD_X)$
\[
\Ot[X] \ltens[\O_X] \shl \simeq 
\rihom(\sol_X(\shl),\Ot[X]).
\]
In particular, for a closed hypersurface $Y\subset X$, one has
\[
\Ot[X] \ltens[\O_X] \O_X(*Y) \simeq \rihom(\C_{X\setminus Y},\Ot[X]).
\]
\enum
\end{theorem}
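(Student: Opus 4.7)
The plan is to prove the four assertions separately, with (ii) following formally from (i) and (iii), (iv) requiring substantially more work; the technical heart is (iii), while (iv) is essentially the ind-sheaf refinement of Kashiwara's regular Riemann--Hilbert correspondence.

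For (i), I would factor $f$ as the graph embedding $X\into X\times Y$ followed by the second projection $p\cl X\times Y\rightarrow Y$ and prove the transfer isomorphism for each factor. For the projection, $\shd_{Y\leftarrow X\times Y}$ is essentially $\Omega_X[d_X]\tens\opb{p}\shd_Y$, and combining this with the tempered Dolbeault resolution of $\Ot[X\times Y]$ together with the computation of $\epb{p}$ on $\Ot[Y]$ yields the desired isomorphism. For the closed embedding $i$, the plan is to use the Koszul resolution of $\opb{i}\sho_Y$ by locally free $\opb{i}\shd_Y$-modules and to compute $\epb{i}\Ot[Y]$ via a tempered version of the Cauchy--Kowalewski theorem. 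Once (i) is established, (ii) is obtained by applying $\Omega_X\ltens[\shd_X](-)\,[d_X]$ to both sides and using the standard identities $\Dopb{f}\shn = \shd_{X\rightarrow Y}\ltens[\opb{f}\shd_Y]\opb{f}\shn$ and $\Omega_X\ltens[\shd_X]\shd_{X\rightarrow Y}\simeq \shd_{Y\leftarrow X}\tens[\opb{f}\sho_Y]\opb{f}\Omega_Y$.

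For (iii), the plan is to write $\Doim{f}\shm = \reim{f}(\shd_{Y\leftarrow X}\ltens[\shd_X]\shm)$ (valid under the properness hypothesis), apply the functor $\drt_Y = \Ovt_Y\ltens[\shd_Y](-)$, and compare the result with $\reeim{f}(\Ovt_X\ltens[\shd_X]\shm)$. The goodness assumption on $\shm$ allows us, locally, to replace $\shm$ by a sum of coherent submodules and thereby reduce to a projection-formula identity of the form $\reeim{f}(F\tens \opb{f}G)\simeq \reeim{f}F\tens G$, applied after substitution via~(i); properness of $\supp(\shm)\rightarrow Y$ ensures $\reim{f}\simeq\reeim{f}$ on the relevant objects.

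For (iv), the plan is to proceed by devissage on $\shl$ using resolution of singularities. By the constructibility of $\sol_X(\shl)$ and the local structure theory of regular holonomic modules, one reduces to the case $\shl = \sho_X(*Y)$ for a normal-crossing divisor $Y$, in which case $\sol_X(\shl)\simeq\C_{X\setminus Y}$ and the statement becomes $\Ot[X]\ltens[\sho_X]\sho_X(*Y)\simeq \rihom(\C_{X\setminus Y},\Ot[X])$, provable via Lojasiewicz-type estimates on tempered holomorphic functions. The main obstacle is~(iii): the delicate step is to justify the commutation of $\reeim{f}$ with the tempered tensor product, where one ultimately invokes the flabbiness of $\Dbt$ on the subanalytic site together with Hironaka's theorem to control the behaviour of $\supp(\shm)$ near the boundary of $X$.
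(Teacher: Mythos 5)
The theorem you are asked to prove is not proved in this paper at all: it is stated with the attribution \cite[Theorems 7.4.1, 7.4.6 and 7.4.12]{KS01} and no proof is given; the next line of text is already the start of \S\,3.4. Moreover, the paper explicitly flags in \S\,4.1 that part~(iv) ``is a reformulation of a result of Bj\"ork~\cite{Bj93}.'' So there is no internal argument to compare against; one can only judge your outline on its own terms and against the literature it summarizes.

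On those terms, the overall architecture you propose --- factor $f$ through the graph embedding and a projection for~(i), derive~(ii) by side-change from~(i), treat~(iii) via properness and a projection formula for $\reeim{f}$, and prove~(iv) by d\'evissage through resolution of singularities to the normal-crossing case $\shl=\sho_X(*Y)$ --- is indeed the shape of the proofs in \cite{KS01} and, for~(iv), in \cite{Ka84,Bj93}. A few points, however, deserve correction or more care. First, the side-change identity you invoke is wrong as written: the correct formula is the \emph{underived} $\sho_X$-linear identity
\[
\Omega_X\tens[\sho_X]\shd_{X\to Y}\;\simeq\;\shd_{Y\from X}\tens[\opb{f}\sho_Y]\opb{f}\Omega_Y,
\]
whereas $\Omega_X\ltens[\shd_X]\shd_{X\to Y}$ is a genuinely different (and much smaller) object, essentially the relative de~Rham complex. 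Second, ``$\text{(ii)}$ following formally from (i) and (iii)'' should just be ``from (i)''; the proof of~(ii) does not use~(iii). Third, for~(iii) the reduction to $\reeim{f}(F\tens\opb{f}G)\simeq\reeim{f}F\tens G$ undersells the real work: the heart of the argument in \cite{KS01} is the construction of a tempered Dolbeault resolution, the verification that the tempered sheaves $\Dbt$ and $\Cinft$ are acyclic for the direct-image functors on the subanalytic site, and an ind-sheaf version of Serre/Grothendieck duality providing an integration morphism $\reim{f}\Ovt_X[d_X]\to\Ovt_Y[d_Y]$ compatible with the $\shd$-module structure; properness and the goodness of $\shm$ then let one check the resulting map is an isomorphism by reduction to free modules. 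None of this is captured by the phrase ``flabbiness of $\Dbt$'' alone. These are gaps in exposition rather than wrong turns, but if you intend this as a proof rather than a pointer to \cite{KS01}, they need to be filled in.
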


\subsection{Enhanced solutions of $\D$-modules}
References for this subsection are made to~\cite{DK13}.

Let $X$ be a complex analytic manifold,  $Y\subset X$ a complex analytic hypersurface and set $U=X\setminus Y$.
For $\varphi\in\O_X(*Y)$, one sets
\begin{align*}
\D_X \e^\varphi &= \D_X/\set{P}{P\e^\varphi=0 \text{ on } U}, \\
\she^\varphi_{U|X}&=\D_X \e^\varphi(*Y).
\end{align*}
Hence $\D_X \e^\varphi$ is a $\D_X$-submodule of $\she^\varphi_{U|X}$ and 
 $\she^\varphi_{U|X}$ is a holonomic $\D_X$-module.
Moreover
\eq\label{eq:dualexpphi}
(\Ddual_X\she^\varphi_{U|X})(*Y) \simeq \she^{-\varphi}_{U|X} .
\eneq

For $c\in\R$, set for short
\[
\{\Re \varphi < c\}\seteq\set{x\in U}{\Re \varphi(x) < c} \subset X.
\]

\begin{notation}\label{not:<?}
One sets
\begin{align*}
\C_{\{\Re \varphi <\ast\}} &\eqdot \inddlim[c\rightarrow+\infty]\C_{\{\Re \varphi < c\}} \in \Derb(\iC_X), \\
E^\varphi_{U|X} &\eqdot \rihom(\C_U,\C_{\{\Re \varphi <\ast\}}) \in \BDC(\iC_X).
\end{align*}
\end{notation}

The next result (see~\cite[Prop.~6.2.2]{DK13}) generalizes~\cite[Proposition~7.3]{KS03} in which the case 
$X=\C$ and $\varphi(z)=1/z$ was treated. 

\begin{proposition}\label{pro:Solphi}
Let $Y\subset X$ be a closed complex analytic hypersurface, and set $U=X\setminus Y$.
For $\varphi\in\O_X(*Y)$, there is an isomorphism in $\BDC(\iC_X)$
\[
\drt_X(\she^{-\varphi}_{U|X}) \simeq E^\varphi_{U|X}[d_X].
\]
\end{proposition}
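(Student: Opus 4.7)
The plan is to reduce the computation to a twisted tempered de Rham complex, apply the ``untwisting'' trick of multiplication by $e^\varphi$ on the regions $\{\Re\varphi<c\}$ where $e^\varphi$ is bounded, and then invoke a tempered Poincaré lemma.

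First, via the $\sho_X$-linear isomorphism $\sho_X(*Y)\isoto\she^{-\varphi}_{U|X}$, $f\mapsto fe^{-\varphi}$, the $\D_X$-module $\she^{-\varphi}_{U|X}$ is presented as the meromorphic connection $(\sho_X(*Y),\nabla_\varphi)$ with $\nabla_\varphi=d-d\varphi\wedge$. Hence $\drt_X(\she^{-\varphi}_{U|X})$ is quasi-isomorphic to the twisted tempered de Rham complex $(\Omega_X^{\bullet,t}(*Y),\nabla_\varphi)$, placed in degrees $-d_X,\dots,0$; and by Theorem~\ref{thm:ifunct}~(iv) applied to the regular holonomic $\sho_X(*Y)$, each term is $\Omega_X^{k,t}(*Y)\simeq\rihom(\C_U,\Omega_X^{k,t})$.

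Next, observe the formal identity $\nabla_\varphi(e^\varphi h)=e^\varphi\,dh$, so multiplication by $e^\varphi$ intertwines the untwisted differential $d$ with the twisted one $\nabla_\varphi$. On each subanalytic open set $\{\Re\varphi<c\}\subset U$ the function $e^\varphi$ is holomorphic and bounded by $e^c$, hence defines an ind-sheaf morphism $\C_{\{\Re\varphi<c\}}\otimes\Omega_X^{k,t}\to\Omega_X^{k,t}(*Y)$ preserving tempered growth. These morphisms are compatible as $c$ increases; taking the ind-limit and applying $\rihom(\C_U,-)$ yields a natural morphism of complexes of ind-sheaves on $X$:
\eqn
E^\varphi_{U|X}[d_X]\simeq\rihom(\C_U,\C_{\{\Re\varphi<\ast\}})[d_X]\to\drt_X(\she^{-\varphi}_{U|X}).
\eneqn

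It remains to check that this morphism is an isomorphism, which can be verified locally at stalks. On $U$, the restriction $\she^{-\varphi}_{U|X}|_U$ is isomorphic to $\sho_U$ as a $\D_U$-module (via multiplication by $e^\varphi$), and both sides restrict to $\C_U[d_X]$ by the tempered Poincaré lemma. At a point $y\in Y$ the content is a twisted tempered Poincaré lemma: given tempered $g$ one solves $\nabla_\varphi f=g$ by the substitution $f=e^\varphi h$, which reduces to $dh=e^{-\varphi}g$, and one then verifies that $f=e^\varphi h$ is tempered on each $\{\Re\varphi<c\}$ because $e^\varphi$ is bounded there. The main obstacle is this last step: rigorously establishing the twisted tempered Poincaré lemma requires combining H\"ormander's $L^2$-$\bar\partial$-theory on subanalytic opens with Lojasiewicz-type estimates for $\Re\varphi$ near $Y$, so as to control the tempered growth of $f=e^\varphi h$ directly, even when the intermediate function $h$ itself fails to be tempered.
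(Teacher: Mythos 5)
The paper does not prove this Proposition; it cites \cite[Prop.~6.2.2]{DK13} (noting it generalizes \cite[Prop.~7.3]{KS03}, the case $X=\C$, $\varphi=1/z$), so there is no paper-internal proof to compare against. On its own terms, your architecture is reasonable: identify $\she^{-\varphi}_{U|X}$ with $(\sho_X(*Y),\nabla_\varphi)$, build the comparison morphism via $e^\varphi$-multiplication on the regions $\{\Re\varphi<c\}$ where $e^\varphi$ is bounded, and reduce the isomorphism to a twisted tempered Poincar\'e/Dolbeault lemma. This identifies the crux correctly.

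But there is a genuine gap, and you flag it yourself: the twisted tempered Poincar\'e lemma \emph{is} the entire analytic content of the proposition, and the route you propose does not establish it. The substitution $f=e^\varphi h$ turns $\nabla_\varphi f=g$ into $dh=e^{-\varphi}g$, but $e^{-\varphi}g$ is in general \emph{not} tempered: on the portion of $Y$ where $\Re\varphi\to-\infty$, the factor $e^{-\varphi}$ dominates every power of $1/\operatorname{dist}(\cdot,Y)$, so the untwisted tempered Poincar\'e lemma gives no control on $h$, and multiplying back by $e^\varphi$ does not recover temperedness of $f$ on $\{\Re\varphi<c\}$. In other words, the ``reduction'' does not reduce; one must prove a tempered $\bar\partial$-type estimate directly in the presence of the exponential weight $e^{\Re\varphi}$, which already in dimension one is the nontrivial content of \cite[Prop.~7.3]{KS03}. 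The proof in \cite{DK13} handles the general case not by such a global substitution but by reducing, via resolution and the structure theory of meromorphic connections, to the normal-crossing/monomial case on the real blow-up, where explicit estimates are carried out. A secondary point: ind-sheaves have no stalk functor, so ``verified locally at stalks'' is not a legitimate step. The available criterion is that $\II[\C_{(\cdot)}]$ is a stack (so one may localize on $X$) together with testing the cone by $\rhom(\C_V,\scbul)$ over relatively compact subanalytic opens $V$; but this again puts you face to face with the weighted estimate rather than letting you bypass it.
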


Recall that we have set $\PP\eqdot\BBP^1(\R)$. In the sequel, one  sets for short 
\eqn
&&\BBP\eqdot\BBP^1(\C). 
\eneqn
We denote by $\tau\in\C\subset\BBP$ the affine coordinate such that $\tau|_\R = t$, the affine coordinate of $\R$.

Consider the natural morphism of bordered spaces
\[
i\colon X\times\fR \to X\times\BBP.
\]
Recall that  $\mop\colon\BDC(\D_\BBP) \to \BDC(\D_\BBP^\op)$ is the functor given by 
$\shm^\mop = \Omega_\BBP \ltens[\O_\BBP] \shm$.

\begin{definition}\label{def:OE}
One sets:
\begin{align*}
\OEn_X 
&= \epb i ((\she_{\C|\BBP}^{-\tau})^\mop\ltens[{\D_\BBP}]\Ot[X\times\BBP])[1] 
\simeq \epb i \rhom[\D_\BBP](\she_{\C|\BBP}^\tau,\Ot[X\times\BBP])[2] \in \TDC(\iD_X), \\
\OvE_X &= \Omega_X \ltens[\O_X] \OEn_X
\simeq \epb i(\Ovt_{X\times\BBP}\ltens[\D_\BBP] \she_{\C|\BBP}^{-\tau})[1]  \in \TDC(\iD_X^\rop).
\end{align*}
One defines the {\em enhanced de Rham functor} and the 
{\em enhanced solution functor} by 
\eqn
\drE_{X}&\cl& \Derb(\D_{X})\to\TDC(\iC_X),\quad \shm\mapsto \OvE_{X}\ltens[\D_X]\shm,\\
\solE_{X}&\cl& \Derb(\D_{X})^\rop\to\TDC(\iC_X),\quad
\shm\mapsto\rhom[\D_X](\shm,\OEn_{X}).
\eneqn
One defines similarly the functors $\drE_X$ and $\solE_X$ for right modules.
\end{definition}
Note that
\[
\solE_X(\shm) \simeq \drE_X(\Ddual_X\shm)[-d_X]\quad\text{ for
$\shm\in \Derb_\coh(\D_{X})$.}
\]

\begin{theorem}\label{thm:OTgeq0}
There is an isomorphism in $\BDC(\iC_{X\times\R_\infty})$
\eqn
&&\Tr\OEn_X \simeq \epb i \bl(\she_{\C|\BBP}^{-\tau})^\mop\ltens[\D_\BBP]\Ot[X\times\BBP]\br[1],
\eneqn
and there are isomorphisms in $\TDC(\iD_X)$
\eqn
\OEn_X &\isoto& \cihom(\C_{\{t\geq 0\}}, \OEn_X) \\
&\isofrom& \cihom(\C_{\{t\geq a\}}, \OEn_X) \quad
\text{for any $a\geq 0$}\\
&\simeq& \cihom(\C_X^\enh, \OEn_X)\simeq \C_X^\Tam \ctens \OEn_X.
\eneqn
\end{theorem}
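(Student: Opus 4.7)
The starting point is the alternative expression from Definition~\ref{def:OE},
\[
\OEn_X \simeq \epb i \bigl((\she_{\C|\BBP}^{-\tau})^\mop \ltens[\D_\BBP] \Ot[X\times\BBP]\bigr)[1],
\]
which realizes $\OEn_X$ in $\TDC(\iD_X)$ as the image, under the quotient functor, of an explicit object $F$ of $\BDC(\iD_{X\times\R_\infty})$. The first assertion then amounts to showing that $F$ already lies in the essential image of $\Tr$, i.e.\ in $(\indc_{t^*=0})^\bot$; the subsequent isomorphisms in $\TDC(\iD_X)$ will follow from the translation invariance of $\she_{\C|\BBP}^{-\tau}$ together with standard manipulations in the quotient category.

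For the first part, the key computational input is Proposition~\ref{pro:Solphi} applied on $X\times\BBP$ with $Y=X\times\{\infty\}$, $U=X\times\C$ and $\varphi=\tau$,
\[
\drt_{X\times\BBP}(\she^{-\tau}_{\C|\BBP}) \simeq \rihom(\C_{X\times\C},\C_{\{\Re\tau<*\}})[d_X+1],
\]
whose geometric content is that tempered solutions of $(\partial_\tau+1)u=0$ on $X\times\BBP$ are ind-presented by the expanding half-planes $\{\Re\tau<c\}$ as $c\to+\infty$. The same ind-presentation is already visible in the partial de Rham $(\she^{-\tau}_{\C|\BBP})^\mop\ltens[\D_\BBP]\Ot[X\times\BBP]$; I would extract it either by a relative-over-$X$ adaptation of Proposition~\ref{pro:Solphi}, or by using the length-two resolution of $\she^{-\tau}_{\C|\BBP}$ by $\D_\BBP(*\infty)$ combined with the compatibility of $\Ot[X\times\BBP]$ with subanalytic ind-limits in the $\tau$-direction. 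Pulling back by $\epb i$ identifies $\tau|_\R$ with the coordinate $t$ of $\R_\infty$, so the family $\{\Re\tau<c\}$ becomes the family $\{t<c\}\subset X\times\R$, and this ind-presentation is precisely what forces $F\in(\indc_{t^*=0})^\bot$.

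For the second block of isomorphisms, I would exploit the translation invariance of $\she^{-\tau}_{\C|\BBP}$: multiplication by $\e^{-a}$ induces, for each $a\in\R$, a $\D_\BBP$-linear isomorphism $\she^{-\tau}_{\C|\BBP}\isoto\she^{-(\tau-a)}_{\C|\BBP}$, which combined with the $\tau$-translation on $\Ot[X\times\BBP]$ yields a translation action on $\OEn_X$ identifying $\cihom(\C_{\{t\geq a\}},\OEn_X)\simeq\cihom(\C_{\{t\geq 0\}},\OEn_X)$ in $\TDC(\iD_X)$; this gives the $\isofrom$ isomorphism for $a\geq 0$. To prove $\OEn_X\isoto\cihom(\C_{\{t\geq 0\}},\OEn_X)$, I would apply $\cihom(\scbul,\OEn_X)$ to the distinguished triangle $\C_{\{t>0\}}\to\C_{\{t\geq 0\}}\to\C_{\{t=0\}}\to[+1]$: since $\C_{\{t=0\}}$ is the $\ctens$-unit and $\C_{\{t>0\}}\simeq\inddlim[a>0]\C_{\{t\geq a\}}$, the vanishing $\cihom(\C_{\{t>0\}},\OEn_X)\simeq0$ in $\TDC(\iD_X)$ follows from translation invariance by letting $a\to+\infty$. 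The last isomorphism $\cihom(\C_X^\enh,\OEn_X)\simeq\C_X^\Tam\ctens\OEn_X$ is then an instance of Lemma~\ref{lem:kTamtens}.

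The principal technical obstacle is the first step: extracting the half-plane ind-presentation directly from the \emph{partial} tempered de Rham $(\she^{-\tau}_{\C|\BBP})^\mop\ltens[\D_\BBP]\Ot[X\times\BBP]$, since Proposition~\ref{pro:Solphi} as stated only describes the \emph{full} de Rham. A relative-over-$X$ version of the arguments of~\cite{KS01} underlying Proposition~\ref{pro:Solphi} looks like the cleanest route, and makes the isomorphism for $\Tr\OEn_X$ into an essentially formal consequence.
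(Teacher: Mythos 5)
Your top-level strategy is correct (the theorem is recalled from~\cite{DK13} without proof in the paper): one shows that the defining representative
$F\eqdot\epb i\bigl((\she_{\C|\BBP}^{-\tau})^\mop\ltens[\D_\BBP]\Ot[X\times\BBP]\bigr)[1]$
already lies in $(\indc_{t^*=0})^\bot\subset\BDC(\iC_{X\times\R_\infty})$, and then deduces the remaining isomorphisms. But there are two real problems with the sketch.

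The first you flag yourself: Proposition~\ref{pro:Solphi} controls only the full tempered de Rham on $X\times\BBP$, not the $\BBP$-partial de Rham appearing in $F$, and the ``relative over $X$'' upgrade is precisely the nontrivial content here, not a formality. Moreover, an ind-presentation of $F$ by half-planes $\{t<c\}$ does not by itself force $F\in(\indc_{t^*=0})^\bot$; one still has to carry out convolution computations such as $\cihom(\C_{\{t\leq0\}},F)\simeq0$ in $\BDC(\iC_{X\times\R_\infty})$, and your proposal never performs them. The second problem is a genuine error: $\C_{\{t>0\}}$ is not $\inddlim[a>0]\C_{\{t\geq a\}}$. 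The only natural morphisms among the $\C_{\{t\geq a\}}$ are the restrictions $\C_{\{t\geq a'\}}\to\C_{\{t\geq a\}}$ for $a'\leq a$, and the resulting inductive limit as $a\to+\infty$ is the object $\cor_{\{t\gg0\}}$ of~\S\ref{not:gg}, which sits ``at $t=+\infty$'' and is quite different from $\C_{\{t>0\}}$; there is no filtered system of the kind you invoke. So the vanishing argument ``by letting $a\to+\infty$'' cannot get started, and even granting a hypothetical such presentation it would fail, because translation invariance turns the transition maps into isomorphisms of $\OEn_X$ and the limit of a constant pro-system is $\OEn_X$, not $0$. The vanishings actually needed for the second block, $\cihom(\C_{\{t>0\}},\OEn_X)\simeq 0$ and $\cihom(\C_{\{t<0\}},\OEn_X)\simeq 0$ in $\TDC(\iD_X)$, should be established alongside the first block as part of the same convolution computation; once they are, your use of the triangle $\C_{\{t>0\}}\to\C_{\{t\geq0\}}\to\C_{\{t=0\}}\to[+1]$, of translation invariance, and of Lemma~\ref{lem:kTamtens} does finish the argument.
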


Let $\phi\in\sho_X(*Y)$ be as above. 
By \cite[Corollary 9.4.12]{DK13}, 
one has:
\eq\label{eq:laplaceEphi}
&&\solE_X(\she^{-\varphi}_{U|X})\simeq\C_X^\Tam\ctens\C_{\{t=\Re\phi\}}\simeq
\inddlim[a\to+\infty]\C_{\{t\geq\Re\phi+a\}}.
\eneq
 Here $\{t=\Re \phi\}$ denotes 
$\set{(x,t)\in M\times\R}{x\in U,\,t=\Re \phi(x)}$
and similarly for $\{t\ge\Re\phi+a\}$.

One can recover $\Ot[X]$ from $\OEn_X$. Indeed, one has:
\begin{proposition}\label{pro:fromOEtoEt}
For $F\in\Derb(\C_X)$, one has  the isomorphisms in $\Derb(\iC_X)$
\eqn
\rihom(F,\Ot[X])&\simeq&
\fihom(\C_{\{t\geq0\}}\tens\opb{\pi}F,\OEn_X)\\
&\simeq&\fihom(\C_{\{t=0\}}\tens\opb{\pi}F,\OEn_X)\\
&\simeq&\fihom(\C_X^\Tam\tens\opb{\pi}F,\OEn_X).
\eneqn
In particular, we have
\eqn
&&\Ot[X]\simeq\fihom(\C_{\{t\geq0\}},\OEn_X).
\eneqn
 \end{proposition}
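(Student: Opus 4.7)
The plan is to proceed in three steps: a projection formula in $F$, a comparison of the three ``test sheaves'' $\C_{\{t\geq 0\}}$, $\C_{\{t=0\}}$, $\C_X^\Tam$, and the base case $\Ot[X]\simeq\fihom(\C_{\{t\geq 0\}},\OEn_X)$.

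First, for $F\in\Derb(\C_X)$ and $K\in\TDC(\iC_X)$, I would establish the projection formula $\fihom(K\tens\opb\pi F,\OEn_X)\simeq\rihom(F,\fihom(K,\OEn_X))$. The key point is the identity $\Tl(K\tens\opb\pi F)\simeq\Tl(K)\tens\opb\pi F$: unfolding $\Tl(\scbul)=(\C_{\{t\geq 0\}}\oplus\C_{\{t\leq 0\}})\ctens(\scbul)$ and $\ctens$ as $\reeim\mu(\opb{q_1}\scbul\tens\opb{q_2}\scbul)$, together with $\opb\mu\opb\pi F\simeq\opb{q_1}\opb\pi F\simeq\opb{q_2}\opb\pi F$ (all equal to the pullback along $X\times\fR^2\to X$, since $\pi\circ\mu=\pi\circ q_1=\pi\circ q_2$), the projection formula for $\reeim\mu$ yields $(A\ctens B)\tens\opb\pi F\simeq A\ctens(B\tens\opb\pi F)$, whence the identity. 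The rest follows from the standard adjunctions $\rihom(G\tens H,I)\simeq\rihom(G,\rihom(H,I))$ and $\roim\pi\rihom(\opb\pi F,\scbul)\simeq\rihom(F,\roim\pi\scbul)$.

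Next, using the internal form of the $\ctens$--$\cihom$ adjunction in $\TDC$, namely $\fihom(K_1\ctens K_2,K_3)\simeq\fihom(K_1,\cihom(K_2,K_3))$, together with the isomorphisms $\cihom(\C_{\{t\geq 0\}},\OEn_X)\simeq\cihom(\C_X^\Tam,\OEn_X)\simeq\OEn_X$ from Theorem~\ref{thm:OTgeq0}, and taking $K_1=\C_{\{t=0\}}$ (the $\ctens$-unit) with $K_2=\C_{\{t\geq 0\}}$ or $K_2=\C_X^\Tam$, I would obtain $\fihom(\C_{\{t\geq 0\}},\OEn_X)\simeq\fihom(\C_{\{t=0\}},\OEn_X)\simeq\fihom(\C_X^\Tam,\OEn_X)$. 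Combined with the projection formula above, this reduces the three claimed isomorphisms to the single identity $\Ot[X]\simeq\fihom(\C_{\{t\geq 0\}},\OEn_X)$.

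For this last identity, since $\Tr\OEn_X\in(\indc_{t^*=0})^\bot$ and the canonical map $\Tl\C_{\{t\geq 0\}}\to\C_{\{t\geq 0\}}$ has cone in $\indc_{t^*=0}$, one gets $\fihom(\C_{\{t\geq 0\}},\OEn_X)\simeq\roim\pi\rihom(\C_{\{t\geq 0\}},\Tr\OEn_X)$. Using $\Tr\OEn_X\simeq\epb{i}\bigl((\she^{-\tau}_{\C|\BBP})^\mop\ltens[\D_\BBP]\Ot[X\times\BBP]\bigr)[1]$ from Theorem~\ref{thm:OTgeq0} and factoring $\pi=p\circ i$ with $p\colon X\times\BBP\to X$ the projection, the internal-Hom form of the $(\reeim i,\epb i)$-adjunction transforms this into $\roim p\,\rihom(\reeim i\,\C_{\{t\geq 0\}},(\she^{-\tau}_{\C|\BBP})^\mop\ltens[\D_\BBP]\Ot[X\times\BBP])[1]$. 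Identifying $\reeim i\,\C_{\{t\geq 0\}}$ with the constant sheaf on $X\times[0,+\infty]\subset X\times\BBP$ and using Proposition~\ref{pro:Solphi} to relate the $\she^{-\tau}$-tempered de Rham on $\BBP$ to $E^\tau_{\C|\BBP}[1]$, fiber integration along $p$ over the tempered-growth region near $\tau=+\infty$ collapses this to $\Ot[X]$, the shift $[1]$ being absorbed by the one-dimensional fiber integration. The hard part will be precisely this last step: carefully identifying $\reeim i\,\C_{\{t\geq 0\}}$ on $X\times\BBP$, matching the shift with the fiber integration, and verifying that the tempered growth at $\tau=+\infty$ produces exactly $\Ot[X]$.
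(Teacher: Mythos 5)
The paper does not actually prove this proposition; the subsection opens with ``References for this subsection are made to~\cite{DK13}'' and the result is quoted. So there is no proof in the paper to compare against, and I assess your argument on its own. Your Steps 1 and 2 are sound: the projection formula $\fihom(K\tens\opb\pi F,\OEn_X)\simeq\rihom(F,\fihom(K,\OEn_X))$ does follow from $\Tl(K\tens\opb\pi F)\simeq(\Tl K)\tens\opb\pi F$ together with the $(\opb\pi,\roim\pi)$ and $(\tens,\rihom)$ adjunctions, and the $\ctens$--$\cihom$ adjunction combined with Theorem~\ref{thm:OTgeq0} identifies $\fihom(K,\OEn_X)$ for $K=\C_{\{t\geq0\}},\,\C_{\{t=0\}},\,\C_X^\Tam$. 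The first two reductions of Step 3 are also valid: since $\roim\pi\Tr\OEn_X\simeq 0$ and the cone of $\Tl\C_{\{t\geq0\}}\to\C_{\{t\geq0\}}$ lies in $\indc_{t^*=0}$, one may replace $\Tl\C_{\{t\geq0\}}$ by $\C_{\{t\geq0\}}$ (in fact $\Tl\C_{\{t\geq0\}}\simeq\C_{\{t\geq0\}}$ outright, because $\C_{\{t\leq0\}}\ctens\C_{\{t\geq0\}}\simeq 0$ and $\C_{\{t\geq0\}}\ctens\C_{\{t\geq0\}}\simeq\C_{\{t\geq0\}}$), and $\pi=p\circ i$ with the $(\reeim i,\epb i)$ adjunction is fine.

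The gap is in the final collapse, as you yourself flag. Two concrete problems. First, since $i$ is semi-proper, $\reeim i\C_{\{t\geq0\}}$ equals ordinary $\reim$ along the open embedding $X\times\R\hookrightarrow X\times\BBP$ of the constant sheaf on $\{t\geq 0\}$; this is the constant sheaf on the locally closed half-open set $X\times[0,+\infty)$, not the closed set $X\times[0,+\infty]$ — and whether the point $\infty$ lies in the support is precisely where the tempered growth condition enters, so the distinction is not a harmless slip. Second, Proposition~\ref{pro:Solphi} computes the absolute tempered De Rham $\drt_\BBP(\she^{-\tau}_{\C|\BBP})=\Ovt_\BBP\ltens[\D_\BBP]\she^{-\tau}_{\C|\BBP}$ on $\BBP$, whereas what appears in the reduction is the relative object $(\she^{-\tau}_{\C|\BBP})^\mop\ltens[\D_\BBP]\Ot[X\times\BBP]$; since $\Ot[X\times\BBP]$ admits no K\"unneth decomposition into an $X$-factor and a $\BBP$-factor (tempered growth on the product does not split), one cannot invoke Proposition~\ref{pro:Solphi} fiberwise and then ``fiber-integrate.'' The available push-forward statement, Theorem~\ref{thm:ifunct}~(iii), governs the full $\drt_{X\times\BBP}$, not this relative tensor, and would leave an extra $X$-direction De Rham to undo. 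Closing the base case $\Ot[X]\simeq\fihom(\C_{\{t\geq0\}},\OEn_X)$ therefore requires an argument at the level of the Dolbeault (or distribution) resolution of $\Ot$, in the spirit of the proof of Theorem~\ref{th:phitempdist}; the formal toolkit in the sketch does not by itself produce it.
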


For $\shm\in\Derb_\qgood(\D_X)$ recall that one sets
\eqn
&&\sol_X(\shm) = \rhom[\D_X](\shm,\O_X)\in\Derb(\C_X).
\eneqn
The next result of~\cite{DK13}  follows from Theorem~\ref{thm:ifunct}. 

\begin{theorem}\label{thm:Tfunct}
Let $f\colon X\to Y$ be a morphism of complex manifolds.
\bnum
\item\label{thm:Tfunctitem1}
There is an isomorphism in $\TDC(\II[\opb f\D_Y])$
\[
\Tepb f \OEn_Y\,[d_Y] \simeq \D_{Y\from X} \ltens[\D_X] \OEn_X \,[d_X].
\]
\item\label{thm:Tfunctitem2}
Let $\shn\in\BDC(\D_Y)$. There is an isomorphism in $\TDC(\iC_X)$, functorial in $\shn${\rm:}
\[
\drE_X(\Dopb f \shn)\,[d_X] \simeq \Tepb f \drE_Y(\shn)\,[d_Y].
\]
If moreover  $\shn\in\BDC_\hol(\D_Y)$, there is an isomorphism in $\TDC(\iC_X)$
\[
\solE_X(\Dopb f \shn) \simeq \Topb f \solE_Y(\shn).
\]
\item\label{thm:Tfunctitem3}
Let $\shm\in\BDC_\qgood(\D_X)$, and assume that $\supp\shm$ is proper over $Y$. 
There  is an  isomorphism in $\TDC(\iC_Y)$, functorial in $\shm${\rm:}
\begin{align*}
\drE_Y(\Doim f\shm) &\simeq  \Teeim f\drE_X(\shm).
\end{align*}
If moreover $\shm\in \BDC_\good(\D_X)$, then 
$$\solE_Y(\Doim f\shm)\,[d_Y]\simeq  \Teeim f\solE_X(\shm)\,[d_X].$$
\item\label{thm:Tfunctitem4}
Let $\shl\in\BDC_\reghol(\D_X)$ and $\shm\in\BDC(\D_X)$. Then
\[
\drE_X(\shl\Dtens\shm) \simeq \rihom(\opb\pi\sol_X(\shl), \drE_X(\shm)).
\]
\enum
\end{theorem}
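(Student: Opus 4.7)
The strategy for all four parts is to reduce to the corresponding statement of Theorem~\ref{thm:ifunct} via the defining identity $\OEn_X\simeq\epb{i_X}\rhom[\D_\BBP](\she_{\C|\BBP}^\tau,\Ot[X\times\BBP])[2]$, applied to the morphism $f\times\id_\BBP\colon X\times\BBP\to Y\times\BBP$. The Cartesian square of bordered spaces
\[
\xymatrix@C=5ex@R=3ex{
X\times\fR \ar[r]^-{\tilde f} \ar[d]_-{i_X} & Y\times\fR \ar[d]^-{i_Y} \\
X\times\BBP \ar[r]_-{f\times\id_\BBP} & Y\times\BBP
}
\]
provides the key identifications $\epb{\tilde f}\epb{i_Y}\simeq\epb{i_X}\epb{(f\times\id_\BBP)}$ and $\reeim{\tilde f}\epb{i_X}\simeq\epb{i_Y}\reeim{(f\times\id_\BBP)}$, while $\she_{\C|\BBP}^\tau$ is unchanged by pullback along either vertical arrow because it is constant along the $X$ (resp.\ $Y$) direction, and one has $\D_{Y\times\BBP\from X\times\BBP}\simeq\D_{Y\from X}\boxtimes\D_\BBP$.

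For (i), I would apply Theorem~\ref{thm:ifunct}(i) to $f\times\id_\BBP$, tensor both sides with $(\she_{\C|\BBP}^{-\tau})^\mop$ over $\D_\BBP$, and then apply $\epb{i_X}$ to obtain the claim. Part (ii) is obtained by tensoring (i) with $\shn$: the projection formula $\Tepb f(F\ltens[\D_Y]\shn)\simeq\Tepb f F\ltens[\opb f\D_Y]\opb f\shn$ combined with (i) yields the de Rham statement, and the solution statement in the holonomic case follows by applying $\Ddual_X$ and using $\solE\simeq\drE\circ\Ddual[-d]$ together with the holonomic compatibility $\Ddual_X\Dopb f\simeq\Dopb f\Ddual_Y[2(d_X-d_Y)]$, along with the relation between $\Tepb f$ and $\Topb f$ via the relative dualizing complex. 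Part (iii) is symmetric: applying Theorem~\ref{thm:ifunct}(iii) to $f\times\id_\BBP$ and the good $\D_{X\times\BBP}$-module $\shm\Detens\she_{\C|\BBP}^{-\tau}$ (whose support remains proper over $Y\times\BBP$ since $\BBP$ is compact), and then pulling back along $i_Y$, yields the de Rham half; the solution half follows by duality as in (ii).

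For (iv), the standard side-changing identity $\drE_X(\shl\Dtens\shm)\simeq(\OvE_X\ltens[\O_X]\shl)\ltens[\D_X]\shm$ reduces the claim to the isomorphism
\[
\OvE_X\ltens[\O_X]\shl\simeq\rihom(\opb\pi\sol_X(\shl),\OvE_X)
\]
of right $\D_X$-modules for $\shl$ regular holonomic, which in turn follows by unwinding $\OvE_X$ in terms of $\Ot[X\times\BBP]$ and applying Theorem~\ref{thm:ifunct}(iv) to the regular holonomic $\D_{X\times\BBP}$-module $\Dopb{p_1}\shl$, where $p_1\colon X\times\BBP\to X$ is the projection. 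The principal obstacle I foresee is the final step of commuting $\rihom(\opb\pi\sol_X(\shl),-)$ past $\ltens[\D_X]\shm$, which requires the $\C$-constructibility of $\sol_X(\shl)$; more generally, the delicate bookkeeping of sides, shifts, and transfer bimodules when transferring statements from $X\times\BBP$ to the bordered space $X\times\fR$ through $\epb{i_X}$ is where the argument requires the most care, even though Theorem~\ref{thm:ifunct} supplies all the analytic content.
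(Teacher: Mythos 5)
Your proposal is correct in outline and takes precisely the route the paper indicates: the paper does not give its own proof of Theorem~\ref{thm:Tfunct} but cites~\cite{DK13} and states only that the result ``follows from Theorem~\ref{thm:ifunct}.'' Your sketch fills in the intended derivation — passing to $X\times\BBP\to Y\times\BBP$, invoking the tempered statements of Theorem~\ref{thm:ifunct} there, and then transferring along the Cartesian square $X\times\fR\to X\times\BBP$, $Y\times\fR\to Y\times\BBP$ via base change and tensoring/$\rihom$ with the kernel $\she^{-\tau}_{\C|\BBP}$ — which is consistent with the construction of $\OEn_X$ and with the lemma (\ref{le:Mtenscommut}) the paper later uses in the same spirit. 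The one place to be precise, as you already flag, is the final commutation in~(iv): it uses that $\sol_X(\shl)$ is $\C$-constructible (hence~\cite[Thm.~5.6.1~(ii)]{KS01} applies), and this is exactly the point the paper delegates to the cited references.
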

The next result will be of constant use in the next section.

\begin{lemma}\label{le:Mtenscommut}
Let $\shm\in\Derb(\D^\rop_X)$,  $\shl\in\BDC_\hol(\D_X)$, 
$\shk\in\TDC(\mathrm{I}\D_X)$ and assume that 
$\cihom(\C_X^\Tam,\shk)\simeq \shk$. 
Then we have the natural isomorphism
\eqn
\shm\ltens[\D_X]\cihom(\solE_X(\shl),\shk)\isoto\cihom(\solE_X(\shl),\shm\ltens[\D_X]\shk).
\eneqn
\end{lemma}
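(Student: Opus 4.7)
I would first construct the natural morphism from the evaluation pairing
\[
\solE_X(\shl)\ctens\cihom(\solE_X(\shl),\shk)\to\shk
\]
in $\TDC(\iD_X)$. Applying $\shm\ltens[\D_X](\scbul)$ and using that $\solE_X(\shl)\in\TDC(\iC_X)$ carries no $\D_X$-action, so that $\ltens[\D_X]$ commutes with $\solE_X(\shl)\ctens(\scbul)$, one gets a map
\[
\solE_X(\shl)\ctens\bl\shm\ltens[\D_X]\cihom(\solE_X(\shl),\shk)\br\to\shm\ltens[\D_X]\shk
\]
in $\TDC(\iC_X)$, whose $(\ctens,\cihom)$-adjoint is the desired morphism $\alpha$.

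To prove $\alpha$ is an isomorphism I would use dévissage on $\shm$. Both sides are triangulated in $\shm\in\Derb(\D_X^\rop)$; the source commutes with all colimits in $\shm$ since $\ltens[\D_X]$ does. The target commutes with filtered colimits in $\shm$ provided $\cihom(\solE_X(\shl),\scbul)$ commutes with filtered colimits on the full subcategory of $\TDC(\iD_X)$ of objects $\shk$ with $\cihom(\C_X^\Tam,\shk)\simeq\shk$; note that this stability condition is preserved by $\shm\ltens[\D_X](\scbul)$, as can be checked directly from $\C_X^\Tam=\inddlim[a]\C_{\{t\geq a\}}$. For $\shm=\D_X$ both sides canonically reduce to $\cihom(\solE_X(\shl),\shk)$ and $\alpha$ is the identity, so a local flat resolution of $\shm$ by direct sums of copies of $\D_X$ completes the dévissage.

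The main obstacle is justifying the commutation of $\cihom(\solE_X(\shl),\scbul)$ with filtered colimits on the $\C_X^\Tam$-stable subcategory, which amounts to an $\R$-constructibility (equivalently, compactness) property of $\solE_X(\shl)$. For $\shl$ holonomic, I would deduce it from the Mochizuki/Kedlaya structure theorem: after resolution of singularities, $\shl$ is locally, and up to dévissage by distinguished triangles, an iterated extension of modules of the form $\she^{-\phi}_{U|X}\Dtens\shl'$ with $\shl'$ regular holonomic. For such pieces, $\solE_X$ admits the explicit description given by~\eqref{eq:laplaceEphi} combined with $\epsilon_X\sol_X(\shl')$, which is manifestly $\R$-constructible and hence compact in $\TDC(\iC_X)$, so the required commutation follows. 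Alternatively, one may cite the corresponding constructibility statements from~\cite{DK13} directly.
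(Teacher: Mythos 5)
You take a genuinely different route from the paper. Both proofs begin by invoking~\cite{DK13} to write $\solE_X(\shl)\simeq F\ctens\C_X^\Tam$ locally with $F\in\Derb(\C_{X\times\R})$ $\R$-constructible, and use the hypothesis on $\shk$ to replace $\cihom(\solE_X(\shl),\scbul)$ by $\cihom(F,\scbul)$. After that you diverge: the paper does \emph{not} d\'evissage on $\shm$. Instead it rewrites $\cihom(F,\shk)$ in the manifestly proper form $\reeim{\bar{q_1}}\rihom(\reim{i}\sigma^{-1}F,\bar{q_2}^!\shk)$ --- note $\reeim{\bar{q_1}}$ replacing the $\roim{q_1}$ from the definition of $\cihom$ --- and then pushes $\shm\ltens[\D_X](\scbul)$ through each of the three operations by citing Theorems 5.5.4, 5.6.1~(ii) and 5.6.3 of \cite{KS01}.

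The gap in your argument is the colimit commutation. Your reduction to $\shm=\D_X$ via (locally finite flat, hence Lazard-filtered) resolutions requires $\cihom(\solE_X(\shl),\scbul)$ to commute with filtered colimits and direct sums of $\C_X^\Tam$-stable objects, which you attribute to a ``compactness'' property of $\solE_X(\shl)$, invoking $\R$-constructibility. This is not a formal consequence: the definition of $\cihom$ contains a $\roim{q_1}$, and on ind-sheaves $\roim{f}$ does \emph{not} commute with filtered inductive limits --- this is exactly the defect that makes $\reeim{f}$ necessary in the first place. To obtain the commutation you need, one must first replace $\roim{q_1}$ by a proper direct image, using the constructibility of $F$, and then also know that $\rihom(G,\scbul)$ is compatible with the relevant colimits for constructible $G$ --- which is essentially the content of the \cite{KS01} commutation theorems the paper quotes directly. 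In addition, ``compactness'' in the sense of $\Hom$ commuting with direct sums concerns the $\Derb(\C)$-valued $\FHom$, not the $\TDC(\iD_X)$-valued $\cihom$, so even if established it would not by itself give the enriched commutation you use. Thus the d\'evissage does not sidestep the heart of the matter; it defers it to a colimit-commutation claim that, once made rigorous, would amount to reproducing the paper's argument.
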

\begin{proof}
Since the morphism is well-defined, we may argue locally. 
We know  by~\cite{DK13}  that there exists an object $F\in\Derb(\C_{X\times\R})$  such that
\eqn
&&\solE_X(\shl)\simeq F\ctens\C_X^\Tam.
\eneqn
It follows from the hypothesis on $\shk$ that
\eqn
&&\cihom(\solE_X(\shl),\shk)\simeq \cihom(F, \shk),\\
&&\cihom(\solE_X(\shl),\shm\ltens[\D_X]\shk)\simeq \cihom(F, \shm\ltens[\D_X]\shk).
\eneqn
 Indeed, we have
$$\cihom(\solE_X(\shl),\shm\ltens[\D_X]\shk)\simeq \cihom\bl F, 
\cihom(\C_X^\Tam,\shm\ltens[\D_X]\shk)\br.$$
By \cite[Proposition~4.7.5]{DK13}, we have
$\C_X^\Tam\ctens (\shm\ltens[\D_X]\shk)\simeq
\shm\ltens[\D_X](\C_X^\Tam\ctens\shk)\simeq\shm\ltens[\D_X]\shk$,
and $\cihom(\C_X^\Tam,\shm\ltens[\D_X]\shk)\simeq\shm\ltens[\D_X]\shk$.

\medskip
Hence, it is enough to prove the isomorphism
\eqn
&&\shm\ltens[\D_X]\cihom(F,\shk)\isoto\cihom(F,\shm\ltens[\D_X]\shk).
\eneqn
Let $\bar{q_k}\cl X\times\bR\times\bR\to X\times\bR$ ($k=1,2$)
be the projection,
$\sigma\cl X\times\R\times\R\to X\times\R$ the map $(t_1,t_2)\mapsto t_2-t_1$
and $i\cl X\times\R\times\R\to X\times\ol\R\times\ol \R$  the inclusion.
Then we have
\eqn
\shm\ltens[\D_X]\cihom(F,\shk)&\simeq &
\shm\ltens[\D_X]\reeim{\bar{q_1}}\rihom(\reim{i}\sigma^{-1}F,{\bar{q_2}}^!\shk)\\
&\underset{(1)}{\simeq}&
\reeim{\bar{q_1}}\Bigl(\shm\ltens[\D_X]\rihom(\reim{i}\sigma^{-1}F,{\bar{q_2}}^!\shk)\Bigr)\\
&\underset{(2)}{\simeq}&\reeim{\bar{q_1}}\rihom(\reim{i}\sigma^{-1}F,\shm\ltens[\D_X]{\bar{q_2}}^!\shk)\\
&\underset{(3)}{\simeq}&\reeim{\bar{q_1}}\rihom\bl\reim{i}\sigma^{-1}F,
{\bar{q_2}}^!(\shm\ltens[\D_X]\shk)\br\\
&\simeq&
\cihom(F,\shm\ltens[\D_X]\shk).
\eneqn
Here the isomorphisms
(1), (2) and (3) follow from
Theorem 5.5.4, Theorem 5.6.1 (ii) and Theorem 5.6.3
in \cite{KS01}, respectively.
\end{proof}

\section{Integral transform for De Rham}

\subsection{An enhanced Riemann-Hilbert correspondence}

Theorem~7.4.12 of~\cite{KS01}  (that is, Theorem~\ref{thm:ifunct} (iv)
in this paper)  is a reformulation of 
a result of Bj\"ork~\cite{Bj93}.
The aim of this subsection is to extend  this result 
to the case where $\shl$ is holonomic but not necessarily regular.
Note that we do not any more use Bj\"ork's result in the proof of Theorem~\ref{th:bjork} below. 
As we shall see, this last theorem generalizes the reconstruction theorem (Riemann-Hilbert) of~\cite{DK13} but of course, its proof deeply uses the tools of loc.\ cit.

\begin{lemma}\label{le:Topb}
Let $\iota\cl X\to Y$ be a closed embedding of complex manifolds. There is a natural isomorphism
\eqn
&&\Topb{\iota}(\shd_{X\to Y}\lltens[\shd_Y]\OEn_Y)\simeq\OEn_X.
\eneqn
\end{lemma}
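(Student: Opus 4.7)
The strategy is to unfold the definition $\OEn_Y \simeq \epb{i_Y}\rhom[\shd_\BBP](\she_{\C|\BBP}^\tau,\Ot[Y\times\BBP])[2]$, commute $\Topb\iota$ and $\shd_{X\to Y}\lltens[\shd_Y]$ past the constituent functors, and reduce to a Koszul-type computation for tempered holomorphic functions along a closed embedding.

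First, I would rewrite the tensor product. Using $\shd_{X\to Y}\simeq\sho_X\otimes_{\opb\iota\sho_Y}\opb\iota\shd_Y$ and the projection formula for the closed embedding $\iota$ (giving $\iota_*\shd_{X\to Y}\simeq\iota_*\sho_X\otimes_{\sho_Y}\shd_Y$), the object $\shd_{X\to Y}\lltens[\shd_Y]\OEn_Y$, viewed on $Y$, is isomorphic to $\iota_*\sho_X\lltens[\sho_Y]\OEn_Y$. Applying $\Topb\iota$ and using $\opb\iota\iota_*\simeq\id$ for a closed embedding, the claim reduces to
\eqn
&&\sho_X\lltens[\opb\iota\sho_Y]\opb{\tilde\iota}\OEn_Y\simeq\OEn_X\quad\text{in }\TDC(\iD_X),
\eneqn
where $\tilde\iota\seteq\iota\times\id_\fR\cl X\times\fR\to Y\times\fR$.

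Second, consider the Cartesian square
\eqn
&&\xymatrix@R=2.5ex{X\times\fR\ar[r]^-{\tilde\iota}\ar[d]_-{i_X}&Y\times\fR\ar[d]^-{i_Y}\\X\times\BBP\ar[r]^-{\iota_\BBP}&Y\times\BBP}
\eneqn
with $\iota_\BBP\seteq\iota\times\id_\BBP$. Since the underlying maps of the bordered morphisms $i_X,i_Y$ are open embeddings, one has $\epb{i_X}\simeq\opb{i_X}$ and $\epb{i_Y}\simeq\opb{i_Y}$, yielding the base change $\opb{\tilde\iota}\epb{i_Y}\simeq\epb{i_X}\opb{\iota_\BBP}$. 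Substituting into the definition of $\OEn_Y$ and commuting $\opb{\iota_\BBP}$ past $\rhom[\shd_\BBP]$ (legitimate since $\shd_\BBP$ acts on the $\BBP$-factor, disjoint from $\iota$), one obtains $\opb{\tilde\iota}\OEn_Y\simeq\epb{i_X}\rhom[\shd_\BBP](\she_{\C|\BBP}^\tau,\opb{\iota_\BBP}\Ot[Y\times\BBP])[2]$. Commuting the outer tensor $\sho_X\lltens[\opb\iota\sho_Y]$ past $\epb{i_X}$ and $\rhom[\shd_\BBP]$---valid as $\sho_X$ and $\opb\iota\sho_Y$ extend to sheaves constant along the $\BBP$-factor---the problem reduces to the local identity
\eqn
&&\opb{\ol\pi_X}\sho_X\lltens[\opb{\ol\pi_X}\opb\iota\sho_Y]\opb{\iota_\BBP}\Ot[Y\times\BBP]\simeq\Ot[X\times\BBP]
\eneqn
on $X\times\BBP$, where $\ol\pi_X\cl X\times\BBP\to X$ denotes the projection.

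Finally, I would verify this identity by a local Koszul argument. Writing $Y\simeq X\times\C^d$ locally with coordinates $(x,y_1,\ldots,y_d)$ and $\iota$ the zero section, the Koszul resolution of $\sho_X$ over $\opb\iota\sho_Y$ with differential $y_i$ reduces the derived tensor product to the Koszul complex of $(y_1,\ldots,y_d)$ acting on $\opb{\iota_\BBP}\Ot[Y\times\BBP]$. Since multiplication by each $y_i$ is injective on tempered holomorphic functions, the sequence is regular and the complex collapses to the degree-zero quotient. Lojasiewicz-type estimates identify this quotient with $\Ot[X\times\BBP]$: restriction preserves tempered growth along a closed complex submanifold, and every tempered holomorphic function on $X\times\BBP$ admits local tempered extensions to $Y\times\BBP$. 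The main obstacle lies precisely in this last identification, requiring careful control of growth conditions; alternatively it can be extracted from Theorem~\ref{thm:ifunct}(i) applied to $\iota_\BBP$ combined with the side-switching relation $\shd_{Y\from X}\simeq\omega_{X/Y}\otimes_{\sho_X}\shd_{X\to Y}$, at the cost of heavier bimodule bookkeeping.
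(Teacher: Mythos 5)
The step that breaks is the base change. The underlying map of $i_X\colon X\times\fR\to X\times\BBP$ is the inclusion $X\times\R\into X\times\BBP$ (restricting the affine coordinate $\tau$ of $\BBP=\BBP^1(\C)$ to its real values), a locally closed embedding of real codimension one, \emph{not} an open embedding. Hence $\epb{i_X}\not\simeq\opb{i_X}$ (they differ by a shift and an orientation twist), and likewise for $i_Y$; the asserted base change $\opb{\tilde\iota}\epb{i_Y}\simeq\epb{i_X}\opb{\iota_\BBP}$ is not a formal consequence of the Cartesian square. To salvage it one would have to argue a transversality/non-characteristicity statement for the ind-sheaf $\Ot[Y\times\BBP]$ against the closed hypersurface $\{\Im\tau=0\}$, which is neither done nor elementary. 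Downstream of this, the discrepancy between $\opb{\iota_\BBP}$ and $\epb{\iota_\BBP}$ (a shift by twice the complex codimension of $\iota$) is also untracked in the Koszul step, and the identity you ultimately reduce to is, as you acknowledge, essentially Theorem~\ref{thm:ifunct}(i) again — so the Koszul ``verification'' is a consistency check rather than an independent derivation.

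The paper's proof sidesteps the entire unfolding: it applies Theorem~\ref{thm:Tfunct}~(\ref{thm:Tfunctitem3}) (direct image compatibility of the enhanced de~Rham functor) with $\shm=\shd_X$ to get $\Toim{\iota}\OEn_X\simeq\shd_{X\to Y}\lltens[\shd_Y]\OEn_Y$ in one stroke, and then applies $\Topb{\iota}$, using that $\Topb{\iota}\Toim{\iota}\simeq\id$ for the closed embedding $\iota$. All the analytic content of Theorem~\ref{thm:ifunct} is already packaged inside the cited functoriality theorem, so there is no need to reopen the Dolbeault description of $\OEn$ or to confront $\epb{i}$ versus $\opb{i}$.
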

\begin{proof}
Applying Theorem~\ref{thm:Tfunct}~(\ref{thm:Tfunctitem3}) with $\shm=\shd_X$ we get
\eqn
&&\Toim{\iota}\OEn_X\simeq\shd_{X\to Y}\lltens[\shd_Y]\OEn_Y,
\eneqn
and the result follows. 
\end{proof}

\begin{lemma}\label{le:canmor1}
There is a canonical morphism in $\TDC(\iD_X)$
\eqn
&&\OEn_X\ctens[\beta\sho_X]\OEn_X\to \OEn_X.
\eneqn
\end{lemma}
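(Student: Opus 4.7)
The morphism will be built from two natural structures: the multiplicative structure of tempered holomorphic functions, and the additivity of exponents encoded in the $\ctens$-product. For any complex manifold $Y$, the pointwise product of tempered holomorphic functions is tempered, yielding a canonical morphism $m_Y\cl\Ot[Y]\ltens[\sho_Y]\Ot[Y]\to\Ot[Y]$ in $\Derb(\iD_Y)$.

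To translate $m_Y$ into a convolution map for enhanced objects, work on $X\times\BBP^2$ with coordinates $(\tau_1,\tau_2)$. Let $i_{12}\cl X\times\fR^2\to X\times\BBP^2$ be the inclusion, $p_k\cl X\times\BBP^2\to X\times\BBP$ ($k=1,2$) the partial projections, and $\tilde\mu\cl X\times\fR^2\to X\times\fR$ the addition in the $\fR$-coordinates; note that $\tilde\mu$ is a semi-proper morphism of bordered spaces since $\bR$ is compact. Unwinding the definition of $\ctens[\beta\sho_X]$ together with the formula $\OEn_X=\epb i((\she^{-\tau}_{\C|\BBP})^\mop\ltens[\D_\BBP]\Ot[X\times\BBP])[1]$ and applying the appropriate base-change/projection formulas relating $i$, $i_{12}$, $p_k$ and $\tilde\mu$, one identifies $\OEn_X\ctens[\beta\sho_X]\OEn_X$ with
\[
\Teeim{\tilde\mu}\epb{i_{12}}\Bigl((\she^{-\tau_1}_{\C|\BBP}\Detens\she^{-\tau_2}_{\C|\BBP})^\mop\ltens[\D_{\BBP^2}]\bigl(\Dopb{p_1}\Ot[X\times\BBP]\ltens[\sho_{X\times\BBP^2}]\Dopb{p_2}\Ot[X\times\BBP]\bigr)\Bigr)[2].
\]
Now three natural morphisms assemble the required map: $(a)$ the multiplication $m_{X\times\BBP^2}$ contracts the two factors $\Ot[X\times\BBP]$ into $\Ot[X\times\BBP^2]$; $(b)$ the identity $\e^{-\tau_1}\cdot\e^{-\tau_2}=\e^{-(\tau_1+\tau_2)}$ produces a canonical $\D_{\BBP^2}$-morphism $\she^{-\tau_1}_{\C|\BBP}\Detens\she^{-\tau_2}_{\C|\BBP}\to \she^{-(\tau_1+\tau_2)}_{\C^2|\BBP^2}$; $(c)$ base change along $\tilde\mu$, combined with the definition of $\OEn_X$, identifies $\Teeim{\tilde\mu}\epb{i_{12}}((\she^{-(\tau_1+\tau_2)}_{\C^2|\BBP^2})^\mop\ltens[\D_{\BBP^2}]\Ot[X\times\BBP^2])[2]$ with $\OEn_X$. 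Composing $(a)$, $(b)$ and $(c)$ produces the desired morphism in $\TDC(\iD_X)$.

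The main technical obstacle is that the addition $\BBP^2\dashrightarrow\BBP$ is only a rational map, so the comparisons in $(b)$ and $(c)$ cannot be performed on an actual morphism of compactifications; they must be handled entirely within the bordered-space and enhanced ind-sheaf formalism via $\tilde\mu\cl\fR^2\to\fR$, where the compactness of $\bR$ makes the pushforward behave properly. One then also needs to verify that the resulting morphism descends to the quotient $\TDC(\iD_X)$ and is independent of the auxiliary compactification $\BBP$ of $\C$.
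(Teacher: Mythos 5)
Your plan takes a genuinely different route from the paper's. The paper does not work with $\BBP^2$ or with the exponential kernel at all in this lemma: it cites the canonical external morphism $\OEn_X\cetens\OEn_X\to\OEn_{X\times X}$ from \cite[Prop.~8.2.4]{DK13}, transposes it to $\opb{\tw p_1}\Tr\OEn_X\tens\opb{\tw p_2}\Tr\OEn_X\to\epb{\tw\mu}\Tr\OEn_{X\times X}$ by adjunction, then observes that the $\beta\sho_X$-tensor in $\ctens[\beta\sho_X]$ precisely implements the restriction to the diagonal $\delta\cl X\into X\times X$, and finishes with Lemma~\ref{le:Topb} (which identifies $\Topb\delta(\shd_{X\to X\times X}\ltens\OEn_{X\times X})$ with $\OEn_X$). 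This modularizes the argument: the multiplicativity of $\Ot$ and the additivity $\e^{-\tau_1}\e^{-\tau_2}=\e^{-(\tau_1+\tau_2)}$ are hidden inside the citation, and all the $\BBP$-bookkeeping is confined to the diagonal-restriction lemma. Your approach tries to reconstruct the external-product morphism from scratch and stay on $X$, which is conceptually appealing but costs you more.

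There are two concrete gaps you would need to close. First, the ``unwinding of $\ctens[\beta\sho_X]$'' into an expression living over $X\times\BBP^2$ is not a single base change: the square relating $X\times\fR^2$, $X\times\fR$, $X\times\BBP^2$, $X\times\BBP$ (with $q_1$ on the left and $p_1$ on the right) is \emph{not} cartesian, because the fiber product $X\times\fR\times_{X\times\BBP}(X\times\BBP^2)$ is $X\times\fR\times\BBP$ rather than $X\times\fR^2$. The paper itself, when it needs this kind of computation (proof of Theorem~\ref{th:bjork}, case (f)--(1)), passes through the intermediate space $X\times\BBP_1\times\R_\infty$ so that each square is cartesian and one can apply base change and the projection formula one $\BBP$-factor at a time; your one-step ``base-change/projection formulas'' would have to be replaced by this two-stage argument. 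Second, your step $(c)$ asserts that $\Teeim{\tilde\mu}\epb{i_{12}}\bl(\she^{-(\tau_1+\tau_2)}_{\C^2|\BBP^2})^\mop\ltens[\D_{\BBP^2}]\Ot[X\times\BBP^2]\br[2]$ \emph{identifies} with $\OEn_X$; the correct statement is that the expression before $\Teeim{\tilde\mu}$ is isomorphic to $\Tepb{\tilde\mu}$ of (a representative of) $\OEn_X$, so that one only gets a \emph{morphism} to $\OEn_X$ by the counit of the $(\Teeim{\tilde\mu},\Tepb{\tilde\mu})$ adjunction, not an isomorphism. This is exactly the adjunction step that concludes the paper's proof. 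With these two fixes, and after spelling out the precise form of the multiplication morphism in $(a)$ (which should be $\opb{p_1}\Ot[X\times\BBP]\ltens[\beta\sho_{X\times\BBP^2}]\opb{p_2}\Ot[X\times\BBP]\to\Ot[X\times\BBP^2]$ rather than using $\Dopb{p_k}$), your route should go through; but as written the proposal is a sketch with real gaps in the execution.
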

\begin{proof}
Consider the diagram in which $q_k$ and $\tw p_k$ ($k=1,2$) denote the projections. 
\eqn
&&\xymatrix{
X&\ar[l]_-qX\times\R_\infty\times\R_\infty\ar[d]_-{\tw\delta}\ar@<1.5ex>[rr]^{q_k}\ar@<-1.5ex>[rr]^\mu&&X\times\R_\infty\ar[d]_-\delta\\
&X\times\R_\infty\times X\times\R_\infty\ar[rr]^-{\tw\mu}\ar[d]_-p\ar[rrd]_-{\tw p_k}&&X\times X\times\R_\infty\\
&X\times X&&X\times\R_\infty.
}\eneqn
Denote by $q\cl X\times\fR\times\fR\to X$ the projection. 
One has
\begin{align*}
\opb{q_1}\Tr\OEn_X\ltens[\opb{q}\beta\sho_X]\opb{q_2}\Tr\OEn_X&\\
&\hspace{-4.5ex}\simeq (\opb{q_1}\Tr\OEn_X\tens\opb{q_2}\Tr\OEn_X)\ltens[\opb{q}(\sho_X\tens\sho_X)]\opb{q}\sho_X\\
&\hspace{-4.5ex}\simeq \opb{\tw\delta}\bl\opb{\tw p_1}\Tr\OEn_X\tens\opb{\tw p_2}\Tr\OEn_X\br\ltens[\opb{q}(\sho_X\tens\sho_X)]\opb{q}\sho_X.
\end{align*}
On the other hand, by~\cite[Prop.8.2.4]{DK13}, there is a canonical morphism
\eqn
\OEn_X\cetens\OEn_X&\simeq&\reeim{\tw\mu}(\opb{\tw p_1}\Tr\OEn_X\tens\opb{\tw p_2}\Tr\OEn_X)\\
&\to& \OEn_{X\times X}.
\eneqn
By adjunction, we get the morphism
\eqn
\opb{\tw p_1}\Tr\OEn_X\tens\opb{\tw p_2}\Tr\OEn_X&\to&\epb{\tw\mu}\Tr \OEn_{X\times X}.
\eneqn
Therefore, we have the morphisms
\eqn
\opb{\tw q_1}\Tr\OEn_X\ltens[\opb{q}\beta\sho_X]\opb{\tw q_2}\Tr\OEn_X&&\\
&&\hspace{-7ex}\simeq\opb{\tw\delta}(\opb{\tw p_1}\Tr \OEn_{X}\tens\opb{\tw p_2}\Tr \OEn_{X})\ltens[\opb{q}(\sho_X\tens\sho_X)]\opb{q}\sho_{X}\\
&&\hspace{-7ex}\to\opb{\tw\delta}\epb{\tw\mu}\Tr \OEn_{X\times X}\ltens[\opb{q}(\sho_X\tens\sho_X)]\opb{q}\sho_X\\
&&\hspace{-7ex}\to\opb{\tw\delta}\epb{\tw\mu}\bl\Tr\OEn_{X\times X}\ltens[\opb{p}\sho_{X\times X}]\opb{p}\sho_X\br\\
&&\hspace{-7ex}\simeq\epb{\mu}\opb{\delta}\bl\Tr\OEn_{X\times X}\ltens[\opb{p}\sho_{X\times X}]\opb{p}\sho_X\br\\
&&\hspace{-7ex}\simeq\epb{\mu}\Tr\OEn_X,
\eneqn
where $\sho_X$ is identified  with  $\sho_{\Delta_X}$ by the diagonal embedding 
$\delta\cl X\into X\times X$, and the last isomorphism follows from 
Lemma~\ref{le:Topb}.

The result then follows by adjunction. 
\end{proof}

Let $M$ be a good topological space, 
$\cor$ a commutative unital ring  and  $\sha$ a flat $\cor$-algebra on $M$ as above.

\begin{lemma}\label{le:inddproo}
Let $M\in\Derb(\sha)$, $F\in\Derb(\isha^\rop)$ 
and  $G\in\Derb(\isha)$. Then there is a canonical morphism
in $\Derb(\icor_M)${\rm:}
\eqn
&&(F\lltens[\sha]M)\ltens[\cor]\rhom[\sha](M,G)\to F\ltens[\beta\sha]G.
\eneqn 
\end{lemma}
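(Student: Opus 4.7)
The plan is to obtain the morphism as the composition of a derived evaluation pairing with a mixed associativity isomorphism for tensor products of ind-sheaf modules. Unpacking the conventions of Section~\ref{subsect:alg}, we have $F\lltens[\sha]M=F\ltens[\beta\sha]\beta M$ and $\rhom[\sha](M,G)=\rihom[\beta\sha](\beta M,G)$, so the statement reduces to constructing a canonical morphism in $\Derb(\icor_M)$:
\begin{equation*}
(F\ltens[\beta\sha]\beta M)\ltens[\cor]\rihom[\beta\sha](\beta M,G)\to F\ltens[\beta\sha]G.
\end{equation*}

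The first step would be to produce the canonical evaluation morphism in $\Derb(\isha)$
\begin{equation*}
\mathrm{ev}\cl \beta M\ltens[\cor]\rihom[\beta\sha](\beta M,G)\to G,
\end{equation*}
adjoint to the identity on $\rihom[\beta\sha](\beta M,G)$; here the source carries the left $\beta\sha$-module structure induced from that on $\beta M$, since $\rihom[\beta\sha](\beta M,G)$ itself is only a complex of $\cor$-modules. Next I would invoke mixed associativity: for any right $\beta\sha$-module $F$, left $\beta\sha$-module $N$ and $\cor$-module $H$, there is a natural isomorphism
\begin{equation*}
(F\ltens[\beta\sha] N)\ltens[\cor] H\isoto F\ltens[\beta\sha]\bl N\ltens[\cor] H\br
\end{equation*}
in which $N\ltens[\cor] H$ carries the $\beta\sha$-structure coming from $N$. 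Taking $N=\beta M$ and $H=\rihom[\beta\sha](\beta M,G)$ and then composing with $F\ltens[\beta\sha]\mathrm{ev}$ will produce the sought morphism.

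The construction is essentially formal, within the six-operation formalism for ind-sheaves of modules of~\cite{KS01}. The only point requiring attention — rather than a serious obstacle — is well-definedness at the derived level, which is ensured by the $\cor$-flatness of $\sha$: the derived tensor products may be computed using flat resolutions on either side, and both the evaluation morphism and the associativity isomorphism are compatible with passage to such resolutions.
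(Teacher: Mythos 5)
Your proposal is correct and follows essentially the same route as the paper: after unwinding the definitions of $F\lltens[\sha]M$ and $\rhom[\sha](M,G)$, the paper likewise applies the associativity isomorphism $(F\lltens[\beta\sha]\beta M)\ltens[\cor]H\simeq F\lltens[\beta\sha](\beta M\ltens[\cor]H)$ and then the evaluation morphism $\beta M\ltens[\cor]\rihom[\beta\sha](\beta M,G)\to G$, exactly as you describe. The only difference is that the paper leaves these two ingredients implicit inside a three-line display, whereas you have spelled them out.
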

\begin{proof}
We have 
\eqn
&&(F\lltens[\sha]M)\ltens[\cor]
\rhom[\sha](M,G)\\
&&\hs{20ex}=(F\lltens[\beta\sha]\beta M)\ltens[\cor]\rihom[\beta\sha](
\beta M,G)\\
&&\hs{20ex}\simeq F\lltens[\beta\sha]\bl\beta M\ltens[\cor]\rihom[\beta\sha](\beta M,G)\br\\
&&\hs{20ex}\to F\ltens[\beta\sha]G.
\eneqn
\end{proof}

\begin{lemma}\label{lem:MtoIhomSolM}
There exists a canonical morphism functorial with respect to $\shm\in\Derb(\D_X)$:
\eq\label{eq:bjorkmor}
&&\shm\Dtens\OEn_X\to\cihom(\solE_X(\shm),\OEn_X)\mbox{ in }\TDC(\iD_X).
\eneq
\end{lemma}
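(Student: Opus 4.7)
The plan is to construct the morphism by adjunction, combining the multiplicative structure of $\OEn_X$ (Lemma~\ref{le:canmor1}) with the tautological evaluation pairing underlying $\solE_X$. By the $(\ctens,\cihom)$-adjunction in $\TDC(\iD_X)$, producing a morphism
\[
\shm \Dtens \OEn_X \longrightarrow \cihom(\solE_X(\shm), \OEn_X)
\]
is equivalent to producing a morphism
\[
\solE_X(\shm) \ctens (\shm \Dtens \OEn_X) \longrightarrow \OEn_X,
\]
and it is the latter I would construct directly.

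To that end I would proceed in three steps. First, since $\shm$ is pulled back from $X$ along $\pi\colon X\times\fR\to X$, it is independent of the variable $t$, so that $\opb{q_2}\opb{\pi}\shm\simeq\opb{\mu}\opb{\pi}\shm$ on $X\times\fR\times\fR$; the projection formula then yields a natural isomorphism
\[
\solE_X(\shm) \ctens (\shm \ltens[\sho_X] \OEn_X) \simeq \shm \ltens[\sho_X] \bigl(\solE_X(\shm) \ctens \OEn_X\bigr).
\]
Second, from the definition $\solE_X(\shm)=\rhom[\D_X](\shm,\OEn_X)$, there is a tautological enhanced evaluation $\solE_X(\shm)\ctens\shm\to\OEn_X$ — a direct analogue of Lemma~\ref{le:inddproo} in the enhanced setting, factoring the $\D_X$-linear homs through the $\sho_X$-linear ones — which, inserted inside $\ltens[\sho_X]$, produces
\[
\shm \ltens[\sho_X] \bigl(\solE_X(\shm) \ctens \OEn_X\bigr) \longrightarrow \OEn_X \ctens[\beta\sho_X] \OEn_X.
\]
Third, I would compose with the multiplication morphism of Lemma~\ref{le:canmor1},
\[
\OEn_X \ctens[\beta\sho_X] \OEn_X \longrightarrow \OEn_X,
\]
and finally apply the adjunction to obtain~\eqref{eq:bjorkmor}. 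Functoriality in $\shm$ is immediate from the naturality of each ingredient.

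The main work, which is bookkeeping rather than a genuine obstacle, consists in organizing the module structures so that the composition lies in $\TDC(\iD_X)$ and not merely in $\TDC(\iC_X)$. The $\D_X$-action on both sides of~\eqref{eq:bjorkmor} is the one induced by the rightmost factor $\OEn_X$; since the multiplication of Lemma~\ref{le:canmor1} is $\D_X$-equivariant and the evaluation factors through $\sho_X$-linear homs, the composition respects this action, yielding a well-defined morphism in $\TDC(\iD_X)$ that is functorial in $\shm\in\Derb(\D_X)$.
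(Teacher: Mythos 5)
Your proposal is correct and follows essentially the same route as the paper: reduce by $(\ctens,\cihom)$-adjunction to producing a morphism $(\shm\Dtens\OEn_X)\ctens\solE_X(\shm)\to\OEn_X$, rearrange the $\D_X$-, $\sho_X$-, and $\cor$-linear structures so as to apply the evaluation morphism of Lemma~\ref{le:inddproo}, land in $\OEn_X\ctens[\beta\sho_X]\OEn_X$, and finish with the multiplication of Lemma~\ref{le:canmor1}. One caveat: the phrase ``$\solE_X(\shm)\ctens\shm\to\OEn_X$'' does not literally typecheck (you cannot $\ctens$ an enhanced ind-sheaf with a $\D_X$-module); what you mean is the evaluation pairing of Lemma~\ref{le:inddproo} applied with $\sha=\opb{q}\beta\D_X$, $M=\opb{q}\beta\shm$, $F=\opb{q_1}(\Tr\OEn_X\ltens[\beta\sho_X]\beta\D_X)$ and $G=\opb{q_2}\Tr\OEn_X$ inside the $\reeim{\mu}$ defining $\ctens$, which is precisely the computation the paper writes out.
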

\begin{proof}
It is enough to construct the morphism
\eqn
(\shm\Dtens\OEn_X)\ctens\solE_X(\shm)&\to&\OEn_X.
\eneqn
With the notations as in Lemma~\ref{le:canmor1} we have 
\begin{align*}
(\shm\Dtens\OEn_X)\ctens\solE(\shm)
&\simeq\reeim{\mu}\bl\opb{q_1}(\beta\shm\ltens[\beta\sho_X]\Tr\OEn_X)\tens\opb{q_2}\solE_X(\shm)\br\\
&\hspace{-18ex}\simeq\reeim{\mu}\bl\opb{q_1}(\Tr\OEn_X\ltens[\beta\sho_X]
\beta\D_X\ltens[\beta\D_X]\beta\shm)\tens\opb{q_2}
\rihom[\beta\D_X](\beta\shm,\Tr\OEn_X)\br\\
&\hspace{-18ex}\to\reeim{\mu}\bl\opb{q_1}(\Tr\OEn_X\ltens[\beta\sho_X]\beta\D_X)\ltens[\opb{q}\beta\D_X]\opb{q}\beta\shm\\
&\hs{15ex}\tens\rihom[q^{-1}\beta\D_X](q^{-1}\beta\shm,
q_2^{_1}\Tr\OEn_X)\br\\
&\hspace{-18ex}\to\reeim{\mu}\bl(\opb{q_1}\Tr\OEn_X
\ltens[\opb{q_1}\beta\sho_X]\opb{q_1}\beta\D_X)\ltens[\opb{q}\beta\D_X]\opb{q_2}\Tr\OEn_X\br\\
&\hspace{-18ex}\simeq\reeim{\mu}\bl\opb{q_1}\Tr\sho_X\ltens[\opb{q}\beta\sho_X]\opb{q_2}\Tr\OEn_X\br\\
&\hspace{-18ex}\simeq\OEn_X\ctens[\beta\sho_X]\OEn_X\to\OEn_X.
\end{align*}
Here, we have used Lemma~\ref{le:inddproo}.
\end{proof}

\begin{theorem}[{\rm Extended Riemann-Hilbert theorem}]\label{th:bjork}
There exists a canonical isomorphism functorial with respect to $\shm\in\Derb_\hol(\D_X)${\rm:}
\eq\label{eq:bjorkmorf}
&&\shm\Dtens\OEn_X\isoto\cihom(\solE_X(\shm),\OEn_X)\mbox{ in }\TDC(\iD_X).
\eneq
\end{theorem}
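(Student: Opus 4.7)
The morphism $\Phi_\shm\colon\shm\Dtens\OEn_X\to\cihom(\solE_X(\shm),\OEn_X)$ constructed in Lemma~\ref{lem:MtoIhomSolM} is functorial in $\shm$ and distinguishes triangles on the variable $\shm$ (both sides are exact triangulated functors, the left by flatness of $\OEn_X$ in the sense of the $\ltens$ used, the right since $\solE_X$ is triangulated and $\cihom$ is exact in its first variable). Hence the locus $\{\shm\ :\ \Phi_\shm\text{ is an iso}\}$ is a triangulated subcategory of $\Derb_\hol(\D_X)$, and it suffices to verify the statement on a generating family. The question is also local on $X$, so we may assume $X$ is an open polydisk.

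\textbf{Step 1 (regular case).} For $\shl\in\Derb_\reghol(\D_X)$ one has, by~\cite{DK13}, $\solE_X(\shl)\simeq e_X(\sol_X(\shl))$. Combining Proposition~\ref{pro:fromOEtoEt} and Theorem~\ref{thm:OTgeq0} with Lemma~\ref{le:Mtenscommut}, the target of $\Phi_\shl$ becomes
\[
\cihom(\solE_X(\shl),\OEn_X)\simeq\C_X^\Tam\ctens\opb\pi\rihom(\sol_X(\shl),\Ot[X]),
\]
and Theorem~\ref{thm:ifunct}(iv) identifies this with $\C_X^\Tam\ctens\opb\pi(\shl\Dtens\Ot[X])\simeq\shl\Dtens\OEn_X$. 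One checks that the resulting composition coincides with $\Phi_\shl$, so the regular holonomic case is established.

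\textbf{Step 2 (exponential twist).} For $Y\subset X$ a hypersurface, $U=X\setminus Y$, $\varphi\in\sho_X(*Y)$, and $\shl$ regular holonomic with $\shl\simeq\shl(*Y)$, consider $\shm=\shl\Dtens\she^\varphi_{U|X}$. Using Theorem~\ref{thm:Tfunct}(\ref{thm:Tfunctitem4}) together with the explicit formula~\eqref{eq:laplaceEphi}, we can rewrite both sides of $\Phi_\shm$ in terms of $\C_{\{t\geq\Re(-\varphi)+a\}}$ and $\shl\Dtens\OEn_X(*Y)$. One then reduces the isomorphism to the regular case of Step~1 applied to $\shl$, after identifying $\cihom(\C_X^\Tam\ctens\C_{\{t=-\Re\varphi\}},\OEn_X)$ with the exponentially shifted tempered holomorphic object via an argument parallel to~\cite[\S9.4]{DK13}.

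\textbf{Step 3 (devissage).} By the Mochizuki--Kedlaya structure theorem there is a projective modification together with a suitable cyclic ramified cover $f\colon\widetilde X\to X$ such that $\Dopb f\shm$ admits, étale-locally, a good formal decomposition as a direct sum of modules of the shape treated in Step~2. Theorem~\ref{thm:Tfunct}(\ref{thm:Tfunctitem3}) gives compatibility of $\shm\Dtens\OEn_X$ with $\Doim f$, and Theorem~\ref{thm:Tfunct}(\ref{thm:Tfunctitem2}) combined with the enhanced adjunction $(\Topb f,\Teeim f)$ gives compatibility of $\cihom(\solE_X(-),\OEn_X)$ with $\Doim f$ (after tensoring with the appropriate transfer bimodule). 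One verifies that $\Phi_\shm$ intertwines these two compatibilities; then $\Phi_\shm$ is an isomorphism as soon as $\Phi_{\Dopb f\shm}$ is, reducing us to Step~2. Ramified descent along $f$ and $\bl\Doim f\Dopb f\shm\br(*D)$ versus $\shm(*D)$ (for $D$ the branch divisor) is handled as in~\cite{DK13}.

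\textbf{Main obstacle.} The delicate point is Step~3: one must check that $\Phi$ commutes with the base-change/projection isomorphisms of Theorem~\ref{thm:Tfunct} on \emph{both} sides simultaneously and that this commutation survives passage to the quotient category $\TDC(\iD_X)$. This amounts to an elaborate diagram chase built on the adjunctions $(\Teeim f,\Tepb f)$ and $(\Topb f,\Toim f)$, and on Lemma~\ref{le:Mtenscommut} applied to $\shd_{Y\from X}$. In addition, the ramified descent requires controlling how $\OEn$ behaves under cyclic covers, which is the technical heart of the argument and is precisely where the proof "follows the same lines" as in \cite{DK13} but needs to be redone for the morphism $\Phi$ rather than just for $\solE$.
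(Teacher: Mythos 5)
The paper's proof does not run the Kedlaya--Mochizuki devissage by hand; it invokes the axiomatic criterion of~\cite[Lemma~7.3.7]{DK13}, which packages the devissage into a finite list of conditions and reduces the task to verifying two of them: condition~(e), compatibility of the morphism $\Phi_\shm$ of Lemma~\ref{lem:MtoIhomSolM} with projective pushforward, and condition~(f), the isomorphism in the two elementary cases $\shm=\she^\varphi_{U|X}$ and $\shm$ of normal form along a normal crossing divisor $Y$. Your proposal instead tries to reconstruct the devissage directly, and while it is in the right spirit, it contains gaps that the paper's route avoids.

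The most serious problem is your Step~2, which is essentially circular. You propose to ``reduce to the regular case after identifying $\cihom(\C_X^\Tam\ctens\C_{\{t=-\Re\varphi\}},\OEn_X)$ with the exponentially shifted tempered holomorphic object,'' but that identification \emph{is} the sought isomorphism for $\she^\varphi_{U|X}$; there is nothing regular left to reduce to. In the paper this is condition~(f)--(1), and it is established by a genuine computation: one expresses $\Tr\OEn_X$ via $\Ot[X\times\BBP]$ twisted by $\she^{-\tau}_{\C|\BBP}$, uses~\cite[Prop.~9.6.5]{DK13} to compare with $\shm\Dtens\Ot[X\times\BBP_1]$, and then exploits the identity $\she^{-\tau_1-\tau_2}_{\C^2|\BBP_1\times\BBP_2}\simeq\Dopb{\tilde\mu}\she^{-\tau}_{\C|\BBP}$ to match the translation structure in the $t$-variable on both sides. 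Theorem~\ref{thm:Tfunct}~(iv), which you invoke, only applies when the tensoring module is \emph{regular}; it gives no handle on the exponential factor.

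The second gap is in Step~3: you do not engage the real blow-up. A holonomic $\shm$ does not acquire a direct-sum decomposition into exponential twists of regular modules after any algebraic modification, ramified cover, or étale localization on $X$ alone; such a decomposition only exists locally on the real oriented blow-up $\twX$, as $\sha_\twX$-modules. This is exactly why the paper's condition~(f)--(2) works over $\twX$, uses the key formula $\OEn_\twX\simeq\Tepb{\olom}\rihom(\opb{\pi}\C_U,\OEn_X)$ from~\cite[Th.~9.2.2]{DK13} and equation~\eqref{eq:Solb}, and then descends to $X$ via $\Toim{\olom}$. Your ``étale-locally ... a good formal decomposition as a direct sum'' elides precisely the Stokes phenomenon that forces the passage to $\twX$. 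Finally, the triangulated-subcategory-plus-generating-family reduction in your opening paragraph does not by itself reproduce the recursive structure of~\cite[Lemma~7.3.7]{DK13} (double induction on dimension and on a measure of irregularity, with the $(*Y)$ and projective-pushforward closures interlaced); the paper sidesteps this entirely by appealing to the criterion as a black box. Your diagnosis of the ``main obstacle'' also points at the wrong place: the compatibility of $\Phi$ with $\Doim{f}$, which the paper writes out as condition~(e) using Theorem~\ref{thm:Tfunct}~(i),~(iii) and Lemma~\ref{le:Mtenscommut}, is a clean sequence of identifications, not a delicate diagram chase; the genuinely new work is in~(f)--(1).
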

\begin{proof}
We shall apply \cite[Lemma~7.3.7]{DK13}. 
Recall that this lemma summarizes deep results of Mochizuki~\cite{Mo09,Mo11} in the algebraic case completed by those of Kedlaya~\cite{Ke10,Ke11} in the analytic case (see also~\cite{Sa00}). 

All conditions of this lemma are easily verified except conditions~(e) and~(f) that we shall check now. 

\vspace{1ex}\noindent
(e) Let $f\cl X\to Y$ be a projective morphism and let $\shm$ be a good holonomic $\D$-module
such that \eqref{eq:bjorkmor} is an isomorphism. We shall prove the isomorphism:
\eq\label{eq:cond(e)}
\Doim{f}\shm \Dtens\OEn_Y\isoto \cihom(\solE_Y(\Doim{f}\shm),\OEn_Y).
\eneq
By Theorem~\ref{thm:Tfunct}~(\ref{thm:Tfunctitem1}) we have 
\eqn
&&  \D_{Y\from X}\ltens[\D_X] \OEn_X\simeq\Tepb{f}\OEn_Y\,[d_Y-d_X].
\eneqn
Therefore, using Lemma~\ref{le:Mtenscommut}, we get:
\begin{align*}
\D_{Y\from X}\ltens[{\D_X}] &\cihom(\solE_X(\shm),\OEn_X)\\
& \simeq \cihom(\solE_X(\shm),\Tepb{f}\OEn_Y\,[d_Y-d_X]).
\end{align*}
 Hence, using Theorem~\ref{thm:Tfunct}~(\ref{thm:Tfunctitem3}) :
\eqn
&& \Toim{f}\bl \D_{Y\from X}\ltens[\D_X] \cihom(\solE_X(\shm),\OEn_X)\br\\
&&\hs{25ex}\simeq\cihom(\Teeim{f}\solE_X(\shm),\OEn_Y\,[d_Y-d_X])\\
&&\hspace{25ex}\simeq\cihom(\solE_Y(\Doim{f}\shm),\OEn_Y).
\eneqn
Hence, the right hand side of \eqref{eq:cond(e)} is calculated as
\eq
&&\ba{l}\cihom(\solE_Y(\Doim{f}\shm),\OEn_Y)\\[1ex]
\hs{20ex}\simeq\Toim{f}\bl\D_{Y\from X}\ltens[\D_X] \cihom(\solE_X(\shm),\OEn_X) \br. 
\ea\label{eq:cond(e)R}\eneq
On the other hand we have:
\eqn
\OvE_Y \ltens[\sho_Y] \Doim{f}\shm&\simeq&\OvE_Y\ltens[\D_Y]\D_Y \Dtens\Doim{f}\shm\\
&\simeq&\drE_Y(\D_Y\Dtens\Doim{f}\shm)\\
&\simeq&\drE_Y(\Doim{f}(\D_{X\to Y}\Dtens\shm)),
\eneqn
where the last isomorphism follows from  the projection formula of~\cite[Th.~4.2.8]{Ka03}. 
Hence 
\eqn
\OvE_Y\ltens[\sho_Y]\Doim{f}\shm&\simeq&\drE_Y\bl\Doim{f}(\D_{X\to Y}\Dtens\shm)\br\\
&\simeq&\Toim{f}\bl\drE_X(\D_{X\to Y}\Dtens\shm)\br,
\eneqn
where the last isomorphism follows from Theorem~\ref{thm:Tfunct}~(\ref{thm:Tfunctitem3}).  Since
\eqn
\drE_X(\D_{X\to Y}\Dtens\shm)&\simeq&\OvE_X\ltens[\D_X](\D_{X\to Y}\Dtens\shm)\\
&\simeq&(\OvE_X\ltens[\sho_X]\shm)\ltens[\D_X]\D_{X\to Y},
\eneqn
we obtain
\eqn
&&\OvE_Y\ltens[\sho_Y]\Doim{f}\shm\simeq\Toim{f}\bl
(\OvE_X\ltens[\sho_X]\shm)\ltens[\D_X]\D_{X\to Y}\br,
\eneqn
which is equivalent to
\eq
&&\Doim{f}\shm\Dtens \OEn_Y\simeq\Toim{f}\bl
\D_{Y\from X}\ltens[\D_X](\shm\Dtens\OEn_X)\br.
\label{eq:cond(e)L}
\eneq
By comparing~\eqref{eq:cond(e)R} and \eqref{eq:cond(e)L} and assuming that \eqref{eq:bjorkmor} is an isomorphism,
we get  isomorphism~\eqref{eq:cond(e)}.

\vspace{1ex}\noindent
(f)--(1)\quad Let $Y\subset X$ be a normal crossing divisor, $U=X\setminus Y$ and 
$\phi\in\sect(X;\sho_X(*Y))$ a meromorphic function on $X$  with poles on $Y$.
We shall first prove that \eqref{eq:bjorkmor} is an isomorphism when $\shm=\she^\varphi_{U|X}$.

Recall that $\BBP=\BBP^1(\C)$ and denote by $\BBP_1$ and $\BBP_2$ two copies of $\BBP$. Let $\tau_k$ be the inhomogeneous coordinate of $\BBP_k$ ($k=1,2$).
 Let $q_{kX}\cl X\times\R_\infty\times \R_\infty\to  X\times\R_\infty$ be the
projection ($k=1,2$).
Consider the maps
\eqn
&&\xymatrix@C=10ex{
X\times\R_\infty\times\R_\infty\ar[r]^-j\ar[d]_-{q_{1X}}
&X\times\BBP_1\times\R_\infty\ar[d]^-{\pi_{X\times\BBP_1}}\ar[r]^-{i_{X\times\BBP}}\ar[rd]^q\ar@{}[dl]|-{\square}& X\times\BBP_1\times\BBP_2\\
 X\times\R_\infty\ar[r]^-{i_X}  &  
X\times\BBP_1&X\times \R_\infty
}\eneqn
Set $K\eqdot\solE_X(\shm)$. Then by~\cite[Prop.~9.6.5]{DK13}, we have
\eqn
\roim{\pi_{X\times\BBP_1}}\rihom(\opb{q}\Tl(K),\Tr\OEn_{X\times\BBP_1})\simeq\shm\Dtens\Ot[X\times\BBP_1].
\eneqn
On the other hand, we have
\eqn
&&\epb{i_X}\roim{\pi_{X\times\BBP_1}}\rihom(\opb{q}\Tl(K),\Tr\OEn_{X\times\BBP})\\
&&\hspace{10ex}\simeq
\roim{q_{1X}}\epb{j}\rihom(\opb{q}\Tl(K),\Tr\OEn_{X\times\BBP})\\
&&\hspace{10ex}\simeq\roim{q_{1X}}\rihom(\opb{j}\opb{q}\Tl(K),\epb{j}\Tr\OEn_{X\times\BBP_1}).
\eneqn
Since
\eqn
\Tr\OEn_{X\times\BBP_1}&\simeq&\epb{i_{X\times\BBP}}\bl(\she^{-\tau_2}_{\C\vert\BBP_2})^\mop\ltens[\D_{\BBP_2}]
\Ot[X\times\BBP_1\times\BBP_2]\br[1],\eneqn
we obtain
\eqn
&&\epb{i_X}(\shm\Dtens\Ot[X\times\BBP_1])\\
&&\hs{3ex}\simeq\roim{q_{1X}}\rihom\Bigl(\opb{j}\opb{q}\Tl(K),
\epb{j}\epb{i_{X\times\BBP}}\bl(\she^{-\tau_2}_{\C\vert\BBP_2})^\mop
\ltens[\D_{\BBP_2}]
\Ot[X\times\BBP_1\times\BBP_2]\br[1]\Bigr).
\eneqn
By applying the functor $(\she^{-\tau_1}_{\C|\BBP_1})^\mop\ltens[\D_{\BBP_1}]\scbul$, we get the isomorphism
\eqn
&&\hs{-3ex}\shm\Dtens\Tr\OEn_{X}\simeq\epb{i_X}\Bigl((\she^{-\tau_1}_{\C|\BBP_1})^\mop\ltens[\D_{\BBP_1}](\shm\Dtens\Ot[X\times\BBP_1])\Bigr)[1]\\
&&\hs{-1ex}\simeq
\roim{q_{1X}}\rihom\Bigl(\opb{q_{2X}}\Tl(K),
\epb{j}\epb{i_{X\times\BBP_1}}((\she^{-\tau_1-\tau_2}_{\C^2|\BBP_1\times\BBP_2})^\mop\ltens[\D_{\BBP_1\times\BBP_2}]\Ot[X\times\BBP_1\times\BBP_2])[2]\Bigr).
\eneqn
Let $\tilde\mu\cl X\times(\C,\BBP_1)\times (\C,\BBP_2)\to X\times(\C,\BBP)$ be the morphism
given by $(\tau_1,\tau_2)\mapsto\tau_1+\tau_2$,
and let $\mu\cl X\times\R_\infty\times \R_\infty\to X\times\R_\infty$
be its restriction.
Since
\eqn
(\she^{-\tau_1-\tau_2}_{\C^2|\BBP_1\times\BBP_2})^\mop\ltens[\D_{\BBP_1\times\BBP_2}]\Ot[X\times\BBP_1\times\BBP_2][2]
&\simeq&
(\Dopb{\tilde\mu}\she^{-\tau}_{\C|\BBP})^\mop\ltens[\D_{\BBP_1\times\BBP_2}]\Ot[X\times\BBP_1\times\BBP_2][2]\\
&\simeq&\epb{\tilde\mu}\Bigl((\she^{-\tau}_{\C|\BBP})^\mop\ltens[\D_\BBP]
\Ot[X\times\BBP]\Bigr)[1],
\eneqn
we get
\eqn
&&\epb{j}\epb{i_{X\times\BBP}}\Bigl((\she^{-\tau_1-\tau_2}_{\C^2|\BBP_1\times\BBP_2})^\mop\ltens[\D_{\BBP_1\times\BBP_2}]\Ot[X\times\BBP_1\times\BBP_2])\Bigr)
[2]\\
&&\hs{15ex}\simeq\epb{\mu}\epb{i_X}\Bigl((\she^{-\tau}_{\C|\BBP})^\mop\ltens[\D_\BBP]
\Ot[X\times\BBP]\Bigr)[1]\simeq
\epb{\mu}\Tr\OEn_X.
\eneqn
Hence we obtain
\eqn
\shm\Dtens\OEn_X\simeq
\shm\Dtens\Tr\OEn_X&\simeq&
\roim{q_{1X}}\rihom(\opb{q_{2X}}\Tl(K),\epb{\mu}\Tr\OEn_X)\\
&\simeq&\cihom( K,\OEn_X).
\eneqn

\vspace{1ex}\noindent
(f)--(2)\quad Let $Y\subset X$ be a normal crossing divisor, $U=X\setminus Y$ as in (f)--(1). We shall prove that 
\eqref{eq:bjorkmor} is an isomorphism when $\shm$ has a normal form along $Y$. 
We keep the notations of~\cite[Lemma~9.6.6]{DK13}. 
Recall from \cite[\S~7.1, 7.2]{DK13} that $\twX\eqdot\twX_Y$ 
is the real blow-up of $X$ along $Y$ 
and $\olom\cl\twX\to X$ is the projection.

 Similarly to Lemma~\ref{lem:MtoIhomSolM}, we have a morphism
\eq\label{lem:MtoIhomSolMb}
&&\shl\ltens[\sha_{\twX}]\OEn_\twX\to\cihom(\solE_\twX(\shl),\OEn_\twX)
\quad\text{for any }\shl\in\Derb(\D^\sha_\twX).
\eneq
Let us  first  show that~\eqref{lem:MtoIhomSolMb} is an isomorphism when $\shl=\shm^\sha$. 
Note that by~\cite[(9.6.6)]{DK13}, 
\eq
&&\opb{\pi}\opb{\olom}\C_U\tens\solE_\twX(\shm^\sha)\simeq \Topb{\olom}
\solE_{ X}(\shm).\label{eq:Solb}
\eneq
Since $\shm^\sha$ is isomorphic to a direct sum of modules of type $(\she^\phi_{U|X})^\sha$, we may assume that $\shm^\sha=\shn^\sha$ with 
$\shn=\she^\phi_{U|X}$. In this case we have proved the isomorphism
\eq\label{lem:MtoIhomSolMc}
&&\shn\ltens[\sho_X]\OEn_X\isoto\cihom(\solE_{X}(\shn),\OEn_{X})
\eneq
 in (f)--(1).     By~\cite[Th.~9.2.2]{DK13}, 
\eqn
&&\OEn_\twX\simeq\Tepb{\olom}\rihom(\opb{\pi}\C_U,\OEn_X).
\eneqn
Applying the functor $\Tepb{\olom}\rihom(\opb{\pi}\C_U,\scbul)$ to \eqref{lem:MtoIhomSolMc},
 we obtain
\eqn
&&\shm^\sha\ltens[\sha_{\twX}]\OEn_\twX\simeq \shn\Dtens\OEn_\twX   
\isoto\cihom(\solE_{\twX}(\shm^\sha),\OEn_{\twX}).
\eneqn
Therefore  \eqref{lem:MtoIhomSolMb}  is an isomorphism  for $\shl=\shm^\sha$.

Hence, we obtain
\eqn
&&\shm^\sha\ltens[\sho_{\tw X}]\Tepb{\olom}\rihom
(\opb{\pi}\C_U,\OEn_X)
\simeq \shm^\sha\ltens[\sha_{\twX}]\OEn_\twX\\
&&\hspace{10ex}\simeq
\cihom(\solE_{\twX}(\shm^\sha),\Tepb{\olom}\rihom(\opb{\pi}\C_U,\OEn_X))\\
&&\hspace{10ex}\simeq
\cihom(\opb{\pi_\twX}\opb{\olom}\C_U\tens\solE_{\twX}(\shm^\sha),\Tepb{\olom}\OEn_X)\\
&&\hspace{10ex}\simeq
\cihom(\Topb{\olom}\solE_{X}(\shm),\Tepb{\olom}\OEn_X)\\
&&\hspace{10ex}\simeq
\Tepb{\olom}\cihom(\solE_{X}(\shm),\OEn_X).
\eneqn
Here, we have used  \eqref{eq:Solb}.  
Applying the functor $\Toim{\olom}$ we obtain
\begin{align*}
\shm\Dtens\rihom&(\opb{\pi}\C_U,\OEn_X)\\
&\simeq\rihom(\opb{\pi}\roim{\olom}\C_\twX,\cihom(\solE_X(\shm),\OEn_X))\\
&\simeq\cihom(\opb{\pi}\roim{\olom}\C_\twX\ltens \solE_X(\shm),\OEn_X).
\end{align*}
Since $\opb{\pi}\C_U\tens\solE_X(\shm)\simeq\solE_X(\shm)$, we also have 
$\opb{\pi}\roim{\olom}\C_\twX\tens\solE_X(\shm)\simeq\solE_X(\shm)$.

Since 
$\rihom(\opb{\pi}\C_U,\OEn_X)\simeq\sho_X(*Y)\tens[\sho_X]\OEn_X$ and $\shm\tens[\sho_X]\sho_X(*Y)\simeq\shm$,
we obtain isomorphism~\eqref{eq:bjorkmorf}.

Thus condition (f) is checked, and this completes the proof 
of  isomorphism~\eqref{eq:bjorkmorf} for an arbitrary $\shm\in\Derb_\hol(\D_X)$.
\end{proof}

\begin{corollary}[{ cf.~\cite[Th.~9.6.1]{DK13}}]
There are isomorphisms, functorial with respect to $\shm\in\Derb_\hol(\D_X)$%
{\rm:}
\begin{align*}
\shm\Dtens\Ot[X]&\isoto\fihom(\solE_X(\shm),\OEn_X)
\quad\text{in $\Derb(\mathrm{I}\shd_X)$,}\\
\shm&\isoto\fhom(\solE_X(\shm),\OEn_X)\quad\text{in $\Derb(\shd_X)$.}
\end{align*}
\end{corollary}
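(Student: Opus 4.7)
The plan is to derive the corollary from the Extended Riemann-Hilbert Theorem~\ref{th:bjork}, which provides an isomorphism $\shm\Dtens\OEn_X\isoto\cihom(\solE_X(\shm),\OEn_X)$ in $\TDC(\iD_X)$, by descending through the appropriate functors: first from $\TDC(\iD_X)$ to $\Derb(\iD_X)$ via $\fihom(\C_X^\Tam,-)$, and then from $\Derb(\iD_X)$ to $\Derb(\D_X)$ via $\alpha_X$.

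First, I would apply the functor $\fihom(\C_X^\Tam,-)\cl\TDC(\iD_X)\to\Derb(\iD_X)$ to both sides of the isomorphism of Theorem~\ref{th:bjork}. On the left-hand side, I want to commute $\shm\Dtens(\scbul)$ past $\fihom(\C_X^\Tam,-)$. This commutation follows by the same argument used in the proof of Lemma~\ref{le:Mtenscommut}: unwinding $\fihom(\C_X^\Tam,-)=\roim\pi\,\rihom(\Tl\C_X^\Tam,\Tr(-))=\roim\pi\,\rihom(\C_{\{t\gg0\}},\Tr(-))$ and using the projection formulas~\cite[Theorems~5.5.4, 5.6.1(ii), 5.6.3]{KS01}. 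Once the tensor factor is pulled out, Proposition~\ref{pro:fromOEtoEt} (applied with $F=\C_X$) gives $\fihom(\C_X^\Tam,\OEn_X)\simeq\Ot[X]$, so the left-hand side becomes $\shm\Dtens\Ot[X]$.

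Second, for the right-hand side, I need the identification
\[
\fihom\bl\C_X^\Tam,\cihom(\solE_X(\shm),\OEn_X)\br\simeq\fihom(\solE_X(\shm),\OEn_X).
\]
This rests on two facts: the adjunction/associativity relation $\fihom(K_1\ctens K_2,K_3)\simeq\fihom(K_1,\cihom(K_2,K_3))$ in $\TDC$, and the idempotency-type property $\C_X^\Tam\ctens\solE_X(\shm)\simeq\solE_X(\shm)$. The latter is because enhanced solutions of holonomic $\D$-modules always have a presentation $\solE_X(\shm)\simeq F\ctens\C_X^\Tam$ (used in the proof of Lemma~\ref{le:Mtenscommut}), combined with $\C_X^\Tam\ctens\C_X^\Tam\simeq\C_X^\Tam$. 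Combining Steps~1 and~2 yields the first isomorphism $\shm\Dtens\Ot[X]\isoto\fihom(\solE_X(\shm),\OEn_X)$ in $\Derb(\iD_X)$.

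Finally, applying $\alpha_X$ to the first isomorphism gives the second: the right-hand side becomes $\fhom(\solE_X(\shm),\OEn_X)$ by the very definition $\fhom=\alpha_X\circ\fihom$, while on the left-hand side we use that $\alpha_X$ is exact, commutes with $(\scbul)\Dtens(\scbul)$ (since $\shm\in\Derb(\D_X)$ is a classical sheaf), and that $\alpha_X(\Ot[X])\simeq\sho_X$ (as $\alpha$ sends ind-sheaves of tempered functions to the usual sheaf of holomorphic functions). This yields $\shm\Dtens\sho_X\simeq\shm$, as required. The main technical obstacle is Step~2: verifying rigorously that $\fihom(\C_X^\Tam,\cihom(K,L))\simeq\fihom(K,L)$ for $K$ satisfying $\C_X^\Tam\ctens K\simeq K$, which requires a careful manipulation of the quotient functors $\Tl,\Tr$ together with the compatibilities between $\ctens$, $\cihom$, and the projection $\roim\pi$.
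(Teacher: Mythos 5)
Your proof is correct and follows essentially the same route as the paper's: apply a functor of the form $\fihom(K_0,\scbul)$ to the Extended Riemann--Hilbert isomorphism of Theorem~\ref{th:bjork}, identify the left-hand side with $\shm\Dtens\Ot[X]$ via Proposition~\ref{pro:fromOEtoEt}, collapse the right-hand side by the $\ctens$--$\cihom$ adjunction, and then apply $\alpha_X$. The only cosmetic difference is your choice of $K_0=\C_X^\Tam$ instead of the paper's $K_0=\C_{\{t=0\}}$; by Proposition~\ref{pro:fromOEtoEt} both give $\Ot[X]$ on the left, but $\C_{\{t=0\}}$ is the unit for $\ctens$, so the paper's choice dispenses with your extra step checking $\C_X^\Tam\ctens\solE_X(\shm)\simeq\solE_X(\shm)$.
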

\begin{proof}
We shall apply the functor $\fihom(\C_{\{t=0\}},\scbul)$ 
to isomorphism~\eqref{eq:bjorkmorf}. 
The first isomorphism  follows by Proposition~\ref{pro:fromOEtoEt}. The second isomorphism is deduced from the first one by applying the functor $\alpha_X$.
\end{proof}

\subsection{Functoriality of  the De Rham functor}
\begin{proposition}\label{pro:opforenhD}
The enhanced de Rham functor has the following properties.
\bnum
\item
Let $f\cl X\to Y$ be a morphism of complex manifolds and let $\shn\in\Derb_\qgood(\D_{Y})$. There is a natural isomorphism
\eqn
&&\drE_{X}(\Dopb{f}\shn)\,[d_X]\simeq\Tepb{f}\drE_{Y}(\shn)\,[d_Y].
\eneqn
\item
Let $f\cl X\to Y$ be a morphism of complex manifolds and let $\shm\in\Derb_\qgood(\D_{X})$. 
Assume that $f$ is proper on $\supp(\shm)$. Then there is a natural isomorphism
\eqn
&&\drE_{Y}(\Doim{f}\shm)\simeq\Toim{f}\drE_{X}(\shm).
\eneqn
\item
Let $X$ be a complex manifold and let $\shl\in\Derb_\hol(\D_X)$ and 
 $\shm\in\Derb(\D_{X})$.  There is a natural isomorphism
\[
\drE_X(\shl\Dtens\shm) \simeq \cihom(\solE_X(\shl), \drE_X(\shm)).
\]
\enum
\end{proposition}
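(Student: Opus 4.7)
The plan is to reduce each of the three items to results already available in the excerpt; only item (iii) requires substantive new argument.

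Item (i) is literally Theorem~\ref{thm:Tfunct}~(\ref{thm:Tfunctitem2}), which is stated there without any restriction on $\shn\in\BDC(\D_Y)$, so no extra work is needed. For item (ii), I would start from Theorem~\ref{thm:Tfunct}~(\ref{thm:Tfunctitem3}), which provides an isomorphism $\drE_Y(\Doim f\shm)\simeq\Teeim f\drE_X(\shm)$. Since $f$ is proper on $\supp(\shm)$, the enhanced ind-sheaf $\drE_X(\shm)$ is supported over $f(\supp\shm)$, and for objects with such proper support the functors $\Teeim f$ and $\Toim f$ are canonically isomorphic, which yields (ii).

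For item (iii) my plan has three steps. First, apply the Extended Riemann--Hilbert theorem (Theorem~\ref{th:bjork}) to the holonomic $\D_X$-module $\shl$ to obtain
\[
\shl\Dtens\OEn_X\isoto\cihom(\solE_X(\shl),\OEn_X)\quad\text{in }\TDC(\iD_X).
\]
Second, I would perform a side change by tensoring with the invertible $\sho_X$-module $\Omega_X$, converting the left $\D_X$-module structures into right $\D_X$-module structures; because $\Omega_X$ is locally free of rank one, tensoring by it commutes with $\cihom$ in its second argument, producing
\[
\shl\Dtens\OvE_X\simeq\cihom(\solE_X(\shl),\OvE_X)\quad\text{in }\TDC(\iD_X^\rop).
\]
Third, tensor both sides over $\D_X$ with $\shm$. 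On the left, the standard $\D$-module shuffle identity
\[
(\shl\Dtens\OvE_X)\ltens[\D_X]\shm\simeq\OvE_X\ltens[\D_X](\shl\Dtens\shm)
\]
identifies the result with $\drE_X(\shl\Dtens\shm)$. On the right, commuting $\ltens[\D_X]\shm$ past $\cihom(\solE_X(\shl),\scbul)$ yields $\cihom(\solE_X(\shl),\OvE_X\ltens[\D_X]\shm)=\cihom(\solE_X(\shl),\drE_X(\shm))$, completing the proof.

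The main obstacle is this last commutation. Lemma~\ref{le:Mtenscommut} is formulated with a right $\D_X$-module on the outside and a left $\D_X$-module coefficient $\shk\in\TDC(\iD_X)$ on the inside, whereas here the sides are reversed: $\shm$ is a left $\D_X$-module and $\OvE_X$ is a right $\D_X$-module coefficient. However, the proof of Lemma~\ref{le:Mtenscommut} uses only the representation $\solE_X(\shl)\simeq F\ctens\C_X^\Tam$ (which is valid for any holonomic $\shl$), the hypothesis $\cihom(\C_X^\Tam,\shk)\simeq\shk$ (which for $\shk=\OvE_X$ follows from Theorem~\ref{thm:OTgeq0} after tensoring with $\Omega_X$), and formal properties of $\cihom$ and $\ltens[\D_X]$ from~\cite{KS01} that are symmetric in the left/right convention. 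I therefore expect that the same argument transposes verbatim; carrying out this transposition and verifying the side-change identity in the second step are the genuinely technical parts of the work.
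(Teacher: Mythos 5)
Your proof is correct and follows essentially the same route as the paper. Items (i) and (ii) are, as you say, direct consequences of Theorem~\ref{thm:Tfunct}, with the small gap (the passage from $\Teeim f$ to $\Toim f$ on objects supported properly over the base) that the paper leaves entirely implicit. For item (iii) you use the right ingredients, namely Theorem~\ref{th:bjork} and Lemma~\ref{le:Mtenscommut}, but the transposition of Lemma~\ref{le:Mtenscommut} that you flag as the delicate point is avoidable: instead of converting the isomorphism of Theorem~\ref{th:bjork} to $\OvE_X$-coefficients and then tensoring by $\shm$ on the right, apply $\shm^\mop\ltens[\D_X]\scbul$ on the left to $\shl\Dtens\OEn_X\simeq\cihom(\solE_X(\shl),\OEn_X)$. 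Since $\shm^\mop=\Omega_X\ltens[\sho_X]\shm$ is already a right $\D_X$-module and $\OEn_X$ is a left $\D_X$-module object of $\TDC(\iD_X)$ satisfying $\cihom(\C_X^\Tam,\OEn_X)\simeq\OEn_X$ by Theorem~\ref{thm:OTgeq0}, Lemma~\ref{le:Mtenscommut} applies exactly as stated. The left-hand side is then $(\shm\Dtens\shl)^\mop\ltens[\D_X]\OEn_X\simeq\drE_X(\shm\Dtens\shl)$ by the usual shuffle, and the right-hand side is $\cihom(\solE_X(\shl),\shm^\mop\ltens[\D_X]\OEn_X)\simeq\cihom(\solE_X(\shl),\drE_X(\shm))$. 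This is precisely the paper's argument, and it eliminates the need for any left/right transposition of the lemma.
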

\begin{proof}
(i) and (ii) follow from Theorem~\ref{thm:Tfunct}. Let us prove~(iii).
By Theorem~\ref{th:bjork}, we have an isomorphism  in $\TDC(\iD_X)$:
\eq\label{eq:bjorkmorfbis}
&&\shl\Dtens\OEn_X\isoto\cihom(\solE_X(\shl),\OEn_X).
\eneq
Let us apply $\shm^\mop\ltens[\shd_X]\scbul$ to both sides of \eqref{eq:bjorkmorfbis}. We have
\eqn
\shm^\mop\ltens[\shd_X](\shl\Dtens\OEn_X)&\simeq&(\shm\Dtens\shl)^\mop\ltens[\shd_X]\OEn_X\\
&\simeq&\drE_X(\shm\Dtens\shl),
\eneqn
and, using Lemma~\ref{le:Mtenscommut},
\eqn
\shm^\mop\ltens[\shd_X]\cihom(\solE_X(\shl),\OEn_X)&\simeq&\cihom(\solE_X(\shl),\shm^\mop\ltens[\shd_X]\OEn_X)\\
&\simeq&\cihom(\solE_X(\shl),\drE_X(\shm)).
\eneqn
\end{proof}
Consider morphisms of complex manifolds
\eq\label{diag:inttrans}
&&\ba{c}\xymatrix@C=6ex{
&S\ar[ld]_-f\ar[rd]^-g&\\
{X}&&{Y}.
}\ea\eneq

\begin{notation}\label{not:PhiPsiE}
(i) For $\shm\in\Derb_\qgood(\D_{X})$ and $\shl\in\Derb_\qgood(\D_S)$ one sets
\eq\label{eq:Dcconv}
&&\shm\Dconv\shl\eqdot\Doim{g}(\Dopb{f}\shm\Dtens\shl).
\eneq
(ii) For $L\in \TDC(\iC_S)$,  $F\in \TDC(\iC_{X})$ and $G\in\TDC(\iC_Y)$ one sets
\eq\label{eq:Ccconv}
&&\ba{c} L\econv G\eqdot\Teeim{f}(L\ctens\Topb{g}G),\quad
 F\econv L\eqdot\Teeim{g}(\Topb{f}F\ctens L),\\
\Phi_L^\Tam(G)=  L\econv G,\quad \Psi_L^\Tam(F)=\Toim{g}\cihom(L,\Tepb{f}F).
\ea\eneq
\end{notation}
Note that we have a pair of adjoint functors 
\eq\label{eq:phipsiadj}
&&\xymatrix{
\Phi_L^\Tam\cl \TDC(\iC_Y)\ar@<-0.5ex>[r]&\TDC(\iC_X)\ar@<-0.5ex>[l]\cl \Psi_L^\Tam
}\eneq

\begin{theorem}\label{th:7412}
Let $\shm\in\Derb_\qgood(\D_X)$, 
$\shl\in\Derb_\ghol(\D_{S})$ and let  $L\eqdot\solE_{S}(\shl)$. 
 Assume that $\opb{f}\supp(\shm)\cap\supp(\shl)$ is proper over $Y$.
Then there is a natural isomorphism in $\TDC(\iC_Y)${\rm:}
\eqn
&&\Psi_L^\Tam\bl\drE_X(\shm)\br\,[d_X-d_S]\simeq\drE_Y(\shm\Dconv\shl).
\eneqn
\end{theorem}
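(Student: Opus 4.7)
The plan is to simply chain together the three functorial properties of the enhanced de Rham functor established in Proposition~\ref{pro:opforenhD}. Since this proposition was itself the substantial step (requiring the extended Riemann--Hilbert isomorphism of Theorem~\ref{th:bjork}), the remaining work is mostly bookkeeping: unwinding $\shm\Dconv\shl$ and pushing $\drE$ through direct image, tensor product, and inverse image in turn.

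I would start from the right-hand side $\drE_Y(\shm\Dconv\shl)=\drE_Y\bl\Doim{g}(\Dopb{f}\shm\Dtens\shl)\br$. The support of $\Dopb{f}\shm\Dtens\shl$ is contained in $\opb f\supp(\shm)\cap\supp(\shl)$, which by hypothesis is proper over $Y$ via $g$; hence the direct image assertion Proposition~\ref{pro:opforenhD}(ii) applies and gives
\eqn
&&\drE_Y(\shm\Dconv\shl)\simeq\Toim{g}\,\drE_S(\Dopb{f}\shm\Dtens\shl).
\eneqn
Next, since $\shl$ is (good) holonomic, Proposition~\ref{pro:opforenhD}(iii) applied with $\shl$ and $\Dopb{f}\shm$ yields
\eqn
&&\drE_S(\shl\Dtens\Dopb{f}\shm)\simeq\cihom\bl\solE_S(\shl),\drE_S(\Dopb{f}\shm)\br=\cihom\bl L,\drE_S(\Dopb{f}\shm)\br.
\eneqn
Finally, the inverse image formula Proposition~\ref{pro:opforenhD}(i) for $f\cl S\to X$ gives
\eqn
&&\drE_S(\Dopb{f}\shm)\,[d_S]\simeq\Tepb{f}\drE_X(\shm)\,[d_X],
\eneqn
i.e.\ $\drE_S(\Dopb{f}\shm)\simeq\Tepb{f}\drE_X(\shm)\,[d_X-d_S]$. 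Combining these three isomorphisms and recalling that $\Psi_L^\Tam(F)=\Toim g\cihom(L,\Tepb f F)$ (Notation~\ref{not:PhiPsiE}), we obtain
\eqn
\drE_Y(\shm\Dconv\shl)
&\simeq&\Toim{g}\cihom\bl L,\Tepb{f}\drE_X(\shm)\,[d_X-d_S]\br\\
&\simeq&\Psi_L^\Tam\bl\drE_X(\shm)\br\,[d_X-d_S],
\eneqn
which is the desired isomorphism.

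The only real subtlety is to check that the chain of isomorphisms is functorial and that each hypothesis in Proposition~\ref{pro:opforenhD} is verified: $\shm\in\Derb_\qgood(\D_X)$ is needed for (i), $\shl\in\Derb_\ghol(\D_S)$ (hence in particular holonomic) is needed for (iii), and the properness of $\opb f\supp(\shm)\cap\supp(\shl)$ over $Y$ controls (ii). There is no genuine obstacle left at this stage; the deep input --- namely the extended Riemann--Hilbert isomorphism $\shl\Dtens\OEn_S\isoto\cihom(\solE_S(\shl),\OEn_S)$ underlying Proposition~\ref{pro:opforenhD}(iii) --- has already been supplied by Theorem~\ref{th:bjork}, so the proof reduces to concatenating the three functorialities.
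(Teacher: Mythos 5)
Your proof is correct and follows exactly the paper's own argument: start from $\drE_Y(\shm\Dconv\shl)$ and apply Proposition~\ref{pro:opforenhD}(ii), then (iii), then (i), recognizing the result as $\Psi_L^\Tam(\drE_X(\shm))[d_X-d_S]$. The extra care you take in checking the support/properness hypothesis and the shift bookkeeping is implicit in the paper but correct; nothing is missing.
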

\begin{proof}
Applying Proposition~\ref{pro:opforenhD}, we get:
\eqn
\drE_Y(\shm\Dconv\shl)&=& \drE_Y(\Doim{g}(\Dopb{f}\shm\Dtens\shl))\\
&\simeq&\Toim{g} \drE_S(\Dopb{f}\shm\Dtens\shl)\\
&\simeq&\Toim{g}  \cihom(\solE_S(\shl), \drE_S(\Dopb{f}\shm))\\
&\simeq&\Toim{g}  \cihom(L, \Tepb{f}\drE_X(\shm))\,[d_X-d_S]\\
&=&\Psi_L^\Tam(\drE_X(\shm))\,[d_X-d_S].
\eneqn
\end{proof}

\begin{corollary}\label{cor:7412}
In the situation of {\rm Theorem~\ref{th:7412}}, let $G\in \TDC(\iC_Y)$. Then there is a natural isomorphism in $\Derb(\C)$
\eqn
&&\FHom(L\econv G,\OvE_{X}\ltens[\D_{X}]\shm)\,[d_X-d_S]\\
&&\hspace{30ex}\simeq\FHom(G,\OvE_{Y}\ltens[\D_{Y}](\shm\Dconv \shl)).
\eneqn
\end{corollary}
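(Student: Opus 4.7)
The plan is to deduce this corollary from Theorem~\ref{th:7412} by a standard adjunction argument. The key observation is that the functor $\Phi_L^\Tam(\scbul)=L\econv(\scbul)$ is left adjoint to $\Psi_L^\Tam$, as recorded in~\eqref{eq:phipsiadj}. Once this adjunction is invoked at the level of $\FHom$, the conclusion will reduce to identifying $\Psi_L^\Tam\bl\drE_X(\shm)\br\,[d_X-d_S]$ with $\drE_Y(\shm\Dconv\shl)$, which is exactly the content of Theorem~\ref{th:7412}.

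More precisely, first I would rewrite the right-hand term using the enhanced de Rham functor, namely
\[
\OvE_X\ltens[\D_X]\shm=\drE_X(\shm),\qquad \OvE_Y\ltens[\D_Y](\shm\Dconv\shl)=\drE_Y(\shm\Dconv\shl).
\]
Then, using the $(\Phi_L^\Tam,\Psi_L^\Tam)$-adjunction in $\TDC(\iC_{\scbul})$, one obtains a functorial isomorphism
\[
\FHom\bl L\econv G,\drE_X(\shm)\br\simeq\FHom\bl G,\Psi_L^\Tam(\drE_X(\shm))\br
\]
in $\Derb(\C)$; here one should make sure that the adjunction at the level of internal $\cihom^+$ on $\TDC(\iC_Y)$, combined with $\roim\pi$ and $\rsect(Y;\scbul)$, really yields an adjunction for $\FHom$, but this is a formal consequence of the definitions in Definition~\ref{def:fihom}.

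Next I would apply Theorem~\ref{th:7412}, which gives
\[
\Psi_L^\Tam\bl\drE_X(\shm)\br\,[d_X-d_S]\simeq\drE_Y(\shm\Dconv\shl),
\]
and substitute this into the previous isomorphism after shifting by $[d_X-d_S]$. Combining the two yields the desired functorial isomorphism in $\Derb(\C)$.

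The routine but slightly delicate point is to check that the adjunction~\eqref{eq:phipsiadj} passes to $\FHom$, that is, that the isomorphism of $\Hom$-sets in $\TDC(\iC_{\scbul})$ upgrades to an isomorphism in $\Derb(\C)$ at the level of $\FHom$; this is where the main (but essentially formal) obstacle lies. The properness hypothesis $\opb{f}\supp(\shm)\cap\supp(\shl)$ proper over $Y$ is exactly what ensures that Theorem~\ref{th:7412} applies, so no additional hypotheses are needed. The functoriality in $G$ then comes for free since every step is functorial.
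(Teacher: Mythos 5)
Your proposal is correct and follows essentially the same route as the paper: both invoke the $(\Phi_L^\Tam,\Psi_L^\Tam)$-adjunction of~\eqref{eq:phipsiadj}, upgrade it to the level of $\FHom$, and then substitute the isomorphism from Theorem~\ref{th:7412}. The paper states this in one line; your extra remark that the $\Hom$-set adjunction lifts to $\FHom$ (via the compatibilities of $\fihom$ with the enhanced operations) is exactly the point the paper leaves tacit, and is indeed formal.
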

\begin{proof}
Applying Theorem~\ref{th:7412} and the adjunction~\eqref{eq:phipsiadj}, we get
\eqn
\FHom(\Phi_L^\Tam(G), \drE_X(\shm))&\simeq&\FHom(G, \Psi_L^\Tam(\drE_X(\shm)))\\
&\simeq&\FHom(G, \drE_Y(\shm\Dconv\shl)) \,[d_S-d_X].
\eneqn
\end{proof}
Note that Corollary~\ref{cor:7412} is a generalisation of
\cite[Th.7.4.12]{KS01} to not necessarily regular  holonomic $\D$-modules.

\subsection{Generalization to complex bordered spaces}
It is possible to generalize all the preceding results when replacing complex manifolds with complex bordered spaces, as defined below. 
\begin{definition}\label{def:Cbspace}
The category of \emph{complex bordered spaces} is defined as follows.\\
The objects are pairs $(X,\bX)$ where $\bX$ is a complex manifold and $X\subset \bX$ is an open  subset such that $\bX\setminus X$ is a complex analytic subset of $\bX$.\\
Morphisms $f\colon (X,\bX) \to (Y,\bY)$ are complex analytic maps $f\colon X\to Y$ such that
\bnum
\item
$\Gamma_f$ is a complex analytic subset of $\bX\times\bY$,
\item
$\overline\Gamma_f\to\bX$ is proper.
\ee
\end{definition}
Hence a morphism of complex  bordered spaces is a morphism of real analytic bordered spaces
(cf. Definition~\ref{def:Rbspace}).

Let $\fX=(X,\bX)$ be a complex bordered space. One sets 
\begin{align*}
&\Derb(\D_{\fX})\eqdot\Derb(\D_\bX)/\set{\shm}{\supp(\shm)\subset \bX\setminus X}
\simeq\Derb(\D_X),\\
&\Derb_\qgood(\D_{\fX})\eqdot\Derb_\qgood(\D_\bX)/\set{\shm}{\supp(\shm)\subset \bX\setminus X},\\
&\Derb_\good(\D_{\fX})\eqdot\Derb_\good(\D_\bX)/\set{\shm}{\supp(\shm)\subset \bX\setminus X},\\
&\Derb_\hol(\D_{\fX})\eqdot   \Derb_\hol(\D_\bX)/\set{\shm}{\supp(\shm)\subset \bX\setminus X},\\
&\Derb_\ghol(\D_{\fX})\eqdot\Derb_\ghol(\D_{\bX})/\set{\shm}{\supp(\shm)\subset \bX\setminus X}.
\end{align*}
Let $f\cl \fX=(X,\bX)\to\fY=(Y,\bY)$ be a morphism of complex bordered spaces. One sets
\eqn\label{eq:Bf}
&&\shb_f\eqdot \rsect_{[\Gamma_f]}(\sho_{\bX\times \bY})\,[d_Y].
\eneqn
Denote by $p_1$ and $p_2$ the first and second projection defined on $\bX\times\bY$. 
By representing an object of  $\Derb(\D_{\fX})$ by an object of $\Derb(\D_{\bX})$ and similarly 
with $\Derb(\D_{\fY})$, we define the functors:
\begin{align*}
\Dopb{f}\cl& \Derb(\D_{\fY})\to \Derb(\D_{\fX}),\quad
\shn\mapsto \Doim{{p_1}}(\Dopb{p_2}\shn\Dtens\shb_f),\\
\Doim{f}\cl& \Derb(\D_{\fX})\to \Derb(\D_{\fY}),\quad
\shm\mapsto \Doim{{p_2}}(\Dopb{p_1}\shm\Dtens\shb_f).
\end{align*}
Also set:
\eq\label{eq:Df}
&&\shd_{\fX\to\fY}\eqdot \shb_f\tens[\opb{p_2}\sho_{\bY}]\opb{p_2}\Omega_{\bY},\quad
\shd_{\fY\from\fX}\eqdot \shb_f\tens[\opb{p_1}\sho_{\bX}]\opb{p_1}\Omega_{\bX}.
\eneq
\begin{lemma}
Let $\shn\in \Derb(\D_{\fY})$ and let $\shm\in \Derb(\D_{\fX})$.Then
\eqn
\Dopb{f}\shn\simeq \roim{p_1}(\shd_{\fX\to\fY}\ltens[\opb{p_2}\D_{\bY}]\opb{p_2}\shn),\\
\Doim{f}\shm\simeq \roim{p_2}(\shd_{\fY\from\fX}\ltens[\opb{p_1}\D_{\bX}]\opb{p_1}\shm).
\eneqn
\end{lemma}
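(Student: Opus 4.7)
The plan is to unwind the definitions of $\Dopb{f}$ and $\Doim{f}$ displayed just above the lemma, decomposing each via the smooth projections $p_1,p_2$ from $\bX\times\bY$, and then to simplify the resulting expressions using the standard identities for $\D$-module operations along smooth morphisms.

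For the first isomorphism, I would start from $\Dopb{f}\shn=\Doim{p_1}(\Dopb{p_2}\shn\Dtens\shb_f)$. Since $p_2\cl\bX\times\bY\to\bY$ is smooth, one has $\Dopb{p_2}\shn\simeq\sho_{\bX\times\bY}\ltens[\opb{p_2}\sho_\bY]\opb{p_2}\shn$ with its natural $\D_{\bX\times\bY}$-structure, and tensoring with $\shb_f$ over $\sho_{\bX\times\bY}$ collapses by associativity to
\[
\Dopb{p_2}\shn\Dtens\shb_f\simeq \shb_f\ltens[\opb{p_2}\sho_\bY]\opb{p_2}\shn.
\]
Next, I would apply the standard formula $\Doim{p_1}\shk\simeq\roim{p_1}(\D_{\bX\from\bX\times\bY}\ltens[\D_{\bX\times\bY}]\shk)$. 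For the smooth projection $p_1$ of relative dimension $d_\bY$, the transfer bimodule $\D_{\bX\from\bX\times\bY}$ admits, as $(\opb{p_1}\D_\bX,\D_{\bX\times\bY})$-bimodule, a factorization reflecting the splitting $\Omega_{\bX\times\bY}\simeq \opb{p_1}\Omega_\bX\tens[\opb{p_1}\sho_\bX]\sho_{\bX\times\bY}\tens[\opb{p_2}\sho_\bY]\opb{p_2}\Omega_\bY$. Substituting and using associativity of the derived tensor product yields
\[
\D_{\bX\from\bX\times\bY}\ltens[\D_{\bX\times\bY}]\bigl(\shb_f\ltens[\opb{p_2}\sho_\bY]\opb{p_2}\shn\bigr)\simeq \bigl(\shb_f\tens[\opb{p_2}\sho_\bY]\opb{p_2}\Omega_\bY\bigr)\ltens[\opb{p_2}\D_\bY]\opb{p_2}\shn,
\]
whose right-hand side is $\shd_{\fX\to\fY}\ltens[\opb{p_2}\D_\bY]\opb{p_2}\shn$ by definition of $\shd_{\fX\to\fY}$. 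Applying $\roim{p_1}$ then gives the first formula.

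The second isomorphism will follow by the symmetric argument, exchanging the roles of $\bX,\bY$ (and of $p_1,p_2$). Since $p_1$ is smooth, $\Dopb{p_1}\shm\simeq\sho_{\bX\times\bY}\ltens[\opb{p_1}\sho_\bX]\opb{p_1}\shm$, hence $\Dopb{p_1}\shm\Dtens\shb_f\simeq \shb_f\ltens[\opb{p_1}\sho_\bX]\opb{p_1}\shm$; then $\Doim{p_2}$ is computed via the analogous factorization of $\D_{\bY\from\bX\times\bY}$, producing $\roim{p_2}(\shd_{\fY\from\fX}\ltens[\opb{p_1}\D_\bX]\opb{p_1}\shm)$.

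The main technical point, and the only place real work is required, is the displayed bimodule identification: one has to track three compatible $\D$-actions on the iterated tensor product --- the left $\opb{p_1}\D_\bX$- and left $\opb{p_2}\D_\bY$-actions on $\shb_f$ and $\opb{p_2}\shn$ respectively (whose combination produces the $\D_{\bX\times\bY}$-action needed by the direct image formula), together with the right $\opb{p_2}\D_\bY$-action on $\opb{p_2}\Omega_\bY$ --- and verify that the transfer bimodule exchange reduces the derived tensor product over $\D_{\bX\times\bY}$ to the simpler one over $\opb{p_2}\D_\bY$. This is a standard manipulation, verifiable via a relative Spencer resolution of $\opb{p_1}\D_\bX$ as right $\D_{\bX\times\bY}$-module along the smooth fibration $p_1$.
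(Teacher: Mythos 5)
Your proposal is correct and takes essentially the same approach as the paper. The paper reduces the lemma to the standard bimodule identity $\D_{\bY\from \bX\times \bY}\ltens[\D_{\bX\times \bY}](\opb{p_1}\shm\ltens[\opb{p_1}\sho_\bX]\shl)\simeq \opb{p_1}\shm^\mop\ltens[\opb{p_1}\D_\bX]\shl$ (and its $\bX\leftrightarrow\bY$-swapped variant), quoting it without proof; your key displayed isomorphism is exactly this formula after the side-swap commutation $(\Omega\otimes M)\otimes_{\D}N\simeq(\Omega\otimes N)\otimes_{\D}M$, and you merely supply the extra step of indicating how one would verify it via the relative Spencer resolution along the smooth projection.
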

\begin{proof}
After replacing the notations $X$ and $Y$ by $\bX$ and $\bY$, 
this follows from the formula, valid for $\shm\in\Derb(\D_{X})$ and $\shl\in\Derb(\D_{X\times Y})$:
\eqn
&&\D_{Y\from X\times Y}\lltens[\D_{X\times Y}](\opb{p_1}\shm\lltens[\opb{p_1}\sho_X]\shl)\simeq 
\opb{p_1}\shm^\mop\lltens[\opb{p_1}\D_X]\shl.
\eneqn
\end{proof}

\begin{lemma}
\banum
\item
The functor $\Dopb{f}$ above induces  well-defined functors 
\begin{align*}
\Dopb{f}\cl&\Derb_\qgood(\D_{\fY})\to \Derb_\qgood(\D_{\fX}),\\
\Dopb{f}\cl&\Derb_\hol(\D_{\fY})\to \Derb_\hol(\D_{\fX}).
\end{align*}
\item Assume that the morphism $f$ is semi-proper. 
Then the functor $\Doim{f}$ above induces well-defined functors 
\begin{align*}
\Doim{f}&\cl \Derb_\qgood(\D_{\fX})\to \Derb_\qgood(\D_{\fY}),\\
\Doim{f}&\cl\Derb_\ghol(\D_{\fX})\ \to \Derb_\ghol(\D_{\fY}).
\end{align*}
\eanum
\end{lemma}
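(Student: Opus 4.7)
The plan is to reduce both parts to classical preservation results for the $\D$-module operations on complex manifolds, via the factorizations
\eqn
&&\Dopb{f}\shn\simeq\Doim{p_1}\bl\Dopb{p_2}\shn\Dtens\shb_f\br,\qquad
\Doim{f}\shm\simeq\Doim{p_2}\bl\Dopb{p_1}\shm\Dtens\shb_f\br
\eneqn
established in the preceding lemma. The starting observation is that $\shb_f=\rsect_{[\Gamma_f]}(\sho_{\bX\times \bY})[d_Y]$ is a good holonomic $\D_{\bX\times\bY}$-module supported on the closed complex analytic subset $\ol\Gamma_f\subset\bX\times\bY$ of pure codimension $d_Y$; this follows from the fact that $\ol\Gamma_f\to\bX$ is proper and generically bijective onto its image, so that the algebraic local cohomology is concentrated in degree $d_Y$.

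For part~(a), I would verify each step of the factorization of $\Dopb{f}$ separately. The functor $\Dopb{p_2}$ preserves $\Derb_\qgood$ and $\Derb_\hol$, as $p_2$ is a smooth submersion. Tensoring over $\sho_{\bX\times\bY}$ with the good holonomic $\shb_f$ preserves $\Derb_\qgood$ (since $\shb_f$ is locally a sum of coherent $\sho$-submodules) and preserves $\Derb_\hol$ by Kashiwara's theorem on the holonomicity of derived tensor products of holonomic $\D$-modules. Finally, $\Doim{p_1}$ preserves both categories because the intermediate object has support contained in $\ol\Gamma_f$, on which $p_1$ is proper by the bordered-space axiom. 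Well-definedness on the quotient categories reduces to the set-theoretic inclusion $\ol\Gamma_f\cap(X\times\bY)\subseteq X\times Y$, which follows by approximating any point $(x,y)\in\ol\Gamma_f$ with $x\in X$ by a sequence in $\Gamma_f$ and using continuity of $f\cl X\to Y$ to force $y=f(x)\in Y$.

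For part~(b), the structure of the argument is parallel. The steps $\Dopb{p_1}$ and $({-})\Dtens\shb_f$ are handled exactly as in (a). The semi-properness hypothesis on $f$ is precisely the condition that $p_2$ be proper on $\ol\Gamma_f$, so that $\Doim{p_2}$ preserves $\Derb_\qgood$ and $\Derb_\hol$. The principal novelty over (a) is the preservation of \emph{goodness}, i.e.\ coherence, in the good-holonomic case: for $\shm$ good holonomic, one must check that $\Dopb{p_1}\shm\Dtens\shb_f$ is coherent as a $\D_{\bX\times\bY}$-module, so that $\Doim{p_2}$ yields a coherent quasi-good, hence good, object thanks to semi-properness. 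I expect the coherence of this tensor product---which uses coherence of $\shb_f$, its proper support along $\ol\Gamma_f$, and a characteristic-variety estimate---to be the main technical obstacle; once it is in place, the direct image by the proper map $p_2\vert_{\ol\Gamma_f}$ preserves coherence by the standard theorem of Grauert--Kashiwara, completing the proof.
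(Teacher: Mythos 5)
Part (a) is essentially correct, and the alternative way to see well-definedness there is that, after restricting to $X\times\bY$ (where $\Gamma_f$ is closed, by exactly the continuity argument you give), the formula becomes the classical inverse image along $f\cl X\to Y$. One small caveat: since $\Gamma_f$ is only \emph{locally closed} in $\bX\times\bY$, the object $\rsect_{[\Gamma_f]}(\sho_{\bX\times\bY})$ is in general a genuine complex (a local cohomology along $\ol\Gamma_f$ composed with a localization along $\ol\Gamma_f\setminus\Gamma_f$), so your claim that $\shb_f$ is concentrated in a single degree need not hold; fortunately this is irrelevant, since it is still a bounded complex of good regular holonomic modules.

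The genuine gap is in part (b), where you say the argument is ``parallel.'' The well-definedness of $\Doim f$ on the quotient does \emph{not} follow from a support estimate mirroring (a): the analogous inclusion — that $(x,y)\in\ol\Gamma_f$ with $y\in Y$ forces $x\in X$ — is false. Take $\fX=(\C^*,\C)$, $\fY=(\C,\C)$ and $f\equiv 0$; then $\ol\Gamma_f=\C\times\{0\}$ contains $(0,0)$, with $0\in Y$ but $0\notin X$. So for $\shm'$ supported on $\bX\setminus X$, the set $\supp(\Dopb{p_1}\shm'\Dtens\shb_f)$ can meet $\bX\times Y$, and $p_2$ of it can meet $Y$: the support estimate by itself does not place $\Doim f\shm'$ in the null system of $\fY$.

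What actually makes (b) work is that $\shb_f$ is \emph{localized} along the boundary: since $\Gamma_f$ is locally closed, one has $\shb_f\isoto\shb_f(*(\ol\Gamma_f\setminus\Gamma_f))$. Now if $\supp(\shm')\subset\bX\setminus X$, then $\supp(\Dopb{p_1}\shm')\subset(\bX\setminus X)\times\bY$, and since $\Gamma_f\subset X\times Y$ we get
\[
\supp\bl\Dopb{p_1}\shm'\Dtens\shb_f\br\subset\ol\Gamma_f\cap\bl(\bX\setminus X)\times\bY\br\subset\ol\Gamma_f\setminus\Gamma_f.
\]
Because $\Dopb{p_1}\shm'\Dtens\shb_f$ inherits the localization $\bigl(*(\ol\Gamma_f\setminus\Gamma_f)\bigr)$ from $\shb_f$ while being supported \emph{inside} $\ol\Gamma_f\setminus\Gamma_f$, it vanishes, and therefore $\Doim f\shm'\simeq 0$. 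So $\Doim f$ actually kills the null system outright; this is not a mirror of (a), and your proposal needs this extra step.

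A final terminological point: you describe the novelty of the good-holonomic case as ``preservation of goodness, i.e.\ coherence,'' but holonomic complexes are automatically coherent. What must be preserved beyond holonomicity is \emph{quasi-goodness}, and this is what the Grauert--Kashiwara proper direct image theorem you invoke provides.
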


 The proof is obvious.

\begin{definition}\label{def:OEbis}
Let $\fX=(X,\bX)$ be a complex bordered space and denote by $j\cl\fX\to\bX$ the natural morphism. We set
\eq\label{eq:OEbis}
&&\OEn_\fX\eqdot\Topb{j}\OEn_\bX.
\eneq
We define similarly $\OvE_\fX$ and we define the functors 
\begin{align*}
\drE_\fX&\cl\Derb_\qgood(\D_\fX)\to\TDC(\iC_\fX),\\
\solE_\fX&\cl\Derb_\qgood(\D_\fX)^\rop\to\TDC(\iC_\fX),
\end{align*}
as in Definition~\ref{def:OE}. 
\end{definition}

\begin{proposition}\label{pro:opforenhDbis}
\bnum
\item
Let $f\cl \fX\to \fY$ be a morphism of complex bordered spaces and let $\shn\in\Derb_\qgood(\D_{\fY})$. There is a natural isomorphism
\eqn
&&\drE_{\fX}(\Dopb{f}\shn)\,[d_X]\simeq\Tepb{f}\drE_{\fY}(\shn)\,[d_Y].
\eneqn
\item
Let $f\cl \fX\to \fY$ be a morphism of complex bordered spaces and let $\shm\in\Derb_\qgood(\D_{\fX})$. 
Assume that $f$ is semi-proper. Then there is a natural isomorphism
\eqn
&&\drE_{\fY}(\Doim{f}\shm)\simeq\Toim{f}\drE_{\fX}(\shm).
\eneqn
\item
Let $\fX$ be a complex bordered space and let $\shl\in\Derb_\hol(\D_\fX)$ and $\shm\in\Derb_\qgood(\D_{\fX})$. There is a natural isomorphism
\[
\drE_\fX(\shl\Dtens\shm) \simeq \cihom(\solE_\fX(\shl), \drE_\fX(\shm)).
\]
\enum
\end{proposition}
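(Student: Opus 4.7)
\medskip\noindent
\textbf{Proof plan.} All three statements are bordered-space analogues of the corresponding items in Proposition~\ref{pro:opforenhD}, and the strategy is to bootstrap from that manifold version by representing objects on $\fX,\fY$ by objects on the ambient complex manifolds $\bX,\bY$, and applying $\Topb{j_X}$ (resp.~$\Topb{j_Y}$) to the resulting isomorphisms. The central observation---an immediate consequence of $\OEn_\fX \eqdot \Topb{j_X}\OEn_\bX$ together with the compatibility of the bordered-space functors with those on $\bX$---is that for any representative $\tilde\shm \in \Derb(\D_\bX)$ of $\shm \in \Derb(\D_\fX)$ one has natural isomorphisms
\[ \drE_\fX(\shm) \simeq \Topb{j_X}\drE_\bX(\tilde\shm), \qquad \solE_\fX(\shm) \simeq \Topb{j_X}\solE_\bX(\tilde\shm). \]
These are independent of the chosen representative because any two differ by an object supported on $\bX\setminus X$, which is annihilated by $\Topb{j_X}$.

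For (iii), choose representatives $\tilde\shl,\tilde\shm\in\Derb(\D_\bX)$, apply Proposition~\ref{pro:opforenhD}(iii) on $\bX$ to obtain the corresponding isomorphism in $\TDC(\iC_\bX)$, and then apply $\Topb{j_X}$ to both sides. Compatibility of $\Topb{j_X}$ with $\cihom$ (a standard property of inverse image of enhanced ind-sheaves on bordered spaces) then yields the claim.

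For (i) and (ii), unwind the definitions
\[ \Dopb{f}\shn = \Doim{p_1}(\Dopb{p_2}\tilde\shn \Dtens \shb_f), \qquad \Doim{f}\shm = \Doim{p_2}(\Dopb{p_1}\tilde\shm \Dtens \shb_f), \]
where $p_1,p_2\cl \bX\times\bY\to\bX,\bY$ are the projections and $\shb_f$ is the regular holonomic module~\eqref{eq:Bf}. Since $\supp\shb_f\subset\overline{\Gamma_f}$ is proper over $\bX$ by the bordered-space axiom, and semi-properness of $f$ ensures properness also over $\bY$, one may combine, on the ambient manifold $\bX\times\bY$, Proposition~\ref{pro:opforenhD}(ii) for the relevant projection, part (iii) applied to the regular holonomic $\shb_f$, and part~(i) for the other projection. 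Applying $\Topb{j_X}$ (resp.~$\Topb{j_Y}$) to the resulting isomorphisms in $\TDC(\iC_\bX)$ (resp.~$\TDC(\iC_\bY)$), and recognizing the bordered-space functors $\Tepb{f}$ and $\Toim{f}$ through the analogous factorization on the enhanced side, delivers the desired identifications.

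\medskip\noindent
\textbf{Main obstacle.} The substantive work lies not in the algebraic manipulations---which become formal after reduction to the manifold case---but in the bookkeeping of the compatibility isomorphisms between bordered-space operations and their manifold counterparts via the restriction functors $\Topb{j}$: namely, checking that $\Topb{j_Y}\Toim{p_1}$ matches the bordered-space direct image along the relevant piece of the factorization, that $\Topb{}$ commutes with $\ctens$ and $\cihom$ in the precise ways required, and that the various support and properness hypotheses translate correctly at each invocation of Proposition~\ref{pro:opforenhD}. These verifications are standard for ind-sheaves on bordered spaces but must be assembled carefully.
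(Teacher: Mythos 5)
Your proof is correct in outline but takes a genuinely different route from the paper for parts (i) and (ii). The paper's first move is a resolution step: one finds a complex manifold $Z$ with a proper map $h\cl Z\to\ol\Gamma_f$ such that $\opb h X\to X$ is an isomorphism, and replaces $(X,\bX)$ by $(X,Z)$. After this reduction $f$ extends to an actual morphism of complex manifolds $\baf\cl\bX\to\bY$, so for (i) one simply picks a representative of $\shn$ in $\Derb_\qgood(\D_\bY)$ and invokes Proposition~\ref{pro:opforenhD}~(i) directly; for (ii) one represents $\Doim f\shm$ by $\Doim{\baf}(\shm'\Dtens\rsect_{[X]}(\sho_\bX))$ and uses $\drE_\bX(\shm'\Dtens\rsect_{[X]}(\sho_\bX))\simeq\Toim{j_X}\Topb{j_X}\drE_\bX(\shm')\simeq\Toim{j_X}\drE_\fX(\shm)$ before applying Proposition~\ref{pro:opforenhD}~(ii). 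Your approach instead unwinds $\Dopb f$ and $\Doim f$ through the graph factorization on $\bX\times\bY$ and threads together Proposition~\ref{pro:opforenhD}~(i), (ii), and (iii) (the last applied to the regular holonomic kernel $\shb_f$). This avoids the resolution but is substantially heavier in bookkeeping: you must check that the enhanced ind-sheaf kernel $\cor_{\Gamma_f}$ entering the bordered-space definitions of $\Tepb f$ and $\Toim f$ matches, up to shift and duality, the solution complex of $\shb_f$ that appears after applying Proposition~\ref{pro:opforenhD}~(iii), and that the properness hypotheses for (ii) of that proposition are met at each projection---for $\Doim{p_1}$ by the bordered-space axiom $\ol\Gamma_f\to\bX$ proper, and for $\Doim{p_2}$ by semi-properness. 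The net gain of the paper's route is that (i) and (iii) become immediate; the net gain of yours is that one never needs to invoke resolution of singularities. For part (iii) there is no map between spaces, and your proof coincides with the paper's.
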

\begin{proof}
There exist a complex manifold $Z$ and a proper morphism $h\cl Z\to\ol\Gamma_f$ such that $\opb{h}X\to X$ is an isomorphism. Hence, replacing $(X,\bX)$ with $(X,Z)$ we may assume from the beginning that $f\cl X\to Y$ extends to 
a morphism of complex manifolds $\baf\cl\bX\to\bY$. 

\vspace{0.3ex}\noindent
(i) Choose a representative $\shn'\in\Derb_\qgood(\D_{\bY})$ of $\shn$ and apply 
Proposition~\ref{pro:opforenhD}~(i). 

\vspace{0.3ex}\noindent
(ii) Choose a representative $\shm'\in\Derb_\qgood(\D_{\bX})$ of $\shm$.  
Then $\Doim{f}\shm$ is represented by 
$\Doim{\baf}(\shm'\Dtens\rsect_{[X]}(\sho_{\bX}))$.
We have
\eqn
\drE_\bX(\shm'\Dtens\rsect_{[X]}(\sho_{\bX}))&\simeq& 
\Toim{j_X}\Topb{j_X}\drE_\bX(\shm')\\
&\simeq&\Toim{j_X}\drE_\fX(\shm).
\eneqn
Applying Proposition~\ref{pro:opforenhD}~(ii), we get
\eqn
\drE_\fY(\Doim{f}\shm)&\simeq&\Toim{\baf}\drE_\bX
(\shm'\Dtens\rsect_{[X]}(\sho_{\bX}))\\
&\simeq&\Toim{\baf}\Toim{j_X}\Topb{j_X}\drE_\bX(\shm')\\
&\simeq&\Toim{f}\drE_\fX(\shm).
\eneqn

\vspace{0.3ex}\noindent
(iii) Choose a representative $\shm'$ of $\shm$ as in (ii), choose a representative $\shl'$ of $\shl$ and apply 
Proposition~\ref{pro:opforenhD}~(iii) to $\shm'$ and $\shl'$.
\end{proof}

Consider morphisms of bordered spaces.
\eq\label{diag:inttransinfty}
&&\xymatrix{
&\fS\ar[ld]_-f\ar[rd]^-g&\\
{\fX}&&{\fY}\,.
}\eneq

\begin{notation}\label{not:PhiPsiEbis}
(i) For $\shm\in\Derb_\qgood(\D_{\fX})$ and $\shl\in\Derb_\qgood(\D_\fS)$ one sets
\eq\label{eq:Dcconvbis}
&&\shm\Dconv\shl\eqdot\Doim{g}(\Dopb{f}\shm\Dtens\shl).
\eneq
(ii) For $L\in \TDC(\iC_\fS)$,  $F\in \TDC(\iC_{\fX})$ and $G\in\TDC(\iC_\fY)$ one sets
\eq\label{eq:Ccconvbis}
&&\ba{c} L\econv G\eqdot\Teeim{f}(L\ctens\Topb{g}G),\\
\Phi_L^\Tam(G)=  L\econv G,\quad \Psi_L^\Tam(F)=\Toim{g}\cihom(L,\Tepb{f}F).
\ea\eneq
\end{notation}
Here again, we get a pair of adjoint functors
\eq\label{eq:phipsiadjbis}
&&\xymatrix{
\Phi_L^\Tam\cl \TDC(\iC_{\fY})\ar@<-0.5ex>[r]&\TDC(\iC_{\fX})\ar@<-0.5ex>[l]\cl \Psi_L^\Tam
}\eneq

\begin{theorem}\label{th:7412bis}
Assume that $g$ is semi-proper. Let $\shm\in\Derb_\qgood(\D_\fX)$, 
$\shl\in\Derb_\ghol(\D_{\fS})$ and let  $L\eqdot\solE_{\fS}(\shl)$. 
Then there is a natural isomorphism in $\TDC(\iC_\fY)$
\eqn
&&\Psi_L^\Tam(\drE_\fX(\shm))\,[d_X-d_S]\simeq\drE_\fY(\shm\Dconv\shl).
\eneqn
\end{theorem}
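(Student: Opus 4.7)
The plan is to follow the proof of Theorem~\ref{th:7412} step for step, substituting Proposition~\ref{pro:opforenhDbis} for Proposition~\ref{pro:opforenhD} at each step. The semi-properness hypothesis on $g$ in the present statement plays exactly the role of the support-properness condition in the manifold version: it is what is needed to commute the enhanced direct image $\Toim g$ with the enhanced de Rham functor.

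Starting from $\shm\Dconv\shl=\Doim g(\Dopb f\shm\Dtens\shl)$, I would produce the chain of isomorphisms
\eqn
\drE_\fY(\shm\Dconv\shl)
&\simeq& \Toim g\,\drE_\fS(\Dopb f\shm\Dtens\shl)\\
&\simeq& \Toim g\,\cihom\bl\solE_\fS(\shl),\drE_\fS(\Dopb f\shm)\br\\
&\simeq& \Toim g\,\cihom\bl L,\Tepb f\drE_\fX(\shm)\br\,[d_X-d_S]\\
&=& \Psi_L^\Tam(\drE_\fX(\shm))\,[d_X-d_S].
\eneqn
The first isomorphism uses Proposition~\ref{pro:opforenhDbis}~(ii) together with the semi-properness of $g$; the second uses Proposition~\ref{pro:opforenhDbis}~(iii), applied to $\shl\in\Derb_\hol(\D_\fS)$ (since $\ghol\subset\hol$) and $\Dopb f\shm\in\Derb_\qgood(\D_\fS)$; the third applies Proposition~\ref{pro:opforenhDbis}~(i) inside the second slot of $\cihom$ and pulls the shift out using covariance of $\cihom(L,\scbul)$; the last line is simply the definition of $\Psi_L^\Tam$ from Notation~\ref{not:PhiPsiEbis}.

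The only point requiring actual verification is that $\Dopb f\shm\Dtens\shl$ really lies in $\Derb_\qgood(\D_\fS)$, so that Proposition~\ref{pro:opforenhDbis}~(ii) is applicable to it. This reduces, via the blow-up/resolution reduction used in the proof of Proposition~\ref{pro:opforenhDbis}, to a local statement on the ambient complex manifolds: $\Dopb f\shm$ is quasi-good by pullback of a quasi-good representative, and tensoring by the coherent, good holonomic module $\shl$ preserves quasi-goodness since locally $\shl$ is a finite sum of coherent $\sho$-submodules. Beyond this rather mild point, the proof is purely formal, and once Proposition~\ref{pro:opforenhDbis} is in hand no further technical input is required.
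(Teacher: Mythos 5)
Your proof is correct and is essentially identical to the paper's: the authors simply say ``the proof goes as for Theorem~\ref{th:7412}, using Proposition~\ref{pro:opforenhDbis} instead of Proposition~\ref{pro:opforenhD},'' and your chain of isomorphisms reproduces the chain in the proof of Theorem~\ref{th:7412} verbatim with the bordered-space proposition substituted. Your additional remark verifying that $\Dopb f\shm\Dtens\shl$ lies in $\Derb_\qgood(\D_\fS)$ (using that $\shl$ is good, hence locally a finite union of coherent $\sho$-submodules) is a sensible explicit check that the paper leaves implicit.
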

\begin{proof}
The proof goes as for Theorem~\ref{th:7412}, using Proposition~\ref{pro:opforenhDbis} instead of 
Proposition~\ref{pro:opforenhD}.
\end{proof}
Note that the preceding results generalize to complex bordered spaces 
Theorem~\ref{th:bjork}, Proposition~\ref{pro:opforenhD} and Theorem~\ref{th:7412}.

\section{Enhanced Fourier-Sato transform}
\subsection{Enhanced Fourier-Sato transform}
Let $\VV$ be a real finite-dimensional vector space, $\VVd$ its dual. 
Recall that the Fourier-Sato transform is an equivalence of categories 
between conic sheaves on $\VV$ and conic sheaves on $\VV^*$.
 References are made to~\cite{KS90}. 
In~\cite{Ta08}, D.~Tamarkin has extended the Fourier-Sato transform 
to no more conic (usual) sheaves, by adding an extra variable. 
Here we generalise this last transform to enhanced ind-sheaves on $\fV$.

We set $n=\dim\VV$ and we denote by $\ori_\VV$ the orientation $\cor$-module of $\VV$, {\em i.e.,} 
$\ori_\VV=H^n_c(\VV;\cor_\VV)$.
We have a canonical isomorphism $\ori_\VV\simeq\ori_{\VVd}$. 
We denote by $\Delta_\VV$ the diagonal of $\VV\times\VV$. 

We consider the bordered space $\fV=(\VV,\bV)$ where $\bV$ is the projective compactification of 
$\VV$, that is
\eqn
&&\bV=\bl(\VV\oplus\R)\setminus\{0\}\br/\R^\times.
\eneqn
We shall work in the categories of enhanced ind-sheaves. If $\fM$ is a bordered space and 
$F\in\Derb(\icor_\fM)$, recall that in Definition~\ref{def:fcteepsilon}  we set: 
\eq\label{def:F+}
\epsilon_M(F)\eqdot\opb{\pi}F\tens\cor_{\{t\geq0\}}\in\TDC(\cor_\fM).
\eneq
Also recall that  $\pi$ is the projection $\fM\times\fR\to\fM$ and $t$ is the coordinate on $\R$.

Recall (see~\eqref{eq:TDCforM}) that for a bordered space $\fM$, $\TDC(\cor_M)$ is a full subcategory of $\TDC(\icor_\fM)$.

We introduced the kernels
\begin{align}\label{eq:eFT1}
&&\ba{c}
L_\VV\eqdot\cor_{\{t=\langle x,y\rangle\}}\in\TDC(\cor_{\VV\times\VVd})\subset\TDC(\icor_{\fV\times\fVd}),\\
L^a_\VV\eqdot\cor_{\{t=-\langle x,y\rangle\}}\in\TDC(\cor_{\VV\times\VVd})\subset\TDC(\icor_{\fV\times\fVd}).
\ea\end{align}
Here, $x$ and $y$ denote points of $\VV$ and $\VVd$, respectively.

\begin{lemma}\label{le:compkernels}
One has isomorphisms in $\TDC(\icor_{\fV\times\fV})$
\eq\label{eq:eFT3}
&&\ba{c}
L_\VV\econv L^a_\VVd\isoto\cor_{\Delta_\VV\times\{t=0\}}\tens\ori_\VV\,[-n],\\
L^a_\VVd\econv L_\VV\isoto\cor_{\Delta_{\VVd}\times\{t=0\}}\tens\ori_\VV\,[-n].
\ea\eneq
\end{lemma}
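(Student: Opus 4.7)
The plan is to compute the first isomorphism directly; the second then follows by the symmetric argument with $\VV$ and $\VVd$ interchanged, together with $\ori_\VV\simeq\ori_\VVd$. Unwinding the definitions of $\econv$ and of $\ctens$ (which sums up the Tamarkin variable $t$), one has
\[
L_\VV\econv L^a_\VVd\simeq\Teeim{p_{13}}\cor_Z\quad\text{in }\TDC(\iC_{\fV\times\fV}),
\]
where $p_{13}\cl\fV\times\fVd\times\fV\to\fV\times\fV$ is the projection and $Z=\{(x,x',y,t)\cl t=\langle x-x',y\rangle\}\subset\VV\times\VVd\times\VV\times\R$. Since $\bVd$ is compact, $p_{13}$ is semi-proper, so by diagram~\eqref{eq:eeimfeimf} the functor $\Teeim{p_{13}}$ can be computed via the classical $\reim{p_{13}}$, viewed in $\TDC$.

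Apply the diffeomorphism $\psi\cl\VV\times\VV\to\VV\times\VV$, $(x,x')\mapsto(u,v)\eqdot(x-x',x')$: the set $Z$ becomes $\tilde Z\times\VV_v$, where $\tilde Z=\{(u,y,t)\cl t=\langle u,y\rangle\}\subset\VV\times\VVd\times\R$, which is independent of $v$. Hence it suffices to compute $K'\eqdot\reim q\,\cor_{\tilde Z}\in\TDC(\iC_\fV)$, where $q\cl\fV\times\fVd\to\fV$ (times $\id$ on $\fR$) is the projection forgetting $y$. Stratify $\tilde Z=\tilde Z_0\sqcup\tilde Z_1$ with closed stratum $\tilde Z_0=\{0\}\times\VVd\times\{0\}$ and open stratum $\tilde Z_1=\tilde Z\cap\{u\neq0\}$. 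Applying $\reim q$ to the triangle $\cor_{\tilde Z_1}\to\cor_{\tilde Z}\to\cor_{\tilde Z_0}\to[+1]$, the closed stratum immediately contributes the expected answer,
\[
\reim q\,\cor_{\tilde Z_0}\simeq\cor_{(0,0)}\tens\rsect_c(\VVd;\cor)\simeq\cor_{(0,0)}\tens\ori_\VVd\,[-n].
\]

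The crux is to show that $\reim q\,\cor_{\tilde Z_1}\simeq0$ in $\TDC(\iC_\fV)$. By proper base change along the section $i_{t_0}\cl\VV\to\VV\times\R$, the stalk at $(u_0,t_0)$ with $u_0\neq0$ is $\rsect_c(H_{u_0,t_0};\cor)\simeq\cor[-n+1]\tens\ori_{H_{u_0}}$, where $H_{u_0,t_0}=\{y\cl\langle u_0,y\rangle=t_0\}$; this is canonically independent of $t_0$ (translation of hyperplanes is orientation-preserving). More structurally, for each $y_1\in\VVd$ the $\R$-action $(c,(u,y,t))\mapsto(u,y+cy_1,t+c\langle u,y_1\rangle)$ preserves $\tilde Z_1$ and descends via $q$ to the action $(c,(u,t))\mapsto(u,t+c\langle u,y_1\rangle)$ on $\VV\times\R$, which for $u$ with $\langle u,y_1\rangle\neq0$ is pure translation in $t$. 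Varying $y_1$, the restriction of $\reim q\,\cor_{\tilde Z_1}$ to every fiber $\{u\}\times\R$ with $u\neq0$ is constant, so the counit $\opb{\pi}\roim{\pi}K\to K$ is an isomorphism on $(\VV\setminus\{0\})\times\R$; extending by zero we obtain $\reim q\,\cor_{\tilde Z_1}\simeq\opb{\pi}F$ for some $F\in\Derb(\iC_\fV)$ (concretely, $F\simeq j_!\cor_{\VV\setminus\{0\}}[-n+1]\tens\ori_\VVd$ with $j\cl\VV\setminus\{0\}\into\VV$), where $\pi\cl\fV\times\fR\to\fV$. Every such object is killed by the Tamarkin quotient, hence the vanishing.

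Combining the two strata gives $K'\simeq\cor_{(0,0)}\tens\ori_\VVd\,[-n]$ in $\TDC(\iC_\fV)$, and pulling back through $\psi^{-1}$ (together with $\ori_\VVd\simeq\ori_\VV$) yields the first isomorphism of~\eqref{eq:eFT3}; the second follows by swapping the roles of $\VV$ and $\VVd$. The main obstacle is the vanishing step: the ind-sheaf $\reim q\,\cor_{\tilde Z_1}$ is non-trivial in $\Derb(\iC_{\fV\times\fR})$, and one must show that its entire content depends only on $u$, not on the Tamarkin variable $t$, which is precisely what the quotient category $\TDC$ annihilates.
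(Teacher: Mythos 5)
Your proof is correct and reaches the right answer, but it is noticeably more elaborate than the argument in the paper. After the common first step (reducing to $\reim p\,\cor_{\{t=\langle x-x',y\rangle\}}$ via semi-properness), the paper simply constructs the morphism directly from $\cor_{\{t=\langle x-x',y\rangle\}}\to\cor_{\{x=x'\}\times\{t=0\}}$ and then, invoking Remark~\ref{rem:fiber}, checks it is an isomorphism by restricting to each fiber $\{(x,x')\}\times\R$: for $x\neq x'$ the stalkwise restriction is $\cor_\R\tens\ori_\VV[1-n]$, whose image in $\TDC(\icor_\rmpt)$ is zero, and for $x=x'$ one gets $\cor_{\{t=0\}}\tens\ori_\VV[-n]$. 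Your change of variables $\psi$, the stratification of $\tilde Z$, and the $\R$-action argument to exhibit $\reim q\,\cor_{\tilde Z_1}$ as $\opb{\pi}F$ all do the work of that single fiberwise check; the structural identification $\reim q\,\cor_{\tilde Z_1}\simeq\opb{\pi}F$ is a nice observation but is strictly more information than needed, since vanishing in $\TDC$ can be tested fiber by fiber. Be careful with the sentence ``the restriction to every fiber is constant, so the counit is an isomorphism'': constancy on fibers alone does not produce the pullback structure — it is the global translation invariance coming from the $\R$-action (equivalently, that the affine-hyperplane bundle $\tilde Z_1\to(\VV\setminus\{0\})\times\R$ is pulled back from $\VV\setminus\{0\}$) that justifies $\simeq\opb\pi F$. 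As written, the sharper and shorter route is the paper's fiberwise one.
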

\begin{proof}
Of course, it is enough to prove the first isomorphism. 
Denote by $(x,y,x')$ a point of $\VV\times\VVd\times\VV$ and denote by $p$ the projection
$\fV\times\fVd\times\fV\times\fR\to\fV\times\fV\times\fR$. 
We have in $\Derb(\icor_{\fV\times\fV\times\fR})$:
\eqn
L_\VV\econv L^a_\VVd&\simeq&\reeim{p}(\cor_{t=\langle x,y\rangle}\ctens\cor_{t=-\langle x',y\rangle})\\
&\simeq&\reeim{p}(\cor_{t=\langle x-x',y\rangle})\\
&\simeq&\reim{p}(\cor_{t=\langle x-x',y\rangle}),
\eneqn
where the last isomorphism follows from the fact that  $p$ is semi-proper
(see Diagram~\eqref{eq:eeimfeimf}).

The first morphism in~\eqref{eq:eFT3} is deduced from the morphism $\cor_{t=\langle x-x',y\rangle}\to\cor_{\{x=x'\}\times\{t=0\}}$.
To check it is an isomorphism it is thus enough to calculate the restriction  of these sheaves at each fiber of 
$\VV\times\VV\times\R\to\VV\times\VV$ (see Remark~\ref{rem:fiber}). 

The restriction of the left-hand side of~\eqref{eq:eFT3}. to $(x,x')\times\R$ is 
\eq
&&
\begin{cases}
\cor_\R\tens \ori_\VV\,[1-n]&\mbox{ if }x\not=x',\\
\cor_{\{t=0\}}\tens\ori_\VV\,[-n]&\mbox{ if }x=x'.\\
\end{cases}
\eneq
Since  the image of $\cor_{\R}$ in 
$\TDC(\icor_{\rmpt})$ is $0$, we get the result.
\end{proof}
Now we introduce the enhanced Fourier-Sato functors 
\begin{align}\label{eq:fourier}
&&\ba{c}
\EF[\VV]\cl \TDC(\icor_{\fV})\to\TDC(\icor_{\fVd}),\quad \EF[\VV](F)=F\econv L_\VV,\\
\EFa[\VV]\cl \TDC(\icor_{\fV})\to\TDC(\icor_{\fVd}),\quad \EFa[\VV](F)=F\econv L^a_\VV.
\ea
\end{align}
Applying Lemma~\ref{le:compkernels}, we obtain:

\begin{theorem}[{See~\cite{Ta08}}]\label{th:fourier}
The functors $\EF[\VV]$ and $\EFa[\VVd]\tens\ori_\VV\,[n]$ are equivalences of categories, inverse to each other. In other words, one has the isomorphisms, functorial with respect to $F\in \TDC(\icor_{\fV})$
and $G\in \TDC(\icor_{\fVd})${\rm:}
\eqn
&&\EFa[\VVd]\circ\EF[\VV](F)\simeq F\tens\ori_\VV\,[-n],\\
&&\EF[\VV]\circ\EFa[\VVd](G)\simeq G\tens\ori_\VV\,[-n].
\eneqn
\end{theorem}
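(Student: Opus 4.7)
The plan is to reduce the theorem directly to Lemma~\ref{le:compkernels} via the general kernel calculus for enhanced ind-sheaves. Concretely, I would (i) establish that the composition $\econv$ is associative, (ii) use Lemma~\ref{le:compkernels} to replace $L_\VV \econv L^a_\VVd$ with $\cor_{\Delta_\VV \times \{t=0\}} \tens \ori_\VV\,[-n]$, and (iii) show that convolution with the kernel $\cor_{\Delta_\VV \times \{t=0\}}$ on $\fV \times \fV$ is isomorphic to the identity functor of $\TDC(\icor_\fV)$. Combining these three ingredients yields the first isomorphism; the second follows by swapping the roles of $\VV$ and $\VVd$ and invoking the second identity in Lemma~\ref{le:compkernels}.

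For step (i), the associativity
\[
(F \econv L_\VV) \econv L^a_\VVd \simeq F \econv (L_\VV \econv L^a_\VVd)
\]
should follow from a routine combination of base change and the projection formula for $\Teeim{f}$ and $\Topb{f}$, together with the associativity of $\ctens$ on each $\TDC(\icor_{\bullet \times \fR})$. For step (iii), let $p_1, p_2 \colon \fV \times \fV \to \fV$ be the projections and $\delta \colon \fV \to \fV \times \fV$ the diagonal embedding. Using the projection formula for $\Teeim{\delta}$,
\[
\Topb{p_1}F \ctens \cor_{\Delta_\VV \times \{t=0\}} \simeq \Teeim{\delta}\bigl(F \ctens \cor_{\{t=0\}}\bigr),
\]
and since $\cor_{\{t=0\}}$ is the unit object for $\ctens$ on $\TDC(\icor_\fV)$, applying $\Teeim{p_2}$ recovers $F$.

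The only real obstacle is the care needed in verifying associativity of $\econv$ and the unit property in the bordered-space/enhanced setting: although these are direct analogues of classical facts for sheaf kernels, one has to track the $\fR$-factor, the quotient defining $\TDC$, and the various base changes between bordered spaces. None of this is conceptually hard, but it is the sort of formal bookkeeping that consumes most of the space a rigorous proof would occupy; once carried out, the theorem is a formal consequence of Lemma~\ref{le:compkernels}.
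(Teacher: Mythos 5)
Your proposal is correct and matches the paper's intended argument; the paper simply states "Applying Lemma~\ref{le:compkernels}, we obtain" the theorem, leaving implicit exactly the kernel-calculus steps you spell out (associativity of $\econv$, replacement of $L_\VV\econv L^a_\VVd$ by the shifted delta kernel via Lemma~\ref{le:compkernels}, and the fact that convolution with $\cor_{\Delta_\VV\times\{t=0\}}$ is the identity by the projection formula and the unit property of $\cor_{\{t=0\}}$).
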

\begin{corollary}\label{cor:fourier}
There is an isomorphism functorial in $F_1,F_2\in\TDC(\icor_\fV)${\rm:}
\eq\label{eq:isoFourier}
&&\FHom(F_1,F_2)\simeq\FHom(\EF[\VV](F_1),\EF[\VV](F_2)).
\eneq
\end{corollary}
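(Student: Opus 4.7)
My plan is to deduce the corollary directly from Theorem~\ref{th:fourier} via the adjunction structure of the enhanced Fourier-Sato functor. Since $\EF[\VV]$ is defined as convolution with the kernel $L_\VV$, i.e.\ $\EF[\VV](F) = \Teeim{p_2}(L_\VV \ctens \Topb{p_1} F)$ with $p_1,p_2$ the projections from $\fV\times\fVd$, it is assembled from operations each of which has a right adjoint: $(\Teeim{f},\Tepb{f})$, $(\Topb{f},\Toim{f})$, and $(\ctens,\cihom)$. These combine to yield a right adjoint $\Psi$ to $\EF[\VV]$, together with a natural adjunction isomorphism in $\Derb(\cor)$
$$\FHom(\EF[\VV] F, G) \simeq \FHom(F, \Psi G) \qquad \bigl(F\in \TDC(\icor_\fV),\; G\in\TDC(\icor_\fVd)\bigr).$$

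Next, I would use Theorem~\ref{th:fourier} to identify $\Psi$. Since $\EF[\VV]$ is an equivalence of categories with explicit quasi-inverse $\EFa[\VVd]\tens\ori_\VV\,[n]$, its right adjoint is necessarily isomorphic to this quasi-inverse, so $\Psi \simeq \EFa[\VVd]\tens\ori_\VV\,[n]$ (this is a purely categorical consequence of being an equivalence). Substituting $G = \EF[\VV] F_2$ into the adjunction then gives
$$\FHom(\EF[\VV] F_1, \EF[\VV] F_2) \simeq \FHom\bl F_1,\; \EFa[\VVd](\EF[\VV] F_2) \tens \ori_\VV\,[n]\br,$$
and a second application of Theorem~\ref{th:fourier} yields $\EFa[\VVd](\EF[\VV] F_2)\tens\ori_\VV\,[n] \simeq F_2$, which completes the identification~\eqref{eq:isoFourier}.

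The main obstacle is establishing the adjunction formula for $\FHom$, which is not stated in the excerpt in this exact form. However, it follows in a routine way from the explicit expression for $\EF[\VV]$ together with the standard adjunctions of the six operations for enhanced ind-sheaves recalled in Section~2, bearing in mind that $\FHom = \rsect \circ \alpha \circ \fihom$ and that $\fihom$ inherits the adjoint structure of the operations $\Teeim{f}$, $\Topb{f}$, $\ctens$ used to build $\EF[\VV]$. Once this formal apparatus is in place, the corollary is essentially a one-line consequence of Theorem~\ref{th:fourier}.
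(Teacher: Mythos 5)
Your argument is correct and is essentially the route the authors intend (the paper leaves the corollary without an explicit proof, treating it as an immediate consequence of Theorem~\ref{th:fourier}). Convolution with a kernel admits a right adjoint of the form $\Psi^\Tam$, and this adjunction indeed upgrades from $\Hom$ to $\FHom$ because the standard adjunction isomorphisms for $\Teeim{f},\Topb{f},\ctens$ all hold at the level of $\fihom$ (the argument is exactly the chain of isomorphisms that the paper writes out explicitly in the proof of Corollary~\ref{cor:fourierdual}); combining this with the fact that the right adjoint of an equivalence is its quasi-inverse, given by Theorem~\ref{th:fourier}, yields~\eqref{eq:isoFourier}. The only part you could tighten is the sentence asserting the $\FHom$-level adjunction is routine: it is, but you could simply point to the chain of $\cihom$-adjunction isomorphisms used in the proof of Corollary~\ref{cor:fourierdual} (or to Corollary~\ref{cor:fourieradjoint}, which establishes the $\Hom$-level version of the same fact) rather than appealing in the abstract to "the standard adjunctions recalled in Section~2."
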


We shall give an alternative construction of $\EF[\VV]$. Denote by $p_1$ and $p_2$ the projections from 
$\VV\times\VVd$ to $\VV$ and $\VVd$, respectively and  recall Notation~\ref{not:PhiPsiEbis}
in which $\fS=\fV\times\fVd$, $\fX=\fV$, $\fY=\fVd$, $f=p_1$ and $g=p_2$. 

\begin{corollary}\label{cor:fourieradjoint}
The two functors $\EF[\VV](\scbul)$ and  $\Psi^\Tam_{L^a_\VV}(\scbul)\tens\ori_\VV\,[-n]$ are isomorphic.
\end{corollary}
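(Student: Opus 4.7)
The plan is to proceed by a pure adjunction argument, combining the adjoint pair~\eqref{eq:phipsiadjbis} with the equivalence of categories established in Theorem~\ref{th:fourier}.

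First, I would identify $\EFa[\VVd]$ with the functor $\Phi^\Tam_{L^a_\VV}$. Under the obvious swap isomorphism $\sigma\cl \fV\times\fVd\isoto\fVd\times\fV$ one has $\Topb{\sigma}L^a_\VV\simeq L^a_\VVd$, and the two projections get interchanged. Unwinding the definitions in~\eqref{eq:fourier} and in Notation~\ref{not:PhiPsiEbis} with $\fS=\fV\times\fVd$, $\fX=\fV$, $\fY=\fVd$, $f=p_1$, $g=p_2$, this gives a canonical isomorphism of functors $\EFa[\VVd]\simeq \Phi^\Tam_{L^a_\VV}\cl \TDC(\icor_\fVd)\to\TDC(\icor_\fV)$.

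Next, by Theorem~\ref{th:fourier} the functor $\EFa[\VVd]$ is an equivalence of categories, and the relation $\EFa[\VVd]\circ\EF[\VV](F)\simeq F\tens\ori_\VV\,[-n]$ shows that a quasi-inverse is given by $F\mapsto \EF[\VV](F)\tens\ori_\VV\,[n]$, using the canonical isomorphism $\ori_\VV\tens\ori_\VV\simeq\cor$. A quasi-inverse of an equivalence is simultaneously its left and right adjoint, so in particular
\eqn
&&\Hom[{\TDC(\icor_\fVd)}](G,\EF[\VV](F)\tens\ori_\VV\,[n])\simeq\Hom[{\TDC(\icor_\fV)}](\EFa[\VVd](G),F),
\eneqn
functorially in $G\in\TDC(\icor_\fVd)$ and $F\in\TDC(\icor_\fV)$.

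Finally, the adjunction~\eqref{eq:phipsiadjbis} applied to $L=L^a_\VV$ yields
\eqn
&&\Hom[{\TDC(\icor_\fV)}](\Phi^\Tam_{L^a_\VV}(G),F)\simeq\Hom[{\TDC(\icor_\fVd)}](G,\Psi^\Tam_{L^a_\VV}(F)).
\eneqn
Combining the two displays with the identification $\EFa[\VVd]\simeq\Phi^\Tam_{L^a_\VV}$ gives a functorial isomorphism $\Hom(G,\EF[\VV](F)\tens\ori_\VV\,[n])\simeq\Hom(G,\Psi^\Tam_{L^a_\VV}(F))$, so that by the Yoneda lemma $\Psi^\Tam_{L^a_\VV}(F)\simeq\EF[\VV](F)\tens\ori_\VV\,[n]$ in $\TDC(\icor_\fVd)$. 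Tensoring both sides with $\ori_\VV\,[-n]$ (and using $\ori_\VV^{\otimes 2}\simeq\cor$) yields the claimed isomorphism. The only mildly delicate point is the identification of $\EFa[\VVd]$ with $\Phi^\Tam_{L^a_\VV}$, which is really just bookkeeping on which projection is viewed as $f$ and which as $g$ in the respective definitions; once this is in place the argument is a formal application of adjunction and Yoneda.
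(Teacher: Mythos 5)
Your argument is correct and is essentially the same as the paper's: both proofs identify $\EFa[\VVd]$ (up to the orientation/shift factor) with the left member $\Phi^\Tam_{L^a_\VV}$ of the adjoint pair in~\eqref{eq:phipsiadjbis}, use Theorem~\ref{th:fourier} to see that this functor is an equivalence with quasi-inverse $\EF[\VV](\scbul)\tens\ori_\VV\,[n]$, and then invoke uniqueness of right adjoints. You spell out more carefully the bookkeeping behind $\EFa[\VVd]\simeq\Phi^\Tam_{L^a_\VV}$ and the Yoneda step, which the paper leaves implicit, but the route is the same.
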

\begin{proof}
The functor   $\EFa[\VVd](\scbul)\tens\ori_\VV\,[n]$ admits an inverse, namely  the functor $\EF[\VV](\scbul)$,
and also admits  $\Psi^\Tam_{L^a_\VV}(\scbul)\tens\ori_\VV\,[-n]$ as a right adjoint. Therefore, these two last functors are isomorphic.
\end{proof}
For a bordered space $\fM$, denote by $a_\fM\cl \fM\to\rmpt$ the unique morphism from $\fM$ to the bordered space
$\rmpt$. 
\begin{corollary}\label{cor:fourierdual}
 We have the isomorphism, functorial with respect to  $F\in \TDC(\icor_{\fV})$ and $G\in\TDC(\icor_\rmpt)$:
 \eqn
 &&\EF[\VV](\cihom(F,\Tepb{a_\fV}(G)))\simeq\cihom(\EFa[\VV](F),\Topb{a_\fVd}(G))).
 \eneqn
 \end{corollary}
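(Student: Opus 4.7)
The plan is to prove the corollary by a chain of canonical isomorphisms, transforming the left-hand side into the right-hand side using three ingredients: (i) Corollary~\ref{cor:fourieradjoint}, which expresses $\EF[\VV]$ as $\Psi^\Tam_{L^a_\VV}(\scbul)\tens\ori_\VV[-n]$; (ii) the standard adjunctions between the enhanced operations $(\Topb{},\Toim{})$, $(\Teeim{},\Tepb{})$, and $(\ctens,\cihom)$; and (iii) the compatibility $\Tepb{p_1}\cihom(F,H)\simeq\cihom(\Topb{p_1}F,\Tepb{p_1}H)$ for the smooth projections $p_1,p_2$, together with the factorization $a_\fV\circ p_1=a_{\fV\times\fVd}=a_\fVd\circ p_2$.

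Starting from the left-hand side, I would first apply Corollary~\ref{cor:fourieradjoint} to rewrite
\eqn
\EF[\VV](\cihom(F,\Tepb{a_\fV}G)) \simeq \Toim{p_2}\cihom\bl L^a_\VV,\Tepb{p_1}\cihom(F,\Tepb{a_\fV}G)\br\tens\ori_\VV[-n].
\eneqn
Next, by moving $\Tepb{p_1}$ inside $\cihom$ and composing exceptional inverse images along $a_\fV\circ p_1=a_\fVd\circ p_2$, the inner term becomes $\cihom(\Topb{p_1}F,\Tepb{p_2}\Tepb{a_\fVd}G)$. Since $a_\fVd\cl\fVd\to\rmpt$ is smooth of real dimension $n$, we have $\Tepb{a_\fVd}G\simeq\Topb{a_\fVd}G\tens\ori_\VV[n]$, and pulling $\ori_\VV[n]$ out through $\cihom$ and $\Toim{p_2}$ cancels the shift $\ori_\VV[-n]$. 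This leaves
\eqn
\Toim{p_2}\cihom\bl L^a_\VV,\cihom(\Topb{p_1}F,\Tepb{p_2}\Topb{a_\fVd}G)\br.
\eneqn
Finally, the tensor-hom adjunction converts this into $\Toim{p_2}\cihom(\Topb{p_1}F\ctens L^a_\VV,\Tepb{p_2}\Topb{a_\fVd}G)$, and then the $(\Teeim{p_2},\Tepb{p_2})$-adjunction inside $\cihom$ pulls $\Teeim{p_2}$ out of the first argument, yielding exactly
\eqn
\cihom\bl\Teeim{p_2}(\Topb{p_1}F\ctens L^a_\VV),\Topb{a_\fVd}G\br=\cihom(\EFa[\VV](F),\Topb{a_\fVd}G),
\eneqn
which is the desired right-hand side.

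The main obstacle is not any single step but the careful bookkeeping of orientation shifts $\ori_\VV[\pm n]$ together with the verification that the exchange isomorphisms between $\Tepb{p_i}$, $\Topb{p_i}$ and $\cihom$ for the smooth projections $p_1,p_2$ hold in the enhanced ind-sheaf setting on bordered spaces; these are by-products of the adjunction/projection formalism recalled in \S2.5 but should be invoked explicitly. Once these are in place, the chain of isomorphisms is entirely formal.
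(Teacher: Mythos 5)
Your argument is correct and is essentially the paper's own proof read in the opposite direction: the paper starts from $\cihom(\EFa[\VV](F),\Topb{a_\fVd}G)\tens\ori_\VV[n]$ and runs the same chain (Corollary~\ref{cor:fourieradjoint}, the $(\Teeim{p_2},\Tepb{p_2})$-adjunction, the $(\ctens,\cihom)$ tensor--hom adjunction, and the compatibility of $\Tepb{p_1}$ with $\cihom$ together with $a_\fV\circ p_1=a_\fVd\circ p_2$) to reach $\EF[\VV](\cihom(F,\Tepb{a_\fV}G))\tens\ori_\VV[n]$. Your only deviation is cosmetic --- handling the $\ori_\VV[\pm n]$ twist via $\Tepb{a_\fVd}G\simeq\Topb{a_\fVd}G\tens\ori_\VV[n]$ mid-argument instead of tensoring both sides with $\ori_\VV[n]$ at the outset --- which is equivalent.
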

 \begin{proof}
 One has the sequence of  isomorphisms 
 \eqn
 &&\cihom(\EFa[\VV](F),\Topb{a_\fVd}(G)))\tens\ori_\VV\,[n]
\simeq\cihom(\EFa[\VV](F),\Tepb{a_\fVd}G)\\
 &&\hs{22ex}\simeq\cihom(\Teeim{p_2}(L^a_\VV\ctens\Topb{p_1}F),\Tepb{a_\fVd}G)\\
 &&\hs{22ex}
 \simeq\Toim{p_2}\cihom(L^a_\VV\ctens\Topb{p_1}F,\Tepb{p_2}\Tepb{a_\fVd}G)\\
&&\hs{22ex}\simeq\Toim{p_2}\cihom(L^a_\VV,\cihom(\Topb{p_1}F,\Tepb{p_1}\Tepb{a_\fV}G))\\
 &&\hs{22ex}\simeq\Toim{p_2}\cihom(L^a_\VV,\Tepb{p_1}
\cihom(F,\Tepb{a_\fV}G))\\
 & &\hs{22ex}\simeq\EF[\VV](\cihom(F,\Tepb{a_\fV}G))\tens\ori_\VV\,[n].
  \eneqn
Here the last isomorphism follows from the preceding corollary.
 \end{proof}

\subsection{Operations}
Let $f\cl\VV\to\VV'$ be a linear map of finite-dimensional vector spaces. We denote by $n$ and $n'$ the
dimensions of $\VV$ and $\VV'$, respectively. We denote by $\trf$ the transpose of $f$.
\begin{proposition}
Let $F\in\TDC(\icor_\fV)$ and let $G\in\TDC(\icor_{\VV_\infty'})$. Then{\rm:}
\begin{align}
&\EF[\VV'](\Teeim{f}F)\simeq\Topb{\trf}\EF[\VV](F),\label{eq:fouriereim}\\
&\EF[\VV](\Topb{f}G\tens\ori_{\VV'}\,[-n'])\simeq\Teeim{\trf}\EF[\VV'](G)\tens\ori_\VV\,[-n].\label{eq:fourieropb}
\end{align}
\end{proposition}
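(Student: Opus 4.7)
The plan is to prove both isomorphisms by combining base change and the projection formula, the essential ingredient being a matching of the pullbacks of the two Fourier kernels to the auxiliary bordered space $\fV\times(\VV')^*_\infty$. Define
\[
\alpha\eqdot f\times\id\cl\fV\times(\VV')^*_\infty\to\VV'_\infty\times(\VV')^*_\infty,\qquad \beta\eqdot\id\times\trf\cl\fV\times(\VV')^*_\infty\to\fV\times\fVd,
\]
and write $q_1,q_2$ for the projections of $\fV\times(\VV')^*_\infty$, $\pi_1,\pi_2$ for those of $\VV'_\infty\times(\VV')^*_\infty$, and $p_1,p_2$ for those of $\fV\times\fVd$. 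Both $\alpha$ and $\beta$ sit in Cartesian squares
\[
\xymatrix@R=3ex@C=5ex{
\fV\times(\VV')^*_\infty\ar[r]^-{\alpha}\ar[d]_-{q_1}&\VV'_\infty\times(\VV')^*_\infty\ar[d]^-{\pi_1}\\
\fV\ar[r]^-{f}&\VV'_\infty
}\qquad
\xymatrix@R=3ex@C=5ex{
\fV\times(\VV')^*_\infty\ar[r]^-{\beta}\ar[d]_-{q_2}&\fV\times\fVd\ar[d]^-{p_2}\\
(\VV')^*_\infty\ar[r]^-{\trf}&\fVd
}
\]
and the adjunction identity $\langle f(x),y'\rangle=\langle x,\trf(y')\rangle$ yields in $\TDC(\cor_{\VV\times(\VV')^*})$ the crucial equality $\Topb{\alpha}L_{\VV'}\simeq M\simeq\Topb{\beta}L_\VV$, where $M\eqdot\cor_{\{(x,y',t)\,:\,t=\langle f(x),y'\rangle\}}$.

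For \eqref{eq:fouriereim}, I expand the left hand side from the definition of $\EF[\VV']$. Base change along the first Cartesian square gives $\Topb{\pi_1}\Teeim{f}F\simeq\Teeim{\alpha}\Topb{q_1}F$, and the projection formula together with $\pi_2\circ\alpha=q_2$ yield $\EF[\VV'](\Teeim{f}F)\simeq\Teeim{q_2}(\Topb{q_1}F\ctens M)$. Symmetrically, for the right hand side, base change along the second Cartesian square gives $\Topb{\trf}\Teeim{p_2}\simeq\Teeim{q_2}\Topb{\beta}$, and since $p_1\circ\beta=q_1$ we obtain $\Topb{\trf}\EF[\VV](F)\simeq\Teeim{q_2}(\Topb{q_1}F\ctens M)$. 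The two expressions coincide.

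For \eqref{eq:fourieropb} the argument is formally dual: one pushes Fourier kernels forward instead of pulling them back. Write $r_1,r_2$ for the projections of $\VV'_\infty\times\fVd$ and introduce $\gamma\eqdot f\times\id\cl\fV\times\fVd\to\VV'_\infty\times\fVd$ and $\eta\eqdot\id\times\trf\cl\VV'_\infty\times(\VV')^*_\infty\to\VV'_\infty\times\fVd$. The factorizations $p_2=r_2\circ\gamma$ and $\trf\circ\pi_2=r_2\circ\eta$, combined with the projection formula applied to $\gamma$ and to $\eta$, reduce both sides of \eqref{eq:fourieropb} to expressions of the form $\Teeim{r_2}(\Topb{r_1}G\ctens K)$, with $K=\Teeim{\gamma}L_\VV$ for the left hand side and $K=\Teeim{\eta}L_{\VV'}$ for the right. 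The statement then reduces to the kernel identity
\[
\Teeim{\gamma}L_\VV\tens\ori_{\VV'}[-n']\simeq\Teeim{\eta}L_{\VV'}\tens\ori_\VV[-n]\quad\text{in }\TDC(\cor_{\VV'\times\VV^*}).
\]
I expect this last identity to be the main technical point. A fiberwise computation at each $(x',y,t)$ reduces $\Teeim{\gamma}L_\VV$ (resp.\ $\Teeim{\eta}L_{\VV'}$) to $\rsect_c$ of the set $\{x\in f^{-1}(x'):\langle x,y\rangle=t\}$ (resp.\ $\{y'\in\trf^{-1}(y):\langle x',y'\rangle=t\}$); the only points contributing nontrivially in $\TDC$ are those with $x'\in\im f$, $y\in\im\trf$ and $t=\langle x_0,y\rangle$ for any $x_0\in f^{-1}(x')$, where the two sides give $\cor[-\dim\ker f]\tens\ori_{\ker f}$ and $\cor[-\dim\ker\trf]\tens\ori_{\ker\trf}$ respectively. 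The shifts agree by the rank identity $\dim\ker f+n'=\dim\ker\trf+n$, and the orientations agree via the identifications $\ori_\VV\simeq\ori_{\ker f}\tens\ori_{\im f}$ and $\ori_{\VV'}\simeq\ori_{\ker\trf}\tens\ori_{\im\trf}$ coming from short exact sequences of vector spaces, combined with the canonical isomorphism $\ori_{\im f}\simeq\ori_{\im\trf}$ induced by the nondegenerate pairing $\im f\times\im\trf\to\cor$ (which is nondegenerate because $\im\trf=(\ker f)^\perp$).
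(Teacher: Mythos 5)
Your proof of~\eqref{eq:fouriereim} is correct and follows essentially the same route as the paper: base change along $\alpha=f\times\id$ on one side and along $\beta=\id\times\trf$ on the other, the projection formula, and the pulled-back kernel identity $\Topb{\alpha}L_{\VV'}\simeq\Topb{\beta}L_\VV$ coming from $\langle f(x),y'\rangle=\langle x,\trf y'\rangle$. This is precisely the diagram with the maps called $g$ and $h$ in the paper's proof.

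For~\eqref{eq:fourieropb} you take a genuinely different route. The paper derives it \emph{formally} from~\eqref{eq:fouriereim}: one applies~\eqref{eq:fouriereim} with $f$ replaced by $\trf\cl\fVdp\to\fVd$ (so that ${}^t(\trf)=f$) to the object $\EF[\VV']G$, and then inverts the two Fourier-Sato transforms that appear using Theorem~\ref{th:fourier}; no further geometric computation is needed. You instead reduce~\eqref{eq:fourieropb}, via the projection formula and the factorizations $p_2=r_2\circ\gamma$, $\trf\circ\pi_2=r_2\circ\eta$, to the \emph{pushforward} kernel identity $\Teeim{\gamma}L_\VV\tens\ori_{\VV'}[-n']\simeq\Teeim{\eta}L_{\VV'}\tens\ori_\VV[-n]$, which you try to verify fiberwise. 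The idea is sound, and your shift/orientation bookkeeping (using $\im\trf=(\ker f)^\perp$ and the orientation isomorphisms from the kernel--image exact sequences) is correct. However, as written the argument is incomplete in two respects. First, a fiberwise equality of values does not by itself produce an isomorphism: one must first construct the comparison morphism between the two kernels and only then use Remark~\ref{rem:fiber} to test that its cone vanishes. Second, for the ``bad'' fibers ($x'\notin\im f$, or $y\notin\im\trf$) the claimed vanishing relies on a pushforward along a surjective affine map of bordered affine spaces giving (up to shift and twist) $\cor_\R$, which then dies in $\TDC(\icor_\rmpt)$; this is the same mechanism as in Lemma~\ref{le:compkernels}, but here it takes place over the projective compactification of $f^{-1}(x')$ and should be spelled out. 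In sum, your proof of~\eqref{eq:fourieropb} is a legitimate direct alternative, but it trades the paper's one-line inversion argument for a new kernel identity whose main technical point is left at the level of a sketch.
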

\begin{proof}
Consider the commutative diagram
\eqn
&&\xymatrix{
\VV'\times\VVdp\ar[d]_-{p'_1}\ar@/^2pc/[rrr]^-{p'_{2}}
                 &\VV\times\VVdp\ar[l]^-g\ar[d]|-{p_1\circ h}\ar[rr]^-{p'_2\circ g}\ar[rd]^-h&&\VVdp\ar[d]^-{\trf}\\
 \VVp\ar@{}[ru]|-\square&\VV\ar[l]^-f&  \VV\times\VVd\ar[l]^-{p_1}\ar@{}[u]|-\square\ar[r]_-{p_2}&\VVd\,.             
}\eneqn
\vspace{0.3ex}\noindent
(i) Let us prove~\eqref{eq:fouriereim}.
Using $\Topb{g}L_{\VV'}\simeq\Topb{h}L_\VV$, we have 
\eqn
\Teeim{p_2'}(\Topb{p_1'}\Teeim{f}F\tens L_{\VV'})
&\simeq&\Teeim{p_2'}(\Teeim{g}\Topb{(p_1\circ h)}F\tens L_{\VV'})\\
&\simeq&\Teeim{p_2'}\Teeim{g}(\Topb{h}\Topb{p_1}F\tens\Topb{g}L_{\VV'})\\
&\simeq&\Teeim{p_2'}\Teeim{g}(\Topb{h}\Topb{p_1}F\tens\Topb{h}L_{\VV})\\
&\simeq&\Teeim{(p_2'\circ g)}\Topb{h}(\Topb{p_1}F\tens L_{\VV})\\
&\simeq &\Topb{\trf}\Teeim{p_2}(\Topb{p_1}F\tens L_{\VV}).
\eneqn

\vspace{0.3ex}\noindent
(ii) Let us prove~\eqref{eq:fourieropb}. Applying~\eqref{eq:fouriereim}, we obtain
\eqn
\EF[\VVd]\Teeim{\trf}(\EF[\VV']G)
&\simeq&\Topb{f}\EFa[\VVdp]\EF[\VV']G\simeq \Topb{f}G\tens\ori_{\VV'}\,[-n'].
\eneqn
Hence
\eqn
\EF[\VV]\EFa[\VVd]\Teeim{\trf}(\EF[\VV']G)&\simeq&\EF[\VV]\Topb{f}G\tens\ori_{\VV'}\,[-n'],
\eneqn
and the result follows since 
\eqn
\EF[\VV]\EFa[\VVd]K\simeq K\tens\ori_\VV\,[-n].
\eneqn
\end{proof}

\subsection{Compatibility of Fourier-Sato  transforms}
We shall compare the enhanced Fourier-Sato transform with the classical one, for which we refer to~\cite[\S\,3.7]{KS90}.

Recall that one denotes by $\Derb_{\R^+}(\cor_\VV)$ the full subcategory of $\Derb(\cor_\VV)$ consisting of conic sheaves. We shall denote here by $\FS[\VV](F)$ the Fourier-Sato transform of 
$F\in\Derb_{\R^+}(\cor_\VV)$, which was denoted by $F^\wedge$ in loc.\ cit. The functor  
$\FS[\VV]\cl\Derb_{\R^+}(\cor_\VV)\to\Derb_{\R^+}(\cor_\VVd)$ is an equivalence of categories. 

Recall that one identifies the sheaf $\cor_{\{t\geq0\}}$ with its image in $\BDC(\icor_{\VV\times\R_\infty})$
and that the functor
\eqn
&&\epsilon_\VV\cl\BDC(\cor_\VV) \into \TDC(\icor_\fV),\quad \epsilon_\VV(F)= \cor_{\{t\geq0\}}\tens\opb\pi F
\eneqn
is a fully faithful embedding (see Proposition~\ref{pro:embed}).

Consider the diagram of categories and functors
\eq\label{diag:fourierefourier}
&&\ba{c}\xymatrix{
\TDC(\icor_\fV)\ar[rr]^-{\EF[\VV]}&&\TDC(\icor_\fVd)\\
\Derb_{\R^+}(\cor_\VV)\ar[rr]^-{\FS[\VV]}\ar[u]_-{\epsilon_\VV}&&\Derb_{\R^+}(\cor_\VVd)\ar[u]_-{\epsilon_\VVd}.
}\ea
\eneq
\begin{theorem}\label{th:fourierefourier}
Diagram~\eqref{diag:fourierefourier} is quasi-commutative.
\end{theorem}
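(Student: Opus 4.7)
The plan is to verify, for every $F\in\Derb_{\R^+}(\cor_\VV)$, a natural isomorphism $\EF[\VV](\epsilon_\VV(F))\isoto\epsilon_\VVd(\FS[\VV](F))$ in $\TDC(\icor_\fVd)$. Set $S=\VV\times\VVd$ with projections $q_k^0\cl S\to\VV,\VVd$, let $p_\VV\cl S\times\fR\to\VV$ be the further projection, and let $\tilde q_2^0\cl\fV\times\fVd\times\fR\to\fVd\times\fR$ be the induced morphism of bordered spaces. First I would unpack both sides. Using $\Topb{q_1^0}\epsilon_\VV(F)=\cor_{\{t\geq 0\}}\tens\opb{p_\VV}F$, the kernel computation $\cor_{\{t\geq 0\}}\ctens L_\VV\simeq\cor_{\{t\geq\langle x,y\rangle\}}$ (which follows from $\mu|_{\{t_1\geq 0,\,t_2=\langle x,y\rangle\}}$ being an isomorphism onto its image), and the projection formula to extract $\opb{p_\VV}F$, one obtains
\[
\EF[\VV](\epsilon_\VV(F))\simeq\Teeim{\tilde q_2^0}\bl\cor_{\{t\geq\langle x,y\rangle\}}\tens\opb{p_\VV}F\br.
\]
With the convention $\FS[\VV](F)=\reim{q_2^0}\bl\opb{q_1^0}F\tens\cor_{\{\langle x,y\rangle\leq 0\}}\br$ matching the kernel $L_\VV$, base change in the Cartesian square of $\pi_\VVd$ and $q_2^0$ similarly gives
\[
\epsilon_\VVd(\FS[\VV](F))\simeq\Teeim{\tilde q_2^0}\bl\cor_{\{t\geq 0,\,\langle x,y\rangle\leq 0\}}\tens\opb{p_\VV}F\br.
\]

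Second, I would construct the comparison morphism. The closed inclusion $Z_1=\{t\geq 0,\langle x,y\rangle\leq 0\}\subset Z_2=\{t\geq\langle x,y\rangle\}$ produces a morphism $\cor_{Z_2}\to\cor_{Z_1}$ sitting in a distinguished triangle with third term supported on the locally closed complement $Z_3=Z_2\setminus Z_1=Z_3^{(1)}\sqcup Z_3^{(2)}$, where $Z_3^{(1)}=\{t<0,\,\langle x,y\rangle\leq t\}$ and $Z_3^{(2)}=\{\langle x,y\rangle>0,\,t\geq\langle x,y\rangle\}$ (disjoint by the sign of $t$). Tensoring by $\opb{p_\VV}F$ and applying $\Teeim{\tilde q_2^0}$ yields a distinguished triangle in $\TDC(\icor_\fVd)$ of the form
\[
\Teeim{\tilde q_2^0}\bl\cor_{Z_3}\tens\opb{p_\VV}F\br\to\EF[\VV](\epsilon_\VV(F))\to\epsilon_\VVd(\FS[\VV](F))\xto{+1},
\]
reducing the theorem to the vanishing $\Teeim{\tilde q_2^0}\bl\cor_{Z_3^{(i)}}\tens\opb{p_\VV}F\br\simeq 0$ in $\TDC(\icor_\fVd)$ for $i=1,2$.

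The third step is this vanishing, which is the main obstacle and the only place where conicity of $F$ is used. Since $F$ does not depend on $t$ and is $\R^+$-invariant in $x$, the sheaf $\opb{p_\VV}F$ is invariant under the $\R^+$-action $\lambda\cdot(x,y,t)=(\lambda x,y,\lambda t)$ on $S\times\fR$. Each $Z_3^{(i)}$ is invariant under this action: the relation $t\geq\langle x,y\rangle$ is jointly homogeneous in $(x,t)$, the sign condition $t<0$ for $Z_3^{(1)}$ is preserved, and similarly $\langle x,y\rangle>0$ for $Z_3^{(2)}$. The map $\tilde q_2^0$ intertwines this action with the scaling $(y,t)\mapsto(y,\lambda t)$ on $\fVd\times\fR$, so each pushforward $K=\Teeim{\tilde q_2^0}\bl\cor_{Z_3^{(i)}}\tens\opb{p_\VV}F\br$ is $\R^+$-invariant in $t$ and supported in $\{t<0\}$ (resp.\ $\{t>0\}$). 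To conclude I would either use the $\R^+$-rescaling to show $(\cor_{\{t\geq 0\}}\oplus\cor_{\{t\leq 0\}})\ctens K\simeq 0$ — placing $K$ in $\indc_{t^*=0}$ and hence trivial in $\TDC$ — or reduce to the generators $F=\cor_\gamma$ with $\gamma$ a closed convex proper cone and verify by a direct fiber computation of $\Teeim{\tilde q_2^0}\bl\cor_{Z_3^{(i)}\cap\{x\in\gamma\}}\br$ that the extraneous contributions collapse in $\TDC$. Exploiting the conic rescaling in the enhanced framework is the main technical point of the proof.
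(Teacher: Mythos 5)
Your proposal follows the same overall route as the paper: unpack $\EF[\VV]\circ\epsilon_\VV(F)$ and $\epsilon_\VVd\circ\FS[\VV](F)$ to the direct images of $\cor_{\{t\geq\langle x,y\rangle\}}\tens\opb{p_\VV}F$ and $\cor_{\{t\geq 0\geq\langle x,y\rangle\}}\tens\opb{p_\VV}F$, build the comparison map from the inclusion of closed subsets, decompose the discrepancy $Z_2\setminus Z_1$ into the two disjoint pieces $\{0>t\geq\langle x,y\rangle\}$ and $\{t\geq\langle x,y\rangle>0\}$, and reduce to showing that the corresponding pushforwards die. This matches the paper's proof up to and including the decomposition.

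The gap is in the final vanishing step, and specifically in your option (a). You argue that the pushforward $K$ is $\R^+$-invariant in $t$ and supported in $\{t<0\}$ (resp.\ $\{t>0\}$), and that this "places $K$ in $\indc_{t^*=0}$." That implication is false. For instance $\cor_{\{t<0\}}$ is $t$-conic and supported in $\{t<0\}$, yet $\cor_{\{t\geq 0\}}\ctens\cor_{\{t<0\}}$ has stalk $\cor[-1]$ at each $t\geq 0$ and hence is not zero in $\TDC$, so $\cor_{\{t<0\}}\notin\indc_{t^*=0}$. The property "conic in $t$ and supported off $\{t=0\}$" simply does not land you in the kernel of the quotient functor, and conicity of $F$ cannot be used only through this abstract invariance. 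Your option (b) is closer in spirit to what is needed, but as written it proposes only a computation for cone generators and hopes the contributions "collapse in $\TDC$."

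What actually happens — and this is the paper's key point — is that each fiber
\[
\rsect_c\bl\VV;\,\cor_{\{0>t\geq\langle x,y\rangle\}}\tens F\br,\qquad
\rsect_c\bl\VV;\,\cor_{\{t\geq\langle x,y\rangle>0\}}\tens F\br
\]
is already zero in $\Derb(\cor)$, for any conic $F$. The trick is to push forward along the linear form $h(x)=\langle x,y\rangle\cl\VV\to\R$; setting $G=\reim{h}F$, which is again conic on $\R$, the two fibers become $\rsect_c(\set{\lambda}{0>t\geq\lambda};G)$ and $\rsect_c(\set{\lambda}{0<\lambda\leq t};G)$. These are compactly supported cohomologies of half-open intervals contained in an orbit of the $\R^+$-action, where $G$ is a constant sheaf, and they vanish (e.g.\ $\rsect_c((0,1];\cor)=0$). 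This one-dimensional reduction is what makes the argument close; without it, your proposal does not yet prove the theorem.
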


\begin{proof}
Consider the morphism of diagrams (in which we denote by $\pi$ any of the projections 
$X\times\fR\to X$, with $X=\VV,\VVd$, etc.):
\eqn
&&\xymatrix@R=1.0ex@C=3.8ex{
&\VV\times\VVd\times\fR\ar[lddd]_-{p_1}\ar[rddd]^-{p_2}
&&&\VV\times\VVd\ar[lddd]_-{q_1}\ar[rddd]^-{q_2}&\\
&&\ar[r]^-\pi&&& \\\\
\VV\times\fR&&\VVd\times\fR&\VV&&\VVd.
}
\eneqn
(i)  We shall first construct the morphism, functorial in $F\in\Derb_{\R^+}(\cor_\VV)$:
\eq\label{eq:EFtoF}
&&\EF[\VV]\circ\epsilon_\VV(F)\to\epsilon_\VVd\circ\FS[\VV](F).
\eneq
Consider the sequence of morphisms in $\Derb(\cor_{\VVd\times\R})$:
\begin{align*}
\reim{p_2}(\cor_{\{t=\langle x,y\rangle\}}\ctens\opb{p_1}(\cor_{\{t\geq0\}}\ltens\opb{\pi}F))
&\simeq
\reim{p_2}(\cor_{\{t\geq\langle x,y\rangle\}}\tens\opb{p_1}\opb{\pi}F))\\
&\to\reim{p_2}(\cor_{\{t\geq0\geq\langle x,y\rangle\}}\tens\opb{p_1}\opb{\pi}F))\\
&\simeq\reim{p_2}(\cor_{\{t\geq0\}}\tens\cor_{\{0\geq\langle x,y\rangle\}}\tens\opb{\pi}\opb{q_1}F)\\
&\simeq\cor_{\{t\geq0\}}\tens\opb{\pi}\reim{q_2}(\cor_{\{0\geq\langle x,y\rangle\}}\tens\opb{q_1}F).
\end{align*}
The image of these morphisms in $\TDC(\icor_\VVd)$ 
gives the morphism \eqref{eq:EFtoF}. To prove that it is an isomorphism,
we again argue in  $\Derb(\cor_{\VVd\times\R})$.

By the exact sequence
\eqn
&&0\to \cor_{\{t\geq\langle x,y\rangle>0\}}\oplus\cor_{\{0>t\geq\langle x,y\rangle\}}\to \cor_{\{t\geq\langle x,y\rangle\}}\to \cor_{\{t\geq0\geq\langle x,y\rangle\}}
\to0,
\eneqn
it is enough to show that for any $(y,t)\in\VVd\times\R$, 
\eqn
&&\rsect_c(\VV;\cor_{\{0>t\geq\langle x,y\rangle\}}\tens F)\simeq0,\\
&&\rsect_c(\VV;\cor_{\{t\geq\langle x,y\rangle>0\}}\tens F)\simeq0.
\eneqn
Denote by $h\cl\VV\to\R$ the map $h(x)=\langle x,y\rangle$ and set $G=\reim{h}F\in\Derb(\cor_\R)$.  Then
\eq\label{eq:rsectcIG}
&&\ba{c}
\rsect_c(\VV;\cor_{\{0>t\geq\langle x,y\rangle\}}\tens F)\simeq\rsect_c(\{\lambda\in\R;0>t\ge\lambda\};G),\\[1ex]
\rsect_c(\VV;\cor_{\{t\geq\langle x,y\rangle>0\}}\tens F)\simeq\rsect_c(\{\lambda\in\R;0<\lambda\le t\};G).
\ea\eneq
Since $F$ is $\R^+$-conic, so is $G$ and the vanishing of the right-hand sides of~\eqref{eq:rsectcIG}
follows.
\end{proof}

\subsection{Legendre transform}
In this section, we shall make a link between the enhanced Fourier-Sato transform and the classical Legendre transform of convex functions. 

As above, $\VV$ is a real vector space of dimension $n$. 

\begin{definition}
Let $f\cl\VV\to\R\sqcup{\{+\infty\}}$ be a function.
\banum
\item
We say that $f$ is a  closed proper convex  function on $\VV$ 
if its epigraph 
$\set{(x,t)\in\VV\times\R}{t\geq f(x)}$ is closed, convex and non-empty.
\item
We denote by $\Leg[\VV]$ the space of  closed proper convex functions on $\VV$.
\item
For $f\in\Leg[\VV]$, we denote by $\dom(f)$ the set $\opb{f}(\R)$ and call it the domain of $f$. We denote by $\rmH(f)$ the affine space generated by $\dom(f)$  and by 
$\domo(f)$ the interior of $\dom(f)$ in $\rmH(f)$.
\item 
We define $f^*\cl\VVd\to\R\sqcup{\{+\infty\}}$ by 
$f^*(y)=\sup_{x\in\dom(f)}\bl\langle x,y\rangle-f(x)\br$
and call $f^*$  the Legendre transform, or 
the  Legendre-Fenchel conjugate or the convex conjugate of $f$. 
\eanum
\end{definition}
Note that
\begin{itemize}
\item
the set $\dom(f)$ is convex and non-empty,
\item
the function $f^*$ is also a closed proper convex function, that is, belongs to $\Leg[\VVd]$,
\item
$f^{**}=f$. Hence, ${}^*$ gives an isomorphism $\Leg[\VV]\isoto\Leg[\VVd]$. 
\end{itemize}
Now we introduce the set:
\begin{align*}
\rmE(f)&=\{v\in\VV\,;\;\mbox{there exists }a\in\R\mbox{ such that }f(x+v)=f(x)+a\\
&\hspace{47ex}\mbox{ for any }x\in \VV\}\\
&=\{v\in\VV\;;\mbox{ there exists }a\in\R\mbox{ such that }f(x+tv)=f(x)+ta\\
&\hspace{47ex}\mbox{ for any }x\in \VV,t\in\R\}.
\end{align*}
Denote by $\rmH^\bot$ the orthogonal vector space to an affine space $\rmH\subset\VVd$, that is,
$\rmH^\bot=\set{v\in\VVd}{\text{$v\vert_\rmH$ is constant}}$. Then
\eqn
&&\rmE(f)=\rmH(f^*)^\bot.
\eneqn
In particular, $\dim \rmE(f)=\codim\rmH(f^*)$.
In the sequel, we set:
\eq
&& \rmd(f)=\dim\rmE(f)=\codim\rmH(f^*).
\eneq
In the theorem below,
$$\{t\ge f(x)\}\seteq\set{(x,t)\in\VV\times \R}{t\ge f(x)}$$
is a closed subset of $\VV\times \R$ and
$$\{t\geq -f(x),\;x\in\domo(f)\}
\seteq \set{(x,t)\in\VV\times \R}{t\geq -f(x),\;x\in\domo(f)}$$
is a closed subset of $\domo(f)\times\R$, and hence it is a locally closed
subset of  $\VV\times \R$.

\begin{theorem}\label{th:legendre}
For $f\in\Leg[\VV]$, we have isomorphisms{\rm:}
\eq
&&\EF[\VV](\cor_{\{t\geq f(x)\}})\simeq\cor_{\{t\geq -f^*(-y),-y\in\domo(f^*)\}}\tens\ori_{\rmE(f)}\,[-\rmd(f)],\label{eq:legendre1}\\
&&\EF[\VV](\cor_{\{t\geq -f(x),\;x\in\domo(f)\}})\simeq\cor_{\{t\geq f^*(y)\}}\tens\ori_{\rmH(f)}\,[-\dim\rmH(f)].\label{eq:legendre2}
\eneq
\end{theorem}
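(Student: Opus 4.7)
The plan is to reduce both formulas to stalk computations grounded in convex analysis. I focus on~\eqref{eq:legendre1}; formula~\eqref{eq:legendre2} follows by an analogous argument. Setting $F=\cor_{\{t\geq f(x)\}}$, I first unfold the definition
$$\EF[\VV](F)=\Teeim{p_2}(\Topb{p_1}F\ctens L_\VV).$$
Writing $\ctens$ as $\reeim{\mu}(\opb{q_1}(\cdot)\tens\opb{q_2}(\cdot))$ on $(\VV\times\VVd)\times\fR^2$, the support of $\opb{q_1}\Topb{p_1}F\tens\opb{q_2}L_\VV$ is $\{t_1\geq f(x),\;t_2=\langle x,y\rangle\}$, and the restriction of $\mu\colon(x,y,t_1,t_2)\mapsto(x,y,t_1+t_2)$ to this locally closed set is a homeomorphism onto $G:=\{(x,y,t)\in\VV\times\VVd\times\R:t\geq f(x)+\langle x,y\rangle\}$. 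Hence $\Topb{p_1}F\ctens L_\VV\simeq\cor_G$, and $\EF[\VV](F)\simeq\Teeim{p_2}(\cor_G)$.

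By Remark~\ref{rem:fiber} it suffices to understand $\Topb{\iota_{y_0}}\Teeim{p_2}(\cor_G)$ for every $y_0\in\VVd$; by bordered-space base change this object is represented on $\fR$ by the proper pushforward of $\cor_{K_{y_0}}:=\cor_{\{(x,t):t\geq g_{y_0}(x)\}}$ along $a_\VV\times\id_\R$, with $g_y(x):=f(x)+\langle x,y\rangle$. Its stalk at $t_0\in\R$ is $\rsect_c(\VV;\cor_{K_{y_0,t_0}})$, where $K_{y_0,t_0}:=\{x:g_{y_0}(x)\leq t_0\}$. A direct check from the definitions shows $\rmE(g_y)=\rmE(f)$ for every $y$, since the condition that $v$ preserves $g_y$ up to an additive constant is equivalent to the same for $f$. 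Hence every non-empty $K_{y_0,t_0}$ is $\rmE(f)$-invariant and, choosing any linear complement $\rmE'\subset\VV$ of $\rmE(f)$, splits as $\rmE(f)\times B_{y_0,t_0}$ with $B_{y_0,t_0}:=K_{y_0,t_0}\cap\rmE'$ a line-free closed convex subset of $\rmE'\cong\VV/\rmE(f)$. Künneth then yields
$$\rsect_c(\VV;\cor_{K_{y_0,t_0}})\simeq\ori_{\rmE(f)}[-\rmd(f)]\tens\rsect_c(\rmE';\cor_{B_{y_0,t_0}}).$$

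The key elementary input is: for a line-free closed convex set $B\subset\R^m$, $\rsect_c(\cor_B)\simeq\cor$ when $B$ is non-empty compact and $\rsect_c(\cor_B)\simeq 0$ otherwise (both for $B=\emptyset$ and for $B$ non-empty non-compact line-free, since every non-empty line-free closed convex set is homeomorphic to a product of $\R^k$ with a line-free pointed cone, which is non-compact with vanishing $\rsect_c$ unless the cone is a point). Classical convex analysis identifies three regimes for $B_{y_0,t_0}$: if $-y_0\in\domo(f^*)$ and $t_0\geq-f^*(-y_0)$, then $B_{y_0,t_0}$ is non-empty and compact; if $-y_0\in\dom(f^*)$ and $t_0<-f^*(-y_0)$, then $B_{y_0,t_0}$ is empty; and if $-y_0\notin\domo(f^*)$, then $B_{y_0,t_0}$ is either empty or non-empty non-compact line-free. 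Assembling, the stalk equals $\ori_{\rmE(f)}[-\rmd(f)]$ precisely on $\{(y,t):-y\in\domo(f^*),\;t\geq-f^*(-y)\}$ and vanishes elsewhere, giving~\eqref{eq:legendre1}.

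For~\eqref{eq:legendre2} the same unfolding yields $\EF[\VV](\cor_{\{t\geq -f(x),\,x\in\domo(f)\}})\simeq\Teeim{p_2}(\cor_{G'})$ with $G'=\{(x,y,t):x\in\domo(f),\,t\geq-f(x)+\langle x,y\rangle\}$, whose stalks at $(y_0,t_0)$ are $\rsect_c(\domo(f);\cor_{L_{y_0,t_0}})$ for $L_{y_0,t_0}=\{x\in\domo(f):\langle x,y_0\rangle-f(x)\leq t_0\}$. The complement $C_{y_0,t_0}=\domo(f)\setminus L_{y_0,t_0}$ is open convex in $\domo(f)\cong\R^{\dim\rmH(f)}$, and the distinguished triangle $j_!\cor_{C_{y_0,t_0}}\to\cor_{\domo(f)}\to\cor_{L_{y_0,t_0}}$, combined with the fact that inclusions of non-empty open convex sets in $\R^k$ induce isomorphisms on top compactly supported cohomology, gives stalk $\cor[-\dim\rmH(f)]\tens\ori_{\rmH(f)}$ exactly when $C_{y_0,t_0}=\emptyset$, i.e., $t_0\geq f^*(y_0)$. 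The chief technical point, common to both formulas, is the convex-analysis dichotomy of Step 3/4---that the sublevel sets of $g_y$ are compact modulo $\rmE(f)$ iff $-y\in\domo(f^*)$ and $t_0\geq-f^*(-y)$---which handles in particular the delicate boundary case $-y\in\partial\dom(f^*)$, where the sublevel sets (when non-empty) remain necessarily non-compact in $\VV/\rmE(f)$ and their $\rsect_c$ vanishes by the line-free criterion.
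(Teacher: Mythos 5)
Your proof of~\eqref{eq:legendre2} is sound and essentially parallels the paper's: both reduce to the observation that the stalk at $(y_0,t_0)$ is $\rsect_c$ of a subset of the open convex set $\domo(f)\cong\R^{\dim\rmH(f)}$ whose complement in $\domo(f)$ is open convex, and that $\rsect_c$ of a non-empty open convex subset of $\R^k$ is $\cor[-k]$. Your routing through the complement and the distinguished triangle is a small repackaging of the same idea.

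Your proof of~\eqref{eq:legendre1}, however, contains a genuine error. You correctly note $\rmE(g_y)=\rmE(f)$, but this only says that $g_y$ is \emph{affine} along cosets of $\rmE(f)$, with a slope $a_f(v)+\langle v,y\rangle$ (where $a_f\colon\rmE(f)\to\R$ is the linear map with $f(x+v)=f(x)+a_f(v)$) that need not vanish. The sublevel set $K_{y_0,t_0}=\{g_{y_0}\leq t_0\}$ is $\rmE(f)$-invariant only when $a_f(v)+\langle v,y_0\rangle=0$ for all $v\in\rmE(f)$, i.e.\ precisely when $-y_0\in\rmH(f^*)$. When $-y_0\notin\rmH(f^*)$, the claimed splitting $K_{y_0,t_0}\cong\rmE(f)\times B_{y_0,t_0}$ fails and the Künneth computation gives the wrong answer. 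Concretely, take $\VV=\R$ and $f(x)=x$, so $\rmE(f)=\R$, $\rmd(f)=1$, $\rmE'=\{0\}$, $\dom(f^*)=\domo(f^*)=\{1\}$. For $y_0=0$ (so $-y_0=0\notin\rmH(f^*)=\{1\}$) and $t_0\geq 0$, one has $K_{0,t_0}=(-\infty,t_0]$, which is a half-line with $\rsect_c(\R;\cor_{K_{0,t_0}})=0$ (consistent with~\eqref{eq:legendre1}); but $B_{0,t_0}=K_{0,t_0}\cap\rmE'=\{0\}$ is non-empty and \emph{compact}, contradicting your regime-3 dichotomy, and your Künneth formula would give $\ori_{\R}[-1]\tens\cor\neq 0$. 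To salvage the argument you would have to replace $\rmE(f)$ by the actual lineality space of $K_{y_0,t_0}$, namely $L_{y_0}=\{v\in\rmE(f):a_f(v)+\langle v,y_0\rangle=0\}$, and then observe that when $L_{y_0}\subsetneq\rmE(f)$ the complementary factor $K_{y_0,t_0}\cap L_{y_0}^\perp$ is a line-free closed convex set with a non-trivial recession direction (coming from some $v\in\rmE(f)\setminus L_{y_0}$), hence non-compact, hence has vanishing $\rsect_c$. The paper avoids this case analysis entirely: it proves only~\eqref{eq:legendre2} (where working with the open set $\domo(f)$ makes $\rmE(f)$ irrelevant) and obtains~\eqref{eq:legendre1} formally from~\eqref{eq:legendre2} applied to $f^*$, together with the Fourier inversion of Theorem~\ref{th:fourier} and the orientation identity $\ori_\VV\simeq\ori_{\rmH(f^*)}\tens\ori_{\rmE(f)}$. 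That reduction is cleaner and avoids exactly the convex-analysis subtlety that trips up your direct argument.
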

\begin{proof}
It is enough to prove the second isomorphism. Equivalently, it is enough to prove:
\eqn
\EF[\VV](\cor_{\{t<-f(x),x\in\domo(f)\}})\simeq\cor_{\{t< f^*(y)\}}\tens\ori_{\rmH(f)}\,[-\dim\rmH(f)].
\eneqn
By the definition
\eqn
\EF[\VV](\cor_{\{t<-f(x),x\in\domo(f)\}})\simeq\reim{p_2}\cor_{\{t<-f(x)+\langle x,y\rangle;x\in\domo(f)\}},
\eneqn
and we have a natural morphism
\eqn
\cor_{\{x\;;\;t<-f(x)+\langle x,y\rangle,\,x\in\domo(f)\}}\to \cor_{\{x\in\rmH(f),\,t<f^*(y)\}}.
\eneqn
This defines the morphism 
\eqn
&&\EF[\VV](\cor_{\{t<-f(x),x\in\domo(f)\}})\to \cor_{\{t< f^*(y)\}}\tens\ori_{\rmH(f)}\,[-\dim\rmH(f)].
\eneqn
To check  that it is an isomorphism, choose $(y,t)\in\VVd\times\R$. Then
\begin{align*}
(\EF[\VV](\cor_{\{t<-f(x),x\in\domo(f)\}}))_{(y,t)}
&\simeq\rsect_c(\VV;\cor_{\{x\in\domo(f)\,;\,t<-f(x)+\langle x,y\rangle\}})\\
&\hspace{-24ex}
\simeq
\begin{cases}
\ori_{\rmH(f)}\,[-\dim\rmH(f)]&\mbox{ if }\set{x\in\domo(f)}{t<-f(x)+\langle x,y\rangle}\not=\emptyset,\\
0&\mbox{ otherwise. }
\end{cases}
\end{align*}
To conclude, notice that $\set{x\in\domo(f)}{t<-f(x)+\langle x,y\rangle}\not=\emptyset$ if and only if $t<f^*(y)$. 
\end{proof}

Recall the notation~\eqref{def:F+}.
\begin{corollary}[{cf.\ \cite{Da12}}]\label{cor:legendre1}
Let $G\subset\VV$ be a non empty compact convex subset. 
Define $p_G^-\cl\VVd\to\R$ by $p_G^-(y)=\inf_{x\in G}\langle x,y\rangle$. Then
\eqn
&&\EF[\VV](\eps_M(\cor_{G}))\simeq \cor_{\{t\geq p^-_G(y)\}}\tens\ori_\VV.
\eneqn
\end{corollary}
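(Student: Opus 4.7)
The plan is to apply Theorem~\ref{th:legendre}, equation~\eqref{eq:legendre1}, to the convex indicator function of $G$: let $f\colon\VV\to\R\sqcup\{+\infty\}$ be defined by $f(x)=0$ for $x\in G$ and $f(x)=+\infty$ otherwise. Its epigraph $\set{(x,t)\in\VV\times\R}{t\geq f(x)}$ equals $G\times[0,+\infty)$, which is non-empty, closed and convex, so $f\in\Leg[\VV]$. Reading off Definition~\ref{def:fcteepsilon},
\eqn
&&\eps_\VV(\cor_G)=\cor_{\{t\geq 0\}}\tens\opb{\pi}\cor_G=\cor_{G\times[0,+\infty)}=\cor_{\{t\geq f(x)\}}
\eneqn
in $\TDC(\icor_\fV)$, so that the source of~\eqref{eq:legendre1} coincides with the left-hand side of the statement to be proved.

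The next step is to identify the ingredients on the right-hand side of~\eqref{eq:legendre1}. The Legendre conjugate is the support function $f^*(y)=\sup_{x\in G}\langle x,y\rangle$, which is finite on all of $\VVd$ because $G$ is compact; therefore $\dom(f^*)=\domo(f^*)=\VVd$ and the constraint $-y\in\domo(f^*)$ is vacuous. The identity $-f^*(-y)=\inf_{x\in G}\langle x,y\rangle=p_G^-(y)$ follows from the definitions. Finally, since $G$ is bounded, no non-zero vector $v\in\VV$ satisfies $G+v=G$, whence $\rmE(f)=\{0\}$, $\rmd(f)=0$, and $\ori_{\rmE(f)}\simeq\cor$ canonically.

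Inserting these computations into~\eqref{eq:legendre1} yields
\eqn
&&\EF[\VV](\eps_\VV(\cor_G))\simeq\cor_{\{t\geq p_G^-(y)\}}\tens\cor\simeq\cor_{\{t\geq p_G^-(y)\}}\tens\ori_\VV,
\eneqn
the last isomorphism using that $\ori_\VV$ is a free $\cor$-module of rank one. I anticipate no substantial obstacle: the whole argument is bookkeeping around Theorem~\ref{th:legendre}, and the only point to verify with care is the identification $\eps_\VV(\cor_G)\simeq\cor_{\{t\geq f(x)\}}$, which is an immediate unwinding of the definition of~$\eps_\VV$.
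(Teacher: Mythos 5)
Your argument is exactly the one in the paper: both apply Theorem~\ref{th:legendre}, isomorphism~\eqref{eq:legendre1}, to the indicator function $f$ of $G$, and the computations of $f^*$, $\domo(f^*)$, $\rmE(f)$, and $\rmd(f)$ you carry out are precisely the ones the paper records before declaring that the result follows. The only slightly delicate point, which you flag and the paper silently elides, is that the theorem literally produces the twist $\ori_{\rmE(f)}=\ori_{\{0\}}\simeq\cor$ rather than $\ori_\VV$, so the final replacement by $\ori_\VV$ in the stated isomorphism rests only on the non-canonical fact that $\ori_\VV$ is free of rank one, exactly as you note.
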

\begin{proof}
Define the function
\eqn
&&f(x)=0\mbox{ if $x\in G$ and $f(x)=+\infty$ if $x\notin G$}.
\eneqn
Then $f\in\Leg[\VV]$, $f^*(y)=\sup_{x\in G}\langle x,y\rangle$, $\domo(f^*)=\VVd$, $\rmd(f)=0$
and 
$ -f^*(-y)=p_G^-(y)$. 
Then the result follows from isomorphism~\eqref{eq:legendre1}.
\end{proof}

\begin{corollary}[{cf.\ \cite{Da12}}]\label{cor:legendre2}
Let $U\subset\VV$ be a non empty open convex subset. 
Define $p_U^+\cl\VVd\to\R\sqcup+\infty$ by $p_U^+(y)=\sup_{x\in U}\langle x,y\rangle$. Then
\eqn
&&\EF[\fV]\bl\eps_M(\cor_{U})\br\simeq \cor_{\{t\geq p^+_U(y)\}}\tens\ori_\VV\,[-n].
\eneqn
\end{corollary}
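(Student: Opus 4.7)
The plan is to derive Corollary~\ref{cor:legendre2} as a direct specialization of the second isomorphism~\eqref{eq:legendre2} in Theorem~\ref{th:legendre}, applied to the indicator function of $\bar U$.

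Concretely, I would define $f\cl\VV\to\R\sqcup\{+\infty\}$ by $f(x)=0$ for $x\in\bar U$ and $f(x)=+\infty$ otherwise. First, I would check that $f\in\Leg[\VV]$: its epigraph is $\bar U\times\R_{\geq 0}$, which is closed, convex, and non-empty (since $U\neq\emptyset$), so $\dom(f)=\bar U$.

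Second, I would identify the three pieces of data attached to $f$ on the source side of~\eqref{eq:legendre2}. Since $U$ is an open non-empty convex subset of $\VV$, the standard convex-analytic fact that the interior of $\bar U$ equals $U$ gives $\domo(f)=U$; moreover, the affine hull $\rmH(f)$ of $\bar U$ equals the whole of $\VV$, hence $\dim\rmH(f)=n$ and $\ori_{\rmH(f)}\simeq\ori_\VV$. Because $-f$ vanishes identically on $\domo(f)=U$, the locally closed subset $\{(x,t)\,;\,x\in\domo(f),\,t\geq -f(x)\}$ of $\VV\times\R$ coincides with $U\times\R_{\geq 0}$, whose constant sheaf is precisely $\epsilon_M(\cor_U)=\cor_{\{t\geq 0\}}\tens\opb\pi\cor_U$ (see~\eqref{def:F+}).

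Third, I would compute the Legendre transform: $f^*(y)=\sup_{x\in\bar U}\langle x,y\rangle$, and by continuity of the linear form $\langle\scbul,y\rangle$ together with the density of $U$ in $\bar U$, this equals $p^+_U(y)=\sup_{x\in U}\langle x,y\rangle$ (with the convention that both values lie in $\R\sqcup\{+\infty\}$). Plugging these data into~\eqref{eq:legendre2} gives
$$\EF[\VV](\epsilon_M(\cor_U))\simeq \cor_{\{t\geq p^+_U(y)\}}\tens\ori_\VV\,[-n],$$
which is exactly the stated isomorphism. There is no real obstacle: the argument is purely a bookkeeping check, and the only convex-geometric input is the classical identity that the interior of the closure of an open convex set is the set itself.
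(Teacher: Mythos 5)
Your proposal is correct and follows exactly the paper's argument: specialize isomorphism~\eqref{eq:legendre2} to the indicator function $f$ of $\ol U$, note $\domo(f)=U$, $\rmH(f)=\VV$, $f^*=p_U^+$, and identify $\cor_{\{t\ge -f(x),\,x\in\domo(f)\}}$ with $\epsilon_M(\cor_U)$. You have merely spelled out the routine convex-analytic verifications (interior of the closure of an open convex set, continuity argument for $\sup_{\ol U}=\sup_U$) that the paper leaves implicit.
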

\begin{proof}
As in the proof of Corollary~\ref{cor:legendre1}, define the function
\eqn
&&f(x)=0\mbox{ if $x\in \ol U$ and $f(x)=+\infty$ if $x\notin \ol U$}. 
\eneqn
Then,  $\domo(f)=U$, $\rmH(f)=\VV$ , $f^*=p_U^+$ 
and the result follows from isomorphism~\eqref{eq:legendre2}.
\end{proof}

We endow $\VV$ with an Euclidean norm 
and denote by $\rmd$ the associated distance.  

\begin{lemma} \label{le:lnisconv}
 Let $\phi$ be a strictly decreasing convex function defined on $\R_{>0}$ 
\ro e.g.\ $\phi(t)=-\log t$ or $\phi(t)=t^{-\la}$ with $\la\in\R_{>0}$\rf,
and let $\Omega$ be an open convex subset of $\VV$ such that $\Omega\not=\VV$.
Then the function $\phi\bl\rmd(x,\VV\setminus\Omega)\br$
is a convex function on $\Omega$.
\end{lemma}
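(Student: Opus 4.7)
The plan is to separate the statement into two independent facts: the concavity of the distance-to-boundary function, and the behavior of the composition $\phi\circ(\text{concave})$ when $\phi$ is convex and decreasing.

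First I would prove that the function $g\colon\Omega\to\R_{>0}$ defined by $g(x)=\rmd(x,\VV\setminus\Omega)$ is concave. Fix $x_0,x_1\in\Omega$ and $\theta\in[0,1]$, set $x_\theta=(1-\theta)x_0+\theta x_1$ and $r_i=g(x_i)$, $s=(1-\theta)r_0+\theta r_1$. Since $\Omega$ is open and $r_i=\rmd(x_i,\VV\setminus\Omega)$, the open ball $B(x_i,r_i)$ is contained in $\Omega$. I want to show $B(x_\theta,s)\subset\Omega$, which will give $g(x_\theta)\geq s$. Given any $y\in B(x_\theta,s)$, write $v=y-x_\theta$, so $\|v\|<s$. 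Set
\[
y_i = x_i + \frac{r_i}{s}\,v\quad(i=0,1).
\]
Then $(1-\theta)y_0+\theta y_1 = x_\theta+v=y$, and $\|y_i-x_i\|=(r_i/s)\|v\|<r_i$, so $y_i\in B(x_i,r_i)\subset\Omega$. Since $\Omega$ is convex, $y\in\Omega$, proving $B(x_\theta,s)\subset\Omega$ and hence $g(x_\theta)\geq s=(1-\theta)g(x_0)+\theta g(x_1)$.

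Second, because $\phi$ is nonincreasing and convex, the composition $\phi\circ g$ is convex: applying the decreasing function $\phi$ to the inequality $g(x_\theta)\geq(1-\theta)g(x_0)+\theta g(x_1)$ reverses it to
\[
\phi(g(x_\theta))\leq \phi\bl(1-\theta)g(x_0)+\theta g(x_1)\br\leq (1-\theta)\phi(g(x_0))+\theta\phi(g(x_1)),
\]
where the last inequality uses convexity of $\phi$. This gives exactly the convexity of $\phi\bl\rmd(x,\VV\setminus\Omega)\br$ on $\Omega$.

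The only substantive step is the concavity of $g$; the composition step is purely formal. The mild subtlety in the concavity argument is to produce the correct affine combination $y=(1-\theta)y_0+\theta y_1$ with each $y_i$ still inside $B(x_i,r_i)$, and the scaling $y_i=x_i+(r_i/s)v$ is exactly what balances these two constraints. Note that the hypothesis $\Omega\neq\VV$ is only needed to ensure $g$ is finite-valued (so that $\phi\circ g$ is actually defined on $\Omega$), and the strict monotonicity of $\phi$ plays no role beyond being automatic in the stated examples.
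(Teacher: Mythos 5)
Your proof is correct, and the key technical step (the scaling $y_i = x_i + \tfrac{r_i}{s}\,v$ to produce an affine combination landing in $\Omega$) is the same one the paper uses. The organization differs: you factor the argument through the intermediate statement that $g(x)=\rmd(x,\VV\setminus\Omega)$ is concave, then invoke the standard fact that a convex nonincreasing function of a concave function is convex. The paper instead works directly with $\phi$ and sets $r_0 \eqdot \phi^{-1}\bl(a_1\phi(r_1)+a_2\phi(r_2)\br)$, reducing to the inclusion $B(x_0,r_0)\subset\Omega$; defining $r_0$ this way requires inverting $\phi$, which is why the paper asks for strict monotonicity. Your decomposition is slightly cleaner: it isolates the geometric content (concavity of the distance function), avoids inverting $\phi$, and correctly exposes that only the \emph{nonincreasing} property of $\phi$ is actually needed -- the strictness in the hypothesis is superfluous, as you observe. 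Both routes buy the same result, but yours makes the division of labor between geometry and monotone-convex composition more transparent.
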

\begin{proof}
Let $x_j\in\Omega$ and $a_j\in\R_{>0}$ ($j=1,2$) with $a_1+a_2=1$. Set $x_0=a_1x_1+a_2x_2$ and 
$r_j=\rmd(x_j,\VV\setminus\Omega)$ ($j=1,2$). It is enough to show that
$\phi\bl \rmd(x_0,\VV\setminus\Omega)\br\leq a_1\phi(r_1)+a_2\phi(r_2)$ or equivalently
\eqn
&&\rmd(x_0,\VV\setminus\Omega)\geq 
r_0\eqdot\opb{\phi}\bl a_1\phi(r_1)+a_2\phi(r_2)\br.
\eneqn
This is equivalent to saying that 
\eqn
&&\set{x}{\vert x-x_0\vert<r_0}\subset\Omega.
\eneqn
Since $\phi$ is convex, $\phi(a_1r_1+a_2r_2)\leq a_1\phi(r_1)+a_2\phi(r_2)=\phi(r_0)$. Since $\phi$ is strictly decreasing, we have $a_1r_1+a_2r_2\geq r_0$. Let $x=x_0+y$ with $\vert y\vert<r_0$. Let us show that $x\in\Omega$. 

Since $\vert\frac{r_j}{a_1r_1+a_2r_2}y\vert<r_j$ ($j=1,2$) and $\set{x}{\vert x-x_j\vert<r_j}\subset\Omega$, we have
\eqn
&&x_j+\frac{r_j}{a_1r_1+a_2r_2}y\in\Omega.
\eneqn
Hence $\sum_{j=1}^2 a_j(x_j+\frac{r_j}{a_1r_1+a_2r_2}y)=x_0+y$ belongs to $\Omega$. 
\end{proof}

\section{Laplace transform}

\subsection{Laplace transform}
Recall the $\D_X$-module $\she^{-\varphi}_{U|X}$ and Notation~\ref{not:<?}. 
We saw in \eqref{eq:laplaceEphi} 
\eq
&&\solE_X(\she^{\varphi}_{U|X})\simeq\C_X^\Tam\ctens\C_{\{t=-\Re\phi\}}.\label{eq:laplaceEphiB}
\eneq
We shall apply this result in the following situation.

Let $\WW$ be a complex finite-dimensional vector space of complex dimension $d_\WW$, $\WW^*$ its dual. 
Since $\WW$ is a complex vector space, we shall identify $\ori_\WW$ with $\C$. 
We denote here by 
$\bW$ the projective compactification of $\WW$, we set $\BBH=\bW\setminus\WW$,
and similarly with $\bWd$ and $\BBHd$. We also introduce the bordered spaces
\eqn
&&\fW=(\WW,\bW),\quad \fWd=(\WWd,\bWd).
\eneqn
We set for short
\eqn
&&X=\bW\times\bWd,\, U=\WW\times\WWd,\, Y=X\setminus U.
\eneqn
We shall consider the function 
\eqn
&&\phi\cl\WW\times\WWd\to\C,\quad \phi(x,y)=\langle x,y\rangle.
\eneqn

We introduce the Laplace kernel
\eq\label{eq:Laplaceker}
\shl&\eqdot&\she^{\langle x,y\rangle}_{U|X}.
\eneq
Recall  from Corollary~\ref{cor:fourieradjoint}  
that the kernel of the enhanced Fourier transform   
with respect to the underlying real vector spaces 
of $\WW$ and $\WWd$ is given by 
\eqn
&&L^a_{\WW}\eqdot \cor_{\{t=-\Re\langle x,y\rangle\}}\in\TDC(\iC_{\fW\times\fWd}).
\eneqn

\begin{lemma}\label{le:laplaceD2}
One has the isomorphism in $\TDC(\iC_X)$
\eq\label{eq:Laplaceiso21}
&&\solE_{X}(\shl)\simeq \C^\Tam_X\ctens \Teeim{j}L^a_{\WW},
\eneq
where $j\cl \fW\times\fWd\to X$ is the inclusion.
\end{lemma}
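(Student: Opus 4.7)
The plan is to reduce the lemma to a direct application of \eqref{eq:laplaceEphi} (restated as \eqref{eq:laplaceEphiB}), combined with an identification of the relevant closed subset across the bordered space structure.

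First I would set $\varphi \eqdot -\langle x,y\rangle \in \sho_X(*Y)$, noting that $\varphi$ is holomorphic on $U$ with poles along $Y = (\BBH\times\bWd)\cup(\bW\times\BBHd)$, so that $\shl = \she^{\langle x,y\rangle}_{U|X} = \she^{-\varphi}_{U|X}$. Then \eqref{eq:laplaceEphiB} (i.e.\ \eqref{eq:laplaceEphi} applied to $\varphi$) yields directly
\eqn
&&\solE_X(\shl)\simeq \C_X^\Tam\ctens \C_{\{t=\Re\varphi\}}
=\C_X^\Tam\ctens \C_{\{t=-\Re\langle x,y\rangle\}},
\eneqn
where the right-hand subset is $\set{(x,y,t)\in X\times\R}{(x,y)\in U,\ t=-\Re\langle x,y\rangle}$, a closed subset of $U\times\R$ naturally viewed as a locally closed subset of $X\times\R$.

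The remaining task is to identify $\C_{\{t=-\Re\langle x,y\rangle\}}$ (as an object of $\TDC(\iC_X)$, extended by zero from $U\times\R$) with $\Teeim{j}L^a_\WW$. By definition, $L^a_\WW$ is the object of $\TDC(\cor_{\WW\times\WWd})\subset\TDC(\iC_{\fW\times\fWd})$ associated with the constant sheaf on $\{t=-\Re\langle x,y\rangle\}\subset U\times\R$. The morphism of bordered spaces $j\cl\fW\times\fWd = (U,X)\to X$ is semi-proper, since $\ol{\Gamma_j}=\Delta_X\to X$ is proper (it is even an isomorphism). Hence by the quasi-commutativity of diagram~\eqref{eq:eeimfeimf}, the functor $\Teeim{j}$ applied to the sheaf $\C_{\{t=-\Re\langle x,y\rangle\}}$ on $U\times\R$ coincides with the usual direct image with proper support $\reim{j}$, which on a sheaf supported in a closed subset of $U\times\R$ is simply extension by zero to $X\times\R$. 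This gives the identification
\eqn
&&\Teeim{j}L^a_\WW\simeq \C_{\{t=-\Re\langle x,y\rangle\}}\quad\text{in }\TDC(\iC_X),
\eneqn
and composing with the previous isomorphism yields the desired formula.

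The proof is essentially an unwinding of definitions, so I do not anticipate a serious obstacle; the only point requiring care is the identification $\Teeim{j}L^a_\WW\simeq \C_{\{t=-\Re\langle x,y\rangle\}}$, where one must confirm that the passage from the bordered space $\fW\times\fWd$ to the ambient manifold $X$ respects the support structure and does not introduce contributions from $Y\times\R$. Since the set $\{t=-\Re\langle x,y\rangle\}$ is closed in $U\times\R$ but does not meet $Y\times\R$ tautologically, extension by zero is well-behaved, and the semi-properness of $j$ ensures that $\Teeim{j}$ agrees with the sheaf-theoretic extension via \eqref{eq:eeimfeimf}.
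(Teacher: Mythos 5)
Your proof is correct and takes the same route the paper intends: apply \eqref{eq:laplaceEphi}/\eqref{eq:laplaceEphiB} with $\varphi=\pm\langle x,y\rangle$ to get $\solE_X(\shl)\simeq\C_X^\Tam\ctens\C_{\{t=-\Re\langle x,y\rangle\}}$, and then unwind the definitions to identify $\C_{\{t=-\Re\langle x,y\rangle\}}\in\TDC(\iC_X)$ with $\Teeim{j}L^a_\WW$ via extension by zero along the semi-proper inclusion $j$ (using diagram~\eqref{eq:eeimfeimf} for the associated $\tilde j$ on $(\cdot)\times\R_\infty$). The paper compresses this to one line; you supply exactly the two implicit verifications (semi-properness of $j$ and the compatibility of $\Teeim{}$ with sheaf-theoretic $\reim{}$) that make the identification go through.
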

\begin{proof}
This follows immediately from  isomorphism~\eqref{eq:laplaceEphiB}.
\end{proof}
In the sequel, we denote by $\Drm_\WW$ the Weyl algebra on $\WW$. We also 
use the $(\Drm_{\WW\times\WWd},\Drm_{\WWd})$-bimodule $\Drm_{\WW\times\WWd\to\WWd}$ similar to the bimodule $\D_{X\to Y}$ of the theory of 
$\D$-modules, and finally  we denote by 
$\OO_\WW$ the ring of polynomials on $\WW$.

The next result is well-known and goes back  to~\cite{KL85} or before.
\begin{lemma}\label{le:laplaceD1}
There is a natural isomorphism 
\eq\label{eq:Laplaceiso1}
&&\D_\bW(*\BBH)\Dconv\shl\simeq \D_\bWd(*\BBHd)\tens\det\WWd.
\eneq
Here, $\det\WWd=\bigwedge^n\WWd$ where $n=d_{\WWd}$. 
\end{lemma}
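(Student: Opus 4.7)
My plan is to unfold the definition of $\Dconv$ and reduce to a direct local computation on the open set $U=\WW\times\WWd$, following the classical Fourier–Laplace transform on $\D$-modules.

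I would first write $\D_\bW(*\BBH)\Dconv\shl=\Doim{p_2}\bigl(\Dopb{p_1}\D_\bW(*\BBH)\Dtens\shl\bigr)$, with $p_k\cl X\to\bW,\bWd$ the projections. Since $\Dopb{p_1}\D_\bW(*\BBH)$ has meromorphic poles along $p_1^{-1}\BBH$ and $\shl$ has meromorphic poles along the full divisor $Y=(\BBH\times\bWd)\cup(\bW\times\BBHd)$, the tensor product is localised along $Y$ and its direct image under $p_2$ is determined by its restriction to $U$. Likewise the right-hand side is $(*\BBHd)$-localised, so it is enough to establish the isomorphism over $\WWd\subset\bWd$ and then extend meromorphically across $\BBHd$.

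On $U$, the key identifications are $\Dopb{p_1}\D_\bW(*\BBH)|_U\simeq\D_{U\to\WW}\simeq\D_U\big/\sum_j\D_U\partial_{y_j}$ (transfer bimodule for the smooth projection $p_1|_U$) and $\shl|_U\simeq\D_U/\D_U(\partial_{x_i}-y_i,\partial_{y_j}-x_j)$ (the standard presentation $\D_U\cdot e^{\langle x,y\rangle}$). Since $\shl|_U$ is $\sho_U$-locally free of rank $1$ with generator $e^{\langle x,y\rangle}$, the tensor product $\Dtens$ simply twists the $\D_U$-action on $\Dopb{p_1}\D_\bW(*\BBH)|_U$ by the Leibniz rule against $e^{\langle x,y\rangle}$: $\partial_{x_i}\mapsto\partial_{x_i}+y_i$ and $\partial_{y_j}\mapsto\partial_{y_j}+x_j$. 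Using the relation $\partial_{y_j}=0$ built into the transfer bimodule, the twisted action of $\partial_{y_j}$ on the tensor product becomes multiplication by $x_j$, giving
\[
\Dopb{p_1}\D_\bW(*\BBH)\Dtens\shl\Big|_U\simeq \D_U\big/\D_U(\partial_{y_j}-x_j)_{j=1}^{d_\WW}.
\]

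Next I would apply $\Doim{p_2|_U}$ to this module. For the smooth projection $p_2|_U\cl\WW\times\WWd\to\WWd$ of relative dimension $d_\WW$, the direct image is computed via the relative de Rham/Spencer complex. The commuting sequence $(\partial_{y_j}-x_j)_j$ yields an explicit Koszul-type resolution, and after combining with the vertical direction, whose orientation contributes a factor of $\Omega_\WW$ and hence, after the Spencer duality, $\det\WWd$, one obtains $\D_\WWd\tens\det\WWd$ on $\WWd$. Finally, extending meromorphically across $\BBHd$ recovers the right-hand side $\D_\bWd(*\BBHd)\tens\det\WWd$.

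The main obstacle is the bookkeeping: carefully tracking the $\det\WWd$ twist and the shifts through the transfer bimodules and the relative de Rham complex, and verifying that the meromorphic extensions on both sides match (i.e.\ that $\Doim{p_2}$ computed on $U$ really produces the $(*\BBHd)$-extension on $\bWd$). The underlying algebraic content, namely the identification $\Drm_\WW\simeq\Drm_\WWd$ given on Weyl algebras by $x_i\leftrightarrow\partial_{y_i}$ (up to sign), is classical and goes back at least to~\cite{KL85}; the content of the lemma is its sheaf-theoretic globalisation over the projective compactifications $\bW,\bWd$, handled by the meromorphic localisations $(*\BBH)$ and $(*\BBHd)$.
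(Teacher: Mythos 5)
Your overall strategy is the same as the paper's: exhibit the composite $\Dopb{p_1}\D_\bW(*\BBH)\Dtens\shl$ locally as $\D_U/\D_U(\partial_{y_j}-x_j)_j$ (i.e.\ the classical twist of the pullback transfer module), then take the relative de Rham/Koszul pushforward along the second projection to produce $\D$ on the dual, picking up the $\det\WWd$ twist. The paper does exactly this Weyl-algebra computation: it writes $\D_\bW(*\BBH)\Dconv\shl$ as $\Drm_{\WWd\from\WW\times\WWd}\ltens_{\Drm_{\WW\times\WWd}}\bigl(\Drm_{\WW\times\WWd}\e^{\langle x,y\rangle}\ltens_{\OO}\Drm_{\WW\times\WWd\to\WWd}\bigr)$, reassociates, uses $\Drm_{\WWd\from\WW\times\WWd}\ltens_{\OO}\Drm_{\WW\times\WWd\to\WW}\simeq\Drm_{\WW\times\WWd}\tens\det\WWd$, and finally observes that $\Drm_\WWd\isoto\Drm_{\WW\times\WWd}\e^{\langle x,y\rangle}$.

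There is, however, a genuine gap in your reduction step. You assert that because the tensor is $Y$-localized, ``its direct image under $p_2$ is determined by its restriction to $U$,'' and you treat the passage back to $\bWd$ as a bookkeeping matter of ``verifying that the meromorphic extensions match.'' This is exactly where the real content sits, and it is not a formal consequence of localization. The D-module direct image $\Doim{p_2}$ is taken along the \emph{proper} map $X=\bW\times\bWd\to\bWd$, so over $y\in\WWd$ it involves $\Rr\Gamma$ over the compact fiber $\bW\times\{y\}$, not over $\WW\times\{y\}$. Your proposed computation of $\Doim{p_2|_U}$ on the non-proper affine projection $U\to\WWd$ is a priori a different operation, and identifying the two — that is, identifying the analytic de Rham cohomology on the projective compactification with the algebraic (Weyl-algebra) relative de Rham cohomology on the affine chart — is precisely a GAGA-type comparison theorem. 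The paper invokes GAGA at the very start for exactly this reason, replacing $\D_\bW(*\BBH)$, $\D_\bWd(*\BBHd)$ and $\shl$ by their algebraic counterparts and doing the computation in the Weyl-algebra category where the affine direct image \emph{is} the right object. Without this input, your ``extend meromorphically'' step is unproved and is in fact the only non-formal content of the lemma; the irregularity of $\shl$ at the boundary makes this the sensitive point. You should state GAGA (or an equivalent algebraic/analytic comparison for D-module direct image along the proper map $p_2$) explicitly and note that this is what legitimizes the local Koszul computation.
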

\begin{proof}
Using the GAGA principle, we may replace 
$\D_\bW(*\BBH)$ with $\Drm_\WW$, $\D_\bWd(*\BBH)$ with $\Drm_\WWd$, $\shl$ with $\Drm_{\WW\times\WWd} \e^{\langle x,y\rangle}$
and thus 
$\D_\bW(*\BBH)\Dconv\shl$ with 
\eq\label{eq:KatzL}
&&\Drm_{\WWd\from \WW\times\WWd}\ltens[\Drm_{\WW\times\WWd}](\Drm_{\WW\times\WWd} \e^{\langle x,y\rangle}
\ltens[{\OO_{\WW\times\WWd}}]\Drm_{\WW\times\WWd\to\WWd}).
\eneq
This last object is isomorphic to 
\eqn
\bl\Drm_{\WWd\from\WW\times\WWd}\ltens[{\OO_{\WW\times\WWd}}]\Drm_{\WW\times\WWd\to\WW}  \br 
\ltens[\D_{\WW\times\WWd}]\Drm_{\WW\times\WWd} \e^{\langle x,y\rangle}.
\eneqn
Since
\eqn
&&\Drm_{\WWd\from\WW\times\WWd}\ltens[{\OO_{\WW\times\WWd}}]\Drm_{\WW\times\WWd\to\WW} 
\simeq\Drm_{\WW\times\WWd}\tens\det\WWd,
\eneqn
the module~\eqref{eq:KatzL} is isomorphic to $\Drm_{\WW\times\WWd}\e^{\langle x,y\rangle}\tens\det\WWd$.
Finally, one remarks that the natural morphism $\Drm_{\WWd}\to \Drm_{\WW\times\WWd} \e^{\langle x,y\rangle}$ is an isomorphism.
\end{proof}

In the sequel, we shall identify  $\Drm_\WW$ and $\Drm_\WWd$  
 by the correspondence $x_i\leftrightarrow-\partial_{y_i}$, 
$\partial_{x_i}\leftrightarrow y_i$. (Of course, this does not depend on the choice of linear coordinates on $\WW$ and the dual coordinates on $\WWd$.)

\begin{theorem}\label{th:laplace}
 We have an isomorphism in $\Derb((\iDrm_\WW)_\fWd)$
\eq\label{eq:Laplaceiso22}
&&\EF[\WW](\OEn_\fW)\simeq\OEn_\fWd\tens\det\WW\,[-d_\WW].
\eneq
\end{theorem}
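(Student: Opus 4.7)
The plan is to apply the integral transform theorem (Theorem~\ref{th:7412bis}) to the correspondence $\fW \xleftarrow{f} \fW\times\fWd \xrightarrow{g} \fWd$ with the Laplace kernel $\shl$ and the module $\shm = \D_\fW$, and then translate the resulting tempered de Rham formula into a Fourier-Sato statement by invoking Lemma~\ref{le:laplaceD2} and Corollary~\ref{cor:fourieradjoint}. The hypotheses of Theorem~\ref{th:7412bis} are immediate: $\D_\fW \in \Derb_\qgood(\D_\fW)$, $\shl \in \Derb_\ghol(\D_{\fW\times\fWd})$ since it is coherent and holonomic on $\bW\times\bWd$, and $g$ is semi-proper because $\bW$ is compact. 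Setting $L \eqdot \solE_{\fW\times\fWd}(\shl)$ and observing $d_X - d_S = -d_\WW$, the theorem produces
$$
\Psi_L^\Tam\bl\drE_\fW(\D_\fW)\br\,[-d_\WW] \simeq \drE_\fWd(\D_\fW \Dconv \shl).
$$
By Lemma~\ref{le:laplaceD1} the right-hand side equals $\OvE_\fWd \tens \det\WWd$, and using the canonical trivializations $\OvE_\fW \simeq \OEn_\fW \tens \det\WWd$ and $\OvE_\fWd \simeq \OEn_\fWd \tens \det\WW$ (coming from $\Omega_\WW \simeq \sho_\WW \tens \det\WWd$), cancellation of the common free factor $\det\WWd$ leaves
$$
\Psi_L^\Tam(\OEn_\fW)\,[-d_\WW] \simeq \OEn_\fWd \tens \det\WW.
$$

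Next, I would identify $\Psi_L^\Tam$ on $\OEn_\fW$ with a shift of the enhanced Fourier-Sato functor. Combining Lemma~\ref{le:laplaceD2} with the identifications $\solE_\fS(\shl) \simeq \Topb j \solE_X(\shl)$ and $\Topb j \Teeim j \simeq \id$ for the open immersion $j\cl\fW\times\fWd \to \bW\times\bWd$ yields $L \simeq \C_\fS^\Tam \ctens L^a_\WW$ in $\TDC(\iC_{\fW\times\fWd})$. Since $\OEn_\fW \simeq \C_\fW^\Tam \ctens \OEn_\fW$ by Theorem~\ref{thm:OTgeq0} and the functors $\Tepb f$, $\Toim g$ and $\cihom(\cdot,\cdot)$ all commute with $\C^\Tam \ctens (\cdot)$ (Proposition~\ref{pro:stableops} and Lemma~\ref{lem:kTamtens}), the $\C_\fS^\Tam$-factor is absorbed into the target, giving $\Psi_L^\Tam(\OEn_\fW) \simeq \Psi_{L^a_\WW}^\Tam(\OEn_\fW)$. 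Viewing $\WW$ as its underlying real vector space of real dimension $n = 2d_\WW$, with $\ori_\WW \simeq \C$, Corollary~\ref{cor:fourieradjoint} rewrites this as
$$
\EF[\WW](\OEn_\fW) \simeq \Psi_{L^a_\WW}^\Tam(\OEn_\fW)\,[-2d_\WW],
$$
and combining the three displayed formulas delivers
$$
\EF[\WW](\OEn_\fW) \simeq \OEn_\fWd \tens \det\WW\,[d_\WW - 2d_\WW] = \OEn_\fWd \tens \det\WW\,[-d_\WW],
$$
as required.

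I expect the main technical obstacle to lie in the second step: verifying rigorously in the bordered enhanced category that the $\C^\Tam_\fS$-factor in $L$ can be moved through $\Toim g \,\cihom(\cdot, \Tepb f\,\OEn_\fW)$ without any shift or sign, and that the pushforward $\Teeim j$ appearing in Lemma~\ref{le:laplaceD2} genuinely disappears after restriction to $\fS$. The key point is that the full subcategory of objects $K$ satisfying $\C^\Tam \ctens K \simeq K$ is stable under the relevant six-operation formalism for enhanced ind-sheaves on bordered spaces, and $\OEn_\fW$ lies in this subcategory by Theorem~\ref{thm:OTgeq0}; the orientation and shift bookkeeping must then match exactly the real dimension $n = 2 d_\WW$ appearing in Corollary~\ref{cor:fourieradjoint}.
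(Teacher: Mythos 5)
Your proposal is correct and follows essentially the same route as the paper's proof, combining Theorem~\ref{th:7412bis}, Lemmas~\ref{le:laplaceD2} and~\ref{le:laplaceD1}, Corollary~\ref{cor:fourieradjoint}, and the absorption of the $\C^\Tam$-factor, with only a minor reordering: you specialize to $\shm=\D_\fW$ at the outset, whereas the paper first establishes the functorial isomorphism $\EF_\WW(\drE_\fW(\shm))\simeq\drE_\fWd(\shm\Dconv\shl)[-d_\WW]$ for general quasi-good $\shm$ and only then sets $\shm=\D_\bW(*\BBH)$. The latter ordering makes the claimed $\iDrm_\WW$-linearity of~\eqref{eq:Laplaceiso22} slightly more transparent, since the $\Drm_\WW$-module structure is seen to arise from functoriality in $\shm$, but the mathematical content is the same.
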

\begin{proof}
Set $K=\solE_{\fW\times\fWd}(\shl)$. By Theorem~\ref{th:7412bis}, we have
\eqn
&&\Psi_K^\Tam(\drE_\fW(\shm))\,[-d_\WW]\simeq\drE_\fWd(\shm\Dconv\shl).
\eneqn
By Lemma~\ref{le:laplaceD2}, $K=\C^\Tam_{\fW\times\fWd}\ctens L^a_{\WW}$ 
and by Corollary~\ref{cor:fourieradjoint}, the functor $\EF_\WW$ is isomorphic to the functor 
$\Psi^\Tam_{L^a_\WW}\,[-2d_\WW]$. 
Since 
\eqn
&&\cihom\bl\C^\Tam_{\fW},\drE_\fW(\shm)\br\simeq \drE_\fW(\shm),
\eneqn
we have 
\eqn
&&\Psi_K( \drE_\fW(\shm))\simeq \Psi_{L^a_\VV}( \drE_\fW(\shm))
\simeq \EF_\WW(\drE_\fW(\shm))[2d_\WW].
\eneqn
Therefore
\eqn
&&\EF_\WW(\drE_\fW(\shm))\simeq \drE_\fW(\shm\Dconv\shl)\, [-d_\WW].
\eneqn
 Now  choose $\shm= \D_\bW(*\BBH)$ and  apply Lemma~\ref{le:laplaceD1}.
Since $\drE_\fW(\shm)\simeq\OvE_\fW$ and 
$\drE_\fWd(\shm\Dconv\shl)\simeq\OvE_\fWd\tens\det\WWd$, we obtain 
\eqn
&&\EF_\WW(\OvE_\fW)\simeq\OvE_\fWd\tens\det\WWd\,[-d_\WW].
\eneqn
Hence, it is enough to remark that
\eqn
&&\OvE_\fW\simeq\OEn_\fW\tens\det\WWd\mbox{ and } \OvE_\fWd\simeq\OEn_\fWd\tens\det\WW.
\eneqn
\end{proof}

\begin{remark}
\bnum
\item Symbolically,  isomorphism~\eqref{eq:Laplaceiso22} 
is given by 
$$\OEn_\fWd\tens \det\WW\ni \phi(y)\tens dy
\longmapsto \int \e^{\lan x,y\ran}\phi(y)dy\in \EF[\WW](\OEn_\fW).$$
\item
The identification of $\Drm_\WW$ and $\Drm_\WWd$ 
is given by:
\eqn
&&\Drm_\WW\ni P(x,\partial_x)
\leftrightarrow Q(y,\partial_y)\in\Drm_\WWd\\[1ex]
&&\hs{10ex}\Longleftrightarrow P(x,\partial_x)\e^{\lan x,y\ran}=Q^*(y,\partial_y)\e^{\lan x,y\ran}\\[1ex]
&&\hs{10ex}\Longleftrightarrow P^*(x,\partial_x)\e^{-\lan x,y\ran}=Q(y,\partial_y)\e^{-\lan x,y\ran}.
\eneqn
Here $Q^*(y,\partial_y)$ denotes the formal adjoint operator
of $Q(y,\partial_y)\in\Drm_\WWd$.
\enum
\end{remark}
Applying Corollary~\ref{cor:fourier}, we get:

\begin{corollary}\label{cor:laplace}
Isomorphism~\eqref{eq:Laplaceiso1}  together with the enhanced Fourier-Sato isomorphism induce an isomorphism in $\Derb(\Drm_\WW)$, 
functorial in $F\in\TDC(\iC_{\fW})${\rm:}
\eq\label{eq:Laplaceiso23} 
&&\FHom(F,\OEn_\fW)\simeq \FHom\bl\EF_\WW(F),\OEn_\fWd\br\tens\det\WW\,[-d_\WW].
\eneq
\end{corollary}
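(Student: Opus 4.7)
The plan is to obtain the isomorphism by combining the two preceding results: Corollary~\ref{cor:fourier}, which gives the isomorphism $\FHom(F_1,F_2)\simeq\FHom(\EF[\VV](F_1),\EF[\VV](F_2))$ for $F_1,F_2\in\TDC(\icor_\fV)$, and Theorem~\ref{th:laplace}, which identifies $\EF[\WW](\OEn_\fW)$ with $\OEn_\fWd\tens\det\WW\,[-d_\WW]$ in $\Derb((\iDrm_\WW)_\fWd)$.

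First, I would apply Corollary~\ref{cor:fourier} with $F_1=F$ and $F_2=\OEn_\fW$ (viewed as an object of $\TDC(\iC_\fW)$), producing a natural isomorphism in $\Derb(\C)$
\[
\FHom(F,\OEn_\fW)\simeq\FHom\bl\EF_\WW(F),\EF_\WW(\OEn_\fW)\br.
\]
Then I would substitute the Laplace isomorphism from Theorem~\ref{th:laplace}, $\EF_\WW(\OEn_\fW)\simeq\OEn_\fWd\tens\det\WW\,[-d_\WW]$, into the second argument of the right-hand side. This produces the desired isomorphism
\[
\FHom(F,\OEn_\fW)\simeq\FHom\bl\EF_\WW(F),\OEn_\fWd\br\tens\det\WW\,[-d_\WW],
\]
and functoriality in $F$ follows from the functoriality built into both Corollary~\ref{cor:fourier} and Theorem~\ref{th:laplace}.

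The main point requiring care, and what I would call the key step, is promoting the isomorphism from $\Derb(\C)$ to $\Derb(\Drm_\WW)$. For this, one uses that $\OEn_\fW$ carries a natural $\Drm_\WW$-action (coming from its definition via $\Ot[X\times\BBP]$) and that Theorem~\ref{th:laplace} is stated in $\Derb((\iDrm_\WW)_\fWd)$, which transports the $\Drm_\WW$-structure to $\OEn_\fWd$ through the identification $\Drm_\WW\simeq\Drm_\WWd$ given by $x_i\leftrightarrow-\partial_{y_i}$, $\partial_{x_i}\leftrightarrow y_i$. Since the enhanced Fourier-Sato functor $\EF_\WW$ commutes with the auxiliary operations (being defined by convolution with the kernel $L_\WW$), the resulting isomorphism respects this $\Drm_\WW$-action on both sides.

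The only real obstacle I anticipate is verifying this compatibility of $\Drm_\WW$-module structures, i.e.\ that the $\Drm_\WW$-action inherited on the right through Theorem~\ref{th:laplace} matches the one induced on $\FHom(F,\OEn_\fW)$ on the left through the isomorphism of Corollary~\ref{cor:fourier}. This is essentially bookkeeping: the whole construction is natural, and both actions ultimately come from the action of $\Drm_\WW$ on $\OEn_\fW$, related by the Laplace kernel $\she^{\langle x,y\rangle}_{U|X}$ as in Lemma~\ref{le:laplaceD1}. Once this compatibility is recorded, the corollary follows immediately from the two-step substitution outlined above, with no additional computation needed.
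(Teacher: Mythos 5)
Your proof is correct and follows exactly the paper's intended route: the paper proves this corollary simply by "Applying Corollary~\ref{cor:fourier}" (with $F_1=F$, $F_2=\OEn_\fW$) and substituting the identification of Theorem~\ref{th:laplace}. Your remark that the $\Drm_\WW$-module structure is transported through the identification $\Drm_\WW\simeq\Drm_\WWd$ implicit in Theorem~\ref{th:laplace} being stated in $\Derb((\iDrm_\WW)_\fWd)$ correctly locates the only point requiring care.
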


As a consequence of Corollary~\ref{cor:laplace}, we recover the main result of~\cite{KS97}: 
\begin{corollary}\label{cor:laplace2}
Isomorphism~\eqref{eq:Laplaceiso1}  together with the  Fourier-Sato isomorphism  induces  an isomorphism in $\Derb(\Drm_\WW)$, 
functorial in  $G\in\Derb_{\R^+}(\C_{\WW})${\rm:} 
\eq\label{eq:Laplaceiso3} 
&&\RHom(G,\Ot[\WW])\simeq \RHom\bl\FS_\WW(G),\Ot[\WWd]\br\tens\det\WW\,[-d_\WW].
\eneq
\end{corollary}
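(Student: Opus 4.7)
The plan is to deduce Corollary~\ref{cor:laplace2} from Corollary~\ref{cor:laplace} by specializing to $F=\epsilon_\WW(G)$ for an $\R^+$-conic sheaf $G\in\Derb_{\R^+}(\C_\WW)$ and then translating each $\FHom$ into a classical $\RHom$. Concretely, I would start from
\[
\FHom(\epsilon_\WW(G),\OEn_\fW)\simeq \FHom(\EF_\WW(\epsilon_\WW(G)),\OEn_\fWd)\tens\det\WW\,[-d_\WW]
\]
given by Corollary~\ref{cor:laplace}, and reduce both sides to ordinary $\RHom$-groups by means of two ingredients: the compatibility of the enhanced and classical Fourier--Sato transforms for conic sheaves (Theorem~\ref{th:fourierefourier}), and the recovery of $\Ot$ from $\OEn$ via the functor $\epsilon$ (Proposition~\ref{pro:fromOEtoEt}).

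For the right-hand side of Corollary~\ref{cor:laplace}, Theorem~\ref{th:fourierefourier} gives a natural isomorphism
$$\EF_\WW(\epsilon_\WW(G))\simeq \epsilon_\WWd(\FS_\WW(G))\qquad\text{in }\TDC(\iC_\fWd),$$
since $G$ is $\R^+$-conic. On both sides, I would then apply Proposition~\ref{pro:fromOEtoEt}, which asserts
$$\fihom(\epsilon_M(H),\OEn_X)\simeq \rihom(H,\Ot[X])$$
for any sheaf $H$ on a complex manifold $X$, via the identification $\epsilon_M(H)=\C_{\{t\geq 0\}}\tens\opb\pi H$. Taking $\rsect(\WW;\alpha_\WW(\scbul))$, this yields
\[
\FHom(\epsilon_\WW(G),\OEn_\fW)\simeq \RHom(G,\Ot[\WW]),\quad \FHom(\epsilon_\WWd(\FS_\WW(G)),\OEn_\fWd)\simeq \RHom(\FS_\WW(G),\Ot[\WWd]).
\]
Substituting these into Corollary~\ref{cor:laplace} produces the claimed isomorphism~\eqref{eq:Laplaceiso3}.

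The main obstacle will be to carry out the identification in the bordered-space setting and to verify that the $\Drm_\WW$-module structure is correctly transported. For the first point, one must observe that since $\OEn_\fW=\Topb{j}\OEn_\bW$ with $j\cl\fW\to\bW$, and since $G$ is supported in $\WW$, Proposition~\ref{pro:fromOEtoEt} (which is formulated for the complex manifold $\bW$) restricts cleanly to give the desired identification on $\fW$; the adjunction $(\Topb{j},\Toim{j})$ and the fact that $\epsilon_\WW(G)$ lies in the image of $\TDC(\C_\WW)\hookrightarrow \TDC(\iC_\fW)$ ensure no boundary contribution appears. For the $\Drm_\WW$-action, note that $\Ot[\WW]$ and $\Ot[\WWd]$ are both $\D$-modules, hence carry natural $\Drm_\WW$- and $\Drm_\WWd$-actions by restriction, and the Laplace kernel isomorphism~\eqref{eq:Laplaceiso1} is a $(\Drm_\WW,\Drm_\WWd)$-bimodule isomorphism under the correspondence $x_i\leftrightarrow-\partial_{y_i}$, $\partial_{x_i}\leftrightarrow y_i$; this is exactly the identification used in Theorem~\ref{th:laplace}, so the $\Drm_\WW$-module structure on the isomorphism of Corollary~\ref{cor:laplace} descends to the $\Drm_\WW$-structure claimed in~\eqref{eq:Laplaceiso3}. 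All other steps are either direct applications of the cited results or formal manipulations of the six operations on enhanced ind-sheaves.
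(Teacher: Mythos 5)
Your proof is correct and follows essentially the same route as the paper's: specialize Corollary~\ref{cor:laplace} to $F=\epsilon_\WW(G)$, use Theorem~\ref{th:fourierefourier} to identify $\EF[\WW](\epsilon_\WW(G))\simeq\epsilon_\WWd\FS[\WW](G)$, and then translate both $\FHom$-groups into $\RHom$-groups via Proposition~\ref{pro:fromOEtoEt}. Your additional remarks about the bordered-space embedding and the $\Drm_\WW$-module structure address details that the paper leaves implicit, and they are consistent with the argument.
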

\begin{proof}
By Theorem~\ref{th:fourierefourier} we have the isomorphism
 $\EF[\WW](\epsilon_\WW(G))=\epsilon_\WWd\FS[\WW](G)$, where $\epsilon_\WW$ is given in~\eqref{eq:eMepsilonM}.
Applying  isomorphism~\eqref{eq:Laplaceiso23}  with  $F=\epsilon_\WW(G)$, we obtain
\eqn
&&\FHom(\epsilon_\WW(G),\OEn_\fW)\simeq \FHom\bl\epsilon_\WWd\FS[\WW](G),\OEn_\fWd\br\tens\det\WW\,[-d_\WW].
\eneqn
By Proposition~\ref{pro:fromOEtoEt}, we have 
\eqn
\FHom(\epsilon_\WW(G),\OEn_\fW)&\simeq&\rsect\bl\WW;\alpha_\WW\fihom(\epsilon_\WW(G),\OEn_\fW)\br\\
&\simeq&\rsect\bl\WW;\alpha_\WW\rihom(G,\Ot[\WW])\br\\
&\simeq&\RHom(G,\Ot[\WW]),
\eneqn
and similarly 
$
\FHom(\epsilon_\WWd\FS[\WW](G),\OEn_\fWd)\simeq\RHom(\FS[\WW](G),\Ot[\WWd])$.
\end{proof}

\subsection{Enhanced distributions}
Let $M$ be a real analytic manifold
and consider the natural morphism of bordered spaces
\eqn
&&j\cl M\times\R_\infty \to M\times\PP.
\eneqn
\begin{definition}[{ See~\cite[Def.~8.1.1]{DK13}}]\label{def:DbT}
One sets
\[
\DbT_M = 
\epb j \rhom[\D_\PP](\she_{\C|\BBP}^\tau,\Dbt_{M\times\PP})[1] 
\in \BDC(\iC_{M\times\R_\infty}),
\]
and denotes by $\DbE_M$ the corresponding object of $\TDC(\iD_M)$.
\end{definition}
\begin{proposition}[{ See~\cite[Pro.~8.1.3]{DK13}}]
There are isomorphisms
\eqn
&&\Tr\DbE_M \simeq\DbT_M\quad\text{in $\BDC(\iC_{M\times\R_\infty})$,}\\
&&\C^\Tam_{M}\ctens \DbE_M\simeq \DbE_M \quad\text{in $\TDC(\iD_M)$.}
\eneqn   
\end{proposition}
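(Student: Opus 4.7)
The plan is to mimic the proof of Theorem~\ref{thm:OTgeq0} for $\OEn_X$, whose statement is strictly parallel: the construction of $\DbT_M$ replaces the ind-sheaf $\Ot[X\times\BBP]$ of tempered holomorphic functions by the ind-sheaf $\Dbt_{M\times\PP}$ of tempered distributions, and everything else (the pull-back $\epb{j}$, the twist by $\she^{\tau}_{\C|\BBP}$, the shift) is the same. So I would unwind the definition and transport each step of the argument into the distribution setting.

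By construction $\DbE_M$ is the image of $\DbT_M$ under the quotient functor $\BDC(\iC_{M\times\R_\infty})\to\TDC(\iD_M)$. The first isomorphism is then equivalent to showing $\DbT_M\in(\indc_{t^*=0})^\bot$; concretely this means that the natural morphism
\[
\cihom(\C_{\{t\geq 0\}}\oplus\C_{\{t\leq 0\}},\DbT_M)\To \DbT_M
\]
is an isomorphism in $\BDC(\iD_{M\times\R_\infty})$. To check this I would push $\cihom(\C_{\{t\ge a\}},\scbul)$ through the functors $\epb{j}$ and $\rhom[\D_\PP](\she^{\tau}_{\C|\BBP},\scbul)$ (using that the first is a pure inverse image along a bordered-space inclusion and the second is $\cor$-linear in the target), reducing to the claim that
\[
\cihom(\C_{\{t\ge a\}},\rhom[\D_\PP](\she^{\tau}_{\C|\BBP},\Dbt_{M\times\PP}))
\]
is independent of $a\in\R$, and similarly for the half $\{t\le a\}$. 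The analytic content is exactly as in \cite{DK13}: twisting a tempered distribution by the ind-$\D_\PP$-module $\she^{\tau}_{\C|\BBP}$ forces its behaviour at $\tau=\infty$ to be compared to $e^{\tau}$, which decays as $\Re\tau\to -\infty$, so cutting off by $\{t\ge a\}$ produces nothing new.

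For the second isomorphism $\C^\Tam_M\ctens\DbE_M\simeq\DbE_M$, I would use part (i) together with the identity $\C^\Tam_M\simeq\inddlim[a\to+\infty]\C_{\{t\ge a\}}$ and mimic the last chain of Theorem~\ref{thm:OTgeq0}:
\[
\DbE_M\isoto\cihom(\C_{\{t\ge 0\}},\DbE_M)\isofrom\cihom(\C_{\{t\ge a\}},\DbE_M)\simeq\C^\Tam_M\ctens\DbE_M,
\]
where the second arrow comes from the cutoff-insensitivity proved in (i) and the last isomorphism is the general compatibility (Lemma~\ref{lem:kTamtens} plus Proposition~\ref{pro:stableops}) between $\C^\Tam_M\ctens(\scbul)$ and $\cihom$.

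The main obstacle will be the cutoff-insensitivity of step one: for $\Ot[X\times\BBP]$ this is carried out in~\cite{DK13} using the Dolbeault resolution and the rapid decay of $e^{\tau}$ against tempered $C^\infty$-functions, and for distributions one must establish the same phenomenon by testing against smooth cutoffs. Once the correct analytic vanishing statement for $\rhom[\D_\PP](\she^{\tau}_{\C|\BBP},\Dbt_{M\times\PP})$ is verified (equivalently, that the $\she^\tau_{\C|\BBP}$-twisted tempered distributions have no sections supported in regions where $\Re\tau$ is bounded above), everything else is formal bookkeeping with the six operations on enhanced ind-sheaves.
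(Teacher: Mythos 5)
The paper does not prove this proposition at all — it simply cites it, via the bracketed label \emph{See~\cite[Pro.~8.1.3]{DK13}}, so there is no ``paper's own proof'' to compare against. That said, your proposal can be evaluated on its merits, and two points deserve comment.

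First, the logical order in \cite{DK13} is the reverse of what you describe: the statement about $\DbT_M$ is the primitive one, proved there by direct analysis of tempered distributions on $M\times\PP$ twisted by $\she^\tau_{\C|\BBP}$ (after reduction to $M=\rmpt$); Theorem~\ref{thm:OTgeq0} about $\OEn_X$ is then \emph{deduced} from it via the Dolbeault resolution, exactly as encoded in the identity $\OEn_X \simeq \rhom[\opb\pi\D_{X^c}](\opb\pi\O_{X^c},\DbE_{X_\R})$ appearing later in the paper as \eqref{eq:OTDbT}. So ``mimicking the proof of Theorem~\ref{thm:OTgeq0}'' is the wrong direction of travel; your argument is not circular (you only transport the strategy, not the result), but you should be aware that the $\DbE_M$ statement is the one that has to be established from scratch.

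Second, and this is the real gap, you leave the entire analytic core — the $a$-independence of $\cihom(\C_{\{t\ge a\}},\rhom[\D_\PP](\she_{\C|\BBP}^\tau,\Dbt_{M\times\PP}))$ — as a promissory note. Everything else in your outline (the reduction to showing $\DbT_M\in(\indc_{t^*=0})^\bot$, the use of Lemma~\ref{lem:kTamtens} and Proposition~\ref{pro:stableops} for the second isomorphism) is indeed formal and correct, but the cutoff-insensitivity is precisely where the content lies, and it is not proved. Also a small but real imprecision: the natural morphism you need to show is an isomorphism runs $\DbT_M\to\cihom(\C_{\{t\ge0\}}\oplus\C_{\{t\le0\}},\DbT_M)$ (it is the unit of the quotient/$\Tr$ adjunction, induced by the restriction $\C_{\{t\ge0\}}\oplus\C_{\{t\le0\}}\to\C_{\{t=0\}}$ and the identification $\cihom(\C_{\{t=0\}},\scbul)\simeq\id$), not the other way as written.
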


\begin{remark}
Let  $X$ be a complex manifold and denote as usual by  $X_\R$ the  underlying real analytic manifold to $X$ and $X^c$ the complex conjugate manifold. Then
\eq\label{eq:OTDbT}
&&\OEn_X \simeq \rhom[\opb\pi\D_{X^c}](\opb\pi\O_{X^c},\DbE_{X_\R}).
\eneq 
\end{remark}

\begin{definition}\label{def:classASA}
Let $U\subset M$ be an open subanalytic subset and let $\phi\cl U\to\R$ 
be a continuous map.
We say that $\phi$ is of class (ASA) (almost subanalytic) on $U$ if there exists a $\mathrm{C}^\infty$-function $\psi\cl U\to\R$ such that
\bnum
\item $\psi$ is subanalytic on $M$, that is,
the graph $\Gamma_\psi\subset U\times\R$ is subanalytic in $M\times\bR$,
\item  there exists a constant $C>0$ such that $\vert\phi(x)-\psi(x)\vert\leq C$ for all $x\in U$.
\ee
In such a case, we say that $\psi$ belongs to the (ASA)-class of $\phi$. 
\end{definition}

\Conj
Let $U\subset M$ be an open subanalytic subset and let $\phi\cl U\to\R$ 
be a continuous map, subanalytic on $M$.
We conjecture that such a $\phi$ is always of class (ASA).
\enconj
Let $U$ and $\phi$ be as above with $\phi$ of class (ASA) and let us choose $\psi$ as in Definition~\ref{def:classASA}.
  For an open subanalytic subset $V$ of $M$, we set:
\eq\label{eq:phitempdist0}
&&\e^{-\phi}\Dbt_M(V)=\set{u\in\Db_M(V\cap U)}{\e^\psi u\in\Dbt_M(U\cap V)}.
\eneq
The correspondence $V\mapsto \e^{\phi}\Dbt_M(V)$ defines a sheaf
on $\Msa$, hence an ind-sheaf on $M$. 
We denote this  ind-sheaf  by $\e^{-\phi}\Dbt_M$. 

\begin{theorem}[{ cf.\ \cite[Prop.\;7.3]{Da12}}]\label{th:phitempdist}
Let $\phi$ be a continuous function on a subanalytic open subset $U$ of $M$.
Assume that $\phi$ is of class {\rm(ASA)}.
 Then the right-hand side of~\eqref{eq:phitempdist0} does not depend on the choice of $\psi$ as soon as $\psi$ belongs to the {\rm(ASA)}-class of $\phi$. Moreover, we have isomorphisms in $\BDC(\rm{I}\shd_{M})$
 \eq\label{eq:phitempdist}
&&\ba{l}
 \e^{-\phi}\Dbt_M\simeq\fihom(\C_{\{t\geq\phi(x),x\in U\}},\DbE_M)\\[1ex]
\hs{9ex} \simeq\roim{\pi}\rihom\bl\C_{\{t<\phi(x)\;;\;x\in U\}},\rhom[\D_\BBP]
(\she_{\C|\BBP}^\tau,\Dbt_{M\times\PP})\br.
   \ea\eneq
 In particular, these objects are concentrated in degree $0$.
\end{theorem}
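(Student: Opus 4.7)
The plan proceeds in three steps: establish independence of the representative $\psi$, derive the second isomorphism of~\eqref{eq:phitempdist} by formal manipulations in $\TDC$, and identify both expressions with $\e^{-\phi}\Dbt_M$ via a subanalytic translation. First I handle independence: if $\psi_1,\psi_2$ both belong to the (ASA)-class of $\phi$, then $\psi_1-\psi_2$ is a bounded $\mathrm{C}^\infty$-function on $U$ with subanalytic graph in $M\times\bR$; its derivatives are tempered, hence so are those of $e^{\pm(\psi_1-\psi_2)}$. Multiplication by $e^{\psi_1-\psi_2}$ therefore preserves $\Dbt_M(U\cap V)$ and intertwines the two candidate definitions of $\e^{-\phi}\Dbt_M(V)$.

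For the second isomorphism, set $G=\rhom[\D_\PP](\she^\tau_{\C|\PP},\Dbt_{M\times\PP})$, so that $\Tr\DbE_M\simeq\DbT_M\simeq\epb j\,G\,[1]$. Unfolding yields
\[
\fihom(\C_{\{t\ge\phi,x\in U\}},\DbE_M)\simeq\roim\pi\rihom(\Tl\,\C_{\{t\ge\phi\}},\epb j\,G)\,[1].
\]
Now consider the short exact sequence
\[
0\to\C_{\{t<\phi,x\in U\}}\to\C_{U\times\R}\to\C_{\{t\ge\phi,x\in U\}}\to 0
\]
in $\BDC(\iC_{M\times\R_\infty})$. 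Since $\C_{U\times\R}\simeq\opb\pi\C_U$ is conic, it vanishes in the quotient $\TDC(\iC_\fM)$, giving $\C_{\{t\ge\phi\}}\simeq\C_{\{t<\phi\}}[1]$ there. Plugging in and exploiting the cancellation of shifts, the remaining $\Tl$ can be removed because $\Tl\,\C_{\{t<\phi\}}-\C_{\{t<\phi\}}$ is conic and $\roim{\ol\pi}G=0$ (tempered solutions of $(\partial_\tau-1)v=0$ on $M\times\PP$ vanish upon integration along $\PP$ because of the exponential growth at $\tau=\infty$). The $(\reim j,\epb j)$-adjunction then produces the second isomorphism.

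To identify the result with $\e^{-\phi}\Dbt_M$, choose $\psi$ of (ASA) class for $\phi$; by the independence step, it suffices to treat the case $\phi=\psi$. The translation $\sigma_\psi\colon(x,t)\mapsto(x,t-\psi(x))$ is an isomorphism of real analytic bordered spaces $U\times\R_\infty\isoto U\times\R_\infty$ (its graph is subanalytic in $M\times\bR\times M\times\bR$), sends $\{t\ge\psi\}$ to $\{s\ge 0\}$, and intertwines $\she^\tau_{\C|\PP}$ with $\she^{\tau+\psi}_{\C|\PP}$. Pulling back through $\sigma_\psi$ reduces the computation to $\phi=0$, where the right-hand side of~\eqref{eq:phitempdist} yields $\Dbt_M(V)$; the $\sigma_\psi^*$-twist on $\she^\tau$ precisely produces the factor $e^{-\psi}$, so one recovers $\e^{-\psi}\Dbt_M(V)$. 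Degree-$0$ concentration then follows from the corresponding property of $\Dbt_{M\times\PP}$, the fact that $\rhom[\D_\PP](\she^\tau_{\C|\PP},\Dbt_{M\times\PP})$ sits in degree $0$, and the half-space structure of $\{t<\phi,x\in U\}$ which keeps the resulting $\rihom$ in degree $0$. The main obstacle is rigorously implementing the translation $\sigma_\psi$: since $\psi$ may tend to $\pm\infty$ at $\partial U$, $\sigma_\psi$ is only a morphism of bordered spaces (not of usual manifolds), and one must verify that its ind-sheaf pullback encodes the $e^{-\psi}$-twist correctly through the $\she^\tau$-structure on $M\times\PP$.
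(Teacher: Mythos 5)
Your reduction to $\phi=\psi$ and the formal manipulations linking $\C_{\{t\ge\phi\}}$ to $\C_{\{t<\phi\}}[1]$ are in the spirit of the paper, which is correct. But the heart of your argument — the translation $\sigma_\psi\colon(x,t)\mapsto(x,t-\psi(x))$ — does not close the gap you correctly flag. The paper's invariance statement (after Definition~\ref{def:CwDbtonfM}) gives $\Dbt_\fM\simeq\opb f\Dbt_\fN$ only for isomorphisms of \emph{real analytic} bordered spaces, while $\sigma_\psi$ is only $\mathrm{C}^\infty$: an (ASA)-representative $\psi$ is a $\mathrm{C}^\infty$ function with subanalytic graph, not a real analytic one, so $\sigma_\psi$ is not a morphism in the category of Definition~\ref{def:Rbspace}. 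You would need a separate lemma that $\Dbt$ is stable under pullback by a $\mathrm{C}^\infty$ diffeomorphism that is tempered together with its inverse and all derivatives; that lemma is exactly where all the analytic content lies, and it is not a formal consequence of what you have set up.

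The paper avoids this by a direct verification. After reducing (as you do) to $\phi=\psi$, it unwinds the right-hand side of~\eqref{eq:phitempdist} into the concrete description $S(\{t<\phi\})=\set{u\in\Db_M(U)}{\text{$\e^tu\vert_{\{t<\phi\}}$ is tempered on $M\times\PP$}}$, then proves the two inclusions $\e^{-\phi}\Dbt_M(U)\subset S(\{t<\phi\})$ and $S(\{t<\phi\})\subset\e^{-\phi}\Dbt_M(U)$ by hand. The forward inclusion is easy (multiply by the tempered function $\e^{t-\phi(x)}$). The reverse inclusion is the nontrivial part: starting from a tempered extension $w$ of $\e^tu\vert_{\{t<\phi\}}$, one recovers $\e^\phi u$ by integrating against a bump function $\chi(t-\phi(x)+1)$ normalized so that $\int_\R\chi(t+1)\e^t\,\rmd t=1$. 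This bump-function integration is precisely the analytic step that your translation argument would need the unproved $\Dbt$-invariance lemma to replace; neither your outline nor the cited facts supply it. Finally, a minor point: your claim that $\rhom[\D_\PP](\she^\tau_{\C|\PP},\Dbt_{M\times\PP})$ ``sits in degree $0$'' is not quite right as stated; it is a two-term complex $\Dbt\to[\partial_\tau-1]\Dbt$, and the degree-$0$ concentration of the final object is deduced in the paper from the surjectivity of $\partial_t-1$ on $\Dbt_{M\times\PP}(W)$, itself a consequence of the surjectivity of the restriction $\Dbt_{M\times\PP}(M\times\PP)\to\Dbt_{M\times\PP}(W)$.
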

\begin{proof}
(i) Since
\eqn
&&\C^\Tam_M\ctens\C_{W_\phi}\simeq\C^\Tam_M\ctens\C_{W_\psi},
\eneqn
we have 
\eqn
&&\fihom(\C_{\{t\geq\phi(x),x\in U\}},\DbE_M)\simeq \fihom(\C_{\{t\geq\psi(x),x\in U\}},\DbE_M).
\eneqn
Hence, the fact that the right-hand side of~\eqref{eq:phitempdist0} does not depend on the choice of $\psi$ will follow from~\eqref{eq:phitempdist}.
Therefore, we may assume from the beginning that $\phi$ is $\mathrm{C}^\infty$ and subanalytic on $M$ (see Definition~\ref{def:classASA} (i)). 

\vspace{0.3ex}\noindent
(ii) Set $W_\phi=\set{(x,t)}{t<\phi(x);x\in U}$. 
It is enough to show 
\eq
&&\e^{-\phi}\Dbt_M(U)\simeq\RHom(\C_{W_\phi}\,[1],\DbT_M).
\label{eq:pDb}
\eneq

\vspace{0.3ex}\noindent
(iii) Since  $\Dbt_{M\times\PP}(M\times\PP)\to\Dbt_{M\times\PP}(W)$ is surjective, 
 $\partial_t-1$ acting on $\Dbt_{M\times\PP}(W)$ is surjective. It follows that the right-hand side of \eqref{eq:pDb} is concentrated in degree $0$. 
Set for short
 \eq\label{eq:expphidist2}
 &&S(\{t<\phi\})
\eqdot\set{u\in\Db_{M}(U)}%
{\text{$\e^tu\vert_{\{t<\phi(x)\}}$ is tempered on $M\times\PP$}}.
\eneq
Then 
\eqn
&&\Hom(\C_{\{t<\phi(x)\}},\rhom(\she^{ \tau}_{\C|\BBP},\Dbt_{M\times\PP}))\simeq S(\{t<\phi\}).
\eneqn
(iv) Assume that $u\in\sect(U;\e^{-\phi}\Dbt_M)$. Let $w\in\Db(M)$ such that $w_{\vert_V}=\e^\phi u$. 
Then $\e^tu=\e^{t-\phi(x)}w$ and $\e^{t-\phi(x)}$ is a $C^\infty$-function tempered as a distribution on 
the set  $\set{(x,t)}{t<\phi(x)}$. Therefore, $u\in S(\{t<\phi\})$.

\vspace{0.3ex}\noindent
(v) Assume that $u\in S(\{t<\phi\})$.  Let $w\in\Db(M\times\PP)$ such that $w\vert_{\{t<\phi\}}=\e^tu\vert_{\{t<\phi\}}$.
Let us choose a $C^\infty$-function $\chi(t)$ supported on $\set{t\in\R}{\vert t\vert<1}$ with $\int_\R\chi(t+1)\e^t\rmd t=1$. Then
$\chi(t-\phi(x)+1)w=\chi(t-\phi(x)+1)\e^tu$ and 
\eqn
&&\e^\phi u=\int_\R\chi(t-\phi(x)+1)w\,\rmd t\vert_V
\eneqn
is tempered.
\end{proof}

\begin{lemma}\label{le:Dbtdegree0}
Let $U$ be an open subanalytic subset and let $f\cl U\to\R\sqcup\{+\infty\}$ be a map such that
the set $\set{(x,t)\in U\times \R}{t\ge f(x)}$ is  closed in $U\times \R$ 
and is subanalytic in $M\times\bR$.
Then the objects $\fihom(\C_{\{t\geq f(x)\}},\DbE_M)$ and $\FHom(\C_{\{t\geq f(x)\}},\DbE_M)$
are concentrated in degree $0$.
\end{lemma}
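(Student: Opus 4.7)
\emph{Plan of proof.} The strategy is to imitate step~(iii) of the proof of Theorem~\ref{th:phitempdist}, observing that this step never used the continuity or (ASA) hypothesis on $\phi$---it relied only on $\{t\ge\phi(x),\,x\in U\}$ being closed in $U\times\R$ and subanalytic in $M\times\bR$. Since these are precisely the hypotheses of the present lemma, the same argument will go through without new input.

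First, I rewrite the functor of interest. Set $W\eqdot\set{(x,t)\in U\times\R}{t<f(x)}$; by hypothesis on $\{t\ge f(x)\}$ and openness of $U$ in $M$, the set $W$ is open in $M\times\R$ and subanalytic in $M\times\bR$. The distinguished triangle $\C_W\to\C_{U\times\R}\to\C_{\{t\ge f(x)\}}\to[+1]$, combined with the isomorphism $\DbE_M\simeq\cihom(\C_M^\Tam,\DbE_M)$ and the definition of $\DbE_M$ in terms of $\she_{\C|\BBP}^\tau$, yields
\[
\fihom(\C_{\{t\ge f(x)\}},\DbE_M)\simeq
\roim{\pi}\rihom\bl\C_W,\;\rhom[\D_\BBP](\she_{\C|\BBP}^\tau,\Dbt_{M\times\BBP})\br[1],
\]
exactly as in the second line of~\eqref{eq:phitempdist}. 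The free resolution $0\to\D_\BBP\to[\partial_\tau-1]\D_\BBP\to\she_{\C|\BBP}^\tau\to 0$, together with the fact that $\rihom(\C_W,\Dbt_{M\times\BBP})\simeq\Dbt_{M\times\BBP}(W)$ is concentrated in degree~$0$ for the open subanalytic set $W$ (since $\Dbt$ is a sheaf on the subanalytic site), reduces the expression above to the two-term complex
\[
\bl\Dbt_{M\times\BBP}(W)\to[\partial_t-1]\Dbt_{M\times\BBP}(W)\br.
\]

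Concentration in degree~$0$ thus reduces to showing that $\partial_t-1$ acts surjectively on $\Dbt_{M\times\BBP}(W)$. This is the key step, and it is settled exactly as in Theorem~\ref{th:phitempdist}: since $W$ is open subanalytic in $M\times\BBP$, the restriction map $\Dbt_{M\times\BBP}(M\times\BBP)\to\Dbt_{M\times\BBP}(W)$ is surjective, so any $u\in\Dbt_{M\times\BBP}(W)$ lifts to some $\tilde u\in\Dbt_{M\times\BBP}(M\times\BBP)$; one then solves $\partial_t v-v=\tilde u$ globally by convolution against the tempered kernel $-e^t\theta(-t)$, and restricts $v$ back to $W$. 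The assertion for $\FHom$ follows by applying $\alpha_M$ and then $\rsect(M;\scbul)$, both of which preserve the degree-$0$ concentration on this tempered-distribution complex. The only subtlety---preservation of temperedness under the global convolution---is the very ingredient already verified in the proof of Theorem~\ref{th:phitempdist}, so no new technical work is needed.
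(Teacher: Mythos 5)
Your route is genuinely different from the paper's, and both are valid in principle, but your write-up contains a degree-shift error that must be corrected. The paper works directly with $\C_{\{t\ge f(x)\}}$: it represents $\fihom(\C_{\{t\ge f(x)\}},\DbE_M)$ as a two-term complex $\roim{\pi}\rihom(\C_{\{t\geq f(x)\}},\Dbt_{M\times \bR})\to[\partial_t-1]\roim{\pi}\rihom(\C_{\{t\geq f(x)\}},\Dbt_{M\times \bR})$ in degrees $-1$ and $0$, and checks \emph{injectivity} of $\partial_t-1$ there, which is a one-liner (a distribution with $(\partial_t-1)u=0$ is locally $\e^t v(x)$, and the cylinder $\supp(\e^t v)$ cannot be contained in $\{t\ge f(x)\}$ unless $v=0$). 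You instead pass to $W=\{t<f(x)\}$ and prove \emph{surjectivity} of $\partial_t-1$ on $\Dbt_{M\times\BBP}(W)$, imitating step~(iii) of Theorem~\ref{th:phitempdist}; your observation that that step uses only the open-subanalyticity of $W$, not the continuity or (ASA) hypothesis, is correct. The paper's injectivity argument is the more economical, since it avoids the extension-and-convolution step, but yours is a legitimate alternative.

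The error is in the displayed formula. You write
\[
\fihom(\C_{\{t\ge f(x)\}},\DbE_M)\simeq\roim{\pi}\rihom\bl\C_W,\;\rhom[\D_\BBP](\she_{\C|\BBP}^\tau,\Dbt_{M\times\BBP})\br[1]
\]
and say this is ``exactly as in the second line of~\eqref{eq:phitempdist}'', but that line has no $[1]$. The shift $\C_{\{t\ge f(x)\}}\simeq\C_W[1]$ coming from the triangle and the vanishing of $\C_{U\times\R}$ in $\TDC(\iC_\fM)$ exactly cancels the $[1]$ built into the definition of $\DbT_M$. With your spurious extra $[1]$, the two-term complex $\Dbt_{M\times\BBP}(W)\to[\partial_t-1]\Dbt_{M\times\BBP}(W)$ would sit in degrees $-1$ and $0$; there surjectivity kills $H^0$ rather than $H^{-1}$, and you would end up concluding concentration in degree $-1$ (or nothing useful at all, since $H^{-1}=\ker(\partial_t-1)$ is the nonzero object we want to keep). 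Drop the $[1]$: the complex then sits in degrees $0$ and $1$, and your surjectivity argument correctly kills $H^1$, so the proof goes through.
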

\begin{proof}
(i) We know that  $\roim{\pi}\rihom(\C_{\{t\geq f(x)\}},\Dbt_{M\times \bR})$ is concentrated in degree $0$. Then 
$\fihom(\C_{\{t\geq f(x)\}},\DbE_M)$ is represented by the complex 
\eqn\label{eq:Dbtdegree0}
&&\roim{\pi}\rihom(\C_{\{t\geq f(x)\}},\Dbt_{ M\times \bR})\to[\partial_t-1]\roim{\pi}\rihom(\C_{\{t\geq f(x)\}},\Dbt_ {M\times \bR}),
\eneqn
concentrated in degrees $-1$ and $0$. Hence, it is enough to check that the operator $\partial_t-1$ acting on 
$\roim{\pi}\rihom(\C_{\{t\geq f(x)\}},\Dbt_{ M\times \bR})$ is a monomorphism. This follows from
\eqn
&&\set{u\in\Db_M(U\times\R)}{(\partial_t-1)u=0,\;
 \supp(u)\subset\{t\geq f(x)\}}=0.
\eneqn

\vspace{0.3ex}\noindent
(ii) Similarly, the complex $\rsect\bl M;\roim{\pi}\rhom(\C_{\{t\geq f(x)\}},
\Dbt_{ M\times \bR})\br$ is concentrated in degree $0$
and the operator $\partial_t-1$ acting on this object is injective.
\end{proof}

\subsection{Examples}
In this subsection, we shall give some applications of Corollary~\ref{cor:laplace} by using 
Theorems~\ref{th:phitempdist} and~\ref{th:legendre}. As above, $\WW$ is a complex vector space of dimension $d_\WW$. 
 We shall often identify $\WW$ with 
the  underlying real  vector space of real dimension $2d_\WW$. 

\begin{theorem}\label{th:vanishOEn}
 Let $f\in\Leg[\WW]$.
Then
\bnum
\item We have an isomorphism
\eqn
&&\FHom(\C_{\{t\geq f(x)\}}, \OEn_\fW)\,[d_\WW]\\
&&\hs{10ex}\simeq
\FHom(\C_{\{t\geq-f^*(-y),-y\in\domo(f^*)\}},\OEn_\fWd)\tens\;\det \WW
\eneqn
by the Laplace  isomorphism~\eqref{eq:Laplaceiso23}.
\item
Assume further that $f$ is subanalytic on $\bW$ and 
$\domo(f^*)$ is an open subset of $\WWd$.
Then, these objects are concentrated in degree $0$.
\ee
\end{theorem}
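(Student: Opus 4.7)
The plan for (i) is a direct application of the Laplace isomorphism of Corollary~\ref{cor:laplace} to the enhanced ind-sheaf $F = \C_{\{t \geq f(x)\}} \in \TDC(\iC_\fW)$, combined with the explicit computation of its enhanced Fourier--Sato transform provided by Theorem~\ref{th:legendre}~\eqref{eq:legendre1}:
\[
\EF[\WW](\C_{\{t \geq f(x)\}}) \simeq \C_{\{t \geq -f^*(-y),\,-y \in \domo(f^*)\}} \tens \ori_{\rmE(f)}\,[-\rmd(f)].
\]
Substituting this identity into~\eqref{eq:Laplaceiso23} yields the displayed isomorphism. Note that the orientation/shift factor $\ori_{\rmE(f)}[-\rmd(f)]$ trivialises exactly when $\rmE(f)=0$, equivalently when $\rmH(f^*)=\WWd$, which is precisely the condition that $\domo(f^*)$ be open in $\WWd$ as in hypothesis~(ii).

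For (ii), the strategy is to realise both sides of (i) as weighted Dolbeault complexes with tempered distribution coefficients, and to invoke H\"ormander-type vanishing. Using the identification~\eqref{eq:OTDbT} for $\OEn_\fW$, together with Theorem~\ref{th:phitempdist} and Lemma~\ref{le:Dbtdegree0} (the latter applies since $\{t \geq f(x)\}$ is closed and subanalytic thanks to the subanalyticity of $f$), the object $\fhom(\C_{\{t \geq f(x)\}}, \OEn_\fW)$ is represented by the $\bar\partial$-complex
\[
(\e^{-f}\Dbt_\WW)^{(0,0)} \to[\bar\partial] (\e^{-f}\Dbt_\WW)^{(0,1)} \to[\bar\partial] \cdots \to[\bar\partial] (\e^{-f}\Dbt_\WW)^{(0, d_\WW)}.
\]
Taking global sections (the sheaves of tempered distributions being soft) represents $\FHom(\C_{\{t \geq f(x)\}}, \OEn_\fW)$ as the corresponding global weighted Dolbeault complex on $\WW$. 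The same reasoning applied to the right-hand side produces a weighted Dolbeault complex on the convex open subanalytic subset $-\domo(f^*) \subset \WWd$ with weight $\e^{f^*(-y)}$.

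Concentration in degree $0$ then follows from a H\"ormander-type vanishing: on a pseudoconvex open subset of $\C^{d_\WW}$ with plurisubharmonic weight, the weighted Dolbeault complex has no higher cohomology. The key geometric input is that every convex function on the real vector space underlying $\C^{d_\WW}$ is automatically plurisubharmonic (its complex Hessian is the restriction of a positive semidefinite real Hessian), and every convex open subset of $\C^{d_\WW}$ is pseudoconvex. Both hypotheses apply to $f$ on $\dom(f) \subset \WW$ and to $f^*(-y)$ on $-\domo(f^*) \subset \WWd$. A subanalytic/tempered refinement of H\"ormander --- converting $L^2$-solutions of $\bar\partial v = u$ into tempered ones using \L{}ojasiewicz-type bounds provided by the subanalyticity hypotheses --- will then transfer the classical $L^2$ vanishing to the tempered distribution setting.

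The main obstacle is this last transfer: given a $\bar\partial$-closed tempered $(0,q)$-form with weight $\e^{-f}$ (for $q \geq 1$), one must construct a tempered $(0,q-1)$-primitive with the same weight. H\"ormander's $L^2$ theorem yields a solution with controlled $L^2$-norm, but upgrading an $L^2$-primitive to one of tempered growth requires careful control of its behaviour at infinity and along $\partial\dom(f)$; this is where the subanalyticity hypothesis on $f$ --- guaranteeing tame asymptotics for the weight --- plays its essential role.
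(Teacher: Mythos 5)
Your part (i) is essentially the paper's argument: apply Corollary~\ref{cor:laplace} to $F=\C_{\{t\geq f(x)\}}$ and substitute the Legendre computation from Theorem~\ref{th:legendre}~\eqref{eq:legendre1}. Your side remark that the factor $\ori_{\rmE(f)}[-\rmd(f)]$ trivialises exactly when $\domo(f^*)$ is open is a correct and useful observation.

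For part (ii), however, you take a genuinely different route, and yours has a gap. You propose to realise both sides as weighted Dolbeault complexes and to invoke H\"ormander $L^2$-vanishing, then to upgrade $L^2$-primitives to tempered ones. As you yourself concede, this last upgrade is not automatic: a tempered refinement of H\"ormander's theorem is a nontrivial result which does not appear in the paper, and your sketch does not supply it. Without it, the H\"ormander route does not close.

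The paper avoids analysis entirely by exploiting the isomorphism already established in part (i). Since $\OEn_\fW$ is represented by the Dolbeault complex ${\DbT_\bW}^{(0,\scbul)}$ sitting in degrees $[0,d_\WW]$, and since Lemma~\ref{le:Dbtdegree0} asserts that each $\FHom(\C_{\{t\geq f(x)\}},\DbE_\bW)$ is concentrated in degree $0$ (the hypotheses of the lemma hold because $f$ is subanalytic), the total complex computing $\FHom(\C_{\{t\geq f(x)\}},\OEn_\fW)[d_\WW]$ lives in degrees $[-d_\WW,0]$. The same lemma, applied on the dual side with $M=\bWd$ and the closed subanalytic set $\{t\geq -f^*(-y),\,-y\in\domo(f^*)\}$, shows that $\FHom(\C_{\{t\geq -f^*(-y),-y\in\domo(f^*)\}},\OEn_\fWd)\tens\det\WW$ lives in degrees $[0,d_\WW]$. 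Because part (i) identifies these two objects, both must be concentrated in the intersection $\{0\}$. This ``squeeze'' uses nothing beyond the one-sided degree bounds coming from Lemma~\ref{le:Dbtdegree0}; the Laplace isomorphism itself secretly plays the role of the $\bar\partial$-solvability that H\"ormander would otherwise have to provide. You should redo (ii) along these lines: the weighted $\bar\partial$ picture is a nice heuristic, but the actual proof is homological.
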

\begin{proof}
(i) follows from Corollary~\ref{cor:laplace} and Theorem~\ref{th:legendre}. 

\smallskip\noi
(ii)\  The object $\OEn_\fW$ is represented by the Dolbeault complex
\eqn
&&\OEn_\fW\simeq{\DbT_\bW}^{(0,\scbul)}\eqdot0\to{\DbT_\bW}^{(0,0)}\to[\ol\partial]\cdots\to{\DbT_\bW}^{(0,d_\WW)}\to0,
\eneqn
in which ${\DbT_\WW}^{(0,0)}$ stands in degree $0$.
Then it follows from  Lemma~\ref{le:Dbtdegree0} applied with $M=\bW$
that $\FHom(\C_{\{t\geq f(x)\}},\OEn_\fW)\,[d_\WW]$ is concentrated in degrees $[-d_\WW,0]$
and $\FHom(\C_{\{t\ge -f^*(-y),-y\in\domo(f^*)\}},\OEn_\fWd)$ is concentrated in degrees 
$[0,d_\WW]$. 
\end{proof}

Let $U\subset\WW$ be an open convex subanalytic subset and let $\phi\cl U\to\R$ be a continuous function. We assume that 
$\phi$ is of class (ASA) on $\bW$. 
We define the object $\e^\phi{\Ot[\bW]}\in\Derb(\iC_{\bW})$ by 
 the Dolbeault complex:
\eq\label{eq:dolbeautDbt} 
&&\e^\phi\Ot[\bW]\eqdot0\to \e^\phi{\Dbt_\bW}^{(0,0)}\to[\ol\partial]\cdots\to \e^\phi{\Dbt_\bW}^{(0,d_\WW)}\to0.
\eneq
Hence,  
$\rsect(U;\e^\phi\Ot[\bW])\seteq\RHom(\C_U,\e^\phi\Ot[\bW])$ 
is represented by the complex
\eqn 
&&0\to \e^\phi{\Dbt_\bW}^{(0,0)}(U)\to[\ol\partial]\cdots\to \e^\phi{\Dbt_\bW}^{(0,d_\WW)}(U)\to0.
\eneqn

\begin{corollary}\label{cor:vanishOEn1}
Let $U$ be an open convex and subanalytic subset of $\WW$. Let $\phi\cl U\to\R$ be a continuous function and 
assume that $\phi$ is convex and of class {\rm(ASA)}on $\bW$ \ro see 
{\rm Definition~\ref{def:classASA}}\rp.  Let $\widetilde\phi\in \Leg[\WW]$ be 
the unique function such that $\widetilde\phi\vert_U=\phi$
and  $\domo(\widetilde\phi)=U$.
Then  
\banum
\item
$\rsect(U;\e^\phi\Ot[\bW])$
is concentrated in degree $0$ and its $0$-th cohomology is the space 
$\e^\phi\Ot[\bW](U)\eqdot\set{u\in\sho_\WW(U)}{\text{$\e^{-\phi}u$ 
is tempered in $\bW$}}$.
\item
Assume moreover that ${\widetilde\phi}^*$ is of class {\rm(ASA)} on $\bW$ 
and $\dom(\tw\phi^*)=\WWd$. 
Then  $\rsect(\WWd;\e^{-\tw\phi^*}\Ot[\bWd])$
is concentrated in degree $d_\WW$
and the Laplace tra\-nsform induces an isomorphism between  
the space  $\e^\phi\Ot[\bW](U)$ and 
the space $H^{d_\WW}\bl\rsect(\WWd;\e^{-\tw\phi^*}\Ot[\bWd])\br$.
\eanum
\end{corollary}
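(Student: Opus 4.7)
The strategy is to translate $\rsect(U;\e^\phi\Ot[\bW])$ into the enhanced ind-sheaf language, use the Laplace isomorphism of Corollary~\ref{cor:laplace} to move the computation to the $\fWd$-side, and exploit the complementary cohomological degree bounds produced by Lemma~\ref{le:Dbtdegree0} to force concentration in a single degree on each side. Applying Theorem~\ref{th:phitempdist} with $-\phi$ in place of $\phi$ (which is again of class (ASA) on $\bW$) gives $\e^\phi\Dbt_\bW\simeq\fihom(\C_{\{t\geq-\phi(x),\,x\in U\}},\DbE_\bW)$ in degree~$0$. Commuting $\rhom[\D_{\bW^c}](\O_{\bW^c},\scbul)$ past $\fihom$ in the Dolbeault resolution of $\O_{\bW^c}$ then yields $\e^\phi\Ot[\bW]\simeq\fihom(\C_{\{t\geq-\phi(x),\,x\in U\}},\OEn_\fW)$ in $\Derb(\iC_\bW)$, and since the kernel is supported over $U\times\R$, derived sections over~$U$ coincide with $\FHom(\C_{\{t\geq-\phi(x),\,x\in U\}},\OEn_\fW)$. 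I would then apply Corollary~\ref{cor:laplace} to $F=\C_{\{t\geq-\tilde\phi(x),\,x\in U\}}$: since $U=\domo(\tilde\phi)$ is open in $\WW$, $\rmH(\tilde\phi)=\WW$ has real dimension $2d_\WW$ and $\ori_\WW\simeq\C$ canonically, so formula~\eqref{eq:legendre2} of Theorem~\ref{th:legendre} gives $\EF[\WW](F)\simeq\C_{\{t\geq\tilde\phi^*(y)\}}[-2d_\WW]$. Substitution produces the master Laplace isomorphism
\[
\rsect(U;\e^\phi\Ot[\bW]) \;\simeq\; \FHom(\C_{\{t\geq\tilde\phi^*(y)\}},\OEn_\fWd)\otimes\det\WW\,[d_\WW].
\]

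Under the assumptions of (b), Theorem~\ref{th:phitempdist} applied on $\bWd$ to $\tilde\phi^*$ identifies the right-hand factor with $\rsect(\WWd;\e^{-\tilde\phi^*}\Ot[\bWd])$. Lemma~\ref{le:Dbtdegree0} now applies on both sides: an (ASA)-approximant of $-\phi$ on $\bW$ exhibits $\{t\geq-\phi(x),\,x\in U\}$ as subanalytic, so the Dolbeault complex representation places the left-hand side in cohomological degrees $[0,d_\WW]$; on the $\bWd$-side, the (ASA) hypothesis on $\tilde\phi^*$ does the same, placing $\FHom(\C_{\{t\geq\tilde\phi^*\}},\OEn_\fWd)$ in $[0,d_\WW]$, so after the $[d_\WW]$-shift the right-hand side of the master isomorphism sits in $[-d_\WW,0]$. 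The only overlap permitted by the isomorphism is degree~$0$, whence $\rsect(U;\e^\phi\Ot[\bW])$ is concentrated in degree~$0$ and $\rsect(\WWd;\e^{-\tilde\phi^*}\Ot[\bWd])$ is concentrated in degree~$d_\WW$. The $0$-th Dolbeault cohomology on the left is the kernel of $\bar\partial$, namely $\set{u\in\sho_\WW(U)}{\e^{-\phi}u\text{ is tempered in }\bW}=\e^\phi\Ot[\bW](U)$, and transporting through the master isomorphism gives the claimed identification with $H^{d_\WW}(\rsect(\WWd;\e^{-\tilde\phi^*}\Ot[\bWd]))$, up to the twist by $\det\WW$.

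The main obstacle is establishing the degree~$0$ concentration in part~(a) without the (ASA) hypothesis on $\tilde\phi^*$: the argument above relies on Lemma~\ref{le:Dbtdegree0} being applicable on the $\bWd$-side, which requires subanalyticity of the epigraph of $\tilde\phi^*$. I would handle this either by showing that (ASA)-ness and convexity of $\phi$ propagate (via a subanalytic convex approximant and Tarski--Seidenberg-type stability of the Legendre transform for subanalytic convex data) to (ASA)-ness of $\tilde\phi^*$ on $\bWd$, so that the argument of the preceding paragraph applies verbatim to~(a); or, failing that, by bypassing the $\WWd$-side altogether and deducing the acyclicity of the complex $\e^\phi\Dbt_\bW^{(0,\scbul)}(U)$ in positive degrees from the H\"ormander-type tempered $\bar\partial$-Poincar\'e lemma on the convex subanalytic open set~$U$.
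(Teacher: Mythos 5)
Your proposal follows essentially the same route as the paper's proof, which is a terse citation to Theorems~\ref{th:vanishOEn} and~\ref{th:phitempdist}: convert $\rsect(U;\e^\phi\Ot[\bW])$ to a $\FHom(\scbul,\OEn_\fW)$ via Theorem~\ref{th:phitempdist}, push it through the Laplace isomorphism of Corollary~\ref{cor:laplace} and the Legendre formula of Theorem~\ref{th:legendre}, and then pin the degree by the complementary bounds supplied by Lemma~\ref{le:Dbtdegree0} on the two sides. You have essentially re-derived the content of Theorem~\ref{th:vanishOEn} inline, which is fine, and your bookkeeping of the shifts and orientations (using~\eqref{eq:legendre2} rather than~\eqref{eq:legendre1}) is correct.

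The concern you raise about part~(a) is a real one and is present in the paper's own proof as well: the degree bound on the $\fWd$-side needs Lemma~\ref{le:Dbtdegree0}, hence subanalyticity of the epigraph of $\tilde\phi^*$, yet~(a) does not assume $\tilde\phi^*$ is (ASA). Your first proposed resolution is the intended one. Replace $\phi$ by an (ASA)-approximant $\psi$ subanalytic on $\bW$; since $\phi$ is convex, the convex envelope $\psi^{**}$ is again subanalytic (by o-minimality of globally subanalytic sets, as Fenchel conjugation is first-order definable), and $|\psi^{**}-\tilde\phi|$ is bounded, so one may assume $\tilde\phi$ itself is subanalytic convex. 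Then $\tilde\phi^*$ is subanalytic convex, its epigraph is subanalytic in $\bWd\times\bR$, and Lemma~\ref{le:Dbtdegree0} applies on both sides (the difference between the exact epigraphs and their subanalytic approximants is absorbed after $\C^\Tam\ctens$). The additional hypothesis in~(b) that $\tilde\phi^*$ be (ASA) with full domain is there to apply Theorem~\ref{th:phitempdist} on $\bWd$ and obtain the concrete identification of the transform with $\rsect(\WWd;\e^{-\tilde\phi^*}\Ot[\bWd])$, not for the degree bound of~(a). Your alternative route via a tempered $\bar\partial$-Poincar\'e lemma on $U$ would bypass the $\fWd$-side but is not needed.
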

\begin{proof}
This follows from Theorems~\ref{th:vanishOEn} and~\ref{th:phitempdist}
applied to the function $\widetilde\phi^*$.
\end{proof}
 For an open subset $\Omega$ of $\WW$, we set for short
\eqn
&&\rmd_\Omega(x)=\rmd(x,\WW\setminus\Omega),
\eneqn
and consider the Dolbeault complex  \eqref{eq:dolbeautDbt} 
in which $\phi(x)=\rmd_\Omega(x)^{-\lambda}$
($\lambda\in\Q_{>0}$).  For a subanalytic open subset $U$ of $\Omega$, we  get the complex 
\eq\label{eq:dolbeautDbt2} 
&&\ba{l}
 0\to \e^{\rmd_\Omega(x)^{-\lambda}}{\Dbt_\bW}^{(0,0)}(U)\To[\ol\partial]\cdots\\[1ex]
\hs{30ex}\cdots\To[\ol\partial] \e^{\rmd_\Omega(x)^{-\lambda}}{\Dbt_\bW}^{(0,d_\WW)}(U)\to0
\ea
\eneq
\begin{corollary}\label{cor:vanishOEn2}
Let $\Omega$ and $U$ be open convex subanalytic subsets of $\fW$
 with $U\subset \Omega$, 
and let $\lambda\in\Q_{>0}$. 
Then, if $\rmd_\Omega(x)^{-\lambda}$ is of class \rm{(ASA)}, the complex ~\eqref{eq:dolbeautDbt2} is concentrated in degree $0$ and its $0$-th cohomology is the space 
\eqn
&&\e^{\rmd^{-\lambda}_\Omega}\Ot[\bW](U)\eqdot\set{u\in\sho_\WW(U)}{\e^{-\rmd_\Omega(x)^{-\lambda}}u\mbox{ is tempered in }\bW}.
\eneqn
\end{corollary}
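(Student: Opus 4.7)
The plan is to deduce this directly from Corollary~\ref{cor:vanishOEn1}(a) applied to the function $\phi(x) = \rmd_\Omega(x)^{-\lambda}$ on $U$. Three hypotheses need to be checked: continuity of $\phi$ on $U$, convexity of $\phi$, and the class (ASA) property. Continuity is automatic since $\rmd_\Omega$ is continuous and strictly positive on $\Omega \supset U$. The (ASA) condition is part of the assumption.

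The key step is convexity. I would apply Lemma~\ref{le:lnisconv} with the strictly decreasing convex function $\phi_0(t) = t^{-\lambda}$ on $\R_{>0}$ (which is indeed strictly decreasing and convex for $\lambda \in \Q_{>0}$) and the open convex subset $\Omega$ of the underlying real vector space of $\WW$. This yields that $x \mapsto \rmd_\Omega(x)^{-\lambda}$ is convex on $\Omega$, and restricting to the convex subset $U \subset \Omega$ gives convexity of $\phi$ on $U$.

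Next, one defines $\widetilde\phi \in \Leg[\WW]$ as the unique closed proper convex function extending $\phi$ with $\domo(\widetilde\phi) = U$: concretely, set $\widetilde\phi = \phi$ on $U$ and $\widetilde\phi = +\infty$ on $\WW \setminus U$, then take the lower semicontinuous hull to ensure the epigraph is closed. Since $U$ is open convex and $\phi$ is continuous convex on $U$, this extension satisfies $\widetilde\phi|_U = \phi$ and $\domo(\widetilde\phi) = U$. Corollary~\ref{cor:vanishOEn1}(a) then gives immediately that $\rsect(U; \e^{\rmd_\Omega^{-\lambda}} \Ot[\bW])$ is concentrated in degree $0$ with $0$-th cohomology equal to $\e^{\rmd^{-\lambda}_\Omega}\Ot[\bW](U)$, which is precisely the claim for the complex~\eqref{eq:dolbeautDbt2}.

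The main (minor) obstacle lies in the verification that Lemma~\ref{le:lnisconv} applies to the function $t \mapsto t^{-\lambda}$; this is a direct computation since $(t^{-\lambda})'' = \lambda(\lambda+1) t^{-\lambda-2} > 0$ and $(t^{-\lambda})' = -\lambda t^{-\lambda-1} < 0$ on $\R_{>0}$. Apart from this, the argument is purely a verification of the hypotheses of Corollary~\ref{cor:vanishOEn1}(a), with no new technical content required beyond recognizing that the (ASA) assumption is exactly what is needed to invoke Theorem~\ref{th:phitempdist} inside Corollary~\ref{cor:vanishOEn1}.
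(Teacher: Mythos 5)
Your proof is correct and takes essentially the same route as the paper: apply Corollary~\ref{cor:vanishOEn1}~(a) to $\phi(x)=\rmd_\Omega(x)^{-\lambda}$, obtaining convexity from Lemma~\ref{le:lnisconv} applied to the strictly decreasing convex function $t\mapsto t^{-\lambda}$ on $\R_{>0}$. The extra details you supply (the explicit second-derivative check and the construction of $\widetilde\phi$ via lower semicontinuous hull) are just elaborations of what the paper leaves implicit.
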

\begin{proof}
Apply Corollary~\ref{cor:vanishOEn1} with  $ \phi(x)=\rmd_\Omega(x)^{-\lambda}$. 
 This function is convex 
by Lemma~\ref{le:lnisconv}.
\end{proof}

\providecommand{\bysame}{\leavevmode\hbox to2.5em{\hrulefill}\thinspace}

\vspace*{1cm}
\noindent
\parbox[t]{25em}
{\scriptsize{
\noindent
Masaki Kashiwara\\
Research Institute for Mathematical Sciences, Kyoto University \\
Kyoto, 606--8502, Japan\\
and Department of Mathematical Sciences, Seoul National
University\\
599 Gwanak-ro, Gwanak-gu, Seoul 151-747, Korea\\
e-mail:\;masaki@kurims.kyoto-u.ac.jp

\medskip\noindent
Pierre Schapira\\
Institut de Math{\'e}matiques,
Universit{\'e} Pierre et Marie Curie\\
4 Place Jussieu,  75005  Paris\\
and Mathematics Research Unit,
University of  Luxembourg\\
http://http://webusers.imj-prg.fr/\textasciitilde pierre.schapira/\\
e-mail:\;pierre.schapira@imj-prg.fr
}}

\end{document}